\definecolor{gray}{gray}{0.5}
\def\OPB{{\mathrm{Op}_{B_1,B_2}}}
\def\simpleone{{\mathrm S}_{B_1}}
\def\simpledual{{\mathrm S}_{B_2^*}}
\newcommand{\vertiii}[1]{{\left\vert\kern-0.25ex\left\vert\kern-0.25ex\left\vert #1 
\right\vert\kern-0.25ex\right\vert\kern-0.25ex\right\vert}}
\newcommand{\Eann}{\mathcal E_{\mathrm{ann}}}
\def\av{\mathrm{av}}
\def\tLa{{\widetilde \Lambda}}
\def\lc{\lesssim}
\def\sD{\mathscr{D}}
\def\sH{\mathscr{H}}
\def\sA{\mathscr{A}}
\def\sR{\mathscr{R}}
\def\sV{\mathscr{V}}
\newcommand{\sS}{\mathscr S}
\def\eps{\varepsilon}
\def\bbone{{\mathbbm 1}}
\newcommand{\dil}{{\text{\rm Dil}}}
\newcommand{\sC}{\mathscr C}
\newcommand{\sL}{\mathscr L}
\newcommand{\Lamaxga}{\Lambda^{*}}
\newcommand{\ci}[1]{_{{}_{\!\scriptstyle{#1}}}}
\newcommand{\floor}[1]{\lfloor #1 \rfloor }
\newcommand{\Be}{\begin{equation}}
\newcommand{\Ee}{\end{equation}}
\newcommand{\Bm}{\begin{multline}}
\newcommand{\Em}{\end{multline}}
\def\lac{\mathrm{lac}}
\newcommand\Ch{Ch.}
\newcommand{\dyad}{{\mathrm{dyad}}}
\def\intslash{\rlap{\kern  .32em $\mspace {.5mu}\backslash$ }\int}
\def\qsl{{\rlap{\kern  .32em $\mspace {.5mu}\backslash$ }\int_{Q_x}}}
\def\Im{\operatorname{Im\,}}
\def\F{\mathcal F}
\def\lc{\lesssim}
\def\floor#1{{\lfloor #1 \rfloor }}
\def\emph#1{{\it #1 }}
\def\diam{{\text{\rm  diam}}}
\def\Id{\mathrm{I}}
\def\ga{\gamma}
\def\cf{{\it cf}}
\def\loc{{\mathrm{loc}}}
\def\meas{{\mathrm{meas}}}
\def\dist{{\mathrm{dist}}}
\def\supp{{\mathrm{supp}}}
\newcommand{\dilq}[2]{{#1}{#2}}
\newcommand{\tr}[1]{\dilq{3}{#1}}
\def\inn#1#2{\langle#1,#2\rangle}
\def\biginn#1#2{\big\langle#1,#2\big\rangle}
\def\jp#1{{\langle#1\rangle}}
\def\ga{\gamma}             
\def\eps{\varepsilon}
\def\ep{\epsilon}
\def\ka{\kappa}
\def\la{\lambda}             \def\La{\Lambda}
\def\vphi{\varphi}
\def\om{\omega}              \def\Om{\Omega}
\def\vth{\vartheta}
\def\fD{{\mathfrak {D}}}
\def\fG{{\mathfrak {G}}}
\def\fQ{{\mathfrak {Q}}}
\def\fR{{\mathfrak {R}}}
\def\fS{{\mathfrak {S}}}
\def\fa{{\mathfrak {a}}}
\def\fb{{\mathfrak {b}}}
\def\fz{{\mathfrak {z}}}
\def\bbC{{\mathbb {C}}}
\def\bbD{{\mathbb {D}}}
\def\bbE{{\mathbb {E}}}
\def\bbN{{\mathbb {N}}}
\def\bbR{{\mathbb {R}}}
\def\bbZ{{\mathbb {Z}}}
\def\cB{{\mathcal {B}}}
\def\cC{{\mathcal {C}}}
\def\cE{{\mathcal {E}}}
\def\cF{{\mathcal {F}}}
\def\cI{{\mathcal {I}}}
\def\cK{{\mathcal {K}}}
\def\cM{{\mathcal {M}}}
\def\cS{{\mathcal {S}}}
\def\cT{{\mathcal {T}}}
\def\cV{{\mathcal {V}}}
\def\cW{{\mathcal {W}}}
\def\emph#1{{\it #1}}
\def\textbf#1{{\bf #1}}
\def\Sp{{\mathrm{Sp}}}
\def\beq{\begin{equation}}
\def\endeq{\end{equation}}
\def\bs{\begin{split}}
\def\es{\end{split}}
\theoremstyle{plain}
\newtheorem{thm}{Theorem}[section]
\newtheorem{prop}[thm]{Proposition}
\newtheorem{lem}[thm]{Lemma}
\newtheorem{cor}[thm]{Corollary}
\newtheorem{definition}[thm]{Definition}
\newtheorem{claim}[thm]{Claim}
\newtheorem*{thm*}{Theorem}
\newtheorem*{conj*}{Conjecture}
\newtheorem*{openproblem*}{Open Problem}
\theoremstyle{remark}
\newtheorem{rem}[thm]{Remark}
\newtheorem*{remarka}{Remark}
\newtheorem*{remarksa}{Remarks}
\newtheorem*{definitiona}{Definition}
\numberwithin{equation}{section}
\subjclass[2020]{42B15, 42B20, 42B25}
\keywords{Sparse domination, Fourier multipliers, maximal functions, square functions, variation norm.}
\begin{document}

\title
[Multi-scale sparse domination]
{Multi-scale sparse domination}
\author[D. Beltran \ \ \ \ \ \  \ J. Roos \ \ \ \ \ \  \ A. Seeger ] {David Beltran  \ \ \ \ Joris Roos \ \ \ \ Andreas Seeger }

\address{David Beltran: Department of Mathematics, University of Wisconsin-Madison, 480 Lincoln Dr, Madison, WI-53706, USA}
\curraddr{Department of d'An\'alisi Mathem\'atica, Universitat de Valencia, Dr. Moliner 50, 46100 Burjassot, Spain}
\email{david.beltran@uv.es}

\address{Joris Roos: Department of Mathematics and Statistics,  University of Massachusetts Lowell,
1 University Ave. 
Lowell, MA 01854, USA, and 
School of Mathematics, The University of Edinburgh, James Clerk Maxwell Building, Peter Guthrie Tait Rd, Edinburgh EH9 3FD, UK}
\email{joris\_roos@uml.edu}

\address{Andreas Seeger: Department of Mathematics, University of Wisconsin-Madison, 480 Lincoln Dr, Madison, WI-53706, USA}
\email{seeger@math.wisc.edu}

\begin{abstract}

We prove a  bilinear form sparse domination theorem that applies to many multi-scale operators beyond Calder\'on--Zyg\-mund theory,   and also  establish  necessary conditions.  Among the  applications, we cover  large classes of Fourier multipliers, maximal functions, square functions and variation norm operators.
\end{abstract}


\maketitle

\setcounter{tocdepth}{4}
\tableofcontents

\newpage

\section{Introduction}
Sparse domination results have received
considerable  interest in recent years since the fundamental work of Lerner on Calder\'on--Zygmund operators \cite{LeCZ, LeA2}, which provided an alternative proof  of
the $A_2$-theorem \cite{Hytonen}. The original Banach space domination result was refined and streamlined to a pointwise result \cite{CAR2014,lerner-nazarov,Lac2015,LeNew}, but it is the concept of sparse domination in terms of bilinear (or multilinear) forms \cite{bernicot-frey-petermichl, culiuc-diplinio-ou} that has allowed to extend the subject to many operators in harmonic analysis beyond the scope of Calder\'on--Zygmund theory. Among other examples, one may find the bilinear Hilbert transform \cite{culiuc-diplinio-ou},  singular integrals with limited regularity assumptions \cite{conde-alonso-etal, BeneaBernicot, LernerRev2019},  Bochner--Riesz operators \cite{benea-bernicot-luque, LaceyMenaReguera}, spherical maximal functions \cite{laceyJdA19}, singular Radon transforms \cite{cladek-ou,roberlin,hu-thesispaper},  pseudo-differential operators \cite{beltran-cladek}, maximally modulated singular integrals \cite{DPL14, Beltran-Thesis}, non-integral square functions \cite{BBR}, and  variational operators \cite{DiPlinioDoUraltsev, FrancaSilvaZorinKranich, BeneaMuscalu}, as well as results in a discrete setting (see for instance \cite{KLdiscrete,CKL,AHR}). 

Many operators in  analysis have a multiscale structure, either on the space or frequency side. We consider sums 
\[T=\sum_{j\in\bbZ} T_j,\]
where the Schwartz kernel of $T_j$ 
is  supported in a $2^j$ neighborhood of the diagonal and where suitable 
 rescalings of the individual operators $T_j$ and their adjoints satisfy uniform $L^p\to L^q$ bounds. Moreover we assume  that all  partial sums  $\sum_{j=N_1}^{N_2}  T_j$ satisfy uniform $L^p \to L^{p,\infty}$ and $L^{q,1} \to L^q$ bounds.
 The goal  of this paper 
is to show   bilinear form  $(p, q')$-sparse  domination  results (with 
$q'=q/(q-1)$  the dual exponent) and investigate to which extent our assumptions are necessary.
  We prove such results under  a very mild additional   regularity assumption on the rescaled pieces;   
for a  precise statement see Theorem \ref{mainthm}  below. 
To increase applicability, we cover  vector-valued situations, thus  consider functions with values in a Banach space $B_1$ and operators that map simple $B_1$-valued functions to  functions with values in a Banach space $B_2$.
Our results apply to many classes of operators beyond Calder\'on--Zyg\-mund theory, and cover 
general classes of convolution operators with weak assumptions on the dyadic frequency localizations, together with  associated maximal functions, square functions, variation norm operators, and more. 
See Theorem \ref{thm:MEintro} for a particularly clean result on translation invariant maximal functions.
We shall formulate 
the results with respect to cubes in the standard Euclidean geometry 
but there are no fundamental  obstructions to extend them to other geometries involving nonisotropic dilations (see \textit{e.g.} \cite{cladek-ou}). Our approach to sparse domination extends ideas in the papers by  Lacey \cite{laceyJdA19} on spherical maximal functions and by R. Oberlin \cite{roberlin} on singular Radon transforms to more general situations.

We now  describe the framework for our main theorem and first review  basic definitions. For a Banach space $B$ 
let $\mathrm S_B$ be the space of all $B$-valued simple functions on $\bbR^d$ with compact support, i.e. all functions of the form $f=\sum_{i=1}^N a_i \bbone_{E_i}$ where $a_i\in B$ and $E_i$ are Lebesgue measurable subsets of $\bbR^d$ contained in a compact set. For Banach spaces $B_1$, $B_2$ we 
consider  the space $\OPB$ of  linear  operators $T$ mapping functions in $\simpleone$ to weakly measurable $B_2$-valued functions (see \textit{e.g.} \cite{hytonen-etal} for an exposition of Banach-space  integration theory) with the property that $x\mapsto \inn{Tf(x)}{\la} $ is locally integrable  for any bounded linear functional $\la \in B_2^*$.  If $T\in \OPB$, then the 
 integral 
 $$\biginn{Tf_1}{f_2}=\int_{\bbR^d}  \inn{Tf_1(x)}{f_2(x)}_{(B_2, B_2^*)} dx $$ is well-defined 
 for all $f_1\in \simpleone$ and $f_2\in \simpledual$. 
 For a Banach space $B$ and $p,r\in[1,\infty]$ we define the Lorentz space $L^{p,r}_B$ as the space of strongly measurable functions $f:\bbR^d\to B$ so that the function $x\mapsto |f(x)|_B$ is in the scalar Lorentz space $L^{p,r}$ (and we endow $L^{p,r}_B$ with the topology inherited from $L^{p,r}$). In particular, $L^{p}_B=L^{p,p}_B$ coincides with the standard Banach space valued $L^p$ space as defined in \cite{hytonen-etal}, up to equivalence of norms. If $p\in (1,\infty)$ and $r\in[1,\infty]$, then $L^{p,r}_B$ is normable and we write $\|\cdot\|_{L^{p,r}_B}$ to denote the norm induced by the norm on scalar $L^{p,r}$  defined via the maximal function of the nonincreasing rearrangement \cite{hunt}.

In the definition of sparse forms it is convenient to work with a dyadic lattice $\fQ=\cup_{k\in \bbZ}\fQ_k$ of cubes, in the sense of Lerner and Nazarov \cite[\S2]{lerner-nazarov}.  A prototypical example is  when the cubes in the $k$-th generation $\fQ_k$ are given  by 
\[ 
\fQ_k=\begin{cases} \{ 2^{-k} \fz + [-\tfrac 13 2^{-k},\tfrac 13 2^{-k+1})^d: \fz \in \bbZ^d \} &\text{ if $k$ is odd,}
\\
\{ 2^{-k} \fz + [-\tfrac 13 2^{-k+1},\tfrac 13 2^{-k})^d: \fz \in \bbZ^d \} &\text{ if $k$ is even,}
\end{cases}
\] but many other choices are possible. Notice in this example the cubes in $\fQ_k$ have side length $2^{-k}$.
This family satisfies the three axioms of a {\it dyadic lattice} in \cite{lerner-nazarov}. 
We briefly review the definition. $\fQ$ is a dyadic lattice if  
\begin{enumerate}
\item[(i)] every child of a cube $Q\in \fQ$ is in $\fQ$, 
\item[(ii)] every two cubes $Q$, $Q'$ have a common ancestor in $\fQ$, and 
\item[(iii)] every compact set in $\bbR^d$ is contained in a cube in $\fQ$.
\end{enumerate}
For each dyadic lattice there is an $\alpha\in [1,2)$ such that all cubes $Q\in \fQ$ are of side length $\alpha 2^{-k}$ for some $k\in \bbZ$. Fixing $k$ we then call the cubes of side length $\alpha 2^{-k}$ the  $k$-th generation cubes in $\fQ$.
If $Q\in \fQ$  we can, for every $l\ge 0$, tile $Q$ 
into disjoint subcubes  $Q$ of side length equal to $2^{-l}$ times the side length of $Q$. We denote this family by $\sD_l(Q)$ and let $\sD(Q)=\cup_{l\ge 0}\sD_l(Q)$, the family of all dyadic subcubes of $Q$. Then for every $Q\in \fQ$ we have $\sD(Q)\subset \fQ$. Note that because of condition (iii) the standard dyadic lattice is not a dyadic lattice in the above sense.

\begin{definitiona}
Let $0<\gamma<1$. A collection $\fS \subset \fQ$ is {\it $\ga$-sparse} if for 
every $Q\in \fS$ there is a  measurable subset $E_Q\subset Q$ such that $|E_Q|\ge \ga |Q|$ and such that the sets on the family $\{E_Q : Q \in \fS\}$ are pairwise disjoint.
\end{definitiona}
We next review the concept of sparse domination.
Given a cube $Q$, $1 \leq p < \infty$ and a $B$-valued strongly measurable locally integrable function $f$ we use the notations
\[\av_Qf= |Q|^{-1}\int_Q f(x) dx,  
\qquad \jp f_{Q,p,B} = \Big(|Q|^{-1}\int_Q |f(x)|_B^p dx\Big)^{1/p}  \] for the average of $f$ over $Q$ and the $L^p$ norm on $Q$ with normalized measure, thus $\jp f_{Q,p,B}=(\av_Q|f|_B^p)^{1/p}$.
For an operator $T \in \mathrm{Op}_{B_1,B_2}$  we say that pointwise sparse domination \cite{CAR2014, lerner-nazarov} by $L^p$-averages holds if for every $f\in\simpleone$
there are at most $3^d$ sparse families $\fS_i(f) $ such that \Be\label{pointwisesparse} |Tf(x)|_{B_2}\le C \sum_{i=1}^{3^d}  \sum_{Q\in \fS_i(f)} \langle f\rangle _{Q,p,B_1}\bbone_{Q}(x)  \quad\text{ for a.e. }x \Ee and we  denote by $\|T\|_{\mathrm{sp}_\ga(p,B_1,B_2) }$ the  infimum over all $C$ such that \eqref{pointwisesparse} holds for some collection of $3^d$ $\gamma$-sparse families depending on $f$.
 
 For many operators it is not possible to obtain pointwise sparse domination  and the concept  of  sparse domination of bilinear forms, which  goes back to \cite{bernicot-frey-petermichl} and \cite{culiuc-diplinio-ou}, is an appropriate substitute. 
Given a $\gamma$-sparse collection  of  cubes $\fS$ and $1 \leq p_1, p_2 < \infty$, one defines 
an  associated sparse $(p_1,p_2)$-form acting on  pairs $(f_1,f_2)$ where $f_1$ is 
a simple $B_1$-valued function and 
$f_2$ is 
a simple $B_2^*$-valued function. It
is given by 
\Be\label{sparseform}
\La^{\fS}_{p_1,B_1,p_2,B_2^*}(f_1,f_2) 
 = \sum_{Q\in \fS} |Q|  \langle f_1\rangle_{Q,{p_1},{B_1}} 
 \langle f_2\rangle_{Q,{p_2},{B_2^*}}, 
 \Ee 
and will be abbreviated by $\La^{\fS}_{p_1,p_2}(f_1,f_2) $ if the choice of $B_1, B_2^*$ is clear from context. The form \eqref{sparseform} acts a  bi-sublinear form on 
$(|f_1|_{B_1}, |f_2|_{B_2^*})$. All sparse forms are dominated by a 
maximal form 
 \begin{align}\label{eqn:maxform}
  \Lamaxga_{ {p_1},{B_1}, {p_2},{B_2^*}} (f_1,f_2)
  &= \sup_{\fS:  \text{$\ga$-sparse}} \La^{\fS}_{p_1,B_1,p_2,B_2^*}(f_1,f_2) ,
 \end{align} 
 again also abbreviated by $\Lamaxga_{{p_1}, {p_2}} (f_1,f_2)$ if the choice of $B_1, B_2^*$ is clear from the context.
 The maximal form may  not be   a sparse form itself but, obviously,  for every $f_1,f_2$ there exists a sparse family $\fS(f_1,f_2)$ such that $\Lambda^{\fS(f_1,f_2)} _{p_1,B_1,p_2,B_2^*}(f_1,f_2)\ge \frac 12 \La^*_{p_1,B_1,p_2,B_2^*} (f_1,f_2)$ (\cf.   \cite{LaceyMena2017}, \cite{CuliucDiPlinioOu2017} for more explicit constructions). 
 Note from \eqref{sparseform} that for each pair of simple functions $(f_1,f_2)$,
 \[\Lamaxga_{ {p_1},{B_1}, {p_2},{B_2^*}} (f_1,f_2)\le \gamma^{-1} \|f_1\|_\infty \|f_2\|_\infty \,  \mathrm{meas}(\supp f_1\cup \supp f_2)<\infty.\]
  
 We say that $T\in \OPB$  satisfies a  sparse $(p_1,p_2)$ bound if there is a constant $C$ so that 
 for all  $f_1\in \simpleone$ and $f_2\in \simpledual$  the inequality 
\Be \label{sparseineq} \big|\inn{Tf_1} {f_2}\big| \, 
\le \, C \Lamaxga_{p_1,B_1,p_2,B_2^*} (f_1,f_2)
\Ee
is satisfied.  The best constant in \eqref{sparseineq}  
defines a norm $\|\cdot\|_{\Sp_\gamma({p_1},{B_1}, {p_2},{B_2^*})} 
$ on a subspace of  $\OPB$. 
Thus  $\|T\|_{\Sp_\ga({p_1},{B_1};{p_2},{B_2^*})}$ is given by 
\begin{equation}\label{sparse norm}
\sup\Big\{
\frac {|\inn{Tf_1}{f_2}|}{\Lamaxga_{{p_1},{B_1} ,{p_2},{B_2^*}} (f_1,f_2)}: f_1\in\simpleone, \, f_2\in \simpledual, \,\,   f_i\neq 0,\, i=1,2\Big\},
\end{equation}
where $f_i \neq 0$ means that $f_i(x)\neq 0$ on a set of positive measure.  It is then immediate that 
$
\|T\|_{\Sp_\ga(p_1,B_1,p_2, B_2^*)} \le \|T\|_{\mathrm{sp}_\ga(p_1,B_1, B_2)}$ for $p_2\ge 1$.  %
It can be shown that the  space of operators in $\OPB$
for which \eqref{sparseineq} holds for all $f_1$, $f_2$ with a finite $C$ does not depend on $\gamma$. We denote this space by $\Sp(p_1, B_1, p_2, B_2^*)$ or simply $\Sp(p_1, p_2)$  if the choices of $B_1, B_2^*$ are clear from context.
The norms
$\|\cdot\|_{\Sp_\ga({p_1},{B_1},{p_2},{B_2^*})}$, $0<\gamma<1$,  are equivalent norms on $\Sp(p_1,B_1,p_2,B_2^*)$.
Moreover, if $B_1$, $B_2$ are separable Banach spaces and $p_1<p<p_2'$, then all operators in $\Sp({p_1},{B_1},{p_2}, {B_2^*})$ extend to bounded operators from $L^p_{B_1}$ to $L^p_{B_2}$. %

\subsection{The main result}%
For a function $f$ define  $\dil_t f(x)= f(tx)$. For an operator $T$ define the dilated operator  $\dil_{t} T$ by
 \[ \dil_t T = \dil_t\circ T\circ\dil_{t^{-1}}.\]
Note that if $T$ is given by a Schwartz kernel  $(x,y)\mapsto K(x,y)$, then the Schwartz kernel of $\dil_t T$ is given by $(x,y)\mapsto t^dK(tx, ty)$.
 
\noindent \textbf{Basic assumptions.} Let $\{T_j\}_{j \in \bbZ}$ be a family of operators in $\OPB$. We shall make the following assumptions.

\textit{Support condition.}  For all $f\in \simpleone$,
  \Be \label{support-assu} \supp\; (\dil_{2^j}T_j) f \subset \{x \in \bbR^d: \mathrm{dist}(x,\supp\,  f )\le 1\}.\Ee
 This means that if  $T_j$ is given by integration against a Schwartz kernel $K_j$, then $K_j$ lives on  a $2^j$-neighborhood of the diagonal.\hspace{.3cm}
 
\textit{Weak type $(p,p)$ condition.} For all integers $N_1 \leq N_2$, the sums $\sum_{j=N_1}^{N_2} T_j$  are of weak type $(p,p)$, with uniform bounds, 
\begin{subequations}
 \begin{equation}\label{bdness-wt}  \sup_{N_1\le N_2} \Big\|\sum_{j=N_1}^{N_2}T_j\Big\|_{L^p_{B_1}\to L^{p,\infty}_{B_2}}\le A(p).
 \end{equation}
 
 \textit{Restricted strong type $(q,q)$ condition.} For all integers $N_1 \leq N_2$, the sums $\sum_{j=N_1}^{N_2} T_j$  are of restricted strong type $(q,q)$, with uniform bounds, 
 \begin{equation}\label{bdness-rt}\sup_{N_1\le N_2} \Big\|\sum_{j=N_1}^{N_2}T_j\Big\|_{L^{q,1}_{B_1}\to L^q_{B_2}}\le A(q).
 \end{equation} 
 \end{subequations}
 
\textit{Single scale $(p,q)$ condition}. The operators $T_j$ satisfy the uniform improving bounds 
 \begin{equation}\label{p-q-rescaled}
\sup_{j \in \bbZ} \|\dil_{2^j} T_j \|_{L^p_{B_1}\to L^q_{B_2}} \le A_\circ(p,q).
\end{equation} 

\textit{ Single scale $\varepsilon$-regularity conditions}. For some $\varepsilon >0$ the operators $T_j$ and the adjoints $T_j^*$ satisfy
\begin{subequations} \label{p-q-rescaled-reg}
 \begin{align}\label{p-q-rescaled-reg-a}
&\sup_{|h|\le 1}|h|^{-\eps}\sup_{j \in \bbZ} \|(\dil_{2^j} T_j )\circ\Delta_h\|_{L^p_{B_1}\to L^q_{B_2}} \le B,
\\ \label{p-q-rescaled-reg-b}
&\sup_{|h|\le 1}|h|^{-\eps}\sup_{j \in \bbZ} \|(\dil_{2^j} T_j ^*)\circ\Delta_h\|_{L^{q'}_{B_2^*}\to L^{p'}_{B_1^*}} \le B,
\end{align} 
\end{subequations} 
where \begin{equation}\label{defDeltah}
\Delta_h f(x):=f(x+h) -f(x).
\end{equation}

 The above hypotheses %
 assume certain boundedness assumptions in Lebes\-gue or Lorentz spaces of vector-valued functions; it is then implied  that all operators $T_j$  map simple  $B_1$-valued functions to  $B_2$-valued functions which are strongly measurable with respect to Lebesgue measure. We formulate our main result  for $1<p\le q<\infty$ and refer to Appendix \ref{appendixB} for variants with $p=1$ or $q=\infty$. 
 
 \begin{thm}\label{mainthm}
 Let $1<p\le q<\infty$. Let $\{T_j\}_{j\in\bbZ}$ be a family of operators in $\mathrm{Op}_{B_1,B_2}$ such that
\begin{enumerate}
    \item[$\circ$] the support condition \eqref{support-assu} holds,
    \item[$\circ$] the weak type $(p,p)$ condition \eqref{bdness-wt} holds,
    \item[$\circ$] the restricted strong type $(q,q)$ condition \eqref{bdness-rt} holds,
    \item[$\circ$] the single scale $(p,q)$ condition \eqref{p-q-rescaled} holds,
    \item[$\circ$] the single scale $\varepsilon$-regularity conditions \eqref{p-q-rescaled-reg-a}, \eqref{p-q-rescaled-reg-b} hold.
\end{enumerate}
Define
\Be\label{eqn:cCdef}\mathcal{C}= A(p)+A(q)+A_\circ(p,q)  \log \big(2+
\tfrac{B}  {A_\circ(p,q)}\big).\Ee
Then,
for all integers $N_1, N_2$ with $N_1\le N_2$,
\Be\label{vectsparsebound}
\Big\|\sum_{j=N_1}^{N_2}T_j\Big \|_{\Sp_\ga(p,{B_1},{q'},{B_2^*})} \lesssim_{p,q,\eps,\gamma,d} \mathcal{C}.
\Ee
 \end{thm}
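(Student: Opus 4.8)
The plan is to run a stopping-time/Calderón--Zygmund-type recursion on a fixed cube, organized around the scale $2^j$ at which each piece $T_j$ acts, following the strategy of Lacey and R.~Oberlin. Fix $f_1 \in \simpleone$ and $f_2 \in \simpledual$. By the third axiom of a dyadic lattice it suffices to prove, for each cube $Q_0 \in \fQ$ containing $\supp f_1 \cup \supp f_2$, a sparse bound for the localized form $|\langle \sum_{N_1 \le j \le N_2} T_j (\bbone_{Q_0} f_1), \bbone_{Q_0} f_2\rangle|$ with constant $\lesssim \mathcal{C}\,|Q_0|\langle f_1\rangle_{Q_0,p,B_1}\langle f_2\rangle_{Q_0,q',B_2^*}$ plus a recursively controlled error supported on the selected children; iterating and collecting the children into $3^d$ sparse families gives \eqref{vectsparsebound}. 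The key decomposition is to split the $j$-sum at the scale of $Q_0$: for the ``small scale'' part $\sum_{2^j \lesssim \ell(Q_0)} T_j(\bbone_{Q_0} f_1)$ one uses the support condition \eqref{support-assu} to see that only $j$ with $2^j$ comparable to or below $\ell(Q_0)$ contribute near $Q_0$, and the pieces with $2^j \sim \ell(Q_0)$ are the ``principal'' pieces handled by the single-scale bound \eqref{p-q-rescaled}; the remaining fine-scale pieces are treated by a stopping time on the averages $\langle f_1\rangle_{Q,p,B_1}$ and $\langle f_2\rangle_{Q,q',B_2^*}$ over dyadic subcubes $Q \subset Q_0$.

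The heart of the argument is the estimate on a single ``generation'': after selecting a maximal collection of stopping children $\{Q'\}$ where an average first exceeds a large multiple of the average over $Q_0$, one must bound the contribution of the good part (where averages are controlled) and of the pieces $T_j$ whose scale $2^j$ lies between a stopping child and $Q_0$. For the good part one interpolates between the weak-type $(p,p)$ bound \eqref{bdness-wt}, the restricted strong type $(q,q)$ bound \eqref{bdness-rt}, and the single-scale $L^p \to L^q$ improving bound \eqref{p-q-rescaled}: the weak/restricted-strong endpoints let one sum the full operator $\sum_{N_1}^{N_2} T_j$ acting on the good (bounded) functions with an $L^p$-$L^{q'}$ duality pairing, while the single-scale improving estimate upgrades the off-diagonal scales. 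The $\log(2 + B/A_\circ(p,q))$ factor in $\mathcal{C}$ is exactly what one pays to sum a geometric-type series of single-scale contributions over the range of scales between a child and its parent, cut off at the scale where the $\eps$-regularity bounds \eqref{p-q-rescaled-reg-a}, \eqref{p-q-rescaled-reg-b} start beating the trivial bound; the regularity hypotheses are used to show that $T_j$ applied to a function that is ``$f_1$ minus its average on a slightly larger cube'' gains a factor $2^{-j\eps}$-type smallness relative to $\ell(Q_0)$, so that these cross-scale terms are summable and contribute the logarithm rather than the number of scales.

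Concretely, the ordered steps are: (i) reduce to a single cube $Q_0$ and set up the recursion, peeling off $3^d$ shifted dyadic sublattices so that the selected stopping cubes are genuinely dyadic subcubes; (ii) split $\sum_j T_j = \sum_{2^j \le \delta \ell(Q_0)} T_j + \sum_{2^j > \delta \ell(Q_0)} T_j$ and note the second (coarse) sum, restricted to $Q_0$, only sees $O(\log(1/\delta) + 1)$ scales and is handled directly by \eqref{p-q-rescaled} and Hölder, contributing to the principal term; (iii) run the stopping time on $L^p$- and $L^{q'}$-averages over dyadic descendants of $Q_0$, obtaining stopping children $\{Q'\}$ with $\sum |Q'| \le \tfrac12 |Q_0|$; (iv) on the complement of $\bigcup Q'$ the function is pointwise bounded and one invokes \eqref{bdness-wt}, \eqref{bdness-rt}, interpolation, and \eqref{p-q-rescaled} to bound the local form by $C\mathcal{C}|Q_0|\langle f_1\rangle_{Q_0,p,B_1}\langle f_2\rangle_{Q_0,q',B_2^*}$; (v) for the scales $2^j$ bridging a stopping child $Q'$ and $Q_0$, apply \eqref{p-q-rescaled-reg-a}--\eqref{p-q-rescaled-reg-b} to the mean-zero-adjusted input to extract $2^{-j\eps}$ decay, sum the geometric series, and absorb the $\log(2+B/A_\circ)$; (vi) what remains on each $Q'$ is of the same form with $Q_0$ replaced by $Q'$, so recurse, and collect all selected cubes into sparse families.

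\textbf{Main obstacle.} The delicate point is step (v) combined with the bookkeeping in step (iv): one must carefully track which scales $T_j$ contribute to the interaction between a fixed cube and its stopping children without double-counting, and verify that the $\eps$-regularity estimates \eqref{p-q-rescaled-reg} genuinely produce summability across the $\sim \log(1/\delta)$ intermediate scales with a constant that is $A_\circ(p,q)$ times the logarithm in \eqref{eqn:cCdef} rather than $B$ times the number of scales. Making the vector-valued ($B_1, B_2$) and Lorentz-space endpoint interpolation (between $L^p \to L^{p,\infty}$ and $L^{q,1} \to L^q$) interact cleanly with the stopping-time truncations — in particular that the ``good'' truncated functions still lie in the right Lorentz spaces with controlled norms — is the other technically demanding ingredient.
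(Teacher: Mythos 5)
Your plan is essentially the paper's own proof: a Lacey--Oberlin style stopping-time recursion (the paper runs it as an induction on the number of scales, performing a Whitney/Calder\'on--Zygmund decomposition of \emph{both} $f_1$ and $f_2$ at heights set by their averages over $Q_0$), with the restricted strong type and weak type hypotheses handling the good parts and the good--bad pairings, the single scale $(p,q)$ bound handling the scales within $\ell\approx\log\bigl(2+\tfrac{B}{A_\circ(p,q)}\bigr)$ of the stopping cubes, the $\eps$-regularity conditions — converted in \S\ref{sec:singlescalereg} (Lemma \ref{lem:eps-reg}, Corollary \ref{cor:reg}) into bounds for $T_j(\Id-\bbE_{s-j})$ and its adjoint acting on the mean-zero bad parts — handling the separated scales, and the Three Lattice Theorem assembling the triple-cube contributions into $3^d$ sparse families. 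The technical core you flag as the main obstacle is precisely what the paper supplies in \S\ref{proofofclaim} through the terms $III_1$--$III_4$ and $IV_1$--$IV_3$ (including the bad--bad interactions that force the Calder\'on--Zygmund decomposition of $f_2$ and the use of the adjoint regularity), so your outline is correct and follows the same route.
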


 The estimate \eqref{vectsparsebound} implies, via a  linearization technique  (\cf. Lemma \ref{lem:proofofcormain}) 
 the following variant 
 which leads to a sparse domination result for maximal functions, square functions and variational operators, see \Ch  \ref{sec:squarefct-etc}. Instead of $T_j~\in~ \mathrm{Op}_{B_1,B_2}$ we use the more restrictive assumption that the $T_j$  map functions in $\simpleone$ to locally integrable $B_2$-valued functions.  We let $L^1_{B_2, \loc}$  be the space of all strongly  measurable $B_2$-valued  functions which are Bochner integrable over compact sets.
 
 \begin{cor}\label{mainthm-cor}
Let $1 < p \leq q < \infty$.  Let $\{T_j\}_{j \in \bbZ}$ be a family of operators, with  $T_j:\simpleone\to L^1_{B_2,\loc}$, and satisfying the assumptions of  Theorem \ref{mainthm}. Let $\cC$ be as in \eqref{eqn:cCdef}. Then for all  $f\in \simpleone$, all $\bbR$-valued nonnegative
measurable %
functions $\omega$, and all integers $N_1, N_2$ with $N_1\le N_2$,
\Be\label{norms}
\int_{\bbR^d} \Big|\sum_{j=N_1}^{N_2} T_j f(x) \Big|_{B_2} \omega(x) dx 
\lesssim_{p,q,\varepsilon, \gamma, d} \cC \, \Lamaxga_{p,B_1,q',\bbR} (f,\om).
\Ee 
 \end{cor}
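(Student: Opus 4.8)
The plan is to deduce Corollary \ref{mainthm-cor} from Theorem \ref{mainthm} by a duality/linearization argument, reducing the weighted $L^1$-type bound \eqref{norms} to the bilinear sparse bound \eqref{vectsparsebound}. First I would reduce to the case $\om\in L^\infty$ with compact support by a routine truncation and monotone convergence argument; for $\om=\sum_i \om_i$ with $\om_i = \om\bbone_{\{|x|\le i,\ \om(x)\le i\}}$, the left side of \eqref{norms} is the increasing limit of the corresponding integrals against $\om_i$, and the right side is monotone in $\om$, so it suffices to prove \eqref{norms} for such bounded compactly supported $\om$, with a constant independent of the truncation. In that case, writing $S = \sum_{j=N_1}^{N_2} T_j$, the quantity $Sf$ is a $B_2$-valued locally integrable function, so $x\mapsto |Sf(x)|_{B_2}\om(x)$ is integrable and the left side of \eqref{norms} is finite.

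The key step is the linearization: for a strongly measurable $B_2$-valued function $g\in L^1_{B_2,\loc}$ one has
\[
\int_{\bbR^d} |g(x)|_{B_2}\,\om(x)\,dx = \sup \Big| \int_{\bbR^d} \inn{g(x)}{\lambda(x)}_{(B_2,B_2^*)}\,\om(x)\,dx \Big|,
\]
where the supremum runs over $B_2^*$-valued strongly measurable functions $\lambda$ with $|\lambda(x)|_{B_2^*}\le 1$; moreover, by a standard selection argument (using separability of $B_2$, or applying the Hahn--Banach theorem pointwise together with measurable selection), this supremum is essentially attained, and one may in addition take $\lambda$ simple by a further approximation since $\om$ is bounded with compact support and $g\in L^1_{B_2,\loc}$. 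Setting $f_2 = \om\,\lambda$, which is then a simple $B_2^*$-valued function (or a limit of such, handled by another approximation), we get $\int |Sf|_{B_2}\,\om \approx |\inn{Sf}{f_2}|$, and Theorem \ref{mainthm} gives $|\inn{Sf}{f_2}|\lesssim \cC\,\Lamaxga_{p,B_1,q',B_2^*}(f,f_2)$. It remains to bound the sparse form: since $|\lambda(x)|_{B_2^*}\le 1$ pointwise, $|f_2(x)|_{B_2^*} \le \om(x)$, hence $\jp{f_2}_{Q,q',B_2^*}\le \jp{\om}_{Q,q',\bbR}$ for every cube $Q$, and therefore
\[
\Lamaxga_{p,B_1,q',B_2^*}(f,f_2) = \sup_{\fS:\,\ga\text{-sparse}} \sum_{Q\in\fS} |Q|\,\jp f_{Q,p,B_1}\,\jp{f_2}_{Q,q',B_2^*} \le \Lamaxga_{p,B_1,q',\bbR}(f,\om),
\]
which is exactly the right side of \eqref{norms}. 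Combining the three displays yields \eqref{norms} for bounded compactly supported $\om$, and the truncation step upgrades this to general nonnegative measurable $\om$.

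The main obstacle is the measurable-selection/approximation step producing a \emph{simple} $B_2^*$-valued $f_2$ that both (nearly) realizes the dual pairing and satisfies the pointwise bound $|f_2|\le\om$, since Theorem \ref{mainthm} is stated only for $f_2\in\simpledual$; this is where one must be careful about strong measurability and separability of $B_2$ (the excerpt's running assumptions and the cited Banach-space integration theory in \cite{hytonen-etal} cover what is needed), and about the fact that the supremum defining the $L^1_{B_2}$ norm against $\om$ need not be attained exactly but can be approached within any $\eps>0$, which is harmless after taking $\eps\to0$. The remaining ingredients --- the truncation of $\om$, the monotonicity of the maximal sparse form in its second argument, and the pointwise comparison $|f_2|_{B_2^*}\le\om$ --- are routine. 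This is precisely the content of the linearization Lemma \ref{lem:proofofcormain} referenced in the statement.
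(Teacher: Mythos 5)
Your proposal is correct and follows essentially the same route as the paper's Lemma \ref{lem:proofofcormain}: linearize by a dual-valued $\la$ with $|\la(x)|_{B_2^*}\le 1$, set $f_2=\om\la$, apply Theorem \ref{mainthm}, and use $|f_2|_{B_2^*}\le\om$ to dominate the sparse form by $\Lamaxga_{p,B_1,q',\bbR}(f,\om)$. The paper's bookkeeping is slightly cleaner than yours: it reduces via monotone convergence to \emph{simple} compactly supported $\om$ (so $f_2=\om\la$ is automatically in $\simpledual$) and obtains a simple norming $\la$ by first approximating $Tf$ in $L^1_{B_2}$ on compact sets by a simple function $h$ and applying Hahn--Banach to the finitely many values of $h$, thereby avoiding any appeal to measurable selection or separability of $B_2$ (which is not assumed in the theorem).
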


\begin{remarksa}

 (i)  We emphasize that the implicit constants in \eqref{vectsparsebound} and \eqref{norms} are dependent on the input constants in \eqref{bdness-wt}, \eqref{bdness-rt}, \eqref{p-q-rescaled}, \eqref{p-q-rescaled-reg-a}, \eqref{p-q-rescaled-reg-b} but otherwise not dependent  on the specific choices of the Banach spaces $B_1$, $B_2$.
 In some  applications this enables us to perform certain approximation arguments, where for example the Banach spaces are replaced by finite-dimensional subspaces of large dimension.

 (ii) We note that  for operators $T_j$ which commute with translations Condition \eqref{p-q-rescaled-reg-b} is implied by  Condition \eqref{p-q-rescaled-reg-a}.

 (iii) The H\"older-type regularity assumption \eqref{p-q-rescaled-reg} for the operator norm can be further weakened.
 In  applications this will often  be used  for the situation that an operator $T$ is split into a sum $\sum_{\ell\ge 0} T^{\ell}$ where each $T^{\ell}=\sum_j T^\ell_j$ satisfies the assumptions with  $A(p), A(q), A_\circ(p,q)=O(2^{-\ell\ep'})$ for some $\ep'>0$ and $B=2^{\ell M}$, for a possibly very large  $M$. The conclusion will then say that $\|T^{\ell}\|_{\Sp_\ga  (p,q')}= O(\ell 2^{-\ell\ep'} )$, which can be summed in $\ell$, leading to a sparse bound for $T$. 
 
 (iv)  In this paper we  are mainly interested in applications beyond the  Calder\'on--Zyg\-mund theory and %
 focus on the case $p>1$ and $q<\infty$.   
 There are some elements in our proof such as the property of   $L^{p,\infty}$ being  the dual space of $L^{p',1}$ for which there is no analog  for $p=1$ and similarly the failure of a  suitable notion of restricted strong type for $q=\infty$; hence Theorem \ref{mainthm}  does not immediately apply to the situations where $p=1$ or $q=\infty$. Nevertheless one can formulate variants of the theorem  which cover these missing cases.  We treat them  in Appendix 
 \ref{appendixB}; indeed they  are close  to results already covered in other works, in particular 
  \cite{conde-alonso-etal}.

 (v) The role of the simple functions is not essential in Theorem \ref{mainthm}, and the sparse bound can be extended to other classes of functions under appropriate hypotheses; see Lemma \ref{lem:density}.
 
 (vi) We use the Banach space valued formulation only to increase applicability. We emphasize  that we make no specific assumptions on the Banach spaces in our formulation of   Theorem \ref{mainthm} (such as $\mathrm{UMD}$ in the theory of Banach space valued  singular integrals).  In applications to Banach space valued singular integrals, such assumptions are  made     only because they may   be needed to verify $L^p$-boundedness  hypotheses  but they are not needed to establish  the implication in Theorem \ref{mainthm}.  %
\end{remarksa}

\subsection{Necessary conditions} %
Under the additional assumption that $T_j : \simpleone \to L^1_{B_2, \loc}$, together with $p < q$, one has that the weak type $(p,p)$ condition  \eqref{bdness-wt} and the restricted strong type condition \eqref{bdness-rt} are necessary for the conclusion of Theorem \ref{mainthm} to hold.
Moreover, if we strengthen the support condition \eqref{support-assu} assuming that the Schwartz kernels of $T_j$ are not only supported in $\{|x-y|\lc 2^j\}$ but actually in $\{|x-y|\approx 2^j\}$, then we can also show that the single scale $(p,q)$ condition \eqref{p-q-rescaled} is necessary. 

We also have an analogous statement for Corollary \ref{mainthm-cor}. Indeed, as the corollary is proved via the implication
\Be \notag \eqref{vectsparsebound}\implies \eqref{norms}, \Ee 
see Lemma \ref{lem:proofofcormain} below, we will simply formulate the necessary conditions for the conclusion in Corollary \ref{mainthm-cor}, which will also imply those in Theorem \ref{mainthm}.

To be precise in the general setting, let us formulate the following assumption on a family of operators $\{T_j\}_{j \in \bbZ}$.

{\it Strengthened support condition.}  There are $\delta_1>\delta_2>0$ such that for all $j \in \bbZ$ and all $f \in \simpleone$ \begin{multline} \label{eqn:strengthenedsupp}\supp(\dil_{2^j} T_j f) \subset \{x: \delta_1\le \dist (x,\supp f) \le 1\} , 
\\ \text{ whenever } 
\diam(\supp f) \le\delta_2. \end{multline}
If the $T_j $ are given by a Schwartz kernel $K_j$, then the condition is satisfied provided that   \[\supp(K_j) \subset \{(x,y): (\delta_1-\delta_2)  2^j\le |x-y|\le 2^j\} . \]

\begin{thm}\label{thm:necessarycor}
Suppose that $1<p<q<\infty$. Let  $\{T_j\}_{j \in \bbZ}$ be a family of operators, with 
$T_j:\simpleone\to L^1_{B_2,\loc}$, and  satisfying the support condition \eqref{support-assu}.
Assume the conclusion of Corollary \ref{mainthm-cor},  that is, there exists $\sC>0$ such that for all $N_1$, $N_2$ with $N_1\le N_2$, all  $f_1\in \mathrm{S}_{B_1}$, and all  nonnegative simple functions $\om$ 
\Be \notag \label{eqn:corollary-assumption}\int_{\bbR^d} \Big|\sum_{j=N_1}^{N_2}T_jf(x) \Big| \,\omega(x)  \,dx\le \sC \, \Lamaxga_{p,B_1,q', \bbR}(f,\om) .\Ee Then
\begin{enumerate}
\item[(i)] 
Conditions \eqref{bdness-wt} and  \eqref{bdness-rt} hold, i.e.,  there is a constant $c>0$ only depending on $d,p,q,\gamma$ such that for all $N_1, N_2$ with $N_1\le  N_2$, 
\begin{equation} \notag
 \Big\|\sum_{j=N_1}^{N_2} T_j\Big \|_{L^{p}_{B_1}\to L^{p,\infty}_{B_2}}
\le c \,\sC,  \qquad 
\Big\|\sum_{j=N_1}^{N_2} T_j\Big \|_{L^{q,1}_{B_1}\to L^q_{B_2}} \le c\,\sC\,.
\end{equation}

\item[(ii)] If, in addition, the $T_j$ satisfy 
the strengthened support condition \eqref{eqn:strengthenedsupp} 
then condition \eqref{p-q-rescaled} holds, i.e., there is a constant $c>0$ 
only depending on $d,p,q,\gamma$ such that 
\begin{equation}\notag
\sup_{j \in \bbZ} \|\dil_{2^j} T_j\|_{L^p_{B_1}\to L^q_{B_2}} \le c\, \sC.
\end{equation}
\end{enumerate} 
\end{thm}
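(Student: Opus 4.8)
The plan is to test the assumed sparse domination inequality against carefully chosen pairs $(f,\omega)$ and exploit the locality of the operators via the support condition \eqref{support-assu}. For part (i), recall that the maximal sparse form satisfies the pointwise bound $\Lamaxga_{p,B_1,q',\bbR}(f,\omega)\le \gamma^{-1}\|f\|_\infty\|\omega\|_\infty\,\mathrm{meas}(\supp f\cup\supp\omega)$ recorded in the excerpt, but this crude bound is too lossy; instead I would estimate $\Lamaxga_{p,B_1,q',\bbR}(f,\omega)$ from above by a single average term when $f,\omega$ are both supported in (and roughly constant on) one fixed cube $Q$, using that a sparse family whose cubes are contained in $Q$ contributes at most $\lesssim |Q|\langle f\rangle_{Q,p}\langle\omega\rangle_{Q,q'}$ after summing the geometric series coming from disjointness of the sets $E_Q$. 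The key localization point is that if $f$ is supported in a ball of radius $\sim 2^{N_1}$ then, by \eqref{support-assu}, every $T_j f$ with $j\le N_2$ is supported in a fixed dilate of that ball, so we may take $\omega$ to be an indicator of that dilated ball and the left side of the assumed inequality becomes $\int |\sum_j T_jf|\,|B|$-type quantity; this converts the sparse bound into a genuine $L^p\to L^{p,\infty}$ (resp.\ $L^{q,1}\to L^q$) testing inequality on that scale, and then a scaling/covering argument upgrades it to the uniform bounds claimed. To get weak type $(p,p)$ one fixes $f$ and a level $\lambda$, localizes to the set $\{|\sum T_jf|>\lambda\}$ intersected with balls of the appropriate size, and chooses $\omega$ supported there; to get restricted strong type $(q,q)$ one takes $f=\bbone_E$ and $\omega$ dual to $\sum T_j\bbone_E$, again localized.

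For part (ii) the strengthened support condition \eqref{eqn:strengthenedsupp} is what lets us isolate a single scale $j$. The idea is: take $f$ supported in a tiny cube (diameter $\le\delta_2 2^j$ after rescaling) so that $\dil_{2^j}T_jf$ is supported in the annulus $\{\delta_1\le\dist(x,\supp f)\le 1\}$, which is \emph{disjoint} from the support of every $T_{j'}f$ with $j'$ sufficiently far from $j$ (since those live either on a much smaller neighborhood of $\supp f$ or, after undoing the rescaling, on an incompatible annulus). More carefully, one partitions a ball into many well-separated tiny cubes $\{Q_\nu\}$, puts $f=\sum_\nu c_\nu \bbone_{Q_\nu}$ with the $c_\nu\in B_1$ chosen to realize (nearly) the $L^p\to L^q$ operator norm of $\dil_{2^j}T_j$, and tests against $\omega$ supported on the union of the corresponding disjoint annuli and dual to $\dil_{2^j}T_jf$ there; the contributions of the "wrong scales" $T_{j'}$ vanish on these annuli by the separation, so the left-hand side reproduces $\|\dil_{2^j}T_j f\|_{L^q}\|\cdot\|$-type quantity, while the right-hand side is controlled by $\mathcal{C}$ times $\|f\|_{L^p}$ because the sparse form, evaluated against functions living on the separated cubes $Q_\nu$ and their annuli, again collapses to a sum of local averages. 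Undoing the dilation $\dil_{2^j}$ and optimizing over $f$ then yields $\|\dil_{2^j}T_j\|_{L^p_{B_1}\to L^q_{B_2}}\lesssim \sC$ uniformly in $j$.

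I expect the main obstacle to be the careful bookkeeping in part (ii): one must simultaneously (a) ensure the tiny source cubes $Q_\nu$ are separated enough that their image annuli are pairwise disjoint and also disjoint from images at nearby scales, (b) verify that the sparse form on the right is not larger than the single-scale term one wants to compare with — this requires checking that no cube $Q$ of the dyadic lattice can "see" many of the $Q_\nu$ at a favorable average simultaneously, i.e. that the self-improving nature of the sparse form does not overcount — and (c) handle the Banach-space-valued duality, i.e. choosing $\omega$ (a scalar function) together with a measurable selection of functionals in $B_2^*$ realizing $|\sum T_jf(x)|_{B_2}$, which is where strong measurability of $T_jf$ (guaranteed by the hypothesis $T_j:\simpleone\to L^1_{B_2,\loc}$) is used. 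For part (i) the analogous but easier obstacle is the standard one in extracting strong/weak type bounds from sparse bounds: one needs the geometry of the dyadic lattice $\fQ$ (axiom (iii), that every compact set sits in some $Q\in\fQ$) to place the localized test configuration inside a single lattice cube so that the sparse form genuinely telescopes to one average.
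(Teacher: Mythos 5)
There is a genuine gap, and it is concentrated in the one estimate your whole part (i) rests on. You claim that if $f$ and $\omega$ are supported in a single cube $Q$ then the maximal sparse form telescopes to $\lesssim |Q|\langle f\rangle_{Q,p}\langle \omega\rangle_{Q,q'}$. This is false for general inputs: take $f=a\bbone_{Q'}$, $|a|_{B_1}=1$, and $\omega=\bbone_{Q'}$ for a tiny subcube $Q'\subset Q$ belonging to the lattice. The sparse form already contains the single term $|Q'|\langle f\rangle_{Q',p}\langle\omega\rangle_{Q',q'}=|Q'|$, whereas $|Q|\langle f\rangle_{Q,p}\langle\omega\rangle_{Q,q'}=|Q'|\,(|Q'|/|Q|)^{1/p+1/q'-1}\ll |Q'|$, since $1/p+1/q'>1$ exactly when $p<q$. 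The collapse you invoke only holds for inputs that are essentially constant on $Q$, but your weak type $(p,p)$ and restricted strong type $(q,q)$ tests use a general $f\in L^p_{B_1}$ (resp.\ $f=\bbone_E$), so the argument does not close. The missing ingredient is the pointwise domination $\Lamaxga_{p,q'}(f,\omega)\le \gamma^{-1}\int (\cM[|f|^p_{B_1}])^{1/p}(\cM[\omega^{q'}])^{1/q'}\,dx$, which together with H\"older in Lorentz spaces gives $\Lamaxga_{p,q'}(f,\omega)\lc \|f\|_{L^{q,1}_{B_1}}\|\omega\|_{L^{q'}}$ and $\Lamaxga_{p,q'}(f,\omega)\lc \|f\|_{L^{p}_{B_1}}\|\omega\|_{L^{p',1}}$; here the strict inequality $p<q$ is what makes $\cM_p$ bounded on $L^{q,1}$ and $\cM_{q'}$ bounded on $L^{p',1}$. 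Combining these with scalar duality applied to the locally integrable function $x\mapsto |\sum_j T_jf(x)|_{B_2}$ (this is where $T_j:\simpleone\to L^1_{B_2,\loc}$ enters, via $(L^{q'})^*=L^q$ and $(L^{p',1})^*=L^{p,\infty}$) yields part (i) directly; in particular no localization, no choice of scales, and no use of the support condition is needed for part (i).

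For part (ii) your testing strategy is closer to the right mechanism, but two points need repair. First, there is no need to arrange that the operators $T_{j'}$ at other scales vanish on your target annuli: the hypothesis holds for every pair $N_1\le N_2$, so you may take $N_1=N_2=j$ and work with $S=\dil_{2^j}T_j$ alone (the sparse bound is invariant under this rescaling). Second, your step (b) — that the sparse form does not overcount when you test with one global $f=\sum_\nu c_\nu\bbone_{Q_\nu}$ spread over many source cubes — is precisely the crux, and you do not resolve it; indeed a large sparse cube containing many $Q_\nu$ and many annuli contributes a term that the separation hypothesis does nothing to control. The argument works if instead you localize to \emph{one} small source cube $Q_\fz$ (side length $\lesssim\min\{\delta_1,\delta_2\}/\sqrt d$) and one target cube $R_{\fz,\nu}$ of comparable size at a time: by the strengthened support condition, any sparse cube with both $\langle f\bbone_{Q_\fz}\rangle_{Q,p}\neq 0$ and $\langle\omega\bbone_{R_{\fz,\nu}}\rangle_{Q,q'}\neq 0$ must have diameter $\gtrsim\delta_1$, at each dyadic generation only $O_d(1)$ such cubes can intersect $Q_\fz$, and the resulting sum over scales is controlled by $\sum_{\ell}2^{\ell d(1/q-1/p)}\|f\bbone_{Q_\fz}\|_{L^p_{B_1}}\|\omega\|_{L^{q'}}$, which converges precisely because $p<q$. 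Duality then gives $\|Sf\bbone_{Q_\fz}\|_{L^q_{B_2}(R_{\fz,\nu})}\lc\sC\|f\bbone_{Q_\fz}\|_{L^p_{B_1}}$, and one sums over $\nu$ and over the tiling $\{Q_\fz\}$ using disjointness and $\ell^p\hookrightarrow\ell^q$. Without this lower bound on contributing cube sizes, the bounded count per generation, and the explicit use of $p<q$, the ``collapse to a sum of local averages'' you assert remains unjustified.
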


\begin{remarksa}
(i) Note that in Theorem \ref{thm:necessarycor} there are no additional assumptions on the Banach spaces. The a priori assumption  $T_j:\simpleone\to L^1_{B_2,\loc}$ enters in the proof of necessary conditions for both Theorem \ref{mainthm} and Corollary \ref{mainthm-cor}.

(ii) There is an alternative version for necessary conditions for Theorem \ref{mainthm} where one a priori assumes merely that the $T_j$ belong to $ \mathrm{Op}_{B_1,B_2}$ (i.e. $T_j f$ is only a priori weakly integrable for $f\in\simpleone$),  but where one  imposes the assumption that  $B_2$ is reflexive. See Theorem \ref{thm:necessary} below.

(iii) We have no necessity statement regarding the  regularity conditions \eqref{p-q-rescaled-reg}  in Theorem 
\ref{mainthm}, or Corollary \ref{mainthm-cor}. However, these conditions enter in the conclusion  of both Theorem \ref{mainthm}  and Corollary \ref{mainthm-cor} only in a logarithmic way (see \eqref{eqn:cCdef}), hence  the gap between necessity and sufficiency  appears to be small. Note  that the necessary and sufficient conditions  are formulated for a uniform statement on a family of  operators $\{\sum_{j=N_1}^{N_2} T_j\}_{N_1,N_2} $ but, with the generality of our current formulation, we are unable to prove a necessary condition for sparse domination for a specific   operator in this family. %
Nevertheless,  the formulation allows us to  show necessary conditions for several specific maximal operators, variation  norm operators  and other vector-valued variants, in particular those considered in \S\ref{sec:maxfctandellr}, \S\ref{sec:variationnorms}, \S\ref{mainthmtrunc} 
and \S\ref{subsec:max op Fourier}.

(iv)  The constant $c$ in the conclusion of Theorem \ref{thm:necessarycor}
is independent of the particular pair of Banach spaces $B_1$, $B_2$. This is significant for applying the theorem to families of maximal and variational operators where for the necessity conditions one can replace the  spaces $\ell^\infty$, $L^\infty$, $V^r$ by finite-dimensional subspaces of large dimension.

(v) Since $\|T\|_{\Sp_\ga(p_1,B_1,p_2, B_2^*)} \le \|T\|_{\mathrm{sp}_\ga(p_1,B_1, B_2)}$ for $p_2\ge 1$, %
the necessary conditions in Theorem \ref{thm:necessarycor} can also be used to prove the impossibility of pointwise sparse domination for many of the operators considered in this paper. 
\end{remarksa}

\subsection{An application to maximal functions}  \label{sec:maxfct-intro}
We illustrate  Remark (iii) above with a brief discussion about 
maximal operators associated to a distribution $\sigma$ compactly supported in $\bbR^d \backslash \{0\}$  (for example a measure), for which we have necessary conditions for sparse bounds.
 Denote by $\sigma_t= t^{-d}\sigma(t^{-1}\cdot)  $  the $t$-dilate in the sense of distributions. For a  dilation set $E\subset (0,\infty)$ we consider the maximal operator 
\Be \label{eqn:MEsigma} M_E^\sigma f(x)=\sup_{t\in E} |f*\sigma_t(x)|. \Ee

The maximal function is a priori well defined as measurable function if $f$ is in the Schwartz class; alternatively we may just restrict to countable $E$ (see \S\ref{sec:MEmax} for comments why this is not a significant restriction).

For the formulation of our theorem we also need the rescaled local operators $M^\sigma_{E_j}$ with 
\begin{equation}\label{def:Ej}
E_j = (2^{-j} E)\cap [1,2]. 
\end{equation}
A  model case is given when $E$ consists of all dyadic dilates of a set in $[1.2]$, {\em i.e.}
\[E= \bigcup_{j\in \bbZ} 2^j E^\circ \quad \text{ with }  E^\circ \subset [1,2]. \]
In this case
\[M^\sigma_{E_j}= M^\sigma_{2^{-j}E \cap [1,2]}  = M^\sigma_{E^\circ} \quad \text{ for all $j\in \bbZ$.} \]

\begin{definitiona} 
The \emph{Lebesgue exponent set} 
of the pair $(\sigma, E)$, denoted by $\mathscr{L}(\sigma,E)$, consists   of all $(1/p,1/q)$ for which 
\Be \label{eqn:apriori}
\|M^\sigma_E\|_{L^p\to L^{p,\infty} } + \|M^{\sigma}_{E}\|_{L^{q,1}\to L^q} +\sup_{j\in \bbZ} \|M^\sigma_{E_j} \|_{L^p\to L^q} <\infty.
\Ee

The
{\it sparse exponent set} of $M_E$, denoted by $\Sp[M_E^\sigma] $
consists of all pairs $(1/p_1, 1/p_2)$ with $1/p_2\ge 1/p_1$ for which there is $0<\gamma<1$ and a constant $C$ such that
\[ \int_{\bbR^d} M_E^\sigma f(x)\om(x) dx \le C\,\Lamaxga_{p_1,p_2} (f,\om) 
\]
for all simple $f$ and simple nonnegative $\omega$.

Let $\eps>0$. We let $\Eann(\la)$ be the space of tempered distributions whose Fourier transform is supported in $\{\xi:\la/2<|\xi|<2\la\}$. We say that the pair $(\sigma, E)$ satisfies  an \emph{$\eps$-regularity condition}   if  there exists $C\ge 0$, and an exponent  $p_0\ge 1$ 
such that for all $\la>2$, $j\in \bbZ$,   we have
\begin{equation}  \label{eq:eps reg cond max fn}
\|M_{E_j}^\sigma f\|_{p_0}\le C \la^{-\eps}  \|f\|_{p_0} \quad   \text{  for all $f\in \cS\cap \Eann(\la)$.}
\end{equation}

\end{definitiona} 

\begin{remarka} 
The usual lacunary maximal operator   correspond to the case where 
$E=\bbZ$. 
Under this assumption, $M_E^\sigma$ satisfies an $\eps$-regularity condition for some $\eps>0$  if and only if there is an $\eps'>0$ such that 
\[\widehat \sigma(\xi)=O(|\xi|^{-\eps'} ).\]
Moreover the condition $\sup_{j\in \bbZ} \|M^\sigma_{E_j} \|_{L^p\to L^q} <\infty$  is, in this case, equivalent with the $L^p$ improving inequality
\[\|\sigma*f\|_q\lc \|f\|_p\] for all $f\in L^p$.

\end{remarka}

Denote  by $\mathrm{Int}(\Om)$ 
the interior 
of a planar set $\Om$.
 Define $\Phi:\bbR^2\to \bbR^2$ by 
\[\Phi(x,y)=(x,1-y).\]
We will show that, under the assumption of an $\eps$-regularity condition for some $\eps>0$, 
the interiors of $\mathscr{L}(\sigma,E)$ and $\Sp[M^\sigma_E ]$ are in unique correspondence under $\Phi$ (see Figure 1). That is, 
\Be\label{eqn:interiors}  \mathrm{Int}(\mathrm{Sp}[M_E^\sigma ])=\Phi(\mathrm{Int}(\mathscr{L}(\sigma,E))); \Ee
this can be deduced as a consequence of Corollary \ref{mainthm-cor} and Theorem \ref{thm:necessarycor}.
The next theorem contains a slightly more precise statement.

		

	\begin{figure}
	\begin{tikzpicture}[scale=4]
  	
 	\newcommand{\drawconvexregion}{
 		\coordinate (Q1) at (0,0); 
 		\coordinate (Q2) at (.85,.85);
 		\coordinate (Q3) at (.7, .3); 
		\coordinate (Q4a) at (.40, .080); 
 		\coordinate (Q4) at (.5, .09);
 				\coordinate (Q5) at (.024, .04);
		\filldraw [pattern=north west lines, pattern color=gray, dashed] (Q1) -- (Q2) -- (Q3) [rounded corners=20] --
 		(Q4) [rounded corners=0] -- cycle;
 		-- (Q5)  -- cycle;
 	}

	\draw (0,0) [thick,->] -- (0,1.1) node [left] {$\frac1q$};
	\draw (0,0) [thick,->] -- (1.1,0) node [below] {$\frac1p$};
	\draw [loosely dashed,opacity=.5] (0,1) -- (1,1) -- (1,0);
	\draw [loosely dashed, opacity=.5] (0,0) -- (1,1);
	\drawconvexregion
	
	\begin{scope}[xshift=50]
		\begin{scope}[yshift=28.5, yscale=-1]
			\draw (0,1) [thick,->] -- (0,-.1) node [left] {$\frac1{q'}$};
			\draw (0,1) [thick,->] -- (1.1,1) node [below] {$\frac1p$};
        	\draw [loosely dashed, opacity=.5] (0,0) -- (1,1);
			\draw [loosely dashed,opacity=.5] (0,0) -- (1,0) -- (1,1);
			\drawconvexregion
		\end{scope}
		
	\end{scope}	
	\end{tikzpicture}
	\caption{
	Example for 
	$\mathscr{L}(\sigma,E)$ (left) and $\mathrm{Sp}[M_E^\sigma]$ (right). It may occur that the closure of  $\mathscr{L}(\sigma,E)$ is not a polygonal region, see for example \cite{RoosSeeger}.}
\end{figure}
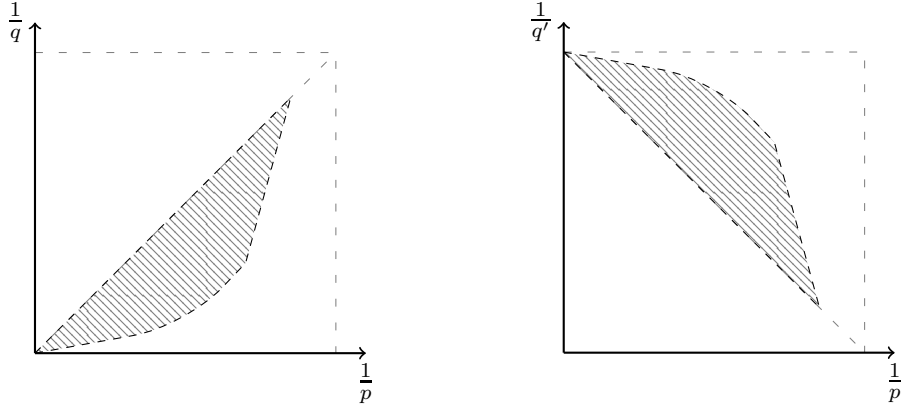

\begin{thm}\label{thm:MEintro} %
Suppose that $\sigma$ is a  compactly supported distribution supported in $\bbR^d\setminus\{0\}$, and suppose that $(\sigma,E) $  satisfies the $\eps$-regularity condition \eqref{eq:eps reg cond max fn} for some $\eps>0$.  
Let  $1< p< q<\infty .$ 
Then the  following implications hold: 
\begin{align}\label{eqn:type-sparse-implication}
(\tfrac 1p,\tfrac 1q)\in \mathrm {Int}(\mathscr{L}(\sigma,E)) 
&
\Longrightarrow 
(\tfrac 1p,\tfrac 1{q'})\in \mathrm{Sp}[M_E^\sigma ]\,,
\\
\label{eqn:sparse-type-implication}
(\tfrac 1p,\tfrac 1q)\in \mathscr{L}(\sigma,E) &
\Longleftarrow 
(\tfrac 1p,\tfrac 1{q'})\in \mathrm{Sp}[M_E^\sigma ]\,.
\end{align}
\end{thm}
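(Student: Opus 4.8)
\textbf{Proof plan for Theorem \ref{thm:MEintro}.}

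The plan is to deduce both implications from the general results already stated, namely Corollary \ref{mainthm-cor} for \eqref{eqn:type-sparse-implication} and Theorem \ref{thm:necessarycor} for \eqref{eqn:sparse-type-implication}, after recasting the maximal operator $M_E^\sigma$ in the vector-valued framework. The standard device is to fix a countable subset of $E$ (harmless by the remarks in \S\ref{sec:MEmax}) and regard $M_E^\sigma f$ as $|Sf|_{B_2}$ where $B_2=\ell^\infty(E)$ and $S$ is the linear operator $f\mapsto (f*\sigma_t)_{t\in E}$, with $B_1=\bbR$. One then decomposes $S=\sum_{j\in\bbZ} S_j$ according to the dyadic scale of $t$: writing $\sigma$ with a fixed compact support in an annulus, the piece $S_j$ collects the dilates $\sigma_t$ with $t\approx 2^j$, so that the kernel of $S_j$ is supported in a $2^j$-neighborhood of the diagonal, giving \eqref{support-assu}. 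The rescaling $\dil_{2^j}S_j$ is, up to the shift of the index set $E_j=(2^{-j}E)\cap[1,2]$, the fixed-scale operator $f\mapsto (f*\sigma_t)_{t\in E_j}$, so the single-scale $(p,q)$ bound \eqref{p-q-rescaled} is exactly $\sup_j\|M^\sigma_{E_j}\|_{L^p\to L^q}<\infty$. Since these operators commute with translations, Remark (ii) after the theorem statement reduces the two $\eps$-regularity conditions \eqref{p-q-rescaled-reg-a}, \eqref{p-q-rescaled-reg-b} to a single one; that one is produced from \eqref{eq:eps reg cond max fn} by a Littlewood--Paley decomposition of $f$ into pieces in $\Eann(\la)$, using \eqref{eq:eps reg cond max fn} with a small loss in $\la$ on the high-frequency part and the trivial single-scale $L^{p_0}$ bound on the low-frequency part, followed by interpolation with the single scale $(p,q)$ bound to move from $p_0$ to $p$ and from the frequency-annulus formulation to the $\Delta_h$ formulation. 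Finally, the partial sums $\sum_{j=N_1}^{N_2}S_j$ correspond to the truncated maximal operators $M^\sigma_{E\cap[2^{N_1-c},2^{N_2+c}]}$, which are dominated by $M^\sigma_E$, so \eqref{bdness-wt} and \eqref{bdness-rt} follow from the two membership conditions defining $\mathscr L(\sigma,E)$ in \eqref{eqn:apriori}, uniformly in $N_1,N_2$.

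With these verifications in place, \eqref{eqn:type-sparse-implication} is immediate: if $(1/p,1/q)\in\mathrm{Int}(\mathscr L(\sigma,E))$ then, possibly after shrinking $p$ and enlarging $q$ slightly so that one stays inside $\mathscr L(\sigma,E)$ and the strict inequality $p\le q$ is preserved, Corollary \ref{mainthm-cor} applies to $\sum_{j=N_1}^{N_2}S_j$ with uniform constant $\cC$; letting $N_1\to-\infty$, $N_2\to+\infty$ and using monotone convergence in the definition of $M_E^\sigma f=\sup_t|f*\sigma_t|$ against the nonnegative weight $\om$ yields $\int M_E^\sigma f\,\om\lesssim\cC\,\Lamaxga_{p,q'}(f,\om)$, i.e. $(1/p,1/{q'})\in\mathrm{Sp}[M_E^\sigma]$. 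For \eqref{eqn:sparse-type-implication} one runs Theorem \ref{thm:necessarycor} in the reverse direction: the hypothesis $(1/p,1/{q'})\in\mathrm{Sp}[M_E^\sigma]$ gives exactly the assumed sparse bound on $\sum_{j=N_1}^{N_2}S_j$ (again uniformly, since the sparse form dominates the integral against $\om$ for any truncation), so part (i) of Theorem \ref{thm:necessarycor} delivers \eqref{bdness-wt} and \eqref{bdness-rt}, which translate back to the first two terms in \eqref{eqn:apriori}. For the third term, $\sup_j\|M^\sigma_{E_j}\|_{L^p\to L^q}$, one invokes part (ii) of Theorem \ref{thm:necessarycor}; this requires the strengthened support condition \eqref{eqn:strengthenedsupp}, which holds here precisely because $\sigma$ is supported away from the origin, so the kernel of $S_j$ lives in $\{|x-y|\approx 2^j\}$, not merely $\{|x-y|\lesssim 2^j\}$. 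Hence all three finiteness conditions in \eqref{eqn:apriori} hold and $(1/p,1/q)\in\mathscr L(\sigma,E)$.

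The main obstacle I expect is the verification of the $\eps$-regularity conditions \eqref{p-q-rescaled-reg} for $S_j$ from the single hypothesis \eqref{eq:eps reg cond max fn}: the latter is stated only for a fixed Lebesgue exponent $p_0$ and only in the frequency-annulus (Littlewood--Paley) form, whereas the theorem needs a Hölder-type modulus-of-continuity bound $\|(\dil_{2^j}S_j)\circ\Delta_h\|_{L^p\to L^q}\lesssim|h|^\eps$ for the exponents $p,q$ of interest. Bridging this gap cleanly — decomposing $\Delta_h f$ into a low-frequency part ($|\xi|\lesssim|h|^{-1}$), where $\|\Delta_h\|$ costs a factor $|h|$ times a frequency-localized bound controlled by the single-scale $(p,q)$ estimate, and a high-frequency part, where one sums the geometric series $\sum_{\la\gtrsim|h|^{-1}}\la^{-\eps}$ coming from \eqref{eq:eps reg cond max fn} after interpolating it with the $(p,q)$ bound to reach the right exponents — is the technical heart. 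One must also check that the constant $B$ so obtained enters only logarithmically, which is automatic from \eqref{eqn:cCdef} once $B$ is shown to be polynomial in the relevant parameters; the second regularity condition \eqref{p-q-rescaled-reg-b} is free by translation invariance (Remark (ii)). A minor additional point is the passage from countable to general $E$ and from $N_1,N_2$ finite to the full sup, which is routine monotone convergence but should be stated, together with the observation that the constant produced is independent of the (finite-dimensional truncations of the) Banach space $\ell^\infty(E)$, as noted in Remark (iv) after Theorem \ref{thm:necessarycor}.
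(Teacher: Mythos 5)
Your plan is correct and follows essentially the same route as the paper: the paper packages exactly this argument as Proposition \ref{prop:MEsigma}, applying the $\ell^\infty$-valued linearization of $M_E^\sigma$ over finite subsets of $E$ to Theorem \ref{thm:ellr} (itself a direct application of Corollary \ref{mainthm-cor}), obtaining the H\"older regularity from \eqref{eq:eps reg cond max fn} by interpolation with the $L^p\to L^q$ bound on the interior of $\mathscr{L}(\sigma,E)$ together with Lemma \ref{lem:regbyFourier} (your low/high frequency splitting is precisely that lemma), and using Theorem \ref{thm:necessarycor} with the strengthened support condition \eqref{eqn:strengthenedsupp} (valid since $\sigma$ is supported away from the origin) for the converse. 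The only cosmetic differences are that you invoke Corollary \ref{mainthm-cor} and Theorem \ref{thm:necessarycor} directly rather than through Proposition \ref{prop:MEsigma}, and that you re-derive rather than cite the frequency-to-$\Delta_h$ regularity lemma; both match the paper's argument and level of rigor.
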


\begin{remarksa}  %

(i) The correspondence \eqref{eqn:interiors} is an immediate  consequence of Theorem \ref{thm:MEintro}.

 (ii) If $\sigma$ is as in Theorem \ref{thm:MEintro} then  similar statements characterizing the sparse exponent set  hold for variation norm operators.
See  the statement of Propositions \ref{prop:VEsigma}. 

(iii) In the case of $\sigma$ being the surface measure on the unit sphere one  recovers as a special case the results  by Lacey \cite{laceyJdA19} on the lacunary and full spherical maximal functions. 
\end{remarksa}

\subsection{Fourier multipliers}
Given a bounded function $m$ we consider the convolution operator $\cT$ given on Schwartz functions $f:\bbR^d\to \bbC$ by 
\Be \label{multiplierdef}\widehat {\cT f}(\xi) =m(\xi)\widehat f(\xi),\qquad \xi\in \widehat \bbR^d,\Ee
i.e. $\cT f=\cF^{-1}[m]*f$ where  $\cF^{-1}[m]$ is the Fourier  inverse of $m$ in the sense of tempered distributions. If $1\le p<\infty$, we say that $m\in M^p$ if $\cT$ extends to a bounded operator on $L^p$ and we define $\|m\|_{M^p} $ to be the $L^p\to L^p$ operator norm of $\cT$. A similar definition applies to $p=\infty$; however one replaces $L^\infty$ by the space $C_0$ of continuous functions that vanish at $\infty$ (i.e. the closure of the Schwartz functions in the $L^\infty$ norm). By duality we have $M^p=M^{p'}$ for $1/p'=1-1/p$. Moreover, $M^2=L^\infty$, $M^p\subset L^\infty$ and $M^1$ is the space of Fourier transforms of finite Borel measures. Similarly, if $1 \leq p, q < \infty$, we say that $m \in M^{p,q}$ if $\mathcal{T}$ is bounded from $L^p$ to $L^q$ and we define by $\|m\|_{M^{p,q}}$ to be the $L^p\to L^q$ operator norm of $\mathcal{T}$. For these and other simple facts on Fourier multipliers see \cite{hormander1960} or \cite{stein-weiss}.

Let $\phi$ be a nontrivial radial $C^\infty_c$ function compactly supported in $\widehat{\bbR}^d\setminus\{0\}$. 
A natural single scale assumption would be to assume a uniform $M^{p_0}$ bound for the pieces  $\phi(t^{-1}\cdot)m$ which is equivalent by  dilation-invariance 
to the condition
\Be\label{eqn:Mp-single-scale} 
\sup_{t>0} \|\phi m(t\cdot)\|_{M^{p_0}} <\infty. \Ee
Inequality \eqref{eqn:Mp-single-scale} is  a necessary and sufficient condition for $\cT$ to be bounded  on the homogeneous  Besov spaces $\dot B^s_{p_0,q}$, for any $s\in\bbR,0<q\le \infty$; see \cite{stein-zygmund}, \cite[\S2.6]{triebel1983}.
However, it does {\it not} imply boundedness on the Lebesgue spaces, except on $L^2$. Indeed, Littman, McCarthy and Rivi\`ere \cite{lmr1968} and Stein and Zygmund \cite{stein-zygmund} give examples of $m$ satisfying 
\eqref{eqn:Mp-single-scale}  for a $p_0\neq 2$ for which $m\notin M^p$ for all $p\neq 2$.

The papers by Carbery \cite{carbery-revista} and by one of the authors \cite{See88} provide positive results under an additional dilation invariant regularity condition, 
\Be\label{eqn:Holder-single-scale} 
\sup_{t>0} \|\phi m(t\cdot)\|_{C^\eps} <\infty,\Ee
where $C^\eps$ is the standard H\"older space. Indeed, it is shown in \cite{carbery-revista,See88} that for $1<p_0<2$, $0<\eps<1$,
\Be \notag \label{eqn:localtoglobal}
\|m\|_{M^p} \le C(p,\eps) \sup_{t>0} \big( \|\phi m(t\cdot)\|_{M^{p_0}} +  \|\phi m(t\cdot)\|_{C^\eps}\big),  \quad p_0<p<p_0'.
\Ee
If the standard H\"older condition $\|\phi m(t\cdot)\|_{C^\eps}=O(1)$ is replaced by its $M^{p_0}$ variant,  $\sup_{t>0}\|\Delta_h[\phi m(t\cdot)]\|_{M^{p_0}}= O(|h|^\eps)$,  one obtains a  conclusion  for $p=p_0$. 
We will show that for fixed $p \in (p_0,p_0')$, the $L^p$-boundedness self-improves to a sparse domination inequality. 

\begin{thm}\label{thm:localtoglobal-sparse}
Let $1<p_0<2$, $0<\eps<1$, and  assume that \eqref{eqn:Mp-single-scale}  and \eqref{eqn:Holder-single-scale} hold. Then for every $p\in (p_0,2]$ there 
is a $\delta=\delta(p)>0$ such that $\cT\in \Sp(p-\delta,p'-\delta).$
\end{thm}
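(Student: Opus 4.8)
The plan is to deduce Theorem \ref{thm:localtoglobal-sparse} from Theorem \ref{mainthm} (in the scalar case $B_1=B_2=\bbC$) by decomposing $\cT$ into Littlewood--Paley pieces and exhibiting, for each piece, a family $\{T_j\}_j$ meeting the five hypotheses. Fix a Littlewood--Paley function $\phi$ as in the statement with $\sum_{j\in\bbZ}\phi(2^{-j}\xi)=1$ on $\widehat\bbR^d\setminus\{0\}$ (a minor renormalization of the given $\phi$), and set $m_j(\xi)=\phi(2^{-j}\xi)m(\xi)$, so $\cT=\sum_j T_j$ with $\widehat{T_jf}=m_j\widehat f$. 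The operator $T_j$ is convolution with $\cF^{-1}[m_j]$, which is the inverse Fourier transform of a function living on $\{|\xi|\approx 2^j\}$; by the standard stationary-phase/integration-by-parts estimate using only $\|\phi m(2^j\cdot)\|_{C^\eps}=O(1)$ and its natural consequences, the convolution kernel $K_j$ has an integrable bump-type majorant at scale $2^{-j}$, not $2^j$. Thus after the rescaling $\dil_{2^{-j}}$ — i.e. after conjugating to unit frequency scale — I would instead index so that $T_j$ corresponds to frequencies $\approx 2^{-j}$, giving spatial scale $2^j$; with that bookkeeping the kernel of $\dil_{2^j}T_j$ is (a fixed dilate of) $\cF^{-1}[\phi\,m(2^{-j}\cdot)]$, independent-looking across $j$ and controlled uniformly by \eqref{eqn:Mp-single-scale}, \eqref{eqn:Holder-single-scale}.

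Next I would verify the hypotheses for this family at a pair $(p_1,q)$ to be chosen. The support condition \eqref{support-assu} is only an \emph{approximate} localization here: $K_j$ is not compactly supported but is rapidly decaying off a $2^j$-neighborhood of the diagonal, so I would first split $m_j$'s kernel as a compactly supported main term plus a rapidly decreasing tail, absorbing the tail into an error operator handled by Remark (iii) after the main theorem (the tail pieces satisfy the hypotheses with constants $O(2^{-\ell\eps'})$ and $B=O(2^{\ell M})$, which sums). For the main term: the single-scale $L^{p_1}\to L^q$ bound \eqref{p-q-rescaled} follows from $\sup_t\|\phi m(t\cdot)\|_{M^{p_1}}<\infty$ (which holds since $\|\phi m(t\cdot)\|_{C^\eps}=O(1)$ together with the compact frequency support forces membership in $M^{p_1}$ for any $p_1$, in fact in $M^{p_1,q}$ with a small gain because the kernel is $L^1$ with some decay — I would take $q$ slightly larger than $p_1$ using Bernstein/Young since each $\dil_{2^j}T_j$ convolves with a fixed Schwartz-tailed $L^1$ function of unit scale, hence maps $L^{p_1}\to L^q$ for $p_1\le q$), and the $\eps$-regularity conditions \eqref{p-q-rescaled-reg-a}, \eqref{p-q-rescaled-reg-b} follow because $\Delta_h$ acting on the kernel scale $1$ introduces a factor $O(|h|^\eps)$ by the $C^\eps$ hypothesis (and the adjoint bound is automatic by Remark (ii), translation invariance). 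The nontrivial inputs are the weak-type $(p_1,p_1)$ bound \eqref{bdness-wt} and the restricted-strong-type $(q,q)$ bound \eqref{bdness-rt} for \emph{all partial sums} $\sum_{N_1}^{N_2}T_j$: here I invoke the Carbery--Seeger local-to-global theorem quoted in the excerpt, which gives $m\in M^p$ for all $p\in(p_0,p_0')$ under \eqref{eqn:Mp-single-scale}, \eqref{eqn:Holder-single-scale}; since each partial sum is a Fourier multiplier whose symbol still satisfies \eqref{eqn:Mp-single-scale} and \eqref{eqn:Holder-single-scale} uniformly (the partial sum just truncates the Littlewood--Paley series, which is a bounded operation on $C^\eps$ and on $M^{p_0}$ locally), the partial sums are uniformly bounded on $L^r$ for every $r\in(p_0,p_0')$ — in particular (strong, hence weak) type $(p_1,p_1)$ and strong, hence restricted strong, type $(q,q)$ for any $p_1,q\in(p_0,p_0')$.

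With the hypotheses verified, Theorem \ref{mainthm} yields a sparse $(p_1,q')$-bound for each partial sum, with constant $\cC=O(1)$ depending only on the quoted norms and $\log(2+B/A_\circ)$ — and since for the main terms $B=O(1)$ too, this is genuinely $O(1)$. Given $p\in(p_0,2]$, I would pick $p_1<p<q'$ with both $p_1$ and $q$ strictly inside $(p_0,p_0')$, which is possible precisely because $p<p_0'$ leaves room on both sides; concretely choose $\delta=\delta(p)>0$ small and set $p_1=p-\delta$, $q'=p'-\delta$, i.e. $q=(p'-\delta)'$, checking that $q\in(p_0,p_0')$ for $\delta$ small since $p>p_0$ gives $p'<p_0'$. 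Summing the sparse bounds over the (at most logarithmically many relevant, then geometrically decaying) Littlewood--Paley error layers as in Remark (iii) produces $\cT\in\Sp(p-\delta,p'-\delta)$, which is the claim. The main obstacle I anticipate is the bookkeeping around the \emph{approximate} rather than exact support condition: one must carefully organize the decomposition of each kernel into a compactly supported piece plus dyadic tails, track that the $L^{p_1}\to L^q$, $C^\eps$, and partial-sum $L^r$ bounds for the tail at distance $2^\ell$ all decay like $2^{-\ell N}$ for large $N$ while the regularity constant $B$ grows at most polynomially, and then apply the summation device of Remark (iii); everything else is a routine verification using Young's inequality, stationary phase for the kernel estimates, and the cited Carbery--Seeger result as a black box.
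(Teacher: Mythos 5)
There is a genuine gap, and it sits exactly where you flag your ``main obstacle'': the treatment of the non-compactly-supported kernels. Under the hypotheses \eqref{eqn:Mp-single-scale} and \eqref{eqn:Holder-single-scale} the kernel of $\phi(2^{-j}\cdot)m$ is \emph{not} rapidly decaying, and your repeated claims that the tails decay like $2^{-\ell N}$ for large $N$ (``Schwartz-tailed $L^1$ function of unit scale'') are false. All that the H\"older condition gives is the $M^2$ bound $\|[\phi m(t\cdot)]*\widehat{\Psi}_\ell\|_{M^2}\lc 2^{-\ell\eps}$ for the spatial dyadic piece at scale $2^\ell$, and after interpolating with the $M^{p_0}$ bound one only gets $\|[\phi m(t\cdot)]*\widehat{\Psi}_\ell\|_{M^{p_1}}\lc 2^{-\ell\eps(p_1)}$ with $\eps(p_1)=\eps(\tfrac1{p_0}-\tfrac1{p_1})/(\tfrac1{p_0}-\tfrac12)$. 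Moreover, when you upgrade such a piece (now genuinely supported at spatial scale $2^\ell$ after the cutoff) to the single scale $(p_1,q_1)$ condition \eqref{p-q-rescaled}, you pay a factor $2^{\ell d(1/p_1-1/q_1)}$, so the sum over the tail layers converges only under the quantitative condition $d(1/p_1-1/q_1)<\eps(p_1)$. This inequality is the crux: it is precisely the finiteness of $\cB[m]$ in \eqref{eqn:Bm} required by Theorem \ref{thm:sparsemult} (which is how the paper proves this theorem), and it is what determines the size of $\delta(p)$ and why the conclusion is only $\Sp(p-\delta,p'-\delta)$ rather than $\Sp(p,p')$. In your write-up the constants are declared to be $O(1)$ or rapidly decaying, so the choice ``pick $\delta$ small'' has no mechanism behind it and the summation over tails, as described, does not close. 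Relatedly, the parenthetical assertion that compact frequency support together with $\|\phi m(t\cdot)\|_{C^\eps}=O(1)$ ``forces membership in $M^{p_1}$ for any $p_1$'' is false; that is essentially the Littman--McCarthy--Rivi\`ere/Stein--Zygmund phenomenon recalled immediately before the theorem, and it is why the interpolation with the $M^{p_0}$ hypothesis is indispensable.

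Apart from this, your skeleton is the right one and in fact mirrors the paper's: the paper proves Theorem \ref{thm:localtoglobal-sparse} by verifying the hypotheses of Theorem \ref{thm:sparsemult}, whose proof carries out exactly your plan --- Littlewood--Paley pieces $\varphi m(2^k\cdot)$, a spatial decomposition of each kernel by $\Psi_\ell$ so that the support condition \eqref{support-assu} holds for each layer, the Carbery--Seeger local-to-global theorem to get the uniform $L^p$ and $L^q$ bounds for all partial sums, and an application of Theorem \ref{mainthm} layer by layer followed by summation in $\ell$. To repair your argument you would need to replace the ``rapid decay'' bookkeeping by the interpolation estimate $2^{-\ell\eps(p_1)}$, track the Bernstein-type loss $2^{\ell d(1/p_1-1/q_1)}$ in \eqref{p-q-rescaled} for each layer, and then choose $p_1=p-\delta$, $q_1'=p'-\delta$ with $\delta$ small enough that $d(1/p_1-1/q_1)<\eps(p_1)$; at that point you have essentially reconstructed the proof of Theorem \ref{thm:sparsemult} in this special case.
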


We note that, in view of the compact support,  for $p\le q$ the quantity $\|\phi m(t\cdot)\|_{M^{p,q}}$ can be bounded by $C\|\phi m(t\cdot)\|_{M^p}$ via Young's inequality. In Theorem \ref{thm:localtoglobal-sparse} the self-improvement to  a sparse bound is due to a tiny bit of regularity as hypothesized in \eqref{eqn:Holder-single-scale}. This together  with \eqref{eqn:Mp-single-scale} implies a mild regularity  condition for $\phi m(t\cdot)$  measured in the $M^{p,q}$ norm. If one seeks better results on the sparse bound in terms  of $q$  a further  specification of this regularity is needed. For this  we use the iterated difference operators  %
\[\Delta_h^M=\Delta_h\Delta_h^{M-1} \quad \text{ for } M\ge 2,\]
where $\Delta_h$ is as in \eqref{defDeltah}. With $\phi$ 
as above we get the following.
\begin{thm} \label{thm:localtoglobal-sparse-s} Let  $m\in L^\infty(\bbR^d)$ and $\cT$ as in \eqref{multiplierdef}.
Let    $1<p\leq q<\infty$. Assume    that there exists $s>d(1/p-1/q)$ and an  $M\in \bbN$ such that
\Be \label{eqn:MpqHolder}
\sup_{t>0} \sup_{|h|\leq 1} |h|^{-s}\big \|\Delta_h^M[\phi m(t\cdot)]\big\|_{M^{p,q}}  <\infty.
\Ee
Then $\cT \in \Sp(p,q')$.
\end{thm}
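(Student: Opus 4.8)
The plan is to deduce Theorem~\ref{thm:localtoglobal-sparse-s} from Corollary~\ref{mainthm-cor} (equivalently Theorem~\ref{mainthm}) by exhibiting a suitable decomposition $\cT = \sum_j T_j$ and verifying the five hypotheses in the scalar case $B_1 = B_2 = \bbC$. First I would fix a Littlewood--Paley partition: choose $\psi \in C^\infty_c(\widehat\bbR^d \setminus \{0\})$ with $\sum_{j \in \bbZ} \psi(2^{-j}\xi) = 1$ for $\xi \neq 0$, chosen so that $\phi \equiv 1$ on $\supp\psi$ (possible after rescaling $\phi$, which does not affect the hypotheses), and set $m_j(\xi) = m(\xi)\psi(2^{-j}\xi)$, with $T_j = \cF^{-1}[m_j] * \,\cdot\,$. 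Since $m \in L^\infty$, each $m_j$ is a compactly supported $L^\infty$ function, so $\cF^{-1}[m_j]$ is a Schwartz function and $T_j$ maps $\simpleone$ into $L^1_{\loc}$; moreover $\sum_j T_j = \cT$ on Schwartz functions. The support condition \eqref{support-assu} is the one genuinely nontrivial structural point: the kernel of $T_j$ is $2^{jd}K(2^j\cdot)$ with $K = \cF^{-1}[m\psi]$ a \emph{fixed} Schwartz function, which is \emph{not} compactly supported. The standard fix is to split $K = K^{(0)} + \sum_{\ell \geq 1} K^{(\ell)}$ where $K^{(\ell)}$ is a smooth truncation of $K$ to the annulus $\{|x| \approx 2^\ell\}$ (and $K^{(0)}$ to $\{|x| \leq 1\}$), thereby decomposing $\cT = \sum_{\ell \geq 0} \cT^{(\ell)}$ with $\cT^{(\ell)} = \sum_j T_j^{(\ell)}$ satisfying \eqref{support-assu} at scale $2^{j+\ell}$, i.e.\ with the dyadic index shifted; by rapid decay of $K$ all relevant constants for $\cT^{(\ell)}$ gain a factor $O(2^{-\ell N})$ for any $N$, so one applies the theorem to each $\cT^{(\ell)}$ and sums in $\ell$ as in Remark~(iii) after Corollary~\ref{mainthm-cor}. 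This is the step I expect to be the main technical obstacle, since one must check that the regularity hypothesis \eqref{eqn:MpqHolder} is not destroyed by the truncation and that the Fourier-side annular localization $\{|\xi| \approx 2^j\}$ survives (it does, up to an error term handled by the $\phi \equiv 1$ normalization, since convolution in $x$ with the truncating bump only mildly smears the frequency support).

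Granting the support reduction, I would verify the four remaining hypotheses for each $\cT^{(\ell)}$ (suppressing $\ell$). For the single scale $(p,q)$ condition \eqref{p-q-rescaled}: $\dil_{2^j} T_j$ is convolution with $\cF^{-1}[m(2^j\cdot)\psi]$ (truncated), and $\|\dil_{2^j} T_j\|_{L^p \to L^q} \lesssim \|m(2^j\cdot)\psi\|_{M^{p,q}} \lesssim \|\phi m(2^j\cdot)\|_{M^{p,q}}$ by the remark after Theorem~\ref{thm:localtoglobal-sparse} (Young's inequality, using $p \leq q$ and compact support), which is $O(1)$ by \eqref{eqn:MpqHolder} with $h=0$ — or rather is bounded since taking $M$ finite differences still controls the function itself together with the $M^{p,q}$ bound, after subtracting a low-frequency piece; more directly, the hypothesis \eqref{eqn:MpqHolder} together with $m \in L^\infty$ and the compact frequency support of $\phi$ gives $\sup_t \|\phi m(t\cdot)\|_{M^{p,q}} < \infty$ by a telescoping/averaging argument over dilations. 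For the $\eps$-regularity conditions \eqref{p-q-rescaled-reg-a}, \eqref{p-q-rescaled-reg-b}: here I use $s > d(1/p - 1/q)$. Write $(\dil_{2^j}T_j) \circ \Delta_h$ as convolution with $(\cF^{-1}[m(2^j\cdot)\psi])(\cdot + h) - (\cF^{-1}[m(2^j\cdot)\psi])(\cdot)$; interpolating between the $M^{p,q}$ bound on $\phi m(2^j\cdot)$ (giving $O(1)$ for the difference, hence $O(1)$ with exponent $0$) and a crude bound using that $\Delta_h^M[\phi m(2^j\cdot)]$ has $M^{p,q}$ norm $O(|h|^s)$ (which controls a \emph{single} difference $\Delta_h$ after dividing out a polynomial factor, exploiting that on the annulus $|\xi| \approx 1$ the $M$-th difference and the first difference are comparable up to $|h|^{M-1}$ and a derivative bound) produces a net gain $|h|^{\eps}$ for some $\eps > 0$ with $\eps < s - d(1/p-1/q)$ — here the condition $s > d(1/p - 1/q)$ is exactly what is needed because passing a single difference $\Delta_h$ through from the $M^{p,q}$ norm to an operator norm costs at most $|h|^{d(1/p-1/q)}$ by the Young/Sobolev embedding cost of the frequency localization. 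The adjoint condition \eqref{p-q-rescaled-reg-b} follows identically since $\overline{m}$ satisfies the same hypothesis and $p' \geq q'$.

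The weak type $(p,p)$ and restricted strong type $(q,q)$ conditions \eqref{bdness-wt}, \eqref{bdness-rt} for the partial sums $\sum_{j=N_1}^{N_2} T_j$ are the places where one must be slightly careful, since \eqref{eqn:MpqHolder} is a \emph{single-scale} hypothesis and does not obviously give uniform bounds on lacunary partial sums. The route is: $\sum_{j=N_1}^{N_2} m_j = m \cdot \chi_{N_1,N_2}$ where $\chi_{N_1,N_2}(\xi) = \sum_{j=N_1}^{N_2}\psi(2^{-j}\xi)$ is a smooth function, identically $1$ on an annulus $\{2^{N_1+c} \lesssim |\xi| \lesssim 2^{N_2-c}\}$ and supported in $\{|\xi| \approx [2^{N_1}, 2^{N_2}]\}$, with all derivatives satisfying Mikhlin-type bounds uniformly in $N_1, N_2$. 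I would invoke the local-to-global results of Carbery~\cite{carbery-revista} and Seeger~\cite{See88}: the hypothesis \eqref{eqn:MpqHolder} (which, as noted, implies $\sup_t \|\phi m(t\cdot)\|_{M^{p_0}} + \sup_t \|\phi m(t\cdot)\|_{C^\eps}$-type control, or one works directly with a difference-norm variant) yields $\|m\chi_{N_1,N_2}\|_{M^p} \lesssim 1$ uniformly via \eqref{eqn:localtoglobal}-type inequalities, since $m\chi_{N_1,N_2}$ inherits the single-scale $M^{p_0}$ and regularity bounds from $m$ with the multiplication by the uniformly-Mikhlin factor $\chi_{N_1,N_2}$ costing only a constant. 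This gives uniform $L^p \to L^p$ bounds, hence \eqref{bdness-wt} a fortiori; for \eqref{bdness-rt} one runs the same argument at the exponent $q$ using that \eqref{eqn:MpqHolder} also holds at $q$ (it is stated for the pair $(p,q)$, but $\|\phi m(t\cdot)\|_{M^{q,q}} = \|\phi m(t\cdot)\|_{M^q}$ is again controlled, since the compact frequency support lets one pass from $M^{p,q}$ regularity to $M^q$ regularity), or more simply one notes $L^{q,1} \to L^q$ follows from the strong $L^q \to L^q$ bound. Finally, with all five hypotheses verified for each $\cT^{(\ell)}$ with constant $\cC^{(\ell)} = O(2^{-\ell N})$ (the $B$ in \eqref{eqn:cCdef} may be exponentially large in $\ell$, but enters only logarithmically), Corollary~\ref{mainthm-cor} and then the passage back to the sparse form norm give $\|\cT^{(\ell)}\|_{\Sp_\gamma(p,q')} \lesssim \cC^{(\ell)}$, and summing the geometric series in $\ell$ yields $\cT \in \Sp(p,q')$ as claimed.
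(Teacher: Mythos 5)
There is a genuine gap, and it sits exactly at the step you yourself flag as the main obstacle. You write that since $m\in L^\infty$ and $m_j=m\,\psi(2^{-j}\cdot)$ is compactly supported, $\cF^{-1}[m_j]$ is a Schwartz function, so that after truncating the kernel to annuli $\{|x|\approx 2^\ell\}$ ``all relevant constants for $\cT^{(\ell)}$ gain a factor $O(2^{-\ell N})$ for any $N$.'' This is false: a compactly supported $L^\infty$ multiplier has a smooth, bounded inverse Fourier transform, but with no decay at all. The only decay available for the spatially truncated pieces is what the hypothesis \eqref{eqn:MpqHolder} provides, and it is polynomial of order $s$, not rapid. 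Concretely, the mechanism the paper uses (inside the proof of Theorem \ref{thm:sparsemult}, to which Theorem \ref{thm:localtoglobal-sparse-s} is reduced in \S\ref{sec:proofoflocaltoglobal-sparse-s}) is to write $\Psi_\ell(x)=\sum_\nu \Psi_{\ell,\nu}(x)\,(e^{i\inn{x}{2^{1-\ell}u_\nu}}-1)^M$ with smooth $\Psi_{\ell,\nu}$, so that $[\phi m(t\cdot)]*\widehat\Psi_\ell=\sum_\nu \Delta^M_{-2^{1-\ell}u_\nu}[\phi m(t\cdot)]*\widehat\Psi_{\ell,\nu}$; then \eqref{eqn:MpqHolder} yields $\sup_t\|[\phi m(t\cdot)]*\widehat\Psi_\ell\|_{M^{p,q}}\lc 2^{-\ell s}$. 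This conversion of spatial localization at scale $2^\ell$ into $M$-fold differences at scale $2^{-\ell}$ of the multiplier is the crux of the proof and is absent from your argument. Without it you have no decay in $\ell$ for the truncated pieces, and the sum over $\ell$ cannot be closed: the pieces lose a factor $2^{\ell d(1/p-1/q)}$ (from passing between $M^{p,q}$ and $L^p\to L^p$ bounds at spatial scale $2^\ell$), and the hypothesis $s>d(1/p-1/q)$ is exactly what makes $2^{-\ell s}\,2^{\ell d(1/p-1/q)}$ summable. In your write-up this condition is instead invoked (vaguely) in the single-scale regularity verification, which is the wrong place: for a fixed spatial scale $\ell$ the $\eps$-regularity of the compactly supported piece is essentially free (it costs a factor $2^{\ell\eps}$, harmless because $B$ enters \eqref{eqn:cCdef} only logarithmically), whereas your proposed interpolation ``controlling a single difference by an $M$-th difference after dividing out a polynomial factor'' is not a valid deduction.

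Two smaller points. First, for the uniform weak type $(p,p)$ and restricted strong type $(q,q)$ bounds on the partial sums you appeal to \eqref{eqn:localtoglobal}, but \eqref{eqn:MpqHolder} gives no pointwise H\"older control of $\phi m(t\cdot)$; the paper instead applies the vector-valued form \eqref{multconcl} of the result of \cite{See88} to each spatially truncated piece, taking $\fb_\ell=\|m\|_\infty 2^{\ell(d+1)}$ (huge, but only logarithmic in the conclusion) and $\fa_\ell\lc 2^{\ell d(1/p-1/q)}\sup_t\|[\phi m(t\cdot)]*\widehat\Psi_\ell\|_{M^{p,q}}$ — again relying on the $2^{-\ell s}$ decay you have not established. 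Second, your reduction of the single scale $(p,q)$ bound from \eqref{eqn:MpqHolder} ``by a telescoping/averaging argument'' is left unproved; in the paper this is not needed in that form, since the relevant single scale input is again the quantity $\fa_\ell$ for the truncated pieces. So while your global architecture (frequency decomposition, spatial truncation, apply Theorem \ref{mainthm} to each spatial scale, sum in $\ell$) matches the paper's, the proof as written does not go through: the key identity converting \eqref{eqn:MpqHolder} into spatial decay must be supplied, and the bookkeeping of where $s>d(1/p-1/q)$ is used must be corrected.
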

One should always take $M>s$. Indeed, note that if $M<s$, then \eqref{eqn:MpqHolder} implies $m\equiv 0$. We note that the $L^p \to L^q$ conditions \eqref{p-q-rescaled}, \eqref{p-q-rescaled-reg}
in Theorem \ref{mainthm} correspond in the instance of convolution operators to an $M^{p,q}$ condition of derivatives of order $s>d(1/p-1/q)$ on the localizations of the Fourier multiplier. 
Also, for fixed $s>d(1/p-1/q)$, if \eqref{eqn:MpqHolder} holds with some $M\geq s$, then it holds for all integers $M>s$.  
For an illustration of this and the broad scope of Theorem \ref{thm:localtoglobal-sparse-s},  see the  discussion on singular Radon transforms in   \S\ref{sec:SingRadonFM} and on various classes of  Fourier multipliers related to oscillatory multipliers in \S\ref{sec:genclassesFM} and to radial multipliers in  \S\ref{sec:rad-mult}.

Theorems \ref{thm:localtoglobal-sparse} and \ref{thm:localtoglobal-sparse-s} will be deduced  in
\S\ref{sec:proofoflocaltoglobal-sparse-s}
from the   more precise, but also more technical Theorem \ref{thm:sparsemult} which expresses the regularity via dyadic decompositions of $\cF^{-1}[\phi m(t\cdot)]$.
Moreover, there we will cover a version involving Hilbert space valued functions which is useful for sparse domination results for  objects such as Stein's square function associated with Bochner--Riesz means.

\subsection{Application to weighted norm inequalities} \label{weighted-section}
It is well known that sparse domination implies a number of weighted inequalities in the context of Muckenhoupt and reverse H\"older classes of weights, and indeed this serves as a first motivation for the subject; see the lecture notes by Pereyra \cite{PeyreyraLN} for more information.
Here we  just cite a general  result about this connection which can be directly applied to all of our results on sparse domination and is due to 
Bernicot, Frey and Petermichl \cite{bernicot-frey-petermichl}. 
 Recall the definition of the Muckenhoupt class $A_t$ consisting of weights for which \[[w]_{A_t}= \sup_Q \jp{w}_{Q,1}\jp{w^{-1}}_{Q,t'-1} <\infty,\]  
 and the definition of the reverse H\"older class $\mathrm{RH}_s$  consisting of weights for which 
 \[[w]_{\mathrm{RH}_s} =\sup_Q \frac{\jp{w}_{Q,s}}{\jp{w}_{Q,1}} <\infty.\] 
 {In both cases the supremum is taken over all cubes $Q$ in $\bbR^d$.}
 \begin{prop}[\cite{bernicot-frey-petermichl}]
  If $T\in \Sp(L^p_{B_1},L^{q'}_{B_2^*})$, then one has the weighted norm inequality
 \begin{multline*}\Big(\int_{\bbR^d} |Tf(x)|^r_{B_2} w(x) \,dx\Big)^{\frac 1r}    \lc \|T\|_{\Sp_\ga({p_1},{B_1};{p_2},{B_2^*})} \times \\ ([w]_{A_{r/p}}[w]_{\mathrm{RH}_{(q/r)'}})^{\alpha} \Big(\int_{\bbR^d} |f(x)|^r_{B_1} w(x) \,dx\Big)^{\frac1r}%
 \end{multline*}
 for all $w\in A_{r/p}\cap \mathrm{RH}_{(q/r)'}$ and $p < r < q$,  where $\alpha:=\max(\tfrac{1}{r-p}, \tfrac{q-1}{q-r})$.
 \end{prop}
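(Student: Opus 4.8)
The plan is to strip off the Banach-space structure by duality, reducing the claim to the weighted estimate for the \emph{scalar} bilinear sparse form, which is a special case of the results of \cite{bernicot-frey-petermichl}.

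First I would fix $w\in A_{r/p}\cap\mathrm{RH}_{(q/r)'}$ and $p<r<q$, and introduce the dual weight $\sigma=w^{1-r'}$, which satisfies $w^{1/r}\sigma^{1/r'}\equiv1$; since $p>1$ we have $A_{r/p}\subset A_r$, whence $\sigma\in A_{r'}\subset L^1_{\loc}$. For $f\in\simpleone$ (the general case $f\in L^r_{B_1}(w)$ following by density, cf.\ Lemma \ref{lem:density}) the function $Tf$ is strongly $B_2$-measurable, so $|Tf(x)|_{B_2}=\sup_{|\lambda|_{B_2^*}\le1}|\inn{Tf(x)}{\lambda}|$ by Hahn--Banach (pointwise, no reflexivity needed), this supremum can be realised along a strongly measurable unit $B_2^*$-valued function, and simple compactly supported $B_2^*$-valued functions are dense in $L^{r'}_{B_2^*}(\sigma)$. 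These facts give
\[
\Big(\int_{\bbR^d}|Tf|_{B_2}^{r}\,w\,dx\Big)^{1/r}=\sup\big\{\,|\inn{Tf}{g}|:\ g\in\simpledual,\ \big\|\,|g|_{B_2^*}\,\big\|_{L^{r'}(\sigma)}\le1\,\big\}.
\]
For each admissible $g$, the sparse bound \eqref{sparseineq}--\eqref{eqn:maxform} produces a $\gamma$-sparse family $\fS=\fS(f,g)$ with
\[
|\inn{Tf}{g}|\ \le\ 2\,\|T\|_{\Sp_\gamma(p,B_1,q',B_2^*)}\sum_{Q\in\fS}|Q|\,\jp{f}_{Q,p,B_1}\,\jp{g}_{Q,q',B_2^*},
\]
and the crucial observation is that the right-hand side involves only the scalar functions $F:=|f|_{B_1}$ and $G:=|g|_{B_2^*}$, since $\jp{f}_{Q,p,B_1}=\jp{F}_{Q,p}$ and $\jp{g}_{Q,q',B_2^*}=\jp{G}_{Q,q'}$. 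From here on $B_1,B_2$ play no role, which is why the final constant will not depend on them.

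It therefore remains to prove, for every $\gamma$-sparse $\fS$ and all nonnegative scalar $F\in L^r(w)$, $G\in L^{r'}(\sigma)$, the scalar sparse-form inequality
\[
\sum_{Q\in\fS}|Q|\,\jp{F}_{Q,p}\,\jp{G}_{Q,q'}\ \lesssim_{p,q,r,\gamma}\ \big([w]_{A_{r/p}}[w]_{\mathrm{RH}_{(q/r)'}}\big)^{\alpha}\,\|F\|_{L^r(w)}\,\|G\|_{L^{r'}(\sigma)},
\]
with $\alpha=\max\big(\tfrac1{r-p},\tfrac{q-1}{q-r}\big)$; feeding this the functions $F=|f|_{B_1}$, $G=|g|_{B_2^*}$ and taking the supremum over $g$ then yields the proposition. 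This scalar estimate is the sparse-form weighted bound of \cite{bernicot-frey-petermichl}, and I would only recall the mechanism: on each $Q\in\fS$, H\"older with exponent $r/p>1$ gives $\jp{F}_{Q,p}\le[w]_{A_{r/p}}^{1/r}\,(\av^{w}_Q F^{r})^{1/r}$ (with $\av^w_Q$ the average against $w\,dx$), while the exponent $r'/q'>1$ (this is where $r<q$ is used), together with the standard duality between Muckenhoupt and reverse-H\"older classes --- by which $w\in A_{r/p}\cap\mathrm{RH}_{(q/r)'}$ corresponds, with comparable constants, to $\sigma\in A_{r'/q'}$ --- bounds $\jp{G}_{Q,q'}$ by a power of $[w]_{A_{r/p}}[w]_{\mathrm{RH}_{(q/r)'}}$ times $(\av^\sigma_Q G^{r'})^{1/r'}$. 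Inserting these bounds, using the disjoint sets $E_Q$ with $|Q|\le\gamma^{-1}|E_Q|$ and the identity $1\equiv w^{1/r}\sigma^{1/r'}$, and invoking the weighted Carleson embedding theorem (available since $w,\sigma\in A_\infty$), one controls the sum by $\|F\|_{L^r(w)}\|G\|_{L^{r'}(\sigma)}$; the exponent $\alpha$ emerges as the sharp Buckley-type exponent $[w]_{A_s}^{1/(s-1)}$ of the associated weighted dyadic sparse operator, optimised over the two relevant values of $s$.

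I expect the one genuinely nontrivial point --- as opposed to routine bookkeeping --- to be the vector-valued duality identity in the first step, i.e.\ that $\|Tf\|_{L^r_{B_2}(w)}$ is computed by testing against $\simpledual$; this is where the measurable selection and the density of simple functions enter, and it is essential that none of these require geometric hypotheses on $B_1$ or $B_2$, so that the implicit constant depends only on $p,q,r,\gamma,d$. Everything after that reduction is a verbatim transcription of the scalar argument of \cite{bernicot-frey-petermichl}, applied to $F=|f|_{B_1}$ and $G=|g|_{B_2^*}$.
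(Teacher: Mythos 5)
Your proposal is correct, and it is in fact more detailed than what the paper provides: the paper does not prove this proposition at all, but quotes it from \cite{bernicot-frey-petermichl} and refers to \S 6 of that paper for the exposition. Your reduction captures exactly why the citation is legitimate in the Banach-valued setting: the sparse form \eqref{sparseform} only sees the scalar functions $|f|_{B_1}$ and $|g|_{B_2^*}$, so once one dualizes $\|Tf\|_{L^r_{B_2}(w)}$ against $L^{r'}_{B_2^*}(\sigma)$ with $\sigma=w^{1-r'}$, the weighted estimate is verbatim the scalar sparse-form bound of \cite{bernicot-frey-petermichl}, with constants independent of $B_1,B_2$. Two small caveats on your ``genuinely nontrivial point.'' First, for $\int|Tf|_{B_2}^r w\,dx$ to make sense one needs $|Tf|_{B_2}$ to be measurable, i.e.\ $Tf$ strongly measurable (or at least $T:\simpleone\to L^1_{B_2,\loc}$ as in Corollary \ref{mainthm-cor}); membership in $\OPB$ alone only gives weak measurability, so this is an implicit hypothesis you should state rather than derive. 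Second, the norming supremum need not be \emph{realised} by a measurable selection; the cleaner route, used in the paper's own Lemma \ref{lem:proofofcormain}, is to approximate $Tf$ in $L^1_{B_2}(K)$ by simple functions and test against $\om\la$ with $\la\in\simpledual$, $\|\la\|_{L^\infty_{B_2^*}}\le 1$, which yields the duality identity up to an $\eps$ and suffices. With these adjustments your argument is a complete and faithful reduction to the cited scalar result.
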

 
 We refer to   \cite[\S6]{bernicot-frey-petermichl} for more information and a detailed exposition.  See also \cite{FreyNieraeth} for other weighted norm inequalities.%

\subsection{Structure of the paper and notation} %
We begin  addressing
necessary conditions, and prove Theorem \ref{thm:necessarycor} in \S\ref{sec:nec}. {In \S\ref{sec:single scale} we review useful preliminary facts needed in the proof of Theorem \ref{mainthm} regarding the single scale regularity conditions; in particular, an alternative form for the regularity conditions in \eqref{p-q-rescaled-reg}.  The proof of Theorem \ref{mainthm} is
presented in \S \ref{genthmpf}. 
The main part of the argument consists of an induction step, which is contained in \S \ref{proofofclaim}. The implication that yields Corollary \ref{mainthm-cor} from Theorem \ref{mainthm} is given in \S\ref{sec:mainthm-part-two}. 
In \S\ref{sec:squarefct-etc} we apply Corollary \ref{mainthm-cor} to deduce sparse domination results for maximal functions, square functions and variation norm operators, as well as Cotlar-type operators associated to truncations of operators. In the case of maximal functions, the assumptions of Theorem \ref{mainthm} can be slightly weakened, and we present this in \S \ref{sec:more max}. Theorems \ref{thm:localtoglobal-sparse} and \ref{thm:localtoglobal-sparse-s} are  proved in \S \ref{sec:Fourier-multipliers}. Finally, in \S \ref{sec:applications} we apply our main theorems to several specific examples, including the  proof of Theorem \ref{thm:MEintro} in \S\ref{sec:MEmax}.  Moreover,  we give several applications of  Theorem \ref{thm:localtoglobal-sparse-s} to specific classes of multipliers. For completeness, we include several appendices. Appendix \ref{basicsect} covers some basic facts  on  sparse domination. Appendix \ref{appendixB} covers versions of the main Theorem \ref{mainthm} for $p=1$ and/or $q=\infty$. 
Some basic facts on Fourier multipliers needed in  \S \ref{sec:Fourier-multipliers} 
are covered in Appendix \ref{app:multipliers}.

\medskip

\noindent{\it Notation.}  The notation $A\lesssim B$ will be used to denote that $A\le C\cdot B$, where the constant $C$ may change from line to line. Dependence of $C$ on various parameters may be denoted by a subscript or will be clear from the context. We use $A\approx B$ to denote that  $A\lc B$ and $B\lc A$.
 
 We shall use the definition
 $\widehat{f}(\xi)= \cF f(\xi)= \int_{\bbR^d} e^{-i\inn{y}{\xi} }f(y) dy$ for the  Fourier transform on $\bbR^d$. %
 We let $\cF^{-1}$ be the inverse Fourier transform and use the notation $m(D) f= \cF^{-1}[m\widehat f]$.
 We denote by $\cS\equiv \cS(\bbR^d)$ the space of Schwartz functions on $\bbR^d$,
by $\cS'$ the space of tempered distributions on $\bbR^d$, and 
by $\Eann(\la)$ the space of all $f\in \cS'$ such that the Fourier transform $\widehat f$ is supported in the open annulus $\{\xi\in \hat \bbR^d: \la/2<|\xi|<2\la\}$. 

For a $d$-dimensional rectangle $R=[a_1,b_1]\times\dots\times[a_d,b_d]$ we denote  by $x_R$ the center of $R$,  i.e. the points with coordinates $x_{R,i}=(a_i+b_i)/2$, $i=1,\dots, d$. If $\tau>0$, we denote by $\dilq{\tau}{R}$ to be the $\tau$-dilate of $R$ with respect  to its center, i.e.
\[\dilq{\tau}{R}=\Big\{ x\in\bbR^d\,:\, x_R+\frac{x-x_R}{\tau}\in R\Big\}. \]
We shall use many spatial or frequency decomposition throughout the paper:
\begin{itemize}
\item[$\circ$] $\{\lambda_k\}_{k \geq 0}$, $\{\widetilde \la_k\}_{k\ge 0}$ are specific families of Littlewood--Paley type operators that can be used for a reproducing formula \eqref{eqn:resofid}; they are compactly  supported and have  vanishing moments (\cf.  \S \ref{sec:resofid});
\item[$\circ$] $\{\Psi_\ell\}_{\ell \geq 0}$ is an inhomogeneous dyadic decomposition in $x$-space, compactly supported where $|x|\approx 2^\ell$ if $\ell>0$ (\cf. \S \ref{sec:more-precise});
\item[$\circ$] $\{\eta_\ell\}_{\ell \geq 0}$ is an inhomogeneous dyadic frequency decomposition
so that $\widehat {\eta}_\ell$ is supported where $|\xi|\approx 2^\ell$ if $\ell>0$ (\cf. \S\ref{sec:singlescalereg}, \S\ref{sec:anotherSRT}). 

\end{itemize}
\noindent Similarly, we shall use the following bump functions:
\begin{itemize}
    \item[$\circ$]  $\phi$ is a radial $C^\infty(\widehat{\bbR}^d)$ function supported in $|\xi| \approx 1$ and not identically zero (\cf. \S\ref{sec:more-precise});
    \item[$\circ$] $\theta$ is a radial $C^\infty(\bbR^d)$ function supported in $|x| \lesssim 1$ with vanishing moments and such that $\widehat{\theta}(\xi) >0$ in $|\xi| \approx 1$ (\cf.  \S\ref{sec:more-precise});
    \item[$\circ$] $\beta$ is any nontrivial $C_c^\infty(\widehat{\bbR})$ function with compact support in $(0,\infty)$ (\cf.   \S \ref{sec:rad-mult}).
\end{itemize}

\medskip

\subsubsection*{Acknowledgements} We would  like to  thank Richard Oberlin and Luz Roncal for informative conversations regarding sparse domination. We thank the referee for various  valuable suggestions and in particular for recommending to formulate  versions of our results  for the cases $p=1$ and $q=\infty$.
D.B. was partially supported by  NSF grant DMS-1954479. 
J.R.  and A.S. were  partially supported by NSF grant DMS-1764295, and A.S. 
 was partially supported by a Simons fellowship. 
We thank Tuomas Hyt\"onen for pointing out an inaccuracy in the original formula \eqref{elldef}.

\smallskip

\noindent{\it Remark.} After we circulated  the first version of this paper,  Jos\'e M. Conde-Alonso, Francesco Di Plinio, Ioannis Parissis and Manasa N. Vempati kindly shared  their  preprint \cite{CDIV},
in which they  develop a metric theory of sparse domination on spaces of homogeneous type. There is a small overlap with our work, as \cite{CDIV}  covers classes of singular Radon transforms, in particular  non-isotropic versions of  results in \S \ref{sec:SingRadon}.

\section{Necessary conditions} \label{sec:nec}

In this section we prove
Theorem \ref{thm:necessarycor} and another partial converse for Theorem \ref{mainthm}, namely Theorem \ref{thm:necessary} below.

We begin 
with an  immediate and well known, but  significant estimate for the maximal sparse forms which will lead to simple necessary conditions. In what follows, let
$\cM$ denote   the Hardy--Littlewood maximal operator.

\begin{lem}\label{standardlemma}    The following hold for the maximal forms defined in \eqref{eqn:maxform}.
\begin{enumerate}
\item[(i)] For $f_1\in \simpleone$, $f_2\in \simpledual$, 
\Be\label{maxineq} 
\Lamaxga_{{p_1},{B_1},{p_2},{B_2^*}} (f_1,f_2)  \le\ga^{-1} \int_{\bbR^d} (\cM[|f_1|_{B_1}^{p_1}](x))^{1/p_1} 
( \cM[|f_2|_{B_2^*}^{p_2}](x))^{1/p_2}  dx.
\Ee

\item[(ii)]  
If $1\le p_1<p$, and $f_1\in L^{p,1}_{B_1}$, $f_2\in L^{p'}_{B^*_2}$, then 
\Be\label{Gbypp'lor1}\Lamaxga_{{p_1},{B_1}, {p'},{B_2^*}} (f_1,f_2) 
\lc_{p,p_1} \ga^{-1} \|f_1\|_{L^{p,1}_{B_1}}\|f_2\|_{L^{p'}_{B_2^*}},
\Ee

\item[(iii)]
If $1<p<p_2'$, and $f_1\in L^p_{B_1}$, $f_2\in L^{p',1}_{B^*_2}$, then
\Be\label{Gbypp'lor2}\Lamaxga_{p,{B_1}, {p_2},{B_2^*}} (f_1,f_2) 
\lc_{p,p_2}  \ga^{-1} \|f_1\|_{L^{p}_{B_1}}\|f_2\|_{L^{p',1}_{B_2^*}},
\Ee
\end{enumerate}
\end{lem}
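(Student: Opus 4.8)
The plan is to prove part (i) directly from the definition of a $\gamma$-sparse collection and the pointwise comparison of averages with the Hardy--Littlewood maximal operator $\cM$, and then to derive (ii) and (iii) from (i) by Hölder's inequality in Lorentz spaces combined with the standard mapping properties of $\cM$ (its weak type $(1,1)$ bound and the resulting bounds for powered maximal operators on Lorentz spaces).

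For (i), fix a $\gamma$-sparse collection $\fS$ with the associated pairwise disjoint measurable sets $E_Q\subset Q$, $|E_Q|\ge\gamma|Q|$, $Q\in\fS$. For $x\in E_Q\subset Q$ one has
$$\langle f_1\rangle_{Q,p_1,B_1}=\big(\av_Q|f_1|_{B_1}^{p_1}\big)^{1/p_1}\le\big(\cM[|f_1|_{B_1}^{p_1}](x)\big)^{1/p_1},$$
and similarly $\langle f_2\rangle_{Q,p_2,B_2^*}\le(\cM[|f_2|_{B_2^*}^{p_2}](x))^{1/p_2}$. Multiplying these, integrating over $E_Q$, and using $|Q|\le\gamma^{-1}|E_Q|$ yields
$$|Q|\,\langle f_1\rangle_{Q,p_1,B_1}\langle f_2\rangle_{Q,p_2,B_2^*}\le\gamma^{-1}\int_{E_Q}\big(\cM[|f_1|_{B_1}^{p_1}]\big)^{1/p_1}\big(\cM[|f_2|_{B_2^*}^{p_2}]\big)^{1/p_2}\,dx.$$
Summing over $Q\in\fS$ and using that the $E_Q$ are pairwise disjoint bounds $\La^{\fS}_{p_1,p_2}(f_1,f_2)$ by the right-hand side of \eqref{maxineq}; taking the supremum over all $\gamma$-sparse $\fS$ proves (i).

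For (ii) and (iii) I would start from \eqref{maxineq} and apply the Lorentz-space Hölder inequality $\|gh\|_{L^1}\lesssim_{s}\|g\|_{L^{s,1}}\|h\|_{L^{s',\infty}}$. Write $N_a g:=(\cM[|g|^a])^{1/a}$; then $N_a$ is quasi-subadditive, it maps $L^a\to L^{a,\infty}$ (by the weak $(1,1)$ bound for $\cM$), and for $s>a$ it maps $L^{s,r}\to L^{s,r}$ for every $r\in(0,\infty]$ (by boundedness of $\cM$ on $L^{s/a}$, the weak $(1,1)$ bound, and real interpolation). For (ii), where $p_2=p'$, apply Hölder to the integrand $N_{p_1}[|f_1|_{B_1}]\,N_{p'}[|f_2|_{B_2^*}]$ pairing $L^{p,1}$ with $L^{p',\infty}$: since $p_1<p$ one has $\|N_{p_1}[|f_1|_{B_1}]\|_{L^{p,1}}\lesssim_{p,p_1}\|f_1\|_{L^{p,1}_{B_1}}$, while $\|N_{p'}[|f_2|_{B_2^*}]\|_{L^{p',\infty}}\lesssim\|f_2\|_{L^{p'}_{B_2^*}}$; this yields \eqref{Gbypp'lor1}. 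For (iii), where $p_1=p$, the roles are reversed: pair $L^{p,\infty}$ with $L^{p',1}$, using $\|N_p[|f_1|_{B_1}]\|_{L^{p,\infty}}\lesssim\|f_1\|_{L^p_{B_1}}$ and, since $p_2<p'$, $\|N_{p_2}[|f_2|_{B_2^*}]\|_{L^{p',1}}\lesssim_{p,p_2}\|f_2\|_{L^{p',1}_{B_2^*}}$, which gives \eqref{Gbypp'lor2}.

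I do not anticipate a real obstacle here: the only point needing a little care is the assertion that the powered, quasi-subadditive operator $N_a$ is bounded on $L^{s,r}$ for all $s>a$ and all $r\in(0,\infty]$, which follows from its two endpoint mapping properties by the Marcinkiewicz interpolation theorem in its Lorentz-space formulation (valid for quasi-subadditive operators, with constants depending on $a$, $s$ and the quasi-subadditivity constant). The remaining ingredients — the comparison $\av_Q g\le\cM g(x)$ for $x\in Q$, the defining property of sparse collections, and Hölder's inequality for Lorentz spaces — are entirely routine.
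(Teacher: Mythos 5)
Your proposal is correct and follows essentially the same route as the paper: part (i) by the pointwise comparison of $L^{p_i}$-averages with the powered Hardy--Littlewood maximal functions on the disjoint sets $E_Q$, and parts (ii), (iii) by Lorentz-space H\"older applied to the integrand in \eqref{maxineq}, using that $g\mapsto(\cM[|g|^a])^{1/a}$ is bounded on $L^{s,r}$ for $s>a$ and weak type $(a,a)$ at the endpoint. The only cosmetic difference is that the paper first proves (ii), (iii) for simple functions and then invokes a density/limiting argument to reach general $f_1\in L^{p,1}_{B_1}$, $f_2\in L^{p'}_{B_2^*}$, whereas you apply the pointwise estimate of (i) directly to such functions, which is equally valid since that argument never uses simplicity.
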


\begin{proof}
For a $\ga$-sparse family of cubes we have
\begin{multline*}
\La^\fS_{p_1,B_1, p_2, B_2^*}(f_1,f_2)\le \sum_{Q\in \fS}
\frac{1}{\ga}\int_{E_Q} (\cM[|f_1|_{B_1}^{p_1}](x))^{1/p_1} 
( \cM[|f_2|_{B_2^*}^{p_2}](x))^{1/p_2}  dx
\end{multline*}
and \eqref{maxineq}  follows by the disjointness of  the sets $E_Q$ and taking supremum over all sparse families.

Now let $f_1\in \simpleone$, $f_2\in \simpledual$. For \eqref{Gbypp'lor1}  we use 
 \eqref{maxineq},  with $p_2=p'$, together with the fact that for $p_1<p$ the operator 
 $g\mapsto (\cM|g|_{B_1}^{p_1} )^{1/p_1}$ maps $L^{p,1}_{B_1}$ to itself; this follows by real interpolation from the fact that it maps $L^p$ to itself, for all $p>p_1$.
  We can now  estimate
     \begin{align*} \Lamaxga_{p_1,B_1,p', B_2^*} (f_1,f_2) &\lc \ga^{-1}  
     \| (\cM[|f_1|^{p_1}_{B_1}])^{1/p_1} \|_{L^{p,1}}
     \| (\cM[|f_2|^{p'}_{B_2^*}])^{1/p'} \|_{L^{p',\infty}}
     \\
     &\lc_{p.p_1}  \ga^{-1}  \|f_1\|_{L^{p,1}_{B_1}} \|f_2\|_{L^{p'}_{B_2^*}}.
 \end{align*}
 Since simple $B_1$-valued functions are dense in  $L^{p,1}_{B_1}$ and 
 simple $B_2^*$-valued functions are dense in $L^{p'}_{B_2^*}$  we get \eqref{Gbypp'} for all $f_1\in L^{p,1}_{B_1}$ and $f_2\in L^{p'}_{B_2^*}$, by a straightforward limiting argument.
 
  For \eqref{Gbypp'lor2} we argue similarly. 
 We use  
 \eqref{maxineq} with $p_1=p$, together with the fact that for $p_2<p'$ the operator  $g\mapsto (\cM|g|_{B_2^*}^{p_2} )^{1/p_2}$ maps $L^{p',1}_{B_2^*}$ to itself, and hence
     \begin{align*}
          \Lamaxga_{p,B_1, p_2,B_2^*} (f_1,f_2)&\lc \ga^{-1}    \| (\cM[|f_1|^p_{B_1}])^{1/p} \|_{L^{p,\infty}}
     \| (\cM[|f_2|^{p_2}_{B_2^*}])^{1/p_2} \|_{L^{p',1}}
     \\
     &\lc_{p,p_2} \ga^{-1} \|f_1\|_{L^p_{B_1}} \|f_2\|_{L^{p',1}_{B_2^*}} .  \qedhere
 \end{align*}
 \end{proof}

The estimates in  Lemma {\ref{standardlemma} immediate  yield estimates for the forms
$\inn{Tf_1}{f_2}$, since by the definition \eqref{sparse norm} 
\Be \notag \label{eqn:Tf1f2}  |\inn{Tf_1}{f_2}|\le \|T\|_{\Sp_\ga(p_1,B_1;p_2,B_2^*)} \,  \La^{\ga,*}_{p_1,B_1,p_2,B_2^*}(f_1,f_2).
\Ee

We shall now prove Theorem \ref{thm:necessarycor} in \S\ref{sec:locint}, and  a variant under reflexivity of $B_2$ in \S\ref{sec:reflexivity}.

\subsection{The local integrability hypothesis} \label{sec:locint}
If $Tf_1 \in L^1_{B_2,\loc}$, Lemma \ref{standardlemma} further yields  bounds for the $L^p_{B_2}$ or $L^{p,\infty}_{B_2}$ norms of $Tf_1$  via 
 a  duality result for scalar functions. 
 
\begin{lem}\label{lem:necessitywithL1loc}
Suppose $T:\simpleone \to L^1_{B_2,\loc}$. Then the following hold.
\begin{enumerate}
\item[(i)] 
If $1 \leq p_1 < p<\infty$ and if for all $f\in\simpleone$ and all $\bbR$-valued nonnegative simple functions $\om$
\[\int_{\bbR^d} |Tf(x)|\ci {B_2} \om(x) dx \le A_1 \Lamaxga_{p_1,B_1,p',\bbR} (f,\om), \]
then
$T$ extends to a bounded operator from $L^{p,1}_{B_1}$ to $L^{p}_{B_2}$ so that 
\Be\label{rtbdness-vs-sparse}
\|T\|_{L^{p,1}_{B_1}\to L^{p}_{B_2}} \lc_{p_1,p} \gamma^{-1} A_1. %
\Ee
\item[(ii)]  If $1< p<p_2'$ and if for all $f\in \simpleone$ and all $\bbR$-valued nonnegative simple functions $\om$
\[\int_{\bbR^d} |Tf(x)|\ci{B_2} \om(x) dx \le A_2 \Lamaxga_{p,B_1,p_2,\bbR} (f,\om), \]
then 
$T$ extends to a bounded operator from $L^{p}_{B_1}$ to $L^{p,\infty}_{B_2}$ so that
\Be\label{wtbdness-vs-sparse}
\|T\|_{L^p_{B_1}\to L^{p,\infty}_{B_2}} \lc_{p_2,p} \gamma^{-1} A_2. %
\Ee
\end{enumerate}
\end{lem}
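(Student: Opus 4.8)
The plan is to exploit the fact that, once $Tf \in L^1_{B_2,\loc}$, the scalar function $g := |Tf|_{B_2}$ is a genuine nonnegative locally integrable function, so that the two stated inequalities say precisely that $g$ is controlled against $\om$ by a sparse form. The $L^p$ (resp. $L^{p,\infty}$) bound for $g$ can then be recovered by duality for scalar Lorentz spaces, using the sharp forms of the maximal sparse-form estimate from Lemma \ref{standardlemma}. For part (i), I would fix $f \in \simpleone$ and test the hypothesis against an arbitrary nonnegative simple $\om$; by Lemma \ref{standardlemma}(ii), $\Lamaxga_{p_1,B_1,p',\bbR}(f,\om) \lc_{p_1,p} \gamma^{-1}\|f\|_{L^{p,1}_{B_1}}\|\om\|_{L^{p'}}$, so $\int g\,\om \lc \gamma^{-1} A_1 \|f\|_{L^{p,1}_{B_1}}\|\om\|_{L^{p'}}$ for all such $\om$. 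Since $1<p<\infty$ so that $L^p$ is the dual of $L^{p'}$ (with simple functions norming), taking the supremum over $\om$ with $\|\om\|_{L^{p'}}\le 1$ gives $\|g\|_{L^p} \lc_{p_1,p} \gamma^{-1} A_1 \|f\|_{L^{p,1}_{B_1}}$, i.e. $\|Tf\|_{L^p_{B_2}} \lc \gamma^{-1}A_1\|f\|_{L^{p,1}_{B_1}}$ for $f \in \simpleone$. The extension to all of $L^{p,1}_{B_1}$ then follows because $\simpleone$ is dense in $L^{p,1}_{B_1}$ and the target $L^p_{B_2}$ is a Banach space, by the same limiting argument used in the proof of Lemma \ref{standardlemma}.

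For part (ii) the structure is identical but one trades which factor carries the Lorentz refinement: by Lemma \ref{standardlemma}(iii), $\Lamaxga_{p,B_1,p_2,\bbR}(f,\om) \lc_{p,p_2}\gamma^{-1}\|f\|_{L^p_{B_1}}\|\om\|_{L^{p',1}}$, valid since $1<p<p_2'$. Hence $\int |Tf|_{B_2}\,\om \lc \gamma^{-1}A_2\|f\|_{L^p_{B_1}}\|\om\|_{L^{p',1}}$ for all nonnegative simple $\om$. Now I would invoke the duality between $L^{p,\infty}$ and $L^{p',1}$ for scalar functions: $\|g\|_{L^{p,\infty}} \approx_p \sup\{\int g\,\om : \om\ge 0,\ \|\om\|_{L^{p',1}}\le 1\}$, where again it suffices to test against simple $\om$ (one may reduce to $\om = \bbone_E$ and use that $\|\bbone_E\|_{L^{p',1}}\approx |E|^{1/p'}$, recovering the quasi-norm $\sup_E |E|^{-1/p'}\int_E g$). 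This yields $\|Tf\|_{L^{p,\infty}_{B_2}} \lc_{p,p_2}\gamma^{-1}A_2\|f\|_{L^p_{B_1}}$ on simple functions, and a density argument as above (now with the quasi-Banach space $L^{p,\infty}_{B_2}$, using that $p>1$ so the quasi-norm is equivalent to a norm) completes the proof.

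The only genuinely delicate point is the scalar duality input in part (ii): one must be slightly careful that testing against \emph{simple} $\om$ suffices to compute (up to constants) the $L^{p,\infty}$ quasi-norm, and that the hypothesis — a priori only for simple $\om$ — is therefore not losing anything. This is standard (restrict to characteristic functions of finite-measure sets and use the classical description of $L^{p,\infty}$ via $\sup_E |E|^{-1+1/p}\int_E g$, valid for $p>1$), and I would cite \cite{hunt} or \cite{stein-weiss} for it rather than reprove it. The local integrability hypothesis $T:\simpleone\to L^1_{B_2,\loc}$ is used exactly once and essentially: it is what guarantees $|Tf|_{B_2}$ is an honest locally integrable function so that $\int |Tf|_{B_2}\,\om$ makes sense and the duality argument can be run; without it one would need the reflexivity workaround indicated in \S\ref{sec:reflexivity}.
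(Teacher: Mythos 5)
Your proposal is correct and follows essentially the same route as the paper: both deduce from Lemma \ref{standardlemma}(ii), (iii) the bounds $\int |Tf|_{B_2}\,\om \lc \gamma^{-1}A_1\|f\|_{L^{p,1}_{B_1}}\|\om\|_{L^{p'}}$ and $\int |Tf|_{B_2}\,\om \lc \gamma^{-1}A_2\|f\|_{L^{p}_{B_1}}\|\om\|_{L^{p',1}}$, and then recover $\|Tf\|_{L^p_{B_2}}$ resp. $\|Tf\|_{L^{p,\infty}_{B_2}}$ by scalar duality against $L^{p'}$ resp. $L^{p',1}$ (citing \cite{hunt}), using the local integrability of $|Tf|_{B_2}$ exactly as the paper does. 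Your extra remarks on why simple $\om$ are norming and on the density extension are correct elaborations of the same argument.
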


\begin{proof}
We rely on Lemma \ref{standardlemma}. For part (i)  we use
\eqref{Gbypp'lor1} to estimate, for $f_2\in \mathrm{S}_{B_2}$,
\Be \notag \label{eqn:Tf1f2(i)}        
\int_{\bbR^d} |Tf(x)|\ci {B_2} \om(x) dx 
\lc \ga^{-1}  A_1
\|f\|_{L^{p,1}_{B_1}} \|\om\|_{L^{p'}}.
\Ee
By $L^p$ duality this implies an $L^p$ bound for the locally integrable scalar function $x\mapsto |Tf(x)|_{B_2} $  and consequently 
$Tf\in L^p_{B_2}$ with
\[\|Tf\|_{L^p_{B_2}} \lc \ga^{-1}  A_1
\|f\|_{L^{p,1}_{B_1}}\] 
and \eqref{rtbdness-vs-sparse} follows.

 For part (ii) we argue similarly. 
 We use  \eqref{Gbypp'lor2} 
 to estimate
 \Be \notag \label{eqn:Tf1f2(ii) }
 \int_{\bbR^d} |Tf(x)|\ci {B_2} \om(x) dx 
     \lc \ga^{-1} A_2
     \|f\|_{L^p_{B_1}} \|\om\|_{L^{p',1}} . 
     \Ee
     By the duality $(L^{p',1})^*=L^{p,\infty} $ for scalar functions for $1 < p < \infty$ \cite{hunt} 
    we get
         \[ \|Tf\|_{L^{p,\infty} _{B_2} } \lc \ga^{-1} A_2
         \|f_1\|_{L^p_{B_1}}\] 
     and \eqref{wtbdness-vs-sparse} follows.
     \end{proof}

\begin{cor} \label{cor:necL1loc}
Assume that $T:\mathrm{S}_{B_1}\to L^1_{B_2,\loc}$ and let $1 \leq p_1 < p < p_2'$.
    If for all $f \in \mathrm{S}_{B_1}$ and all $\bbR$-valued  nonnegative simple  functions $\om$ 
    \[\int_{\bbR^d} |Tf(x) |\ci{B_2} \om(x) dx \le A \, \Lamaxga_{p_1,B,p_2,\bbR}(f,\om), \] 
    then $T$ extends to a bounded operator from $L^p_{B_1}$ to $L^p_{B_2^{}}$ so that
\Be \label{bdness-vs-sparse}\|T\|_{L^p_{B_1}\to L^p_{B_2}}\lesssim_{p,p_1,p_2}  \ga^{-1} A.
\Ee

\end{cor}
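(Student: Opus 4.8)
The plan is to deduce Corollary~\ref{cor:necL1loc} directly from Lemma~\ref{lem:necessitywithL1loc} by interpolating the two endpoint-type bounds that lemma provides. First I would observe that the hypothesis of the corollary, namely
\[
\int_{\bbR^d} |Tf(x)|_{B_2}\,\om(x)\,dx \le A\,\Lamaxga_{p_1,B_1,p_2,\bbR}(f,\om)
\]
for all simple $f\in\simpleone$ and all nonnegative simple $\om$, is simultaneously stronger than (or equal to) the hypothesis of part~(i) and the hypothesis of part~(ii), provided we check that the relevant index inequalities hold. Since we are given $1\le p_1<p<p_2'$, part~(i) applies with exponent $p$ (here $p_1<p$ is exactly what is needed), and part~(ii) applies with the same $p$ (here $p<p_2'$ is what is needed). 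The only subtle point is that the hypothesis of the corollary uses the maximal form $\Lamaxga_{p_1,B_1,p_2,\bbR}$, whereas part~(i) wants $\Lamaxga_{p_1,B_1,p',\bbR}$ and part~(ii) wants $\Lamaxga_{p,B_1,p_2,\bbR}$. I would handle this by monotonicity of the maximal sparse form in its exponents: increasing a local $L^r$-average exponent only increases $\jp{\cdot}_{Q,r}$ by Hölder's inequality on the normalized measure, so $\Lamaxga_{p_1,p_2}(f,\om)\le \Lamaxga_{p_1,p'}(f,\om)$ when $p_2\le p'$, and likewise $\Lamaxga_{p_1,p_2}(f,\om)\le\Lamaxga_{p,p_2}(f,\om)$ when $p_1\le p$. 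One checks $p_2\le p'$ is equivalent to $p\le p_2'$, which holds, and $p_1\le p$ holds by assumption; thus the corollary's hypothesis implies both hypotheses of Lemma~\ref{lem:necessitywithL1loc} with $A_1=A_2=A$.

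Next I would invoke Lemma~\ref{lem:necessitywithL1loc}: part~(i) gives that $T$ extends to a bounded operator $L^{p,1}_{B_1}\to L^p_{B_2}$ with norm $\lesssim_{p_1,p}\gamma^{-1}A$, and part~(ii) gives that $T$ extends to a bounded operator $L^p_{B_1}\to L^{p,\infty}_{B_2}$ with norm $\lesssim_{p_2,p}\gamma^{-1}A$. Both extensions agree on the dense subspace $\simpleone$ (they are both continuous extensions of the same densely defined operator, once we note simple functions are dense in both $L^{p,1}_{B_1}$ and $L^p_{B_1}$), so they define the same operator on the overlap.

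To conclude the strong-type $L^p_{B_1}\to L^p_{B_2}$ bound I would apply real interpolation (Marcinkiewicz, in the Lorentz-space formulation for Banach-space-valued functions). Concretely, pick exponents $p_-<p<p_+$ close to $p$; the $L^{p,1}_{B_1}\to L^p_{B_2}$ bound implies, a fortiori, an $L^{p_-}_{B_1}\to L^{p_-,\infty}_{B_2}$-type restricted bound after a trivial embedding argument, and similarly the $L^p_{B_1}\to L^{p,\infty}_{B_2}$ bound upgrades to an $L^{p_+,1}_{B_1}\to L^{p_+}_{B_2}$ statement; interpolating between these two, or more directly interpolating the pair ($L^{p,1}\to L^p$, $L^p\to L^{p,\infty}$) via the reiteration/Calderón product theorem for Lorentz spaces, yields boundedness $L^p_{B_1}\to L^p_{B_2}$ with constant $\lesssim_{p,p_1,p_2}\gamma^{-1}A$. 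The real-interpolation machinery works for vector-valued Lorentz spaces exactly as in the scalar case since everything is governed by the scalar function $x\mapsto|Tf(x)|_{B_2}$ and $x\mapsto|f(x)|_{B_1}$, matching how $L^{p,r}_B$ is defined in the introduction.

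The main obstacle, such as it is, is purely bookkeeping: verifying that one genuinely has a \emph{single} well-defined operator on which to run interpolation (rather than two a priori unrelated extensions), and pinning down the interpolation statement in the Lorentz scale so that the weak-type-at-$p$ plus restricted-strong-type-at-$p$ pair actually interpolates to strong-type-at-$p$ — this requires moving slightly off the endpoint $p$ on both sides, which is harmless but must be stated carefully. There is no analytic difficulty beyond standard Marcinkiewicz interpolation; the content of the corollary is entirely contained in Lemmas~\ref{standardlemma} and~\ref{lem:necessitywithL1loc} together with this elementary interpolation step.
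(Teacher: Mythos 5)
Your reduction to Lemma \ref{lem:necessitywithL1loc} is fine as far as it goes (the monotonicity of the sparse form in each exponent does show that the hypothesis with $\Lamaxga_{p_1,p_2}$ implies the hypotheses of parts (i) and (ii) at the exponent $p$), but the final step is where the argument breaks. From parts (i) and (ii) applied at the single exponent $p$ you only get $T:L^{p,1}_{B_1}\to L^{p}_{B_2}$ and $T:L^{p}_{B_1}\to L^{p,\infty}_{B_2}$, i.e.\ restricted strong type and weak type at the \emph{same} point $p$, and this pair does not interpolate to strong type $(p,p)$. Real interpolation of Lorentz spaces with a common primary exponent only moves the secondary index: $(L^{p,1},L^{p,p})_{\theta,r}$ and $(L^{p,p},L^{p,\infty})_{\theta,r}$ match up to give source $L^{p,p}$ only at $\theta=1$, where the target degenerates to $L^{p,\infty}$, so you just recover the weak-type bound (and in general such a same-exponent pair yields strong type only up to a logarithmic loss, not strong type itself). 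The proposed remedy of ``moving slightly off the endpoint'' via a trivial embedding is also invalid: on $\bbR^d$ there are no inclusions between Lebesgue or Lorentz spaces with different primary exponents, so an $L^{p,1}\to L^p$ bound gives no bound at any $p_-\neq p$, and likewise for $L^p\to L^{p,\infty}$ at $p_+$.

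The fix is to exploit the strict separation $p_1<p<p_2'$ rather than working at $p$ itself, which is what the paper does. Apply Lemma \ref{lem:necessitywithL1loc}(i) with the target exponent equal to $p_2'$: since $(p_2')'=p_2$, the corollary's hypothesis is \emph{exactly} the hypothesis of (i), giving $T:L^{p_2',1}_{B_1}\to L^{p_2'}_{B_2}$ as in \eqref{rtbdness-vs-sparse}. Apply Lemma \ref{lem:necessitywithL1loc}(ii) with target exponent $p_1$ (or, if $p_1=1$, any exponent in $(1,p)$, using monotonicity of $\jp{f}_{Q,\cdot}$ in the first index), giving $T:L^{p_1}_{B_1}\to L^{p_1,\infty}_{B_2}$ as in \eqref{wtbdness-vs-sparse}. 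Now Marcinkiewicz interpolation between the two \emph{distinct} exponents $p_1<p_2'$ yields the strong bound $L^p_{B_1}\to L^p_{B_2}$ for every $p$ strictly in between, with constant $\lc_{p,p_1,p_2}\gamma^{-1}A$; since everything is governed by the scalar functions $|f|_{B_1}$, $|Tf|_{B_2}$, the vector-valued case causes no difficulty. Alternatively (the paper's second argument), one can bypass interpolation entirely: by \eqref{maxineq} and the Hardy--Littlewood maximal theorem, $\cM_{p_1}$ is bounded on $L^{p}$ and $\cM_{p_2}$ on $L^{p'}$ because $p_1<p$ and $p_2<p'$, so $\Lamaxga_{p_1,B_1,p_2,\bbR}(f,\om)\lc_{p,p_1,p_2}\ga^{-1}\|f\|_{L^p_{B_1}}\|\om\|_{L^{p'}}$ as in \eqref{Gbypp'}, and the duality argument in the proof of Lemma \ref{lem:necessitywithL1loc} (using $Tf\in L^1_{B_2,\loc}$) then gives \eqref{bdness-vs-sparse} directly.
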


\begin{proof}
Lemma \ref{lem:necessitywithL1loc} implies $T$ maps boundedly $L^{\tilde{p}_1}_{B_1} \to L^{\tilde{p}_1,\infty}_{B_2^{}}$ and $L^{\tilde{p}_2',1}_{B_1} \to L^{\tilde{p}_2'}_{B_2^{}}$ for any $p_1 < \tilde{p}_1 < p$ and $p< \tilde{p}_2' < p_2'$; the desired $L^p_{B_1} \to L^p_{B_2^{}}$ boundedness for $p_1 < p < p_2'$ then follows by interpolation. 

Alternatively, one could deduce this result directly from \eqref{maxineq}. Arguing as in the proofs of (ii) or (iii) in Lemma \ref{standardlemma}, by the Hardy--Littlewood theorem and \eqref{maxineq} one has
\Be\label{Gbypp'}\Lamaxga_{{p_1},{B_1}, {p_2},{B_2^*}} (f_1,f_2) 
\lesssim_{p,p_1,p_2} \ga^{-1} \|f_1\|_{L^p_{B_1}}\|f_2\|_{L^{p'}_{B_2^*}}
\Ee
for $1\le p_1<p<p_2'$. %
Then one can argue as in the proof  of Lemma \ref{lem:necessitywithL1loc},  to deduce \eqref{bdness-vs-sparse} from \eqref{Gbypp'}.
\end{proof}

We next turn to the necessity of the condition \eqref{p-q-rescaled} in Corollary \ref{mainthm-cor}
and  Theorem \ref{mainthm}. In this generality, this  type of implication appears to be new in the sparse domination literature. It is inspired by the philosophy of adapting the counterexamples for $L^p\to L^q$ estimates to sparse bounds (see \textit{i.e.} the examples for spherical maximal operators in \cite{laceyJdA19}).
\begin{lem} \label{lem:pq-nec}  
Let  $\{T_j\}_{j \in \bbZ}$ be a family of operators, with
$T_j:\simpleone\to L^1_{B_2,\loc}$, and  satisfying the strengthened support condition \eqref{eqn:strengthenedsupp}. Let $1\le p<q\le \infty$
and  suppose that for all $f \in \mathrm{S}_{B_1}$ and all $\bbR$-valued nonnegative simple function $\om$, the estimate
\[ \int_{\bbR^d} |T_j f(x)|\ci{B_2} \om(x) dx \le \sC \, \Lamaxga_{p,B_1,q',\bbR}(f,\om) \] holds uniformly in  $j\in \bbZ$. 
Then \[\sup_{j \in \bbZ}\|\dil_{2^j }T_j \|_{L^p_{B_1}\to L^q_{B_2^{}}} \lc_{\gamma,d,\delta_1,\delta_2,p,q}  \sC.\]
\end{lem}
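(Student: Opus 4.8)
The plan is to reduce the assertion to a single localized $L^p\to L^q$ bound and then reassemble. Write $S_j=\dil_{2^j}T_j$. It suffices to prove $\|S_jf\|_{L^q_{B_2}}\lc\sC\|f\|_{L^p_{B_1}}$ for all simple $f\in\simpleone$, with implicit constant of the claimed type and uniform in $j$; density of simple functions in $L^p_{B_1}$ then yields the operator bound. Fix $s=s(\delta_1,\delta_2,d)>0$ small, say $s=\delta_2/(2d)$, so that $s\sqrt d\le\delta_2$ and $s\sqrt d<\delta_1/\sqrt d$, and let $\{R_\nu\}_\nu$ be the tiling of $\bbR^d$ by the grid of half-open cubes of side length $s$.

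\emph{Localized estimate.} First I would show: if $g\in\simpleone$ is supported in a single $R_\nu$, then $\|S_jg\|_{L^q_{B_2}}\lc_{\gamma,d,\delta_1,\delta_2,p,q}\sC\|g\|_{L^p_{B_1}}$. Since $\diam(\supp g)\le s\sqrt d\le\delta_2$, the strengthened support condition \eqref{eqn:strengthenedsupp} applies and shows that $G:=\supp(S_jg)$ obeys $\dist(\supp g,G)\ge\delta_1$ and $G\subset\{x:\dist(x,R_\nu)\le1\}$; in particular $S_jg\in L^1_{B_2,\loc}$ is bounded and compactly supported. For any $\bbR$-valued nonnegative simple $\omega$ one has $\int|S_jg|_{B_2}\omega=\int|S_jg|_{B_2}(\omega\bbone_G)\le\sC\,\Lamaxga_{p,B_1,q',\bbR}(g,\omega\bbone_G)$ by hypothesis, so it is enough to bound $\Lamaxga_{p,B_1,q',\bbR}(g,\tilde\omega)$ by $C\|g\|_{L^p_{B_1}}\|\tilde\omega\|_{L^{q'}}$ whenever $\supp\tilde\omega\subset G$. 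In the defining sum $\sum_{Q\in\fS}|Q|\langle g\rangle_{Q,p,B_1}\langle\tilde\omega\rangle_{Q,q',\bbR}$ only cubes $Q$ meeting both $\supp g$ and $G$ contribute, and by the separation these have $\diam Q\ge\delta_1$, hence side length $\ge\delta_1/\sqrt d$ (in particular exceeding $\diam\supp g$). Using the crude bounds $\langle g\rangle_{Q,p,B_1}\le|Q|^{-1/p}\|g\|_{L^p_{B_1}}$, $\langle\tilde\omega\rangle_{Q,q',\bbR}\le|Q|^{-1/q'}\|\tilde\omega\|_{L^{q'}}$, and $1-\tfrac1p-\tfrac1{q'}=\tfrac1q-\tfrac1p<0$, each summand is at most $|Q|^{1/q-1/p}\|g\|_{L^p_{B_1}}\|\tilde\omega\|_{L^{q'}}$; summing over the cubes of $\fQ$ meeting $\supp g$ with side length $\ge\delta_1/\sqrt d$ (at most $2^d$ per dyadic generation, since their side length exceeds $\diam\supp g$) gives $\sum_Q|Q|^{1/q-1/p}\le C(\delta_1,d,p,q)$, a convergent geometric series precisely because the exponent is negative. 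Hence $\int|S_jg|_{B_2}\omega\le C\sC\|g\|_{L^p_{B_1}}\|\omega\|_{L^{q'}}$ for every simple $\omega\ge0$, and the $L^q$–$L^{q'}$ duality for the locally integrable function $|S_jg|_{B_2}$ (with the standard $L^1$-testing variant if $q=\infty$) yields $\|S_jg\|_{L^q_{B_2}}\le C\sC\|g\|_{L^p_{B_1}}$.

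\emph{Assembling.} For general simple $f$ put $f_\nu=f\bbone_{R_\nu}$, a finite decomposition, so $S_jf=\sum_\nu S_jf_\nu$ with $\supp(S_jf_\nu)\subset\{x:\dist(x,R_\nu)\le1\}$. These sets have overlap at most $N=N(\delta_1,\delta_2,d)=O(s^{-d})$, so pointwise $|S_jf|_{B_2}^q\le N^{q-1}\sum_\nu|S_jf_\nu|_{B_2}^q$ by Hölder applied to the at most $N$ nonzero terms; combining with the localized estimate,
\[\|S_jf\|_{L^q_{B_2}}^q\le N^{q-1}\sum_\nu\|S_jf_\nu\|_{L^q_{B_2}}^q\le N^{q-1}(C\sC)^q\sum_\nu\|f_\nu\|_{L^p_{B_1}}^q\le N^{q-1}(C\sC)^q\|f\|_{L^p_{B_1}}^q,\]
where the last step uses $p\le q$, hence the embedding $\ell^p\hookrightarrow\ell^q$, together with $\sum_\nu\|f_\nu\|_{L^p_{B_1}}^p=\|f\|_{L^p_{B_1}}^p$. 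Taking $q$-th roots and then extending by density finishes the proof; the case $q=\infty$ is handled by the obvious $\ell^p\hookrightarrow\ell^\infty$ modification.

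\emph{Main obstacle.} The only genuine point is that the target bound is $L^p\to L^q$ \emph{improving} ($p<q$), so Hölder applied directly to a generic sparse form cannot produce it — this is exactly why the plain support condition \eqref{support-assu} does not suffice and the \emph{strengthened} support condition is imposed. The separation $\delta_1$ between $\supp g$ and $\supp(S_jg)$ is what confines the relevant cubes to side lengths $\gtrsim\delta_1$, turning $\sum_Q|Q|^{1/q-1/p}$ into a convergent rather than divergent sum; the remaining ingredients (bounded overlap of the localized pieces, $L^q$–$L^{q'}$ duality) are routine, since $s$ and $N$ are fixed constants depending only on $\delta_1,\delta_2,d$.
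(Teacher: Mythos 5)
Your proof is correct and follows essentially the same route as the paper's: localize $f$ to cubes of side length comparable to $\min\{\delta_1,\delta_2\}$, use the separation forced by the strengthened support condition \eqref{eqn:strengthenedsupp} to see that only cubes of diameter $\gtrsim \delta_1$ can contribute to any sparse form, sum the resulting geometric series (convergent precisely because $p<q$), dualize to recover the $L^q$ norm, and reassemble via bounded overlap and the embedding $\ell^p\hookrightarrow\ell^q$. Two remarks. First, one step is stated too quickly: the hypothesis gives the sparse bound for $T_j$, not for $S_j=\dil_{2^j}T_j$, so your inequality $\int |S_jg|_{B_2}\,\omega \le \sC\,\Lamaxga_{p,B_1,q',\bbR}(g,\omega\bbone_G)$ ``by hypothesis'' requires the rescaling observation that the paper records explicitly at the start of its proof: after the change of variables $x\mapsto 2^{-j}x$ one uses that dilating a $\gamma$-sparse family by $2^{-j}$ produces again a $\gamma$-sparse family in a dyadic lattice, so that $2^{-jd}\Lamaxga_{p,B_1,q',\bbR}\big(g(2^{-j}\cdot),\omega(2^{-j}\cdot)\big)\le \Lamaxga_{p,B_1,q',\bbR}(g,\omega)$; this is exactly where the uniformity in $j$ comes from, and it should be inserted before the localized estimate. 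Second, your device of testing against $\omega\bbone_G$ with $G=\supp(S_jg)$ is a mild streamlining of the paper's argument, which instead subdivides the region where $S_jf_\fz$ can live into subcubes $R_{\fz,\nu}$ of small side length and works with $\omega\bbone_{R_{\fz,\nu}}$; restricting $\omega$ to the essential support directly enforces the $\delta_1$-separation between the supports of the two entries of the sparse form and avoids that secondary decomposition (and $\omega\bbone_G$ is still a nonnegative simple function, since $G$ is closed, so the hypothesis applies). With the rescaling step supplied, the argument is complete and matches the paper's in all essential respects.
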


\begin{proof} Fix $j \in \bbZ$ and let $S= \dil_{2^j} T_j$. We first apply a scaling argment. Note that
 by assumption
\begin{align*} \int |S f(x)|\ci {B_2} \om(x) dx &= 
2^{-jd} \int \big|T_j[f(2^{-j} \cdot)] (x)\big|\ci{B_2} \om(2^{-j} x) dx
\\& \le \sC \,  2^{-jd} \Lamaxga_{p,B_1,q',\bbR} (f(2^{-j}\cdot), \om(2^{-j}\cdot) )
\end{align*}
If $\La_{p,B_1,q,\bbR} ^\fS$ is a sparse form  with a  $\gamma$-sparse collection of cubes  we form the collection
$\fS_j$ of dilated cubes $\{2^{-j} y: y\in Q \} $ where $Q\in \fS$. 
Then 
\[2^{-jd} \La^{\fS }_{p,B_1,q',\bbR} (f(2^{-j}\cdot), \om(2^{-j}\cdot) )
=\La^{\fS_j}_{p,B_1,q',\bbR} (f,\om) \] and therefore we get the estimate
\Be \label{eq:assumption nec normalised}
\int|S f(x)|\ci{B_2} \om(x) dx \le \sC \Lamaxga_{p,B_1,q',\bbR}(f,\om) .
\Ee

Suppose that $b$ is the smallest positive integer  such that
$$2^{-b} \le d^{-1/2}\min\{ \delta_1/2, \delta_2\}.$$
 For $\fz\in \bbZ^d$ let  
 \[Q_\fz= \{x: 2^{-b} \fz_i\le x_i< 2^{-b}(\fz_{i}+1), \,\,i=1,\dots, d \}\] and let $f_\fz=f\bbone_{Q_\fz}$. 
Let $R_\fz$ the cube of side length $3$ centered at $2^{-b}\fz$. Then $Sf_\fz$ is supported in $R_\fz$. We decompose $R_\fz$ into $3^d 2^{bd}$ 
cubes $R_{\fz,\nu}$ of side length $2^{-b}$, here $\nu\in  \cI_{\fz}$ with $\# \cI_{\fz} =3^d 2^{bd} $. 

Fix $\fz,\nu$ and a simple nonnegative function  $\om$ with $\|\om \|_{L^{q'}}\le 1$.  We first prove that for $\nu\in \cI_{\fz}$ 
\Be\label{eqn:dualityfixedz}
\int |Sf_\fz(x)|\ci{B_2} \omega (x) \bbone_{R_{\fz,\nu}}(x)  dx   \lc \sC \, \|f_\fz\|_{L^p_{B_1}}
\| \om \|_{L^{q'}}.
\Ee
In this argument we shall  not use strong measurability of $Sf_\fz$. 
By \eqref{eq:assumption nec normalised} we have
\[\int |Sf_\fz(x)|\ci{B_2} \omega (x) \bbone_{R_{\fz,\nu}}(x)  dx  
 \le \sC \, \Lamaxga_{p,B_1,q', \bbR} (f_{\fz}, \om \bbone_{R_{\fz,\nu}} ) \]
and therefore we find a sparse family $\fS_{\fz,\nu} $ such that
\Be \label{eqn:concretesparse} 
\int |Sf_\fz(x)|\ci{B_2} \omega (x) \bbone_{R_{\fz,\nu}}(x)  dx  
\le 2 \sC \sum_{Q\in \fS_{\fz,\nu}} |Q|\jp{f_{\fz}}_{Q,p} \jp{ \om \bbone_{R_{\fz,\nu}}}_{Q,q'}.  \Ee
By the strengthened support condition, \eqref{eqn:strengthenedsupp},
\Be\label{eqn:suppsep}
Sf_{\fz}\bbone_{R_{\fz,\nu}} \neq 0 
\implies \dist( Q_{\fz}, R_{\fz,\nu} ) \ge \delta_1-2^{-b}\sqrt d.\Ee
Assuming that the left-hand side is not $0$ in \eqref{eqn:concretesparse}, and in view of \eqref{eqn:suppsep}, we see that for a cube $Q\in \fS_{\fz,\nu}$ we have, recalling that $\delta_1\ge\sqrt{d}\,2^{-b+1}$,
\begin{equation}\label{eqn:lowerbdQ}\notag
\def\arraystretch{1.4}
\left.
\begin{array}{r}
      Q_\fz\cap Q\neq \emptyset \\ 
R_{\fz,\nu}\cap Q \neq  \emptyset
\end{array}\right\} \implies  
\diam(Q)\ge \delta_1-2^{-b} \sqrt d \ge  2^{-b}\sqrt d.
\end{equation}
Hence all cubes that contribute to the sum in  \eqref{eqn:concretesparse} have side length $\ge 2^{-b}$. %
Denote the cubes in $\fS_{\fz,\nu}$ with side length in $[2^\ell, 2^{\ell+1})$ by $\fS_{\fz,\nu}(\ell) $ and note that for every $\ell \ge -b$ there are at most  $C(d)$ many cubes that contribute. 
Hence we may estimate
\begin{align*}
&\sum_{Q\in \fS_{\fz,\nu}} |Q|\jp{f_{\fz}}_{Q,p} \jp{\om\bbone_{R_{\fz,\nu}}}_{Q,q'}
\\&\le \sum_{\ell\ge -b} 
\sum_{Q\in \fS_{\fz,\nu}(\ell)} |Q|^{\frac 1q-\frac 1p} \Big(\int_Q|f_{\fz}(y)|_{B_1}^p dy\Big)^{\frac 1p} \Big(\int_Q |\om(x) \bbone_{R_{\fz,\nu}}(x) |_{}^{q'} dx\Big)^{\frac{1}{q'}} 
\\ &\lc_d\sum_{\ell\ge -b} 2^{\ell d(\frac 1q-\frac 1p)} \|f_\fz\|_{L^p_{B_1}} \|\om\|_{L^{q'}_{} } \lc_{b,d,p,q} \|f_\fz\|_{L^p_{B_1}} 
\end{align*}
where we used the assumption 
$q>p$ to sum in $\ell$.
This establishes \eqref{eqn:dualityfixedz}.

By duality 
combined with \eqref{eqn:dualityfixedz} we have
\Be\label{Sffz-Lq-bd} 
\|S f_{\fz} \|_{L^q_{B_2} (R_{\fz,\nu})}  \lc 
\sup_{\substack{ \om\in {\mathrm S}_{\bbR} \\ \|\om\|_{L^{q'}_{}}\le 1}}  \int |Sf_\fz (x)|\ci{B_2} |\om(x)| \bbone_{R_{\fz,\nu }}(x) dx   \lc \sC \, \|f_\fz\|_{L^p_{B_1}}.
\Ee
Considering this for various $\nu \in \cI_\fz $ we get
\[
\|S f_{\fz} \|_{L^q_{B_2^{}}} 
\lc\sum_{\nu\in \cI_\fz}  \|S f_{\fz} \|_{L^q_{B_2^{}} (R_{\fz,\nu})} 
\lc\sum_{\nu\in \cI_\fz} \sC \, \|f_\fz\|_{L^p_{B_1}}
\lc_{d,\delta_1,\delta_2} \sC \, \|f_\fz\|_{L^p_{B_1}}.\]
Then
\begin{align}\label{eqn:fzlocalization}
\| S f\|_{L^q_{B_2^{}}}&=\Big\|\sum_{\fz \in \bbZ^d} S f_\fz\Big\|_{L^q_{B_2^{}}} \lc C_d  2^{bd} \Big(\sum_{\fz \in \bbZ^d} \|S f_\fz\|_{L^q_{B_2^{}}}^q\Big)^{1/q}
\\  \notag &\lc_{d,\delta_1,\delta_2}
\sC \, \Big(\sum_{\fz\in\bbZ^d} \|f_\fz\|_{L^p_{B_1}}^q\Big)^{1/q} \lc
\sC \, \Big(\sum_{\fz\in\bbZ^d} \|f_\fz\|_{L^p_{B_1}}^p\Big)^{1/p} \lc\sC \, \|f\|_{L^p_{B_1}}. \qedhere
\end{align}
\end{proof}
\noindent Theorem \ref{thm:necessarycor} now follows from Lemmata \ref{lem:necessitywithL1loc} and \ref{lem:pq-nec}.

\subsection{The reflexivity hypothesis}\label{sec:reflexivity}
In this section we prove a version of Theorem \ref{thm:necessarycor} where  we drop the a priori assumption on $T_j$ sending $\simpleone$ to $L^1_{B_2,\loc}$ and thus we can no longer  assume the strong $B_2$ measurability of $Tf$.  We still get a partial converse to Theorem \ref{mainthm} if we assume that the Banach space $B_2$ is reflexive.

\begin{thm}\label{thm:necessary}
Let $B_2$ be reflexive and let  $1<p<q<\infty$. Let   $\{T_j\}_{j \in \bbZ}$ be a family of operators in 
$\mathrm{Op}_{B_1,B_2}$ satisfying the support condition \eqref{support-assu}.
Assume the conclusion of Theorem \ref{mainthm},  that is, there exists $\sC>0$ such that for all $N_1$, $N_2$ with $N_1\le N_2$
and every   $f_1\in \mathrm{S}_{B_1}$, $f_2\in \mathrm S_{B_2^*} $,
\[\Big|\biginn{\sum_{j=N_1}^{N_2}T_jf_1}{f_2} \Big|\le \sC \, \Lamaxga_{p,B_1,q', B_2^*}(f_1,f_2) .\] Then 
\begin{enumerate}
\item[(i)] 
Conditions \eqref{bdness-wt} and  \eqref{bdness-rt} hold, i.e.,  there is a constant $c>0$ only depending on $d,p,q,\gamma$ such that for all $N_1, N_2$ with $N_1\le  N_2$,
\begin{equation} \notag
 \Big\|\sum_{j=N_1}^{N_2} T_j\Big \|_{L^{p}_{B_1}\to L^{p,\infty}_{B_2}}
\le c \,\sC,  \qquad 
\Big\|\sum_{j=N_1}^{N_2} T_j\Big \|_{L^{q,1}_{B_1}\to L^q_{B_2}} \le c\,\sC\,.
\end{equation}

\item[(ii)] If, in addition, the $T_j$ satisfy 
the strengthened support condition \eqref{eqn:strengthenedsupp} 
then condition \eqref{p-q-rescaled} holds, i.e., there is a constant $c>0$ 
only depending on $d,p,q,\gamma$ such that 
\begin{equation}\notag
\sup_{j \in \bbZ} \|\dil_{2^j} T_j\|_{L^p_{B_1}\to L^q_{B_2}} \le c\, \sC.
\end{equation}
\end{enumerate} 
\end{thm}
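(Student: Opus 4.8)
The plan is to follow the proof of Theorem \ref{thm:necessarycor} essentially verbatim, replacing the one place where that argument used $T_jf\in L^1_{B_2,\loc}$ — the passage from a bound on the scalar pairings $\inn{T_jf(\cdot)}{\lambda}$ to a bound on a vector-valued norm — by a vector-valued duality step which is available precisely because $B_2$ is reflexive, hence $(B_2^*)^*=B_2$ has the Radon--Nikodym property. Throughout write $T=\sum_{j=N_1}^{N_2}T_j$, and note that $1<p<q<\infty$ forces $p',q,q'\in(1,\infty)$, so all the $L^p$ and Lorentz dualities below are the standard ones.

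For part (i), first I would combine the hypothesis with Lemma \ref{standardlemma}. Since $p<q=(q')'$, part (iii) of that lemma applied with the exponent pair $(p,q')$ yields $|\inn{Tf_1}{f_2}|\lesssim_{p,q}\sC\,\gamma^{-1}\|f_1\|_{L^p_{B_1}}\|f_2\|_{L^{p',1}_{B_2^*}}$ for all simple $f_1,f_2$, while part (ii) applied with $p$ in the role of the lemma's $p_1$ and $q$ in the role of its $p$ yields $|\inn{Tf_1}{f_2}|\lesssim_{p,q}\sC\,\gamma^{-1}\|f_1\|_{L^{q,1}_{B_1}}\|f_2\|_{L^{q'}_{B_2^*}}$. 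Reflexivity of $B_2$ makes $B_2^*$ reflexive, so $(B_2^*)^*=B_2$ has the Radon--Nikodym property, whence $(L^{q'}_{B_2^*})^*=L^q_{B_2}$ and $(L^{p',1}_{B_2^*})^*=L^{p,\infty}_{B_2}$ — the vector-valued extensions of the scalar dualities in \cite{hunt}; see also \cite{hytonen-etal}. Since simple $B_2^*$-valued functions are dense in $L^{q'}_{B_2^*}$ and in $L^{p',1}_{B_2^*}$, each estimate extends to a bounded linear functional on the respective space, hence is represented by a function $g\in L^q_{B_2}$ of norm $\lesssim\sC\,\gamma^{-1}\|f_1\|_{L^{q,1}_{B_1}}$, respectively $g\in L^{p,\infty}_{B_2}$ of norm $\lesssim\sC\,\gamma^{-1}\|f_1\|_{L^p_{B_1}}$. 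Testing the representation identity against $f_2=\lambda\bbone_E$ with $\lambda\in B_2^*$ and $E$ a bounded measurable set, and using that both $\inn{Tf_1(\cdot)}{\lambda}$ (by the definition of $\mathrm{Op}_{B_1,B_2}$) and $\inn{g(\cdot)}{\lambda}$ are locally integrable, shows $\inn{Tf_1(x)}{\lambda}=\inn{g(x)}{\lambda}$ for a.e.\ $x$ and every $\lambda$; thus $Tf_1$ coincides with the strongly measurable function $g$ in the weak sense in which operators in $\mathrm{Op}_{B_1,B_2}$ are determined. This yields \eqref{bdness-wt} and \eqref{bdness-rt} for $T$.

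For part (ii), I would re-run the proof of Lemma \ref{lem:pq-nec}, preserving all of its combinatorial bookkeeping and changing only the concluding duality step. Fix $j$, put $S=\dil_{2^j}T_j$; the scaling argument producing the normalized inequality \eqref{eq:assumption nec normalised} for $S$ is unaffected by the weakening of the hypotheses. Decompose $f=\sum_{\fz\in\bbZ^d}f_\fz$ with $f_\fz=f\bbone_{Q_\fz}$ over the grid of cubes of side $2^{-b}$ (with $b$ chosen as there), note that $Sf_\fz$ lives on $R_\fz$, and split $R_\fz$ into the subcubes $R_{\fz,\nu}$. The estimate \eqref{eqn:dualityfixedz}, namely $|\inn{Sf_\fz}{\omega\bbone_{R_{\fz,\nu}}}|\lesssim\sC\|f_\fz\|_{L^p_{B_1}}\|\omega\|_{L^{q'}}$ for scalar nonnegative simple $\omega$, is derived exactly as in that proof: by the strengthened support condition \eqref{eqn:strengthenedsupp} only sparse cubes of side length $\gtrsim2^{-b}$ contribute, there are $O_d(1)$ of them per dyadic scale, and the resulting geometric series converges because $q>p$ — and, as noted there, this step uses only local integrability of $x\mapsto\inn{Sf_\fz(x)}{\lambda}$. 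Now, in place of scalar duality, I would invoke the vector-valued duality $(L^{q'}_{B_2^*}(R_{\fz,\nu}))^*=L^q_{B_2}(R_{\fz,\nu})$ (again from reflexivity of $B_2$): testing $\inn{Sf_\fz}{\cdot}$ against simple $B_2^*$-valued functions supported in $R_{\fz,\nu}$ and pointwise bounded by scalar weights produces a representing $g_{\fz,\nu}\in L^q_{B_2}(R_{\fz,\nu})$ with $\|g_{\fz,\nu}\|_{L^q_{B_2}}\lesssim\sC\|f_\fz\|_{L^p_{B_1}}$ that agrees with $Sf_\fz$ weakly on $R_{\fz,\nu}$. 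Patching the $g_{\fz,\nu}$ over $\nu$ and then over $\fz$ exactly as in \eqref{eqn:fzlocalization} — using disjointness of the $Q_\fz$, the $O_d(2^{bd})$-bounded overlap of the $R_\fz$, and the embedding $\ell^p\hookrightarrow\ell^q$ — identifies $Sf$ with an $L^q_{B_2}$ function of norm $\lesssim\sC\|f\|_{L^p_{B_1}}$; taking the supremum over $j$ gives \eqref{p-q-rescaled}, $\sup_{j\in\bbZ}\|\dil_{2^j}T_j\|_{L^p_{B_1}\to L^q_{B_2}}\lesssim\sC$.

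The main obstacle is exactly the step just highlighted: upgrading ``$Tf_1$ pairs boundedly with all $B_2^*$-valued test functions'' to ``$Tf_1$ is (weakly equal to) a strongly measurable $B_2$-valued function of controlled norm.'' In the $L^1_{B_2,\loc}$ setting of Theorem \ref{thm:necessarycor} one sidesteps this by working directly with the scalar function $x\mapsto|Tf_1(x)|_{B_2}$ together with scalar $L^p$ and Lorentz duality; without that a priori hypothesis the Radon--Nikodym property of $(B_2^*)^*=B_2$, i.e.\ reflexivity of $B_2$, is genuinely needed to realize the functional as an element of $L^q_{B_2}$ (resp.\ $L^{p,\infty}_{B_2}$), and one must additionally check that the representing function agrees weakly with $Tf_1$ so that the conclusion is a bona fide statement about $\sum_j T_j\in\mathrm{Op}_{B_1,B_2}$. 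Once this representation is secured, everything else — Lemma \ref{standardlemma} for part (i) and the sparse-cube counting under the strengthened support condition for part (ii) — is identical to the proof of Theorem \ref{thm:necessarycor}.
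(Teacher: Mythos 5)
Your proposal is correct and follows essentially the same route as the paper: the paper proves Theorem \ref{thm:necessary} via Lemmas \ref{lem:necessitywith**} and \ref{lem:pq-nec**}, which combine Lemma \ref{standardlemma} with the dualities $(L^{q'}_{B_2^*})^*=L^q_{B_2^{**}}$, $(L^{p',1}_{B_2^*})^*=L^{p,\infty}_{B_2^{**}}$ furnished by the Radon--Nikodym property of $B_2^{**}$ (which equals $B_2$ under reflexivity), and re-run the cube decomposition of Lemma \ref{lem:pq-nec} with the scalar duality step replaced by this abstract representation, exactly as you describe. The only cosmetic difference is that the paper states the intermediate lemmas for general $B_2$ with $B_2^{**}\in\mathrm{RNP}$ and specializes to the reflexive case at the end, while you work with the reflexive case directly and add an explicit (harmless) verification that the representing function agrees weakly with $Tf_1$.
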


In the vector valued setting of  Theorem \ref{thm:necessary} 
we need to use a more abstract duality argument which requires some care  because of a potential  lack of strong local integrability.  We briefly discuss the issue of duality.

Let $B$ be a Banach space. Recall that for $1\le p\le \infty$, $1/p+1/p'=1$, %
the space $L^{p'}_{B^*}$ is embedded in $(L^p_{B})^*$ via the canonical isometric homomorphism. 
In the scalar case  this isometry is also  surjective when $1\le p<\infty$, and the proof of this fact relies on  the Radon--Nikodym theorem.  In the vector-valued case  the surjectivity is equivalent with the dual space $B^*$ having the  {\it Radon--Nikodym property} (RNP) with respect to Lebesgue measure
(see \cite[Chapter 1.3.b]{hytonen-etal} for the formal definition).
Thus under  this assumption we have an  identification of the dual of $L^p_B$ with $L^{p'}_{B^*}$. To summarize, 
\begin{equation}\label{hyp:RNP}  B^* \in \mathrm{  RNP}  \iff 
 (L^p_{B})^* = L^{p'}_{B^*}, \quad 1\le p<\infty.
\end{equation}
Similarly, the Radon--Nikodym property for $B^*$ also implies
\Be \notag \label{eqn:Lordual} (L^{p,r}_{B})^* = L^{p',r'}_{B^*}, \quad  1<p<\infty,  \quad 1\le r<\infty;\Ee
this is not stated in \cite{hytonen-etal} but follows by a similar  argument as in the scalar case \cite[(2.7)]{hunt}, essentially with  the exception of the application of the Radon--Nikodym property in place of the scalar Radon--Nikodym theorem.
For a detailed discussion of the Radon--Nikodym properties and its applications we  refer to  \cite[Chapter 1.3.c]{hytonen-etal}.
The class of  spaces which have the Radon--Nikodym property with respect to  all $\sigma$-finite measure spaces  includes  all {\it reflexive} spaces and also all spaces that have a {\it separable dual} (\cf.  \cite[Theorem 1.3.21]{hytonen-etal}). If $B$ is reflexive, so is $B^*$, and therefore 
 \eqref{hyp:RNP} holds for reflexive spaces $B$.

Under the assumption that the double dual $B_2^{**}$ satisfies the Radon--Nikodym property, we can show that the sparse bound implies that $Tf$ can be identified with a $B_2^{**}$ strongly measurable function. This leads to a satisfactory conclusion {under the stronger assumption} that  $B_2$ is reflexive.

\begin{lem} \label{lem:necessitywith**}
Assume that $T\in \mathrm{Op}_{B_1,B_2} $  and that  $B_2^{**}$ satisfies the Radon--Nikodym property. Then the following hold.
\begin{enumerate}
\item[(i)] 
If $1 \leq p_1 < p<\infty$ and  $T\in \Sp_\ga(p_1,B_1,p',B_2^*) $ then 
$T$ extends to a bounded operator from $L^{p,1}_{B_1}$ to $L^{p}_{B_2^{**}}$ so that
\Be\label{rtbdness-vs-sparse**}
\|T\|_{L^{p,1}_{B_1}\to L^{p}_{B_2^{**}}} \lc_{p_1} \gamma^{-1} \|T\|_{\Sp_\ga({p_1},{B_1},{p'},{B_2^*})}.
\Ee
\item[(ii)]  If $1< p<p_2'$ and  $T\in \Sp_\ga(p,B_1,p_2,B_2^*) $ then 
$T$ extends to a bounded operator from $L^{p}_{B_1}$ to $L^{p,\infty}_{B_2^{**}}$ so that
\Be\label{wtbdness-vs-sparse**}
\|T\|_{L^p_{B_1}\to L^{p,\infty}_{B_2^{**}}} \lc_{p_2} \gamma^{-1} \|T\|_{\Sp_\ga({p},{B_1},{p_2},{B_2^*})}.
\Ee
\end{enumerate}
\end{lem}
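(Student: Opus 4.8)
Proof plan for Lemma \ref{lem:necessitywith}.**

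The plan is to mimic the proof of Lemma \ref{lem:necessitywithL1loc}, replacing the scalar $L^p$-duality step by an abstract duality argument valued in $B_2^{**}$. The starting point is the same: from $T\in\Sp_\ga(p_1,B_1,p',B_2^*)$ (resp. $\Sp_\ga(p,B_1,p_2,B_2^*)$) together with Lemma \ref{standardlemma}, parts (ii) and (iii), we obtain for $f\in\simpleone$, $g\in\simpledual$ the bilinear estimates
\[
|\inn{Tf}{g}|\le \|T\|_{\Sp_\ga(p_1,B_1,p',B_2^*)}\,\Lamaxga_{p_1,B_1,p',B_2^*}(f,g)\lc_{p_1}\ga^{-1}\|T\|_{\Sp_\ga(p_1,B_1,p',B_2^*)}\|f\|_{L^{p,1}_{B_1}}\|g\|_{L^{p'}_{B_2^*}}
\]
in case (i), and the analogous estimate with $\|f\|_{L^p_{B_1}}\|g\|_{L^{p',1}_{B_2^*}}$ in case (ii). Thus, for fixed $f\in\simpleone$, the linear functional $g\mapsto\inn{Tf}{g}$ defined on the dense subspace $\simpledual$ of $L^{p'}_{B_2^*}$ (resp. of $L^{p',1}_{B_2^*}$) is bounded, hence extends to an element $\Lambda_f$ of $(L^{p'}_{B_2^*})^*$ (resp. of $(L^{p',1}_{B_2^*})^*$) with norm controlled by $\ga^{-1}\|T\|_{\Sp}\|f\|_{L^{p,1}_{B_1}}$ (resp. $\ga^{-1}\|T\|_{\Sp}\|f\|_{L^p_{B_1}}$).

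Next I would identify these dual spaces. Since $B_2^{**}$ has the Radon--Nikodym property, the discussion around \eqref{hyp:RNP} gives $(L^{p'}_{B_2^*})^*=L^{p}_{(B_2^*)^*}=L^p_{B_2^{**}}$ for $1\le p'<\infty$ (case (i)), and likewise $(L^{p',1}_{B_2^*})^*=L^{p,\infty}_{B_2^{**}}$ for $1<p'<\infty$, $1\le 1<\infty$ (case (ii), using the Lorentz-space identification stated after \eqref{hyp:RNP}). Hence $\Lambda_f$ is represented by a function $Gf\in L^p_{B_2^{**}}$ (resp. $L^{p,\infty}_{B_2^{**}}$) with the quantitative bound \eqref{rtbdness-vs-sparse**} (resp. \eqref{wtbdness-vs-sparse**}). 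It remains to check that $Gf$ really is an extension of $Tf$ in the appropriate sense: for $g\in\simpledual$ we have $\int\inn{Gf(x)}{g(x)}\,dx=\Lambda_f(g)=\inn{Tf}{g}=\int\inn{Tf(x)}{g(x)}\,dx$, i.e. $Gf$ agrees with $Tf$ when tested against simple $B_2^*$-valued functions; since $Tf$ is only weakly measurable a priori, this is the correct notion of ``$T$ extends to a bounded operator into $L^p_{B_2^{**}}$'', and linearity of $f\mapsto Gf$ is immediate from linearity of $T$ and uniqueness of the representative.

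The step I expect to be the main obstacle is the precise justification of the duality identification $(L^{p',1}_{B_2^*})^*=L^{p,\infty}_{B_2^{**}}$ under RNP for $B_2^{**}$, since this Lorentz-space version is not stated in \cite{hytonen-etal}; the excerpt itself only asserts it ``follows by a similar argument as in the scalar case \cite[(2.7)]{hunt}'', so in the proof I would either invoke that remark or reproduce the short argument (build the representative on dyadic level sets via the $L^{p'}$-RNP identification and sum, controlling the $L^{p,\infty}$ quasinorm by duality against $L^{p',1}$). A secondary, more routine point is bookkeeping about which $L^{p'}$ versus $L^{p',1}$ space contains $\simpledual$ densely — simple $B_2^*$-valued functions are dense in $L^{p',1}_{B_2^*}$ for $1<p'<\infty$ and in $L^{p'}_{B_2^*}$ for $1\le p'<\infty$ — which is exactly what makes the extension of $g\mapsto\inn{Tf}{g}$ well-defined and unique. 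Finally I would note that when $B_2$ is reflexive we have $B_2^{**}=B_2$ and $B_2^{**}$ automatically has RNP, recovering the clean statement used in Theorem \ref{thm:necessary}.
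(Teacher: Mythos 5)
Your proposal is correct and follows essentially the same route as the paper: both pass from the sparse bound through Lemma \ref{standardlemma} (ii)/(iii) to the bilinear estimates, then use the Radon--Nikodym property of $B_2^{**}$ to invoke the duality identifications $(L^{p'}_{B_2^*})^*=L^{p}_{B_2^{**}}$ and $(L^{p',1}_{B_2^*})^*=L^{p,\infty}_{B_2^{**}}$ and identify $Tf$ with an element of the target space, yielding \eqref{rtbdness-vs-sparse**} and \eqref{wtbdness-vs-sparse**}. Your extra care about density of $\simpledual$ and about checking that the representing function agrees with $Tf$ against simple test functions is consistent with, and slightly more explicit than, what the paper writes.
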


\begin{proof}

We rely, 
as in the proof of Lemma \ref{lem:necessitywithL1loc}, on Lemma \ref{standardlemma}.

For part (i), we use \eqref{Gbypp'lor1} to
obtain \[ |\inn{Tf_1}{f_2}|\lc \gamma^{-1}  \|T\|_{\Sp_\ga({p},{B_1};{p_2},{B_2^*})} \|f_1\|_{L^{p,1}_{B_1}} \|f_2\|_{L^{p'}_{B_2^*}} .
\]
This inequality establishes the form $f_2\mapsto \inn{Tf_1}{f_2}$  as a linear functional on $L^{p'}_{B_2^*}$. Since $B_2^{**}$ has the Radon--Nikodym property and thus %
$(L^{p'}_{B_2^*})^*= L^{p}_{B_2^{**}}$, 
we can identify $Tf_1$ as a member of $L^{p}_{B_2^{**}}$. 
Since  
 \[  \|T\|_{L^{p,1}_{B_1}\to L^{p}_{B_2^{**}}}
     =
     \sup_{\|f_1\|_{L^{p,1}_{B_1}}\le 1} \sup_{\|f_2\|_{L^{p'}_{B_2^*}}\le 1} |\inn{Tf_1}{f_2}|,
     \]
     we have established \eqref{rtbdness-vs-sparse**}.

    Similarly, for part (ii) we  use \eqref{Gbypp'lor2} 
 to obtain 
 \[ |\inn{Tf_1}{f_2}|\lc \gamma^{-1}  \|T\|_{\Sp_\ga({p},{B_1};{p_2},{B_2^*})} \|f\|_{L^{p}_{B_1}} \|g\|_{L^{p',1}_{B_2^*}} .
\]
 Since  $L^{p,\infty}_{B_2^{**}}$ can be identified with 
 $(L^{p',1}_{B_2^*} )^*$ we then get 
 \[  \|T\|_{L^{p}_{B_1}\to L^{p,\infty}_{B_2^{**}}}
     =
     \sup_{\|f_1\|_{L^{p}_{B_1}}\le 1} \sup_{\|f_2\|_{L^{p',1}_{B_2^{*}}}\le 1} |\inn{Tf_1}{f_2}|
     \] and obtain \eqref{wtbdness-vs-sparse**}.
\end{proof}

\begin{lem} \label{lem:pq-nec**}  %
Assume that $B_2^{**} $ satisfies the Radon--Nikodym property.
Let $\{T_j\}_{j \in \bbZ} $ be  a family of  operators in $\mathrm{Op}_{B_1.B_2} $, 
satisfying   the strengthened support condition \eqref{eqn:strengthenedsupp}. Suppose that $1\le p<q\le \infty$
and  
\[\sup_{j \in \bbZ} \|T_j\|_{\Sp_\ga(p,B_1, q', B_2^*)}\le \sC.\]
Then \[\sup_{j \in \bbZ}\|\dil_{2^j }T_j \|_{L^p_{B_1}\to L^q_{B_2^{**}}} \lc_{\gamma,d,\delta_1,\delta_2,p,q}  \sC.\]
\end{lem}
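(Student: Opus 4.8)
The plan is to mirror the proof of Lemma \ref{lem:pq-nec}, replacing the concrete duality step (which used strong local integrability of $T_jf$) by the abstract duality furnished by Lemma \ref{lem:necessitywith**}, available here because $B_2^{**}$ has the Radon--Nikodym property. First I would fix $j\in\bbZ$, set $S=\dil_{2^j}T_j$, and carry out the same scaling argument as in Lemma \ref{lem:pq-nec}: dilating cubes in a sparse family by $2^{-j}$ preserves sparseness and the form $\Lamaxga_{p,B_1,q',B_2^*}$ is invariant, so from $\sup_j\|T_j\|_{\Sp_\ga(p,B_1,q',B_2^*)}\le\sC$ we get $|\inn{Sf_1}{f_2}|\le\sC\,\Lamaxga_{p,B_1,q',B_2^*}(f_1,f_2)$ uniformly in $j$, for all $f_1\in\mathrm S_{B_1}$, $f_2\in\mathrm S_{B_2^*}$.

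Next I would localize exactly as before: choose $b$ with $2^{-b}\le d^{-1/2}\min\{\delta_1/2,\delta_2\}$, split $f=\sum_\fz f_\fz$ with $f_\fz=f\bbone_{Q_\fz}$ over the grid of side-length-$2^{-b}$ cubes, note $Sf_\fz$ is supported in the side-$3$ cube $R_\fz$, and decompose $R_\fz$ into the pieces $R_{\fz,\nu}$. For fixed $\fz,\nu$ and $f_2\in\mathrm S_{B_2^*}$ with $\|f_2\|_{L^{q'}_{B_2^*}}\le 1$, one estimates $|\inn{Sf_\fz}{f_2\bbone_{R_{\fz,\nu}}}|$ by passing to a sparse family $\fS_{\fz,\nu}$; the strengthened support condition \eqref{eqn:strengthenedsupp} forces every contributing cube to have side length $\gtrsim 2^{-b}$, so for each dyadic scale $\ell\ge -b$ only $O_d(1)$ cubes contribute, and the geometric series $\sum_{\ell\ge-b}2^{\ell d(1/q-1/p)}$ converges because $q>p$. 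This gives $|\inn{Sf_\fz}{f_2\bbone_{R_{\fz,\nu}}}|\lesssim\sC\,\|f_\fz\|_{L^p_{B_1}}$ with constant depending on $b,d,p,q$ — precisely the analogue of \eqref{eqn:dualityfixedz}, but now stated as a bound on the bilinear form rather than on an integral of $|Sf_\fz|_{B_2}$.

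The substitute for the final duality step is where the reflexivity/RNP hypothesis does its work. Rather than invoking scalar $L^q$–$L^{q'}$ duality on the function $x\mapsto|Sf_\fz(x)|_{B_2}$ (which need not even be strongly measurable here), I would apply Lemma \ref{lem:necessitywith**}(i): taking $p_1<q$ in that lemma with exponent pair $(q,q')$, the sparse bound $\|S\|_{\Sp_\ga(q\text{-ish})}$ — more precisely the localized bilinear estimate just obtained — identifies $Sf_\fz$ as an element of $L^q_{B_2^{**}}(R_{\fz,\nu})$ with $\|Sf_\fz\|_{L^q_{B_2^{**}}(R_{\fz,\nu})}\lesssim\sC\,\|f_\fz\|_{L^p_{B_1}}$, using that $B_2^{**}$ has RNP so that $(L^{q'}_{B_2^*})^*=L^q_{B_2^{**}}$. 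Summing over the finitely many $\nu\in\cI_\fz$ gives $\|Sf_\fz\|_{L^q_{B_2^{**}}}\lesssim_{d,\delta_1,\delta_2}\sC\,\|f_\fz\|_{L^p_{B_1}}$, and then the same disjoint-support reassembly as in \eqref{eqn:fzlocalization} — using finite overlap of the $R_\fz$, $\ell^q\hookrightarrow\ell^p$ since $q>p$, and $\|f\|_{L^p_{B_1}}^p=\sum_\fz\|f_\fz\|_{L^p_{B_1}}^p$ — yields $\|Sf\|_{L^q_{B_2^{**}}}\lesssim_{\gamma,d,\delta_1,\delta_2,p,q}\sC\,\|f\|_{L^p_{B_1}}$, which is the claim.

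I expect the main obstacle to be purely bookkeeping: making sure that the bilinear-form estimate produced by the localization argument is exactly in the form required to invoke Lemma \ref{lem:necessitywith**} (i.e.\ that $S$, restricted appropriately, satisfies $\Sp_\ga(p_1,B_1,q',B_2^*)$ bounds with the right constant), and that the passage through $B_2^{**}$ rather than $B_2$ causes no trouble downstream — the statement is about $L^q_{B_2^{**}}$ precisely because without reflexivity one cannot return to $B_2$. No new analytic idea beyond Lemma \ref{lem:pq-nec} and Lemma \ref{lem:necessitywith**} is needed.
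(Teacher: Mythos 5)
Your proposal is correct and follows essentially the same route as the paper's proof: run the localization argument of Lemma \ref{lem:pq-nec} to get the bilinear bound $|\inn{Sf_\fz}{g\bbone_{R_{\fz,\nu}}}|\lc \sC\,\|f_\fz\|_{L^p_{B_1}}\|g\|_{L^{q'}_{B_2^*}}$, then use the Radon--Nikodym property of $B_2^{**}$ (via $(L^{q'}_{B_2^*})^*=L^q_{B_2^{**}}$) to identify $Sf_\fz$ on $R_{\fz,\nu}$ with an element of $L^q_{B_2^{**}}(R_{\fz,\nu})$, and reassemble as in \eqref{eqn:fzlocalization}. The appeal to Lemma \ref{lem:necessitywith**}(i) is not really needed — as you yourself note, the direct RNP duality identification is the actual mechanism, which is exactly what the paper does.
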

\begin{proof} 
We let 
$S=\dil_{2^j }T_j$, 
$R_{\fz}$, $R_{\fz,\nu}$, $\nu\in \cI_{\fz}$  as in the proof of Lemma \ref{lem:pq-nec}. The proof of  \eqref{eqn:dualityfixedz} can be modified just with appropriate notational changes, such as replacing expressions as one the left -hand side of  \eqref{eqn:dualityfixedz} with  \[\lambda_{f,\fz,\nu} (g):=\inn{Sf_\fz}{g\bbone_{R_{\fz,\nu}} } .\]
This leads to the inequality
\Be\label{eqn:dualityfixedz**}  |\inn{Sf_{\fz}}{g\bbone_{R_{\fz,\nu}}}| \lc \sC \|f_\fz\|_{L^p_{B_1}} \|g\|_{L^q_{B_2^*}} \Ee
in the place of \eqref{eqn:dualityfixedz}.
Inequality \eqref{eqn:dualityfixedz**} 
shows   that $\lambda_{f,\fz,\nu} $ is a continuous linear functional on the space $L^{q'}_{B_2^*} (R_{\fz,\nu} )$; recall that by assumption $1\le q'<\infty$. By the Radon--Nikodym property of $B_2^{**}$, the linear functional $\la_{f,\fz,\nu}$ is identified  with a  function $ Sf_\fz$  restricted to $R_{\fz,\nu}$, in the space  
$L^q_{B_2^{**}}   (R_{\fz,\nu}) $.  Hence we now get a variant of inequality  \eqref{Sffz-Lq-bd}, namely 
\begin{align*}
&\|S f_{\fz} \|_{L^q_{B_2^{**}}} 
\lc\sum_{\nu\in \cI_\fz}  \|S f_{\fz} \|_{L^q_{B_2^{**}} (R_{\fz,\nu})} 
\\ &\lc\sum_{\nu\in \cI_\fz} 
\sup_{\|g\|_{L^{q'}_{B_2^*}}\le 1}  |\inn{Sf_\fz}{g\bbone_{R_{\fz,\nu }}} \lc_{d,\delta_1,\delta_2} \sC \|f_\fz\|_{L^p_{B_1}}.\end{align*}
We finish  {as}  in \eqref{eqn:fzlocalization} to bound
$\| S f\|_{L^q_{B_2^{**}}}\lc\sC\|f_{L^p_{B_1}}$. 
\end{proof}

 \begin{proof}[Conclusion of the proof of Theorem  \ref{thm:necessary}]  
Since we are assuming  that $B_2$ is reflexive we have that
$B_2=B_2^{**}$ satisfies the Radon--Nikodym property. Hence now  the necessity of 
the $L^{q,1}_{B_1}\to L^q_{B_2} $ and $L^{p}_{B_1}\to L^{p,\infty}_{B_2} $ conditions 
follow from Lemma \ref{lem:necessitywith**},
and the necessity of the single scale $L^p\to L^q$ conditions 
  follows from  using   the assumption with $N_1=N_2$ and applying Lemma \ref{lem:pq-nec**}.
\end{proof}

\section{Single scale sparse domination}\label{sec:single scale}
We collect some preliminary  results which  are needed in the proof of Theorem \ref{mainthm}.
\subsection{A single scale estimate}
We state an elementary lemma which is used to establish the base  case in the induction proof of Theorem \ref{mainthm}.
 Recall that for a cube $Q$, we let $\tr{Q}$ denote the cube centered at the center of $Q$ with three times the side length of $Q$, which is also the union of $Q$ and its neighbors of the same side length.

\begin{lem}\label{single-scale-lem} 
Let $T_j \in \mathrm{Op}_{B_1,B_2}$ satisfy \eqref{support-assu} and \eqref{p-q-rescaled} for some exponents $p,q\in[1,\infty]$. Let $Q$ be a cube of side length $2^{j}$. Then for $f_1\in \simpleone$, $f_2\in \simpledual$,
\[
|\inn {T_j[f_1\bbone_Q]}{f_2}| \le 3^{d/q'}A_\circ(p,q) |Q|\jp {f_1}_{Q,p} \jp {f_2}_{\tr{Q},q'}.
\]
\end{lem}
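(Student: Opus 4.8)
The statement is a straightforward single-scale estimate, so the plan is to reduce it directly to the hypothesis \eqref{p-q-rescaled} after rescaling. First I would use the support condition \eqref{support-assu} to locate the support of $T_j[f_1\bbone_Q]$. Since $f_1\bbone_Q$ is supported in $Q$, which has side length $2^j$, the support condition (after undoing the dilation $\dil_{2^j}$) shows that $T_j[f_1\bbone_Q]$ is supported in $\{x: \dist(x, Q)\le 2^j\}$, which is contained in $\tr{Q}$ (the cube with the same center and three times the side length). Hence in the pairing $\inn{T_j[f_1\bbone_Q]}{f_2}$ we may replace $f_2$ by $f_2\bbone_{\tr Q}$ without changing the value.

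Next I would apply H\"older's inequality in the $(B_2,B_2^*)$ pairing together with the scalar H\"older inequality with exponents $q$ and $q'$:
\[
|\inn{T_j[f_1\bbone_Q]}{f_2}| = |\inn{T_j[f_1\bbone_Q]}{f_2\bbone_{\tr Q}}| \le \|T_j[f_1\bbone_Q]\|_{L^q_{B_2}} \, \|f_2\bbone_{\tr Q}\|_{L^{q'}_{B_2^*}}.
\]
Now I would estimate $\|T_j[f_1\bbone_Q]\|_{L^q_{B_2}}$ by rescaling: writing $g(x) = (f_1\bbone_Q)(2^j x)$, one has $T_j[f_1\bbone_Q] = \dil_{2^{-j}}\big[(\dil_{2^j}T_j)g\big]$, so by \eqref{p-q-rescaled},
\[
\|T_j[f_1\bbone_Q]\|_{L^q_{B_2}} = 2^{jd/q}\|(\dil_{2^j}T_j)g\|_{L^q_{B_2}} \le 2^{jd/q} A_\circ(p,q)\|g\|_{L^p_{B_1}} = A_\circ(p,q)\|f_1\bbone_Q\|_{L^p_{B_1}}.
\]
Finally I would convert the $L^p$ and $L^{q'}$ norms into the normalized averages: $\|f_1\bbone_Q\|_{L^p_{B_1}} = |Q|^{1/p}\jp{f_1}_{Q,p}$, and $\|f_2\bbone_{\tr Q}\|_{L^{q'}_{B_2^*}} = |\tr Q|^{1/q'}\jp{f_2}_{\tr Q,q'} = 3^{d/q'}|Q|^{1/q'}\jp{f_2}_{\tr Q,q'}$. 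Multiplying and using $|Q|^{1/p}|Q|^{1/q'}\le |Q|$ (valid since $1/p+1/q'\le 1$ because $p\le q$) gives the claimed bound with constant $3^{d/q'}A_\circ(p,q)$.

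There is essentially no serious obstacle here; the only points requiring a little care are the bookkeeping of the dilation $\dil_{2^j}$ and its Jacobian factors $2^{jd/q}$ versus $2^{jd/p}$, and checking that the support of the output really lands inside $\tr Q$ rather than a slightly larger cube. One should also be mild about edge cases $q=\infty$ or $p=\infty$, where the $L^q$ or $L^p$ norms and the H\"older step are interpreted in the usual way; the argument goes through verbatim. The exponent inequality $1/p + 1/q' \le 1$ used at the last step is exactly the hypothesis $p \le q$, so this is where that assumption enters.
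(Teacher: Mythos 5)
Your overall route is exactly the paper's: use the support condition to replace $f_2$ by $f_2\bbone_{\tr{Q}}$, apply H\"older with exponents $q,q'$, and invoke the rescaled single scale bound \eqref{p-q-rescaled}. Those structural steps are fine. However, the quantitative bookkeeping — which you yourself flag as the only delicate point — is wrong in two places, and your proof only reaches the stated constant because the two errors cancel.

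First, with $g=\dil_{2^j}(f_1\bbone_Q)$ one has $\|g\|_{L^p_{B_1}}=2^{-jd/p}\|f_1\bbone_Q\|_{L^p_{B_1}}$, so
\[
2^{jd/q}A_\circ(p,q)\|g\|_{L^p_{B_1}}=A_\circ(p,q)\,2^{-jd(\frac1p-\frac1q)}\|f_1\bbone_Q\|_{L^p_{B_1}},
\]
not $A_\circ(p,q)\|f_1\bbone_Q\|_{L^p_{B_1}}$ as you claim; the identity you wrote holds only when $p=q$. Equivalently, rescaling gives $\|T_j\|_{L^p_{B_1}\to L^q_{B_2}}\le 2^{-jd(1/p-1/q)}A_\circ(p,q)$, which is how the paper records it. Second, your closing step ``$|Q|^{1/p}|Q|^{1/q'}\le|Q|$ since $1/p+1/q'\le1$ because $p\le q$'' is backwards: for $p\le q$ one has $1/p+1/q'=1+(1/p-1/q)\ge1$, and in any case an inequality of the form $|Q|^{a}\le|Q|$ would also require $|Q|\ge1$, whereas $|Q|=2^{jd}$ can be arbitrarily small (the lemma is applied at all scales $j\in\bbZ$). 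What actually makes the constant come out right is the factor $2^{-jd(1/p-1/q)}$ you dropped in the first step: since $|Q|=2^{jd}$,
\[
2^{-jd(\frac1p-\frac1q)}\,|Q|^{\frac1p}\,|Q|^{\frac1{q'}}=|Q|^{\frac1q+\frac1{q'}}=|Q|,
\]
with exact equality and no inequality between $1/p+1/q'$ and $1$ needed. With these two corrections your argument coincides with the paper's proof; as written, both displayed intermediate claims are false for $p<q$.
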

\begin{proof} By the support property \eqref{support-assu}, 
$ T_j[f_1\bbone_Q]$ is supported in $\tr{Q}$.
By re-scaling we get from \eqref{p-q-rescaled} that \Be \notag \label{eqn:Tjrescale}\|T_j\|_{L^p\to L^q}\le 2^{-jd(1/p-1/q) }A_\circ(p,q)\Ee and thus 
\begin{align*}|\inn {T_j[f_1\bbone_Q]}{f_2}| & =
|\inn {T_j[f_1\bbone_Q]}{f_2\bbone_{\tr{Q}}}|  
\\
&\le A_\circ(p,q) 
2^{-jd(1/p-1/q)} \|f_1\bbone_Q\|_p \|f_2 \bbone_{\tr{Q}}\|_{q'} 
\\& =A_\circ(p,q)3^{d/q'} |Q| \jp{f_1}_{Q,p}\jp{f_2}_{\tr{Q},q'},
\end{align*}
as claimed.
\end{proof}
This implies a sparse bound for the single scale operators $T_j$; indeed the sparse collection is a disjoint collection of cubes.
\begin{cor} For $0<\ga\le 1$ and $1\le p,q\le\infty$,
$$\|T_j\|_{\Sp_\ga(p,q')} \le 3^{d(1/p+1/q')}  A_\circ(p,q).$$
\end{cor}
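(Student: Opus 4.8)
The plan is to deduce the corollary directly from Lemma~\ref{single-scale-lem} by exhibiting an explicit sparse family for each pair of simple functions. Fix $f_1\in\simpleone$ and $f_2\in\simpledual$; both are compactly supported, so there is a cube $Q_0$ of side length $2^{j}$ (for the relevant scale $j$) containing the support of both $f_1$ and $f_2$ in, say, $Q_0$ itself after translating within the lattice. More precisely, I would tile $\bbR^d$ by a mesh of disjoint cubes $\{Q_\nu\}$ of side length $2^{j}$, drawn from (a translate of) the dyadic lattice $\fQ$, and keep only those $Q_\nu$ that meet the support of $f_1$; since $f_1$ has compact support this is a finite collection. For each such $Q_\nu$, write $f_1=\sum_\nu f_1\bbone_{Q_\nu}$ and apply Lemma~\ref{single-scale-lem} to each term, obtaining
\[
|\inn{T_j f_1}{f_2}|\le \sum_\nu |\inn{T_j[f_1\bbone_{Q_\nu}]}{f_2}|\le 3^{d/q'}A_\circ(p,q)\sum_\nu |Q_\nu|\jp{f_1}_{Q_\nu,p}\jp{f_2}_{\tr{Q_\nu},q'}.
\]

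The next step is to recognize the right-hand side as (a constant times) a sparse form. Each $\tr{Q_\nu}$ is the union of $Q_\nu$ with its $3^d-1$ neighbors of the same side length, so I would split the sum over $\nu$ according to which of the $3^d$ translation classes $\tr{Q_\nu}$ falls into, writing the collection $\{\tr{Q_\nu}\}$ as a union of at most $3^d$ subcollections $\fS_1,\dots,\fS_{3^d}$, each consisting of \emph{pairwise disjoint} cubes of side length $3\cdot 2^j$. I also need to absorb the average of $f_2$ over $\tr{Q_\nu}$ against the size of $Q_\nu$: since $|\tr{Q_\nu}|=3^d|Q_\nu|$, we have $|Q_\nu|\jp{f_2}_{\tr{Q_\nu},q'}=3^{-d}|\tr{Q_\nu}|\jp{f_2}_{\tr{Q_\nu},q'}$, and likewise I can pass from $\jp{f_1}_{Q_\nu,p}$ to $\jp{f_1}_{\tr{Q_\nu},p}$ at the cost of a factor $3^{d/p}$ because $\int_{Q_\nu}|f_1|^p\le\int_{\tr{Q_\nu}}|f_1|^p$. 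Each disjoint subcollection $\fS_i$ is trivially $\gamma$-sparse for any $0<\gamma\le 1$ (take $E_Q=Q$), and then
\[
\sum_\nu |Q_\nu|\jp{f_1}_{Q_\nu,p}\jp{f_2}_{\tr{Q_\nu},q'}\le 3^{d(1/p-1)}\sum_{i=1}^{3^d}\sum_{Q\in\fS_i}|Q|\jp{f_1}_{Q,p}\jp{f_2}_{Q,q'}\le 3^{d(1/p-1)}\cdot 3^d\,\Lamaxga_{p,q'}(f_1,f_2),
\]
using that each $\fS_i$ is a $\gamma$-sparse family so its form is bounded by the maximal form. Collecting the constants $3^{d/q'}$ from Lemma~\ref{single-scale-lem}, $3^{d/p}$ from enlarging the cube for $f_1$, $3^{-d}$ from enlarging it for $f_2$, and $3^d$ from the number of subfamilies gives a total of $3^{d(1/p+1/q')}$, which is the claimed bound; hence $\|T_j\|_{\Sp_\ga(p,q')}\le 3^{d(1/p+1/q')}A_\circ(p,q)$.

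The only genuinely delicate point — really a bookkeeping matter rather than a real obstacle — is making sure one lands on the maximal \emph{sparse} form $\Lamaxga$ rather than a sum of forms over overlapping cubes: the enlarged cubes $\tr{Q_\nu}$ do overlap, so one cannot directly call $\{\tr{Q_\nu}\}$ sparse, and the coloring into $3^d$ disjoint subfamilies (equivalently, a direct appeal to the bound $\Lamaxga\ge\sum_{Q\in\fS}$ for each individual disjoint $\fS_i$) is exactly what repairs this. One should also double-check the constant tracking, since the statement commits to the specific exponent $3^{d(1/p+1/q')}$; the accounting above is designed to hit that number on the nose, but if a cleaner argument avoids enlarging one of the two cubes, the same constant still comes out because the number of colors ($3^d$) and the constant in Lemma~\ref{single-scale-lem} ($3^{d/q'}$) together with one factor $3^{d/p}$ already give $3^{d(1/p+1/q'+1)}$, and one factor of $3^{-d}$ from normalizing $|\tr Q|$ versus $|Q|$ brings it down to $3^{d(1/p+1/q')}$. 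I would present it in the order: (1) decompose $f_1$ over a disjoint mesh of scale-$2^j$ cubes, (2) apply Lemma~\ref{single-scale-lem} termwise, (3) enlarge cubes and color into $3^d$ disjoint families, (4) bound each family's form by $\Lamaxga$ and sum the constants.
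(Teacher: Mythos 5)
Your proposal is correct and follows essentially the same route as the paper: tile $\bbR^d$ by disjoint cubes of side length $2^j$, apply Lemma \ref{single-scale-lem} termwise, pass to the triple cubes at the cost of $3^{d(1/p-1)}$, and split $\{\tr{Q}\}$ into $3^d$ disjoint (hence trivially $\gamma$-sparse) subfamilies, each of whose forms is bounded by $\Lamaxga_{p,q'}$, yielding exactly the constant $3^{d(1/p+1/q')}A_\circ(p,q)$. The only blemish is the throwaway opening remark about a single cube $Q_0$ of side length $2^j$ containing both supports, which is neither true nor needed and is superseded by the tiling argument you actually carry out.
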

\begin{proof} 
We tile  $\bbR^d$ by a family $\fQ_j$ of dyadic cubes of side length $2^j$ and estimate
\begin{align*} 
|\inn{T_jf_1}{f_2}|&\le \sum_{Q\in \fQ_j} |\inn {T_j[f_1\bbone_Q]}{f_2}| \le A_\circ(p,q) 3^{d/q'}\sum_{Q\in \fQ_j}|Q|
\jp{f_1}_{Q,p}\jp{f_2}_{\tr{Q},q'} \\&\le A_\circ(p,q)
 3^{d(1/q'-1/p')} \sum_{Q\in \fQ_j}|\tr{Q}|
\jp{f_1}_{\tr{Q},p}\jp{f_2}_{\tr{Q},q'}.
\end{align*}
The family $\{\tr{Q}: Q\in \fQ_j\}$ can be split into $3^d$  subfamilies  consisting each of {\it disjoint} cubes of side length $3\cdot 2^j$. This implies the assertion (for every $0<\gamma\le 1$) since for every $\tr{Q}$ involved in each subfamily we can choose $E_{\tr{Q}}=\tr{Q}$.
\end{proof}

\subsection{A resolution of the identity} \label{sec:resofid}
It will be quite convenient to work with a  
 resolution of the identity  using Littlewood--Paley decompositions which are localized in space. We have 
\Be\label{eqn:resofid}\Id=\sum_{k=0}^\infty  \La_k\widetilde \La_k \Ee
which converges in the strong operator topology 
on $L^p_{B_1}(\bbR^d)$, $1\le p<\infty$.  Here 
$\La_k$, $\widetilde \La_k$ are convolution operators with convolution kernels $\la_k$, $\widetilde\la_k$
such that $\la_0\in C^\infty_c$ has support in $\{x : |x|<1/2\}$, $\int \la_0=1$, $\la_1= 2\la_0(2\cdot)-\la_0$ and $\widetilde \la_0, \widetilde\la_1\in \cS$ with $\int \tilde \la_0=1$, 
and $\int \widetilde \la_1 =0$. Moreover, for $k\ge 1$, 
$\la_k= 2^{(k-1)d} \la_1(2^{k-1} \cdot)$, 
$\widetilde \la_k= 2^{(k-1)d}\widetilde \la_1(2^{k-1} \cdot)$. 
For later applicability we may choose 
  $\la_1$, 
  so that
  \Be \notag \label{eqn:moments}\int \la_1(x) \prod_{i=1}^d x_i^{\alpha_i} \, dx =0 ,  \quad \text{ for } \sum_{i=1}^d{\alpha_i} \le 100 d \Ee 
   and the same for $\widetilde{\lambda}_1$. 

A proof of \eqref{eqn:resofid} with these specifications can be found in
 \cite[Lemma 2.1]{se-tao} (the calculation there shows that $\widetilde\la_0$, $\widetilde \la_1$ can be chosen with compact support as well).
For later use we let $P_k$ be the operator given by convolution with $2^{kd}\la_0(2^k\cdot)$ for $k \geq 0$, and also set  $P_{-1}=0$, and observe that by our construction 
\Be\label{eqn:difference} \Lambda_k=P_k-P_{k-1}, \quad \text{for } k\ge 1 .\Ee

\subsection{Single scale regularity} \label{sec:singlescalereg}
In our proof of Theorem \ref{mainthm} it will be useful to work with other versions of the regularity conditions \eqref{p-q-rescaled-reg} which are adapted to the dyadic setting. 
To formulate these, we fix a dyadic lattice of cubes $\fQ$. 
Let $\{\bbE_n\}_{n \in \bbZ}$ be the conditional expectation operators associated to the $\sigma$-algebra generated by  the subfamily $\fQ_n$ of cubes in $\fQ$ of side length in $[2^{-n}, 2^{1-n}),$ 
that is, $\bbE_n f(x) = \mathrm{av}_Q f$ for every $x\in Q$ with $Q\in \fQ_n$.
Define the martingale difference operator $\bbD_n$ by  \[\bbD_n=\bbE_{n}-\bbE_{n-1} \quad \text{ for } n \geq 1.\]
We also use the operators $\La_k$, $\widetilde \La_k$ in the  decomposition 
\eqref{eqn:resofid}.

\begin{lem}\label{lem:eps-reg}
Let $T\in\OPB$. 

(i) Let $1\le p\le q<\infty$,
$0<\vartheta<1/p$. Then 
\Be\label{eqn:reg1} \|T\bbE_0\|_{L^p_{B_1}\to L^q_{B_2}} +
\sup_{n>0} 2^{n\vartheta} \|T \bbD_{n}\|_{L^p_{B_1}\to L^q_{B_2}} 
\lc_\vartheta \sup_{k\ge 0} 2^{k\vth}\|T\Lambda_k\|_{L^p_{B_1}\to L^q_{B_2}}. 
\Ee

(ii) Let $1\le p\le q\le\infty$, $0<\vartheta<1$. Then 
\Be\label{eqn:diff-LP}
\sup_{k\ge 0} 2^{k\vth}\|T\Lambda_k\|_{L^p_{B_1}\to L^q_{B_2}} \approx_\vartheta 
\|T\|_{L^p_{B_1}\to L^q_{B_2}} +
\sup_{0<|h|<1} |h|^{-\vartheta} \|T\Delta_h \|_{L^p_{B_1}\to L^q_{B_2}}.  \Ee

(iii) Let $1< p\le q\le \infty$,
$0<\vartheta<1-1/p$. Then
  \Be   \label{eqn:reginfty}
\sup_{k\ge 0} 2^{k\vth}\|T\Lambda_k\|_{L^p_{B_1}\to L^q_{B_2}} 
\lc_\vartheta\|T\bbE_0\|_{L^p_{B_1}\to L^q_{B_2}} +
\sup_{n>0} 2^{n\vartheta} \|T \bbD_{n}\|_{L^p_{B_1}\to L^q_{B_2}}. 
\Ee
\end{lem}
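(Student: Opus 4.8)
All three statements will be proved by playing the Littlewood--Paley resolution $\Id=\sum_{k\ge0}\Lambda_k\widetilde\Lambda_k$ of \eqref{eqn:resofid} against the martingale resolution $\Id=\bbE_0+\sum_{n\ge1}\bbD_n$. The crucial input will be a family of \emph{almost-orthogonality estimates} for the mismatched compositions: for every Banach space $B$, every $1\le p\le\infty$ and all $k,n\ge1$,
\[
\|\widetilde\Lambda_k\bbD_n\|_{L^p_B\to L^p_B}\lesssim_d 2^{-(k-n)_+/p}\,2^{-(n-k)_+},\quad
\|\bbD_n\Lambda_k\|_{L^p_B\to L^p_B}\lesssim_d 2^{-(k-n)_+/p'}\,2^{-(n-k)_+},
\]
together with $\|\widetilde\Lambda_k\bbE_0\|_{L^p_B\to L^p_B}\lesssim_d 2^{-k/p}$, $\|\bbE_0\Lambda_k\|_{L^p_B\to L^p_B}\lesssim_d 2^{-k/p'}$ for $k\ge1$, $\|\widetilde\Lambda_0\bbD_n\|+\|\bbD_n\Lambda_0\|\lesssim_d 2^{-n}$, and the observation that every operator appearing maps $L^p_B$ to itself with norm $O_d(1)$ (being either convolution with an $L^1$-normalised kernel or a conditional expectation). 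To establish the almost-orthogonality bounds I would compute the Schwartz kernel of each composition and invoke the scalar Schur test, which suffices because all operators act diagonally on $B$-valued functions. When $k\ge n$ the narrow bump $\widetilde\lambda_k$ (resp.\ $\lambda_k$) has mean zero, so the composition vanishes off the $2^{-k}$-neighbourhood of the skeleton of $\fQ_n$, whose relative measure is $\lesssim 2^{n-k}$; this factor enters one side of the Schur test, producing the exponent $1/p$ or $1/p'$ according to whether the localisation sits on the output or the input side. When $k<n$ one instead subtracts a constant from the coarse bump and plays its $C^1$-regularity against the mean-zero property of $\bbD_n$, gaining a full factor $2^{-(n-k)}$. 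I expect this geometric bookkeeping to be the only genuine obstacle; everything after it is the summation of geometric series.

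Granting the almost-orthogonality bounds, part (ii) is quickest. For ``$\lesssim$'' I would insert $\Id=\sum_k\Lambda_k\widetilde\Lambda_k$ to the right of $T$: as $\|\widetilde\Lambda_k\|_{L^p\to L^p}\lesssim1$ and $\vartheta>0$, $\|T\|_{L^p_{B_1}\to L^q_{B_2}}\le\sum_k\|T\Lambda_k\|\,\|\widetilde\Lambda_k\|_{L^p\to L^p}\lesssim_\vartheta\sup_k 2^{k\vartheta}\|T\Lambda_k\|$, and from $T\Delta_h=\sum_k(T\Lambda_k)(\widetilde\Lambda_k\Delta_h)$ with $\|\widetilde\Lambda_k\Delta_h\|_{L^p\to L^p}\lesssim\min(1,2^k|h|)$ (comparing $\widetilde\lambda_k(\cdot+h)-\widetilde\lambda_k$ with $\|\widetilde\lambda_k\|_1$ and with $|h|\,\|\nabla\widetilde\lambda_k\|_1$) one gets $\sum_k 2^{-k\vartheta}\min(1,2^k|h|)\lesssim_\vartheta|h|^\vartheta$ precisely because $0<\vartheta<1$. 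For ``$\gtrsim$'' I would use $\Lambda_k=P_k-P_{k-1}$ from \eqref{eqn:difference} and $\int p_k=\int p_{k-1}=1$ to write, for $k\ge1$, $\Lambda_k f=\int(p_k(y)-p_{k-1}(y))\,\Delta_{-y}f\,dy$; since $p_k,p_{k-1}$ are supported in $\{|y|\lesssim2^{-k}\}\subset\{|y|<1\}$, applying $T$ and the triangle inequality gives $\|T\Lambda_k\|_{L^p\to L^q}\lesssim 2^{-k\vartheta}\sup_{|h|\le1}|h|^{-\vartheta}\|T\Delta_h\|$, while $\|T\Lambda_0\|_{L^p\to L^q}\lesssim\|T\|_{L^p\to L^q}$.

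For part (i) I would apply $\Id=\sum_k\Lambda_k\widetilde\Lambda_k$ to $\bbD_n f$ and to $\bbE_0 f$, obtaining $T\bbD_n=\sum_k(T\Lambda_k)(\widetilde\Lambda_k\bbD_n)$ and $T\bbE_0=\sum_k(T\Lambda_k)(\widetilde\Lambda_k\bbE_0)$; plugging in the almost-orthogonality bounds, writing $M=\sup_k 2^{k\vartheta}\|T\Lambda_k\|$, and summing the two resulting geometric tails (one convergent because $\vartheta<1$, the other because $\vartheta+1/p>0$) yields $\|T\bbE_0\|+\sup_n 2^{n\vartheta}\|T\bbD_n\|\lesssim_\vartheta M$ on the stated range $0<\vartheta<1/p$. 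Part (iii) runs in the opposite direction: using $\Id=\bbE_0+\sum_n\bbD_n$ together with the idempotency $\bbE_0^2=\bbE_0$, $\bbD_n^2=\bbD_n$ --- which is exactly what permits extracting the gaining factors $\|\bbE_0\Lambda_k\|$ and $\|\bbD_n\Lambda_k\|$ --- one writes $T\Lambda_k f=(T\bbE_0)(\bbE_0\Lambda_k f)+\sum_{n\ge1}(T\bbD_n)(\bbD_n\Lambda_k f)$, so that $2^{k\vartheta}\|T\Lambda_k\|\le 2^{k\vartheta}\|T\bbE_0\|\,\|\bbE_0\Lambda_k\|+\sum_n 2^{k\vartheta}\|T\bbD_n\|\,\|\bbD_n\Lambda_k\|\lesssim_\vartheta\|T\bbE_0\|+\sup_n 2^{n\vartheta}\|T\bbD_n\|$, the relevant geometric series converging exactly when $0<\vartheta<1-1/p=1/p'$ (the case $k=0$ being immediate). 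Finally, since $\Lambda_k f$ is a compactly supported smooth function for $f\in\simpleone$, all the series above converge in $L^p_{B_1}$ and the interchanges with $T$ are legitimate modulo the routine limiting argument; the case $q=\infty$ is handled identically on a dense subclass.
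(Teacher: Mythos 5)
Your proposal is correct and follows essentially the same route as the paper: the identity $\Id=\sum_k \Lambda_k\widetilde\Lambda_k$ against $\Id=\bbE_0+\sum_n\bbD_n$, the composition bounds for $\widetilde\Lambda_k\bbD_n$ and $\bbD_n\Lambda_k$ proved by playing the cancellation of one factor against the smoothness or skeleton-localization of the other, geometric summation, and for part (ii) the representation of $\Lambda_k=P_k-P_{k-1}$ as an average of difference operators $\Delta_{-y}$. The only deviation is that your almost-orthogonality estimates are slightly sharper than the ones the paper records (full gain $2^{-(n-k)}$ in the coarse-bump regime, uniformly in $p$, rather than $2^{-(n-k)/p}$), which is true, provable by the same argument, and only enlarges the admissible range of $\vartheta$ in part (i).
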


An immediate consequence is the following.

\begin{cor}
 For $1<p\le q<\infty$ and $0<\vartheta<\min \{1/p, 1-1/p\}$,
 \begin{multline*} \|T\bbE_0\|_{L^p_{B_1}\to L^q_{B_2}} +
\sup_{n>0} 2^{n\vartheta} \|T \bbD_{n}\|_{L^p_{B_1}\to L^q_{B_2}} 
\approx_\vartheta\\  \|T\|_{L^p_{B_1}\to L^q_{B_2}} +
\sup_{0<|h|<1} |h|^{-\vartheta} \|T\Delta_h \|_{L^p_{B_1}\to L^q_{B_2}}.
 \end{multline*}
\end{cor}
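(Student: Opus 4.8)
The plan is to obtain the corollary by simply composing the three comparisons contained in Lemma~\ref{lem:eps-reg}. For brevity I would write $\mathcal{E}(\vartheta)=\|T\bbE_0\|_{L^p_{B_1}\to L^q_{B_2}}+\sup_{n>0}2^{n\vartheta}\|T\bbD_{n}\|_{L^p_{B_1}\to L^q_{B_2}}$ for the left-hand side, $\mathcal{L}(\vartheta)=\sup_{k\ge 0}2^{k\vartheta}\|T\Lambda_k\|_{L^p_{B_1}\to L^q_{B_2}}$ for the Littlewood--Paley quantity appearing in the lemma, and $\mathcal{D}(\vartheta)=\|T\|_{L^p_{B_1}\to L^q_{B_2}}+\sup_{0<|h|<1}|h|^{-\vartheta}\|T\Delta_h\|_{L^p_{B_1}\to L^q_{B_2}}$ for the right-hand side.

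Fix $1<p\le q<\infty$ and $0<\vartheta<\min\{1/p,1-1/p\}$. The key observation is that this range of $\vartheta$ is precisely the intersection of the admissible ranges in parts (i), (ii), (iii) of Lemma~\ref{lem:eps-reg}: part (i) requires $\vartheta<1/p$, part (iii) requires $\vartheta<1-1/p$, and part (ii) requires only $\vartheta<1$; moreover $q<\infty$ as needed for (i) and $p>1$ as needed for (iii). Hence all three estimates are simultaneously available. I would then chain (i) and (ii) to get $\mathcal{E}(\vartheta)\lesssim_\vartheta\mathcal{L}(\vartheta)\lesssim_\vartheta\mathcal{D}(\vartheta)$, and chain (ii) and (iii) to get $\mathcal{D}(\vartheta)\lesssim_\vartheta\mathcal{L}(\vartheta)\lesssim_\vartheta\mathcal{E}(\vartheta)$. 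Together these two chains yield $\mathcal{E}(\vartheta)\approx_\vartheta\mathcal{D}(\vartheta)$, which is exactly the asserted equivalence; the implicit constants depend only on $\vartheta$ (and the fixed $p,q,d$), just as in the lemma, and in particular are independent of the Banach spaces.

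There is essentially no obstacle to this step once Lemma~\ref{lem:eps-reg} is in hand --- the corollary is merely the composition of its three items on the overlap of their hypotheses. The only minor point I would record is that the equivalence should be read in the extended sense: if either side of the stated comparison is infinite then so is the other, since each of the inequalities in the two chains above holds regardless of finiteness. All the genuine content --- the martingale versus Littlewood--Paley comparison, and the Bernstein-type and vanishing-moment estimates used to pass between $\bbD_n$, $\Lambda_k$ and $\Delta_h$ --- is contained in Lemma~\ref{lem:eps-reg} and is not revisited here.
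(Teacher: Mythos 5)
Your proposal is correct and is exactly the intended argument: the paper states the corollary as an immediate consequence of Lemma \ref{lem:eps-reg}, obtained by chaining parts (i), (ii), (iii) on the common range $0<\vartheta<\min\{1/p,1-1/p\}$, which is what you do. The remark about reading the equivalence in the extended sense when a side is infinite is fine but not needed.
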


\begin{proof}[Proof of Lemma \ref{lem:eps-reg}]  We rely on arguments  used before  in   considerations  %
of variational estimates \cite{JonesKaufmanRosenblattWierdl}, \cite{JonesSeegerWright}, of  basis properties of the Haar system in  spaces measuring smoothness \cite{GarrigosSeegerUllrich} and elsewhere.
We  use 
\[\|\La_k\|_{L^p_{B_1}\to L^p_{B_1}} =O(1),\quad \|\widetilde{\La}_k\|_{L^p_{B_1}\to L^p_{B_1}} =O(1),\quad
\|\bbE_n\|_{L^p_{B_1}\to L^p_{B_1}} =O(1)\] throughout the proof. 
Since $\vth>0$ we get from \eqref{eqn:resofid} 
\[\|T\bbE_0\|_{L^p_{B_1}\to L^q_{B_2}} 
\lc \sum_{k\ge0} \|T\Lambda_k\|_{L^p_{B_1}\to L^q_{B_2}}
\lc_\vth \sup_{k\ge 0} 2^{k\vartheta} \|T\Lambda_k\|_{L^p_{B_1}\to L^q_{B_2}}.
\]
To estimate $T\bbD_n$ we  will need 
 \Be\label{cancellation}
 \begin{aligned}
 \| \widetilde \La_k \bbD _n\|_{L^p_{B_1}\to L^p_{B_1}} \lc 
\min\{1, 2^{(k-n)/p} \},
\end{aligned}
\Ee
and only the case $k\le n$ needs a proof. A standard calculation using cancellation of $\bbD_n$ yields  \eqref{cancellation} for $p=1$ and the rest follows by interpolation.
Consequently we can estimate 
\begin{align*}
&2^{n\vartheta} \|T \bbD_n\|_{L^p_{B_1}\to L^q_{B_2}}
\le 2^{n\vartheta}\sum_{k\ge 0} 
\|T\La_k\widetilde \La_k \bbD_n\|_{L^p_{B_1} \to L^q_{B_2}}
\\&
\le 2^{n\vartheta}\sum_{k\ge 0} 
\|T\La_k\|_{L^p_{B_1} \to L^q_{B_2} } \|\widetilde \La_k\bbD_n\|_{L^p_{B_1} \to L^p_{B_1}}
\\
&
\lc \sum_{0\le k\le n} 2^{k\vartheta}\|T\La_k\|_{L^p_{B_1} \to L^q_{B_2} } 2^{-(n-k)(\frac 1p-\vartheta) }
+ \sum_{k>n} 2^{k\vartheta}\|T\La_k\|_{L^p_{B_1}  \to L^q_{B_2} }2^{-(k-n)\vartheta}\\
&
\lc \sup_{k\ge 0} 2^{k\vartheta}\|T\La_k\|_{L^p_{B_1} \to L^q_{B_2} },
\end{align*}
where we used $\vth<1/p$ for the first sum.  This proves \eqref{eqn:reg1}.

We now turn to \eqref{eqn:diff-LP} and estimate the left-hand side. By \eqref{eqn:difference} we can write
\[ \|T\La_k \|_{L^p_{B_1}\to L^q_{B_2}}\le 
\|T(\Id-P_k) \|_{L^p_{B_1}\to L^q_{B_2}} +
\|T(\Id-P_{k-1} ) \|_{L^p_{B_1}\to L^q_{B_2}}. \]
Note that, as $\int \lambda_0=1$,
\[(\Id-P_{k-1}) f(x) =\int 2^{(k-1)d} \la_0(2^{k-1} h)
\Delta_{-h} f(x) dh,\]
so 
\begin{align*}\|T(\Id-P_{k-1})f\|_{L^p_{B_1}\to L^q_{B_2}}
&= \int 2^{(k-1)d}|\la_0(2^{k-1} h)|  
\|T\Delta_{-h}\|_{L^p_{B_1}\to L^q_{B_2}}dh
\\&\lc \sup_{|h|\le 2^{-k}} \|T\Delta_{h}\|_{L^p_{B_1}\to L^q_{B_2}}
\end{align*}
and the same bound  for 
$\|T(\Id-P_{k})f\|_{L^p_{B_1}\to L^q_{B_2}}$. This establishes that the left-hand side is smaller than the right-hand side in \eqref{eqn:diff-LP}.
 
 We argue similarly for the converse inequality. %
 We  estimate
\begin{align*}\|T\|_{L^p_{B_1}\to L^q_{B_2}} 
&\le \sum_{k\ge 0} \|T\Lambda_k\|_{L^p_{B_1}\to L^q_{B_2}} 
\lc \sup_{k\ge 0} 2^{k\vartheta} \|T\Lambda_k\|_{L^p_{B_1}\to L^q_{B_2}} .\end{align*}
For the main terms 
\begin{align*}
  \|T\Delta_h\|_{L^p_{B_1}\to L^q_{B_2}}  \le \sum_{k\ge 0} \|T\La_k\|_{L^p_{B_1}\to L^q_{B_2}}\|\tLa_k\Delta_h\|_{L^p_{B_1}\to L^p_{B_1}}.
\end{align*} 
Now 
\Be\label{eqn:LaDeltah}
\|\tLa_k\Delta_h\|_{L^p_{B_1}\to L^p_{B_1}}\lc \|\la_k(\cdot+h)-\la_k(\cdot)\|_1 \lc \min\{1, 2^k|h|\}  \Ee
and therefore
\begin{align*}
|h|^{-\vartheta}\|T\Delta_h\|_{L^p_{B_1}\to L^q_{B_2}}
&\le\sum_{k\ge 0}\|T\La_k\|_{L^p_{B_1}\to L^q_{B_2}}2^{k\vartheta} (2^k|h|)^{-\vartheta} \min\{1, 2^k|h|\} 
\\ &\lc\sup_{k\ge 0} 2^{k\vartheta} \|T\La_k\|_{L^p_{B_1}\to L^q_{B_2}} 
\end{align*}
since $\sum_{{k\ge 0}} (2^{k}|h|)^{-\vartheta} \min\{1, 2^k|h|\} \lc_\vartheta 1$ if $0<\vartheta<1$. %
This completes the proof of \eqref{eqn:diff-LP}.

It remains to prove \eqref{eqn:reginfty}.
Setting $\bbD_0:=\bbE_0$ we observe  that $\Id=\sum_{n\ge 0} \bbD_n$ and $\bbD_n=\bbD_n\bbD_n$, and thus
\begin{align*}
    \|T\La_k\|_{L^p_{B_1}\to L^q_{B_2}}
    \le \sum_{n\ge 0} \|T\bbD_n\|_{L^p_{B_1}\to L^q_{B_2}}\|\bbD_n\La_k\|_{L^p_{B_1}\to L^p_{B_1}}.
\end{align*}
We use $\|\bbD_n\La_k\|_{L^p_{B_1}\to L^p_{B_1}}\lc 1$ for $n\ge k$ and 
\Be \label{eqn:DnLk}
\|\bbD_n\La_k\|_{L^p_{B_1}\to L^p_{B_1}}\lc 2^{(n-k)(1-\frac 1p)} \text{ for } n<k.
\Ee
This is clear for $p=1$ and by interpolation it suffices to show it for $p=\infty$.
Let $Q$ be a dyadic cube of side length $2^{-n+1} $. Let  $\mathrm{Ch}(Q) $ be the set of $2^d$ dyadic children of $Q$  (i.e. the dyadic sub-cubes of side length $2^{-n}$). Let 
\begin{align*} F_{Q,k}&=\{ x: \dist(x, \partial \tilde Q) \le 2^{-k} \text{ for some } \tilde Q\in \mathrm{Ch}(Q)\}.
\end{align*}
Then $|F_{Q,k-1} | \lc 2^{-n(d-1)}2^{-k} $.
Let $g_{Q,k} = f \bbone_{Q\setminus F_{Q,k}} $ and observe that by Fubini's theorem and the cancellation and support properties of $\lambda_k$
\[ \bbE_{n} \La_k g_{Q,k}(x)=0 \quad \text{ and } \quad \bbE_{n-1} \La_k g_{Q,k}(x)=0 \quad  \text{ for  } x\in Q.\]
Hence for $x\in Q$,
\begin{align*} |\bbD_n \La_k f(x)|&= |\bbD_n \La_k (f\bbone_{F_{Q,k}} )(x)| \\&\lesssim 2^{nd} \int \int |\la_k(w-y) | |f(y)| dw  \bbone_{F_{Q,k}}(y) dy
\\&\lc  2^{nd} |F_{Q,k-1}| \|f\|_\infty \lc 2^{n-k}\|f\|_\infty.
\end{align*}
This implies \eqref{eqn:DnLk} for $p=\infty$.

To finish we write 
\begin{align*}
    2^{k\vartheta} \|T\La_k\|_{L^p_{B_1}\to L^q_{B_2}}
    &\le \sum_{n\ge 0} \|T\bbD_n\|_{L^p_{B_1}\to L^q_{B_2}} 2^{k\vartheta} \min\{ 1, 2^{(n-k)(1-1/p)} \} \\&
    \lc \sup_{n \geq 0} 2^{n\vartheta} \|T\bbD_n\|_{L^p_{B_1}\to L^q_{B_2}} 
\end{align*}
where we used 
$\sum_{n\ge 0}  2^{(k-n)\vartheta} \min\{ 1, 2^{(n-k)(1-1/p)} \} \lc 1 $ provided that $0<\vartheta<1-1/p$.
This proves \eqref{eqn:reginfty}.
\end{proof}

In the proof of Theorem \ref{mainthm} we use the following Corollary.
\begin{cor}\label{cor:reg}
Let $1\le p\le q<\infty$ and $0<\vartheta<1/p$. Then 

\noindent (i) For any $n \geq 0$,
\[
\|T(\Id-\bbE_n)\|_{L^p_{B_1}\to L^q_{B_2}} \lc_{\vartheta} 2^{-n\vartheta} \sup_{0<|h|<1} |h|^{-\vartheta} \|T\Delta_h\|_{L^p_{B_1}\to L^q_{B_2}}.
\]
(ii) If $T_j$ is such that \eqref{p-q-rescaled-reg-a} holds then
\[\|T_j (\Id-\bbE_{n-j})\|_{L^p_{B_1}\to L^q_{B_2}} \lc_{\vartheta} B 2^{-n\vartheta} 2^{-jd(\frac 1p-\frac 1q)}. \]
\end{cor}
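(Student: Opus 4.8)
The plan is to read both parts off Lemma~\ref{lem:eps-reg}, using the martingale telescoping $\Id-\bbE_n=\sum_{m>n}\bbD_m$, where $\bbD_0:=\bbE_0$ and $\bbD_m=\bbE_m-\bbE_{m-1}$ for $m\ge 1$. The only structural point is that for $n\ge 0$ every difference operator occurring in this sum has $m\ge 1$, hence carries cancellation; this is exactly what lets the smoothing hypothesis on $\|T\Delta_h\|$ be converted into genuine decay.

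For part (i), fix $0<\vartheta<1/p$ and write $\Theta:=\sup_{0<|h|<1}|h|^{-\vartheta}\|T\Delta_h\|_{L^p_{B_1}\to L^q_{B_2}}$. First I would bound $\|T(\Id-\bbE_n)\|\le\sum_{m>n}\|T\bbD_m\|$ and estimate each $\|T\bbD_m\|$ with $m\ge 1$ by inserting the reproducing formula \eqref{eqn:resofid}, just as in the proof of \eqref{eqn:reg1}. This reduces everything to two ingredients already available: the cancellation bound \eqref{cancellation}, $\|\tLa_k\bbD_m\|_{L^p_{B_1}\to L^p_{B_1}}\lesssim\min\{1,2^{(k-m)/p}\}$, and the bound $2^{k\vartheta}\|T\La_k\|_{L^p_{B_1}\to L^q_{B_2}}\lesssim_\vartheta\Theta$ valid for $k\ge 1$, which is established in the proof of \eqref{eqn:diff-LP} (it uses only $\La_k=P_k-P_{k-1}$ and $\int\la_0=1$, and for $k\ge 1$ involves no $\|T\|$ term, since $(\Id-P_{k-1})f=\int 2^{(k-1)d}\la_0(2^{k-1}h)\Delta_{-h}f\,dh$). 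Summing the two geometric series in $k$, both convergent because $\vartheta<1/p$, gives $\|T\bbD_m\|\lesssim_\vartheta 2^{-m\vartheta}\Theta$, and then $\sum_{m>n}2^{-m\vartheta}\lesssim_\vartheta 2^{-n\vartheta}$. The only slightly delicate contribution is $k=0$: there $\La_0\tLa_0\bbD_m$ is a convolution against a function of integral one composed with $\bbD_m$, so by \eqref{cancellation} its $L^p$-operator norm is $\lesssim 2^{-m/p}$, and the surplus decay $2^{-m(1/p-\vartheta)}$ (again $\vartheta<1/p$) renders this term harmless; alternatively one can peel off $\Id-P_n$ from $\Id-\bbE_n$ and dispose of $T(\Id-P_n)$ directly via $\int\la_0=1$, leaving the lower-order same-scale piece $T(P_n-\bbE_n)$ to the martingale estimates above.

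For part (ii) I would pass to the rescaling $T_j=\dil_{2^{-j}}\circ(\dil_{2^j}T_j)\circ\dil_{2^j}$. Conjugation by $\dil_{2^j}$ turns $\bbE_{n-j}$ into the conditional expectation onto the scale-$2^{-n}$ cubes of the dilated dyadic lattice $\{2^{-j}Q:Q\in\fQ\}$, which is again a dyadic lattice, so part (i) applies to $\dil_{2^j}T_j$ with respect to that lattice; tracking $\|\dil_{2^{-j}}\|_{L^q\to L^q}=2^{jd/q}$ and $\|\dil_{2^j}\|_{L^p\to L^p}=2^{-jd/p}$ produces the factor $2^{-jd(1/p-1/q)}$. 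This yields
\[
\|T_j(\Id-\bbE_{n-j})\|_{L^p_{B_1}\to L^q_{B_2}}\lesssim_\vartheta 2^{-jd(\frac1p-\frac1q)}\,2^{-n\vartheta}\sup_{0<|h|<1}|h|^{-\vartheta}\|(\dil_{2^j}T_j)\Delta_h\|_{L^p_{B_1}\to L^q_{B_2}},
\]
and choosing $\vartheta\le\varepsilon$ the last supremum is $\le B$ by \eqref{p-q-rescaled-reg-a}. The identical computation carried out for the adjoint, using $(\dil_{2^j}T_j)^*=\dil_{2^j}T_j^*$, $(\Id-\bbE)^*=\Id-\bbE$, and the regularity hypothesis \eqref{p-q-rescaled-reg-b}, gives the dual version that the induction in Theorem~\ref{mainthm} invokes.

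The hard part — the only step that is more than bookkeeping — is the transfer of smoothing across the frequency decomposition: showing that the cancellation of the martingale differences $\bbD_m$ converts ``$\|T\Delta_h\|$ small for small $|h|$'' into the decay $2^{-n\vartheta}$ of $\|T(\Id-\bbE_n)\|$. That transfer is carried entirely by the mixed bound \eqref{cancellation}; its failure to gain beyond the exponent $1/p$ is precisely why part (i) requires $\vartheta<1/p$, and it is also the reason the $k=0$ piece needs the small separate argument indicated above. Once (i) is in hand, the rescaling and the dualization in (ii) are routine.
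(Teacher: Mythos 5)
Your overall route is the paper's: for (i) telescope $\Id-\bbE_n=\sum_{m>n}\bbD_m$ and feed the machinery of Lemma \ref{lem:eps-reg}, and for (ii) rescale (your dilation bookkeeping giving $2^{-jd(\frac1p-\frac1q)}$ is correct, as are your remarks that the displayed conclusion really rests on \eqref{p-q-rescaled-reg-a} with \eqref{p-q-rescaled-reg-b} supplying the adjoint analogue used in \eqref{Tjregb}, and that one needs $\vartheta\le\eps$). The gap is in your treatment of the $k=0$ term in (i). That term is $T\La_0\tLa_0\bbD_m$, and your estimate for it is $\|T\La_0\|\,\|\tLa_0\bbD_m\|\lc \|TP_0\|\,2^{-m/p}$: the ``surplus decay'' $2^{-m(1/p-\vartheta)}$ improves the $m$-dependence but does nothing to replace the constant $\|TP_0\|$ by $\Theta:=\sup_{0<|h|<1}|h|^{-\vartheta}\|T\Delta_h\|$. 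Since $P_0$ carries no cancellation, $\|TP_0\|$ is not dominated by $\Theta$: for $T$ equal to convolution with $N^{-d}\bbone_{B(0,N)}$ one has $\Theta\lc N^{-1}$ while $\|TP_0\|_{L^p\to L^p}\approx 1$. Your fallback (peel off $T(\Id-P_n)$ and send $T(P_n-\bbE_n)$ back to the martingale estimates) hits the same wall: writing for instance $P_n-\bbE_n=(P_n-\Id)\bbE_n+\sum_{m>n}P_n\bbD_m$, the pieces $TP_n\bbD_m$ again have no cancellation adjacent to $T$, so they are only bounded by $\|T\|$ times something small. What your argument actually proves is $\|T(\Id-\bbE_n)\|\lc_\vartheta 2^{-n\vartheta}\Theta+2^{-n/p}\|T\|_{L^p_{B_1}\to L^q_{B_2}}$ --- which, to be fair, is exactly what the paper's own two-line combination of \eqref{eqn:reg1} and \eqref{eqn:diff-LP} yields, and it suffices where the corollary is applied (in \eqref{Tjrega}--\eqref{Tjregb} one has $\eps'<1/p$ and the a priori single scale bound \eqref{p-q-rescaled}, so the extra term is harmless) --- but it is not the inequality as stated, with only the H\"older quantity on the right.

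If you want the statement literally as written, a short fix is to use that martingale differences are themselves built from translations. On each generation-$(m-1)$ cube $Q$, $\bbD_m f$ is a linear combination of at most $2^d-1$ functions $\bbone_{Q'}-\bbone_{Q''}$ with $Q',Q''$ adjacent children (order the children along a Hamiltonian path of the discrete cube and telescope, using that the coefficients sum to zero), and $\bbone_{Q'}-\bbone_{Q''}=\Delta_h\bbone_{Q''}$ with $|h|\approx 2^{-m}$ drawn from a fixed finite set of shifts independent of $Q$. Summing over $Q$ gives a factorization $\bbD_m=\sum_{j=1}^{2^d-1}\Delta_{h_j}\circ S_j$ with $\|S_j\|_{L^p_{B_1}\to L^p_{B_1}}\lc_d 1$, whence $\|T\bbD_m\|_{L^p_{B_1}\to L^q_{B_2}}\le\sum_j\|T\Delta_{h_j}\|\,\|S_j\|\lc_d 2^{-m\vartheta}\Theta$ with no $\|T\|$ term at all; summing over $m>n$ gives (i) exactly as stated, and your rescaling argument then gives (ii).
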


\begin{proof} We write $\Id= \bbE_n+\sum_{k=1}^\infty \bbD_{n+k}$, and thus
\begin{align*}
    \|T(\Id-\bbE_n)\|_{L^p_{B_1}\to L^q_{B_2}} & \le \sum_{k=1}^\infty \|T\bbD_{n+k}\|_{L^p_{B_1}\to L^q_{B_2}}
    \\& \lc_{\vartheta} \sum_{k=1}^\infty 2^{-(n+k)\vartheta} \sup_{0<|h|<1} |h|^{-\vartheta} \|T\Delta_h\|_{L^p_{B_1}\to L^q_{B_2}}
\end{align*}
by combining part \eqref{eqn:reg1}, \eqref{eqn:diff-LP} in the statement of  Lemma \ref{lem:eps-reg}. We sum and get the assertion. Part (ii) follows by rescaling and the hypotheses.
\end{proof}

 We finally discuss a formulation of the regularity condition which involves the Fourier support of the function and is therefore limited  to the case where  $B_1$ is a separable Hilbert space, here denoted by $\sH$. It is convenient to use a frequency decomposition %
\Be \label{eqn:resofidmod} f=\sum_{\ell\ge 0}  %
\eta_\ell* f,\Ee
with $\widehat \eta_0$ is supported in $\{\xi: |\xi|< 1\}$ such that $\widehat \eta_0(\xi)=1$ for $|\xi|\le \tfrac 34$, and with $\eta_\ell $ defined by 
$\widehat \eta_\ell(\xi)=\widehat \eta_0(2^{-\ell }\xi)-\widehat \eta_0(2^{1-\ell}\xi)$ for $\ell \geq 1$, {\em i.e.}  we have 
\Be\label{eqn:suppetaell}%
\supp(\widehat\eta_\ell)\subset \{ \xi :   2^{\ell-2}<|\xi|<  2^{\ell} \}, \quad \ell\ge 1.\Ee
Recall that $\Eann(\la)$ denotes  the space of tempered distributions whose Fourier transform is supported in $\{\xi:\la/2<|\xi|<2\la\}$.

\begin{lem} \label{lem:regbyFourier} Let $\sH$ be a separable Hilbert space and $T\in \mathrm{Op}_{\sH, B_2}$. Suppose that $T: L^p_\sH\to L^q_{B_2} $  satisfies 
\begin{equation}\label{eq:p-q hyp regbyFourier}
\|  T\|_{L^p_\sH\to L^q_{B_2}} \le A,
\end{equation}
and  for all $\la>2$ and
all  $\sH$-valued Schwartz functions $f\in \Eann(\la)$, 
\begin{equation}\label{eq:p-q hyp ann regbyFourier}
\|Tf\|_{L^q_{B_2} } \le A \la^{-\vartheta}  \|f\|_{L^p_\sH}.
\end{equation} 
Then
\[
\sup_{0<|h|<1} |h|^{-\vartheta} \|T\Delta_h \|_{L^p_{\sH}\to L^q_{B_2}} \lc_{\vartheta} A. \]
\end{lem}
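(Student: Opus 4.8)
The plan is to run a standard Littlewood--Paley argument based on the frequency decomposition \eqref{eqn:resofidmod}, estimating each dyadic piece by whichever of the two hypotheses \eqref{eq:p-q hyp regbyFourier}, \eqref{eq:p-q hyp ann regbyFourier} is more favorable, and pairing the annular smoothing with the elementary bound $\|\Delta_h\eta_\ell\|_{L^1}\lesssim\min\{1,2^\ell|h|\}$ for the first differences of the Littlewood--Paley kernels. Throughout we may assume $0<\vartheta<1$; this is the case of interest, and indeed first differences cannot detect regularity of order $\ge 1$. Since $\sH$-valued Schwartz functions are dense in $L^p_\sH$, it suffices to show $\|T\Delta_h f\|_{L^q_{B_2}}\lesssim_\vartheta A|h|^\vartheta\|f\|_{L^p_\sH}$ for every $\sH$-valued Schwartz function $f$ and every $0<|h|<1$.

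First I would decompose $\Delta_h f=\sum_{\ell\ge 0}\eta_\ell*\Delta_h f$. The partial sums of $\sum_\ell\eta_\ell$ are convolution with $2^{Nd}\eta_0(2^N\cdot)$ (a telescoping computation), hence form an approximate identity, so the series converges to $\Delta_h f$ in $L^p_\sH$; by the a priori bound \eqref{eq:p-q hyp regbyFourier} the operator $T$ is continuous on $L^p_\sH$, so $T\Delta_h f=\sum_{\ell\ge0}T(\eta_\ell*\Delta_h f)$ in $L^q_{B_2}$ and $\|T\Delta_h f\|_{L^q_{B_2}}\le\sum_{\ell\ge0}\|T(\eta_\ell*\Delta_h f)\|_{L^q_{B_2}}$. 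Two observations drive the estimate. First, since $\Delta_h$ commutes with convolution, $\eta_\ell*\Delta_h f=(\Delta_h\eta_\ell)*f$, whence $\|\eta_\ell*\Delta_h f\|_{L^p_\sH}\le\|\Delta_h\eta_\ell\|_{L^1}\|f\|_{L^p_\sH}$; as $\eta_\ell=2^{(\ell-1)d}\eta_1(2^{\ell-1}\cdot)$ for $\ell\ge1$ with $\eta_1\in\cS$, a change of variables and the mean value theorem give $\|\Delta_h\eta_\ell\|_{L^1}\lesssim\min\{1,2^\ell|h|\}$ (and $\|\Delta_h\eta_0\|_{L^1}\lesssim|h|$ for $|h|<1$). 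Second, for $\ell\ge3$ the function $\eta_\ell*\Delta_h f$ is Schwartz with Fourier support in $\{2^{\ell-2}<|\xi|<2^\ell\}$ by \eqref{eqn:suppetaell}, hence lies in $\Eann(2^{\ell-1})$ with $2^{\ell-1}>2$, so \eqref{eq:p-q hyp ann regbyFourier} applies to it.

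Combining these, for $\ell\ge3$ I get $\|T(\eta_\ell*\Delta_h f)\|_{L^q_{B_2}}\lesssim A\,2^{-\ell\vartheta}\min\{1,2^\ell|h|\}\|f\|_{L^p_\sH}$ from \eqref{eq:p-q hyp ann regbyFourier} and the first observation, while for the finitely many $\ell\in\{0,1,2\}$ the plain bound \eqref{eq:p-q hyp regbyFourier} together with the first observation gives $\|T(\eta_\ell*\Delta_h f)\|_{L^q_{B_2}}\lesssim A|h|\|f\|_{L^p_\sH}\le A|h|^\vartheta\|f\|_{L^p_\sH}$. It then remains to sum $\sum_{\ell\ge3}2^{-\ell\vartheta}\min\{1,2^\ell|h|\}$: splitting at $2^\ell\approx|h|^{-1}$, the range $2^\ell\lesssim|h|^{-1}$ contributes $|h|\sum 2^{\ell(1-\vartheta)}\lesssim_\vartheta|h|^\vartheta$ (using $\vartheta<1$) and the range $2^\ell\gtrsim|h|^{-1}$ contributes $\sum 2^{-\ell\vartheta}\lesssim_\vartheta|h|^\vartheta$ (using $\vartheta>0$). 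Adding all contributions yields $\|T\Delta_h f\|_{L^q_{B_2}}\lesssim_\vartheta A|h|^\vartheta\|f\|_{L^p_\sH}$, as desired.

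The argument is essentially routine; the one point that needs care is that one must invoke the annular smoothing bound \eqref{eq:p-q hyp ann regbyFourier}, not merely \eqref{eq:p-q hyp regbyFourier}, already for the moderate-frequency pieces --- using only \eqref{eq:p-q hyp regbyFourier} with $\|\Delta_h\eta_\ell\|_{L^1}\lesssim\min\{1,2^\ell|h|\}$ would leave $\sum_{\ell}\min\{1,2^\ell|h|\}\approx1$, which does not sum to $|h|^\vartheta$. One should also carry out the decomposition and the interchange of $T$ with the infinite sum on the Schwartz class before passing to the operator norm by density; this is precisely where the hypothesis that $T$ is bounded on all of $L^p_\sH$ (not just on simple functions) enters.
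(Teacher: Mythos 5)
Your proposal is correct and follows essentially the same route as the paper's proof: decompose via \eqref{eqn:resofidmod}, bound each piece using \eqref{eq:p-q hyp ann regbyFourier} for the annular frequencies and \eqref{eq:p-q hyp regbyFourier} for the low ones, combine with $\|\Delta_h\eta_\ell\|_{L^1}\lesssim\min\{1,2^\ell|h|\}$, and sum the geometric-type series using $0<\vartheta<1$. The only differences are cosmetic: you handle $\ell\in\{0,1,2\}$ separately to respect the $\la>2$ restriction in \eqref{eq:p-q hyp ann regbyFourier} and spell out the density/convergence bookkeeping, both of which the paper leaves implicit.
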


\begin{proof}  %
 By the  assumptions \eqref{eq:p-q hyp regbyFourier} ($\ell=0$) and \eqref{eq:p-q hyp ann regbyFourier} ($\ell \geq 1$) we have  
 \[ \|T [\eta_\ell * \Delta_h f ]\|_{L^q_{B_2}}\le A 
 2^{(1-\ell)\vartheta } \|\eta_\ell *\Delta_h f\|_{L^p_\sH} \]
 Arguing as in \eqref{eqn:LaDeltah} we get
 \[ \|\eta_\ell* \Delta_h f\|_{L^p_\sH}  
 = \|\Delta_h\eta_\ell *f\|_{L^p_\sH}  
 \lc \min\{1, 2^\ell|h|\} \|f\|_{L^p_\sH}.\]  
 Thus using \eqref{eqn:resofidmod} we obtain 
 \[ \|T\Delta_h f\|_{L^q_{B_2} }\le  \sum_{\ell=0}^\infty  \| T[ \eta_\ell*\Delta_h  f ]\|_{L^q_{B_2}} \lc  A \sum_{\ell=0}^\infty 2^{-\ell \vartheta}  \min \{ 1, 2^{\ell}   |h| \}  \|f\|_{L^p_\sH}  
 \] and after summing in $\ell$ we arrive at $\|T\Delta_h f\|_{L^q_{B_2}} \lc_\vartheta |h|^{\vartheta} \|f\|_{L^p_\sH} $.
 \end{proof}

\section{Proof of the main result}\label{genthmpf}

\subsection{A modified version of sparse forms} We fix a dyadic lattice $\fQ$ in the sense of Lerner and Nazarov, where we assume that the side length of each cube in $\fQ$ is dyadic, {\em i.e.} of the form $2^k$ with $k\in\bbZ$. Also fix  $\gamma\in (0,1)$ and $1<p\leq q<\infty$. It will be convenient to use variants $\fG_{Q_0}\equiv \fG_{Q_0,\ga}$ of  the  maximal form  
$\Lamaxga_{ p,q'}$ 
defined in \eqref{eqn:maxform}. %
The presence of the triple cubes  in the new form allows one to exploit more effectively the support condition \eqref{support-assu}.

\begin{definition} Given a cube $Q_0\in \fQ$ 
let  
\[\fG_{Q_0}(f_1,f_2)= \sup\sum_{Q \in \fS}|Q|\jp{f_1}_{Q,p, B_1} \jp{f_2}_{\tr{Q},q',B_2^*}\] where the supremum is taken over all $\gamma$-sparse
 collections $\fS$ consisting of cubes in $\sD(Q_0)$. 
\end{definition} 

\noindent \textit{Notational convention.} From now on in this proof, the dependence on the Banach spaces $B_1$, $B_2^*$ will not be explicitly indicated, i.e. $\jp{f_1}_{Q,p}$ should be understood as $\jp{f_1}_{Q,p,B_1}$ and $\jp{f_2}_{Q,q'}$ should be understood as $\jp{f_2}_{Q,q', B_2^*}$.

The key step towards proving Theorem \ref{mainthm} is to establish a variant in which $\Lamaxga_{p, q'}$ is replaced by $\fG_{Q_0}$, that is,
\Be \label{goal la replaced}
\big| \biginn{\sum_{j=N_1}^{N_2} T_j f_1}{f_2}\big| 
\lc_{p,q,\varepsilon,d,\gamma} \cC \,  \fG_{Q_0}(f_1,f_2)
\Ee
for $f_1\in \mathrm{S}_{B_1}$, $f_2\in \mathrm{S}_{B_2^*}$ 
and a sufficiently large cube $Q_0\in \fQ$. The reader will notice that $\fG_{Q_0}$ does not define a sparse form, and we will show in \S \ref{sec:proofofthmgivenclaim} how to finish the proof of Theorem \ref{mainthm} given \eqref{goal la replaced}. 
The proof of 
\eqref{goal la replaced}  will be done by induction, which leads us to the following definition.

\begin{definition}   For $n=0,1,2,\dots$ let $U(n)$ be the smallest constant $U$ so that for all families of operators $\{T_j\}$ satisfying the assumptions of Theorem \ref{mainthm}, for all pairs  $(N_1, N_2)$ with $0\le N_2-N_1\le n$ and for all dyadic cubes $Q_0\in \fQ$ of side length $2^{N_2}$ we have 
$$\big| \biginn{\sum_{j=N_1}^{N_2} T_j f_1}{f_2}\big| 
\le U \fG_{Q_0}(f_1,f_2)
$$
whenever 
$f_1\in \mathrm{S}_{B_1}$ with $\supp(f_1)\subset Q_0$ and $f_2\in\mathrm{S}_{B_2^*}$.

\end{definition}

Thus, in order to show \eqref{goal la replaced}, it suffices to show that 
\[U(n) \lesssim_{p,q,\varepsilon,d,\gamma} \mathcal{C}\]
uniformly in $n \in \bbN_0$. This will be proven by induction on $n$. By Lemma \ref{single-scale-lem} we have the base case 
\Be\label{base-case} U(0) \le 3^{d/q'} A_\circ(p,q)\Ee 
and, more generally, 
$U(n)\le (n+1) 3^{d/q'} A_\circ(p,q)$, which shows the finiteness of the $U(n)$.  The proof then reduces to the verification of the following inductive claim.

\begin{claim} \label{claim}There is a constant
$c=c_{p,q,\eps,d,\ga}$ such that for all 
$n>0$,
\[
U(n) \le \max \{ U(n-1), \, c \,\mathcal{C} \}, \] 
with $\cC$ defined as in \eqref{eqn:cCdef}.
\end{claim}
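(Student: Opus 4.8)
The strategy is a Calderón–Zygmund–type stopping time argument adapted to the multi-scale setting, following the schemes of Lacey and R. Oberlin. Fix $n>0$, a pair $(N_1,N_2)$ with $N_2-N_1=n$, a cube $Q_0\in\fQ$ of side length $2^{N_2}$, and functions $f_1\in\mathrm S_{B_1}$ with $\supp f_1\subset Q_0$, $f_2\in\mathrm S_{B_2^*}$. I want to bound $|\langle\sum_{j=N_1}^{N_2}T_jf_1,f_2\rangle|$ by $\max\{U(n-1),c\mathcal C\}\,\fG_{Q_0}(f_1,f_2)$. The idea is to peel off the top scale $j=N_2$ together with a controlled "exceptional" contribution, and organize the remaining scales $N_1\le j\le N_2-1$ into subcubes on which the inductive hypothesis $U(n-1)$ applies.

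First I would split $f_1 = \mathbb E_{-N_2}f_1 + (\mathrm{Id}-\mathbb E_{-N_2})f_1$ relative to the lattice $\fQ$; here $\mathbb E_{-N_2}f_1 = \av_{Q_0}f_1\,\bbone_{Q_0}$ since $Q_0\in\fQ_{-N_2}$. For the "smooth part" $\mathbb E_{-N_2}f_1$, the whole operator $\sum_{j=N_1}^{N_2}T_j$ acts on a constant multiple of $\bbone_{Q_0}$, and the restricted strong type $(q,q)$ bound \eqref{bdness-rt} (using $\|\bbone_{Q_0}\|_{L^{q,1}}\approx|Q_0|^{1/q}$) together with the support condition localizes the output to $\tr{Q_0}$, giving a contribution bounded by $A(q)\,|Q_0|\,\langle f_1\rangle_{Q_0,p}\langle f_2\rangle_{\tr{Q_0},q'}$, i.e. $\lesssim A(q)\,\fG_{Q_0}(f_1,f_2)$ (a single-cube sparse collection). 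For the remaining term I further decompose by scale: write it as $\sum_{j=N_1}^{N_2} T_j g$ with $g=(\mathrm{Id}-\mathbb E_{-N_2})f_1$, but the key is to match each $T_j$ with the martingale projection $\mathbb E_{-j}$. Using Corollary \ref{cor:reg}(ii), the piece $T_j(\mathrm{Id}-\mathbb E_{-j-m})$ for $m\ge 0$ has operator norm $\lesssim B\,2^{-m\vartheta}2^{-jd(1/p-1/q)}$, and summing the truncated sums of these "off-diagonal in scale" pieces against $f_2$, using the single-scale $(p,q)$ bound $A_\circ(p,q)$ for small $m$ and the regularity gain $B2^{-m\vartheta}$ for large $m$, produces the logarithmic factor $A_\circ(p,q)\log(2+B/A_\circ(p,q))$ in $\mathcal C$ after optimizing the cutoff in $m$; each such term is again dominated by $\fG_{Q_0}$ via Lemma \ref{single-scale-lem}-type single-scale estimates on a disjoint family of scale-$2^j$ cubes inside $Q_0$.

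After these reductions, what remains is the "diagonal" part $\sum_{j=N_1}^{N_2-1} T_j \mathbb E_{-j}f_1$ acting at scales strictly below the top — here is where the induction enters. I perform a Calderón–Zygmund stopping-time decomposition of $f_1$ on $Q_0$ at a suitable height (a large multiple of $\langle f_1\rangle_{Q_0,p}$, measured in the $L^p$-averaged sense), producing a pairwise-disjoint family of maximal stopping cubes $\{Q_k\}\subset\sD(Q_0)$ with $\sum|Q_k|\le \tfrac12|Q_0|$ (so $Q_0\setminus\bigcup Q_k$ has measure $\ge\tfrac12|Q_0|$, furnishing the sparse set $E_{Q_0}$). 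On the good part (away from the $Q_k$) the averaged $L^p$ norm of $f_1$ is controlled pointwise by $\langle f_1\rangle_{Q_0,p}$, and the support condition \eqref{support-assu} confines the scale-$2^j$ pieces so that the interaction with $f_2$ splits as a sum over the $Q_k$ of "local" forms $\langle\sum_{j} T_j[f_1\bbone_{Q_k}],f_2\bbone_{\tr{Q_k}}\rangle$ plus an error of size $\lesssim A(p)\,|Q_0|\langle f_1\rangle_{Q_0,p}\langle f_2\rangle_{\tr{Q_0},q'}$ (the error being handled by the weak type $(p,p)$ bound \eqref{bdness-wt} in an $L^{p,\infty}$–$L^{p,1}$ duality against the localized $f_2$). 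On each $Q_k$, the operator $\sum_{N_1\le j\le N_2-1}T_j$ restricted to functions supported in $Q_k$ involves at most $N_2-1-N_1 = n-1$ consecutive scales relative to the side length of $Q_k$ — more precisely, after the stopping-time construction one arranges that the surviving scales $j$ for which $T_j[f_1\bbone_{Q_k}]$ is nonzero and reaches outside lie in a window of length $\le n-1$ anchored at $\log_2(\text{side}(Q_k))$ — so the inductive hypothesis gives $|\langle\sum_j T_j[f_1\bbone_{Q_k}],f_2\rangle|\le U(n-1)\,\fG_{Q_k}(f_1,f_2)$. Summing over $k$ and using disjointness of the $Q_k$ together with the nesting $\fG_{Q_k}\subset\fG_{Q_0}$ (any sparse subcollection of $\sD(Q_k)$ extends to one of $\sD(Q_0)$, and adjoining $Q_0$ itself with $E_{Q_0}=Q_0\setminus\bigcup Q_k$ keeps the family sparse) yields $\sum_k\fG_{Q_k}(f_1,f_2)\lesssim\fG_{Q_0}(f_1,f_2)$. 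Collecting all pieces gives $U(n)\le \max\{U(n-1),c\mathcal C\}$ with $c=c_{p,q,\varepsilon,d,\gamma}$.

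The main obstacle I anticipate is the bookkeeping in the stopping-time step that guarantees the inductive hypothesis is genuinely applied to $n-1$ scales and not $n$: one must arrange the decomposition so that the top scale $j=N_2$ has been entirely removed before descending into the stopping cubes, and that the support condition \eqref{support-assu} does not let a scale-$2^{N_2}$ interaction leak back in through some $Q_k$ that is too close to $\partial Q_0$ — this is precisely why the triple-cube $\tr Q$ appears in the definition of $\fG_{Q_0}$, and handling the boundary layers of $Q_0$ cleanly (as opposed to the bulk, where everything is as in the classical argument) is the delicate point. A secondary technical nuisance is the vector-valued, merely simple-function setting: one must ensure strong measurability and the legitimacy of the duality pairings at each step, but the a priori boundedness hypotheses \eqref{bdness-wt}, \eqref{bdness-rt} were designed to supply exactly this, so I expect this to be routine rather than substantive.
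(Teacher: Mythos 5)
Your plan has the right general flavor (stopping-time cubes carrying the induction, error terms paid for by \eqref{bdness-wt}, \eqref{bdness-rt}, \eqref{p-q-rescaled} and the regularity with a logarithmic optimization), but two of its load-bearing steps fail as stated. First, you never decompose $f_2$, and this is not optional. Your error term is supposed to be closed by the weak type $(p,p)$ bound ``in an $L^{p,\infty}$--$L^{p',1}$ duality against the localized $f_2$'', which requires $\|f_2\bbone_{\tr{Q_0}}\|_{L^{p',1}}\lesssim |Q_0|^{1/p'}\jp{f_2}_{\tr{Q_0},q'}$. Since $p\le q$ gives $q'\le p'$, the $p'$-average dominates the $q'$-average and no such bound holds for a general simple $f_2$. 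This is exactly why the paper's exceptional set contains the set $\Omega_2$ where $\cM_{q'}f_2$ is large, why $f_2$ is also Calder\'on--Zygmund decomposed along the same Whitney cubes (so that the good part $g_2$ is bounded by $\jp{f_2}_{\tr{Q_0},q'}$ in $L^\infty$, hence in $L^{p',1}$ locally), and why the adjoint regularity hypothesis \eqref{p-q-rescaled-reg-b} is needed at all: it is used against the bad parts $b_{2,W'}$ in the term the paper calls $IV_3$. Your argument uses \eqref{p-q-rescaled-reg-b} nowhere, which is a strong sign that $f_2$ has not been handled.

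Second, the scale bookkeeping does not close. For the ``off-diagonal'' pieces $T_j(\Id-\bbE_{-j-m})f_1$ you propose to apply Lemma \ref{single-scale-lem}-type estimates on a tiling of $Q_0$ by cubes of side $2^j$ and then sum over $j\in[N_1,N_2]$; but the union over $j$ of these tilings is an $n$-fold overlapping family, not a sparse one, so this produces a bound of order $n\,\cC\,\fG_{Q_0}(f_1,f_2)$ rather than $c\,\cC\,\fG_{Q_0}(f_1,f_2)$, destroying the induction. In the paper the single-scale and regularity estimates are applied only to the bad parts $b_{1,W}$ (and $b_{2,W'}$) supported on Whitney cubes of $\Omega$, whose geometry ($\diam W\approx\dist(W,\Omega^\complement)$, so $\jp{f_1}_{W,p}\lesssim\jp{f_1}_{Q_0,p}$, $\jp{f_2}_{\tr{W},q'}\lesssim\jp{f_2}_{\tr{Q_0},q'}$) lets the sums over $j$ and $W$ collapse to $|Q_0|\jp{f_1}_{Q_0,p}\jp{f_2}_{\tr{Q_0},q'}$ with only the logarithmic factor in the window $j-L(W)\le\ell$. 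Relatedly, the ``diagonal'' part you feed into the induction, $\sum_j T_j\bbE_{-j}f_1$ restricted to a stopping cube $Q_k$, is not of the form controlled by $U(n-1)$: the inductive quantity concerns $\sum_{j=N_1}^{L}T_j[f\bbone_{Q}]$ with $Q$ of side $2^{L}$ and no martingale projections inserted, and for your $Q_k$ (selected only by the size of $f_1$) the scales $L(Q_k)<j\le N_2-1$ applied to $f_1\bbone_{Q_k}$ spill far outside $\tr{Q_k}$ and are covered neither by $\fG_{Q_k}$ nor by the hypothesis. The paper resolves this by fiat, defining $S_W=\sum_{N_1\le j\le L(W)}T_j[\cdot\,\bbone_W]$ (only these go to the induction) and shunting the scales $j>L(W)$, applied to $b_{1,W}$, into the error term $III$, which is then estimated with the two-sided Calder\'on--Zygmund machinery described above.
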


Our  proof of the claim is an extension of the proof for sparse bounds  of the prototypical singular  Radon transforms in \cite{roberlin}, which itself builds on ideas in \cite{laceyJdA19}. It is contained in \S\ref{proofofclaim}.

\subsection{Proof of Theorem \ref{mainthm} given Claim \ref{claim}}\label{sec:proofofthmgivenclaim}

Fix $N_1\le N_2$, $f_1\in \mathrm{S}_{B_1}$,
$f_2\in \mathrm{S}_{B_2^*}$. We choose any  dyadic lattice with cubes of dyadic  side length as in the previous subsection. By \eqref{support-assu} we may choose a cube $Q_0\in \fQ$
of side length $2^{L(Q_0)}$ with $L(Q_0)\ge N_2$ such that $f_1$ is supported in $Q_0$. Then $\sum_{j=N_1}^{N_2} T_j f_1$ is supported in $\tr{Q_0}$. 
Define the operators $S_j=T_j$ when $N_1\le j\le N_2$ and $S_j=0$ otherwise. %
Then the assumptions of Theorem \ref{mainthm} apply to the family $\{S_j\}$. 
By \eqref{base-case} and Claim \ref{claim} applied to $S=\sum_{j=N_1}^{L(Q_0)} S_j=\sum_{j=N_1}^{N_2} T_j$ we obtain
\Be \notag \label{asy-sparse} |\inn{Sf_1}{f_2}|\leq c_{p,q,\varepsilon,d,\gamma}\, \cC\,\fG_{Q_0}(f_1,f_2).\Ee
In order to complete the proof of Theorem \ref{mainthm} it remains to replace $\fG_{Q_0}$ by the maximal sparse form $\Lamaxga_{p, q}$. This argument relies on  facts in dyadic analysis which we quote from  the book by Lerner and Nazarov \cite{lerner-nazarov}.

We first note that for $\ep>0$ there is a $\gamma$-sparse collection $\fS_\ep\subset\sD(Q_0)$ 
such that 
\begin{align}
\notag \big| \inn{Sf_1}{f_2}\big| 
&\le (c_{p,q,\varepsilon,d,\gamma}\,\mathcal{C}+\ep) \sum_{Q\in \fS_\ep} |Q| \jp{f_1}_{Q,p} \jp{f_2}_{\tr{Q},q'} 
\label{triple-cubes}
\\
&\le 3^{d/p-d}(c_{p,q,\varepsilon,d,\gamma}\,\mathcal{C}+\ep) \sum_{Q\in \fS_\ep} |\tr{Q}| \jp{f_1}_{\tr {Q},p} \jp{f_2}_{\tr{Q},q'}.
\end{align}

By the Three Lattice Theorem \cite[Theorem 3.1]{lerner-nazarov} 
there are  dyadic lattices $\sD^{(\nu)}$, $\nu=1,\dots, 3^d$, such that every cube in the collection 
$\tr{\fS}_\ep :=\{\tr{Q}:Q\in \fS_\ep\}$  belongs to one of the dyadic lattices $\sD^{(\nu)}$. Moreover, each  collection 
\[\fS_\ep^{(\nu)}= \tr{\fS_\ep}\cap \sD^{(\nu)}\]
is a  $3^{-d}\gamma$-sparse collection of cubes in $\sD^{(\nu)}$.
Each $\fS_\ep^{(\nu)}$  is 
a $3^d\gamma^{-1}$-Carleson family  in the sense of \cite[Definition 6.2]{lerner-nazarov}. By  \cite[Lemma 6.6]{lerner-nazarov}  we can write, for each integer $M\ge 2$,  the family $\fS_\ep^{(\nu)}$ as a  union of   $M$ sub-families $ \fS_{\ep,i}^{(\nu)}$, each of which is a 
$\widetilde M$-Carleson family, with $\widetilde M=
1+ M^{-1} (3^d\ga^{-1} -1)$. By \cite[Lemma 6.3]{lerner-nazarov} the collections $ \fS_{\ep,i}^{(\nu)} $ are $\tilde \gamma$-sparse families where 
$\tilde \ga= \widetilde{M}^{-1} = (1+ M^{-1} (3^d\ga^{-1} -1))^{-1}$.  By choosing $M$ large enough we can have $\tilde \ga>\ga$ and then, from \eqref{triple-cubes}, one has
\begin{align*}
| \inn{Sf_1}{f_2}|&\le 
3^{d/p-d }(c_{p,q,\varepsilon,d,\gamma}\,\mathcal{C}+\ep) \sum_{Q\in \fS_\ep} |\tr{Q}| \jp{f_1}_{\tr {Q},p} \jp{f_2}_{\tr{Q},q'} 
\\
&\le M
3^{d/p}(c_{p,q,\varepsilon,d,\gamma}\,\mathcal{C}+\ep) 
\sup_{\substack{i=1,\dots, M\\ \nu=1,\dots 3^d}} \sum_{R\in \fS_{\ep,i}^{(\nu)}} |R| \jp{f_1}_{ {R},p} \jp{f_2}_{R,q'} 
\\
&\le M3^{d/p}(c_{p,q,\varepsilon,d,\gamma}\mathcal{C}+\ep) \,  \Lamaxga_{p,q} (f_1,f_2)
\end{align*}
which gives the desired $\ga$-sparse bound
with $\|S\|_{\Sp_{\ga}(p,q')}\le M 3^{d/p} c_{p,q,\varepsilon,d,\gamma}\,\mathcal{C}$.

\subsection{Proof of Corollary \ref{mainthm-cor}}\label{sec:mainthm-part-two}
Corollary \ref{mainthm-cor} is a consequence of Theorem \ref{mainthm} and the following lemma, applied to $T=\sum_{j=N_1}^{N_2} T_j$.

\begin{lem} \label{lem:proofofcormain}
Let $T:\simpleone\to L^1_{B_2,\loc}$ and assume that
\[\|T\|_{\Sp_\ga(p,B_1;q',B_2^*)}  \le \sC.\]
Then we have for all $f\in \simpleone$ and all nonnegative simple $\om$ 
\Be\label{normofT}\int_{\bbR^d} |Tf(x)|_{B_2} \om(x)  dx \le  \sC \,  \Lamaxga_{p,B_1,q', \bbR}(f,\om) .\Ee
\end{lem}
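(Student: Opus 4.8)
The plan is to reduce the desired inequality to the scalar-valued sparse bound already available for $T$, via a pointwise Hahn--Banach linearization of the norm $|Tf(x)|_{B_2}$. First I would fix $f\in\simpleone$ and a nonnegative simple function $\om$, and record that $K:=\supp\om$ is compact, so that $g:=Tf$ is Bochner integrable over $K$ by the hypothesis $T:\simpleone\to L^1_{B_2,\loc}$; in particular $\int_{\bbR^d}|Tf|_{B_2}\,\om\le\|\om\|_\infty\int_K|Tf|_{B_2}<\infty$. By Pettis measurability $g$ is a.e. equal to a function with values in a separable subspace of $B_2$, so one can choose simple $B_2$-valued functions $s_n$ supported in $K$ with $s_n\to g$ a.e.\ and $|s_n|_{B_2}\le 2|g|_{B_2}$ for all $n$.

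Next I would linearize. Writing $s_n=\sum_i a_{n,i}\bbone_{E_{n,i}}$ with the $E_{n,i}\subset K$ disjoint, choose by Hahn--Banach norming functionals $\mu_{n,i}\in B_2^*$ with $|\mu_{n,i}|_{B_2^*}\le 1$ and $\inn{a_{n,i}}{\mu_{n,i}}=|a_{n,i}|_{B_2}$, and set $\lambda_n=\sum_i\mu_{n,i}\bbone_{E_{n,i}}$, a simple $B_2^*$-valued function with $|\lambda_n|_{B_2^*}\le 1$ and support in $K$, so that $f_{2,n}:=\om\lambda_n\in\simpledual$ and $\inn{s_n(x)}{\lambda_n(x)}=|s_n(x)|_{B_2}$ pointwise. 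Since $|f_{2,n}(x)|_{B_2^*}\le\om(x)$, a termwise comparison of sparse forms gives $\jp{f_{2,n}}_{Q,q',B_2^*}\le\jp{\om}_{Q,q',\bbR}$ for every cube, hence $\Lamaxga_{p,B_1,q',B_2^*}(f,f_{2,n})\le\Lamaxga_{p,B_1,q',\bbR}(f,\om)$; applying the hypothesis $\|T\|_{\Sp_\ga(p,B_1;q',B_2^*)}\le\sC$ to the pair $(f,f_{2,n})$ then yields $|\inn{Tf}{f_{2,n}}|\le\sC\,\Lamaxga_{p,B_1,q',\bbR}(f,\om)$ uniformly in $n$.

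Finally I would pass to the limit. Writing $\inn{Tf}{f_{2,n}}=\int_K\inn{g-s_n}{\lambda_n}\om+\int_K|s_n|_{B_2}\om$, the first term is bounded by $\int_K|g-s_n|_{B_2}\om\to 0$ and the second converges to $\int_K|g|_{B_2}\om$, both by dominated convergence with majorant $3|g|_{B_2}\om\in L^1$. Taking real parts and letting $n\to\infty$ gives $\int_{\bbR^d}|Tf(x)|_{B_2}\,\om(x)\,dx=\lim_n\Re\inn{Tf}{f_{2,n}}\le\sC\,\Lamaxga_{p,B_1,q',\bbR}(f,\om)$, which is \eqref{normofT}.

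The only genuinely delicate point is that one must be content to \emph{approximate} $|Tf(x)|_{B_2}$ rather than represent it exactly: without any structural hypothesis on $B_2$ there need not exist a single measurable norming section $x\mapsto\lambda(x)\in B_2^*$, so the argument instead uses the exact identity $\inn{s_n}{\lambda_n}=|s_n|_{B_2}$ for simple functions together with a limiting procedure. The role of the local integrability hypothesis $T:\simpleone\to L^1_{B_2,\loc}$ is precisely to make both the simple-function approximation of $Tf$ on $K$ and the attendant dominated-convergence steps legitimate; everything else is routine once the linearized functionals $f_{2,n}$ are constructed.
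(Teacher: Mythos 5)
Your argument is correct and follows essentially the same route as the paper's: you linearize $|Tf(x)|_{B_2}$ by pairing with a simple $B_2^*$-valued norming function of sup-norm at most $1$ multiplied by $\om$, note that $\jp{\om\la}_{Q,q',B_2^*}\le\jp{\om}_{Q,q',\bbR}$ so the vector-valued sparse form is dominated by the scalar one, and remove the simple-function approximation of $Tf$ in the limit (the paper uses a single $\ep$-approximant and lets $\ep\to 0$ rather than a sequence with dominated convergence, which is immaterial). The only slip is asserting that $\supp\om$ is compact for a general nonnegative simple $\om$; as in the paper, first reduce to compactly supported $\om$ by monotone convergence, after which your argument goes through verbatim.
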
 
\begin{proof}
By the monotone convergence theorem we may assume that $\omega$ is a compactly supported simple function. Moreover, since $T:\simpleone\to L^1_{B_2,\loc}$ we can approximate, in the $L^1_{B_2}(K)$ norm for every compact set $K$,   the function $T f$  (for 
 $f\in \simpleone$) by simple $B_2$-valued functions.  Thus given $\ep>0$  there is  $h\in \mathrm{S}_{B_2}$ such that
 \[ \int_{\bbR^d} | Tf(x)-h(x)|\ci{B_2} \, \om(x) dx  \le \ep.\]
 Moreover, there is a compactly supported 
 $\la\in \mathrm S_{B_2^*}$ with $\max_{x\in \bbR^d}|\la(x)|_{B_2^*}\le 1$ (depending on $h$, $\omega$)   such that 
 \[ \int_{\bbR^d} |h(x)|\ci{B_2} \om(x) dx\le \ep+ \int_{\bbR^d} \inn{h(x)} {\la(x) } \, \om(x) dx,
 \]
 and we also have 
 \[\Big|\int_{\bbR^d} \inn{h(x) -Tf(x) } {\la(x) }\, \omega(x) dx \Big|\le 
 \int_{\bbR^d}|h(x)-Tf(x)|\ci{B_2} \, \om(x) dx  \le \ep.\]
 Consequently 
 \[
 \int_{\bbR^d} |T f(x)|_{B_2} \omega(x) dx  \le 3\epsilon +
 \int_{\bbR^d} \biginn{Tf(x)}{\la(x)}  \,
  \omega(x) dx .\]
 Thus  
 in order to show \eqref{normofT} it suffices to show 
 \begin{equation}\label{norms-sparse}
 \int_{\bbR^d} \biginn{T f(x)}{\la(x)} 
  \omega(x) dx 
  \le \sC \Lamaxga_{p_1,B_1,p_2,\bbR}(f,\om) 
\end{equation}
for any choice of compactly supported  $\la\in \mathrm S_{B_2^*}$ such that $\|\la\|_{L^\infty_{B_2^*}}\le 1$. Let $f_2(x)=\omega(x)\la(x)$.
Then $f_2\in \mathrm S_{B_2^*} $ with $|f_2(x)|_{B_2^*}\le \omega(x)$ for all $x\in \bbR^d$.
By  the hypothesis,  applied to $f$ and $f_2=\om\la$, 
\[\int_{\bbR^d} \biginn{T f(x)}{\om(x)\la(x)
}
 dx  \le \sC \, \Lamaxga_{p_1,B_1,p_2,B_2^*}(f,\om\la). 
\]
 Since $\langle \omega \la\rangle_{Q,{p_2},B_2^*}\le 
\langle \omega \rangle_{Q,{p_2},\bbR}$, we have established \eqref{norms-sparse}, and the proof is finished by letting $\ep\to 0$.
 \end{proof}

\subsection{The inductive step}\label{proofofclaim}

In this section we prove Claim \ref{claim}, the key ingredient in the proof of Theorem \ref{mainthm}.
Let $Q_0$ be a dyadic cube of side length 
$2^{N_2}$. Recall that $f_1$ is supported in $Q_0$ and  thus
\[\biginn { \sum_{j=N_1}^{N_2} T_j f_1}{f_2} =
\biginn { \sum_{j=N_1}^{N_2} T_j f_1}{f_2\bbone_{\tr{Q_0} }}. \] 
Hence without loss of generality we may assume that 
$f_2$ is supported in $\tr{Q_0}$.

Let $\cM$ denote the Hardy--Littlewood maximal operator  and let $\cM_pf= (\cM |f|^p)^{1/p}$. By  the  well known weak $(1,1)$ inequality for $\cM$,
\Be \notag \label{wt} \meas(\{x \in \bbR^d:\cM_pf>\la\})\le 5^d\la^{-p}\|f\|_p^p. \Ee
Define $\Omega=\Omega_1\cup \Omega_2$ where
\Be\label{Omdefs}
\begin{aligned}
\Omega_1&= \{x\in 3Q_0: \cM_p f_1(x)> \big(\tfrac{100^d}{1-\gamma}\big)^{1/p}\jp {f_1}_{Q_0,p}\},
\\
\Omega_2&= \{x\in 3Q_0: \cM_{q'} f_2(x)> \big(\tfrac{100^d}{1-\gamma}\big)^{1/q'}  \jp {f_2}_{\tr{Q_0},q'}\}.
\end{aligned}
\Ee
We then have 
$|\Omega|\le |\Omega_1|+|\Omega_2|< (1-\gamma)|Q_0|$ and if  we set %
\[E_{Q_0} = Q_0\setminus \Omega,\]
then $|E_{Q_0} |>\ga|Q_0|$. 

We perform a Whitney decomposition of $\Omega$. It is shown in  
 \cite[VI.1.2]{Ste70}
 that given any $\beta> \sqrt d$, one can  write  $\Omega $ as a union of disjoint dyadic Whitney cubes  $W\in \cW^\beta\subset \fQ$, with side length $2^{L(W)}$ and $L(W)\in \bbZ$,  so that
\[(\beta-\sqrt d) 2^{L(W)} \le \mathrm{dist}(W, \Om^\complement ) \le \beta 2^{L(W)+1}\] for $W\in \cW^\beta$.
In  \cite[VI.1.1]{Ste70} the choice of $\beta=2\sqrt d$ is made; here we need to choose $\beta$ sufficiently large and   $\beta=6\sqrt d$ will work for us. We  fix this choice and label as $\cW$ the  corresponding family of  Whitney cubes. We then have
\Be\label{wh} 5\,\diam(W) \le \mathrm{dist}(W, \Om^\complement ) \le 12\, \diam(W) \quad \text{for all } W\in \cW.\Ee
We set for $i=1,2$,
\begin{align*}
f_{i,W}&= f_i\bbone_W,\\
b_{i,W}&=(f_i-\av\ci W f_i)\bbone_W = (\Id- \bbE_{-L(W)})f_{i,W},
\end{align*}
and
\begin{align*} 
b_i&= \sum_{W \in \mathcal{W}} b_{i,W},
\\
g_i&= f_i\bbone_{\Omega^\complement}+ \sum_{W\in \cW}\av\ci W \!f_i\,\bbone_W.
\end{align*}
Then we have   the Calder\'on--Zygmund decompositions $f_i=g_i+b_i$ (using the same above family of Whitney cubes for $f_1$ and $f_2$).
For $i=1$ we add an observation, namely that
\[b_1= \sum_{\substack{W\in\cW\\W\subset Q_0}} b_{1,W}.\]
Since $f_1$ is supported in $Q_0$, this follows from the fact that
\begin{equation}\label{W in Q0}
W \cap Q_0 \neq \emptyset \quad  \Longrightarrow \quad  W \subsetneq Q_0.
\end{equation}
Indeed, if \eqref{W in Q0} fails, we must have $Q_0 \subseteq W$ as $W$ and $Q_0$ are dyadic. But then $|Q_0|\leq |W| \leq |\Omega| < (1-\gamma) |Q_0|$, which is a contradiction.

We note from \eqref{wh} and the definition of $\Omega$ that 
\begin{subequations}
\Be \label{upperboundsWh} 
\jp{f_1}_{W,p} \lc_{d,\ga}\jp{f_1}_{Q_0,p},
\qquad
\jp{f_2}_{W,q'} \lc_{d,\ga} \jp{f_2}_{\tr{Q_0},q'}
\Ee for every $W\in \cW$, as a fixed dilate of $W$ intersects $\Omega^\complement$. Indeed, 
\Be\label{upperboundsWhext} 
\jp{f_1}_{Q,p} \lc_{d,\ga}\jp{f_1}_{Q_0,p},\qquad \quad
\jp{f_2}_{Q,q'} \lc_{d,\ga} \jp{f_2}_{\tr{Q_0},q'}\Ee\end{subequations} 
for every cube $Q$ which contains a $W\in \cW$. Moreover, by the definition of $g_i$ and $\Omega$,
\Be \label{Linfty-g} \|g_1\|_{L^\infty_{B_1}} \lc_{d,\gamma}\jp{f_1}_{Q_0,p},
\qquad 
\|g_2\|_{L^\infty_{B_2^*}} \lc_{d,\gamma}\jp{f_2}_{\tr{Q_0},q'}.
\Ee 
Since $\supp(f_1)\subset Q_0$ and $\supp(f_2) \subset \tr{Q_0}$ we also get 
$\supp(g_1)\subset Q_0$ and $\supp(g_2) \subset 3{Q_0}$; here we use \eqref{W in Q0}. %
Since $\|\bbone_{Q}\|_{L^{r,1}}\lc_r |Q|^{1/r}$ for $r<\infty$,
we obtain from \eqref{Linfty-g} that for $r_1,r_2<\infty$,
\Be \label{ri-g} \|g_1\|_{L^{r_1,1}_{B_1}} \lc_{d,r_1,\gamma} |Q_0|^{1/r_1} \jp{f_1}_{Q_0,p},
\qquad 
\|g_2\|_{L^{r_2,1}_{B_2^*}} \lc_{d,r_2,\gamma} |Q_0|^{1/r_2}\jp{f_2}_{\tr{Q_0},q'}.
\Ee

For every dyadic cube $Q\in\fQ$ we have by disjointness of the $W$ 
\[\Big\|\sum_{W\subset Q}b_{1,W} \Big\|_{L^r_{B_1}}\lc 
\Big(\sum_{W\subset Q} \|f_{1,W}\|_{L^r_{B_1}}^r\Big)^{1/r}\] and thus 
\Be \label{Lrbounds-sumsb-1}
\Big\|\sum_{W\subset Q}b_{1,W} \Big\|_{L^r_{B_1}}\lc 
\Big(\int_Q|f_1(x)|_{B_1}^r dx\Big)^{1/r}.
\Ee Likewise we get for $f_2$, %
\Be \notag \label{Lrbounds-sumsb-2}
\Big\|\sum_{W\subset Q}b_{2,W} \Big\|_{L^r_{B_2^*}}\lc
\Big(\int_Q|f_2(x)|_{B_2^*}^r dx\Big)^{1/r}.
\Ee 
We now begin the proof of the induction step in Claim
\ref{claim}. Let $$S\equiv S_{N_1,N_2} = \sum_{j=N_1}^{N_2} T_j.$$ By the Calder\'on--Zygmund decomposition for $f_1$ we have
\Be \label{first splitting}
|\inn{Sf_1}{f_2}|\le |\inn{Sg_1}{f_2}|+ | \inn{Sb_{1}}{f_2} |.
\Ee
Using the $L^{q,1}_{B_1}\to L^q_{B_2}$ boundedness of $S$ (from the restricted strong type $(q,q)$ condition \eqref{bdness-rt}) and 
\eqref{ri-g} with $r_1=q<\infty$
we get 
\begin{align} \label{good-fct} 
|\inn {Sg_1}{f_2}| & \le \|Sg_1\|_{L^q_{B_2}}\|f_2\bbone_{\tr{Q_0}}\|_{L^{q'}_{B_2^*}}
\\ &\le A(q) \|g_1\|_{L^{q,1}_{B_1}} \|f_2\bbone_{\tr{Q_0}}\|_{L^{q'}_{B_2^*}}
\notag \\
&
\lc{_{d,q,\gamma}} A(q) |Q_0| \jp{f_1}_{{Q_0},p}
\jp{f_2}_{\tr{Q_0},q'}.
\notag
\end{align}
Define, for each $W\in \cW$ (recalling that the side length of $W$ is $2^{L(W)}$),
 \[S_Wf= 
 S_{N_1, L(W)}[f\bbone_W]\equiv \sum_{N_1\le j\le L(W)} T_j [f\bbone_W].\]
 We decompose the second term in \eqref{first splitting}
as in \cite{roberlin} 
and write
\[ \inn{Sb_{1}}{f_2} =I+II+III,\]
where
\begin{subequations}\label{three terms}
\begin{align} 
I&=  \biginn{\sum_{W\in \cW}S_W f_1}{f_2}\,,
\label{termI}
\\
II&= -
\biginn{\sum_{W\in \cW}S_W(\av_W[f_1]  \bbone_W)}{f_2}\,,
\label{termII}
\\
III&= \biginn{\sum_{W\in \cW}(S-S_W) b_{1,W}}{f_2}.
\label{termIII}
\end{align}
\end{subequations}

The first term \eqref{termI} is handled by the induction hypothesis. In view of \eqref{W in Q0},
each  $W$ that contributes a non-zero summand in \eqref{termI} is a proper subcube of $Q_0$. Therefore  we have 
$L(W)-N_1\le n-1$ and thus by the induction hypothesis,
\[|\inn {S_W f_1}{f_2} |\le U(n-1) \fG_{W}(f_1\bbone_W,f_2).\]
That is, given any $\ep>0$ there is a $\gamma$-sparse collection $\fS_{W,\ep}$ of subcubes of $W$ such that
\Be\label{SWest}|\inn {S_W f_1}{f_2} |\le (U(n-1)+\ep) 
\sum_{Q\in \fS_{W,\ep}} |Q| \jp {f_1}_{Q,p} \jp{f_2}_{\tr{Q},q'}.\Ee
Because of the $\gamma$-sparsity there are measurable subsets $E_Q$ of $Q$ with $|E_Q|\ge \gamma|Q|$ so that the 
$E_Q$ with $Q\in \fS_{W,\ep}$ are disjoint.
We combine the various collections $\fS_{W,\ep}$
and form the collection $\fS_\ep$ of cubes 
\Be\notag 
\fS_\ep:= \{Q_0\} \cup  \bigcup_{\substack{W\in \cW
\\W\subset Q_0}}\fS_{W,\ep}\,.
\Ee
Observe that the collection $\fS_\ep$ is indeed $\gamma$-sparse: as defined above, $E_{Q_0}=Q_0\setminus \Omega$, and therefore $|E_{Q_0}| > \gamma |Q_0|$. By disjointness of the $W \subset Q_0$ the sets $E_Q$, for $Q\in \fS_\ep$ are disjoint; moreover they satisfy $|E_Q|\ge \gamma|Q|$.

We consider the term $II$ in \eqref{termII}. Here we will use that the restricted strong type $(q,q)$ condition \eqref{bdness-rt} implies $\|S_W\|_{L^{q,1}_{B_1}\to L^q_{B_2}} \le A(q)$, and \[\|\av_W[f_1]  \bbone_W\|_{L^{q,1}_{B_1}}\lc 
|\av_W[f_1]|_{B_1}|W|^{1/q}  \lc_q \jp{f_1}_{W,p}|W|^{1/q}\] 
for $q < \infty$. Together with the disjointness of the cubes $W$ and \eqref{upperboundsWh}, we get
\begin{align} \label{IIest} |II| &\le \sum_{W\in \cW}
\|S_W (\av_W[f_1] \bbone_W)\|_{L^q_{B_2}} \|f_2\bbone_{\tr{W}}\|_{L^{q'}_{B_2^*}}
\\
&\lc_{q } \sum_{W\in\cW} \jp{f_1}_{W,p} A(q) |W|^{1/q} (3^d|W|)^{1/q'} \jp{f_2}_{\tr{W},q'}
\notag
\\
&\lc_{d,q,\gamma} A(q) \sum_{W\in \cW}|W| \jp{f_1}_{Q_0,p} 
\jp{f_2}_{\tr{Q_0},q'} 
\notag
\\
&\lc_{d,q,\gamma} A(q) |Q_0| \jp{f_1}_{Q_0,p} 
\jp{f_2}_{\tr{Q_0},q'} \,.
\notag
\end{align}

Regarding the third term in \eqref{termIII} we claim that %
\begin{multline}\label{error-claim} 
|III|\lc_{p,q,\eps,d,\ga} (A(p)+A_\circ(p,q)
\log(2+ \tfrac{B}{A_\circ(p,q)}))|Q_0| \jp{f_1}_{Q_0,p} 
\jp{f_2}_{\tr{Q_0},q'} \,.
\end{multline}
Taking  \eqref{error-claim} for granted we obtain
from \eqref{good-fct}, \eqref{IIest}, 
\eqref{error-claim} and \eqref{SWest} 
that there exist constants $C_1(d,q,\gamma)$ and $C_2(p,q,\eps,d,\gamma)$ such that
 \begin{align*}
|\inn{Sf_1}{&f_2}| \le 
C_1(d,q,\gamma) A(q) |Q_0|
 \jp{f_1}_{{Q_0},p}\jp{f_2}_{\tr{Q_0},q'}
\\
&+
C_2(p,q,\eps, d,\gamma) (A(p)+A_\circ(p,q)
\log(2+ \tfrac{B}{A_\circ(p,q)})
|Q_0| \jp{f_1}_{Q_0,p} 
\jp{f_2}_{\tr{Q_0},q'} 
\\
&+\sum_{\substack{W\in \cW \\ W \subset Q_0}} \sum_{Q\in \fS_{W,\ep}} 
(U(n-1)+\ep) 
 |Q| \jp {f_1}_{Q,p} \jp{f_2}_{\tr{Q},q'}.
 \end{align*}
 
This implies %
\begin{align*}
|\inn{Sf_1}{f_2}| &\le \max\{ U(n-1)+\ep, c_{p,q,\eps,d,\ga}\,  \cC\}
\sum_{Q\in \fS_\ep} |Q|\jp {f_1}_{Q,p} \jp{f_2}_{\tr{Q},q'}
\\ &\le \max\{ U(n-1)+\ep, c_{p,q,\eps,d,\ga}\cC\}\,\fG_{Q_0}(f_1,f_2)
\end{align*}
for all $\ep>0$.  Letting $\ep\to 0$ implies Claim \ref{claim}. We are now coming to the most technical part of the proof, the estimation of the error term $III$ in \eqref{termIII} for which we have to establish the claim \eqref{error-claim}.

\begin{proof}[Proof of \eqref{error-claim}]
We now use the Calder\'on--Zygmund decomposition for $f_2=g_2+\sum_{W\in \cW} b_{2,W}$ as   described above. We split $III=\sum_{i=1}^4 III_i$ where 
\begin{subequations}\label{eq:IIIsplitting}
\begin{align}
\label{eq:IIIone-new}
III_1&= \biginn{
\sum_{W\in \cW} S b_{1,W}}{g_2},
\\ \label{eq:IIItwo-new}
III_2&= -
\sum_{W\in \cW} \inn{S_W b_{1,W}}{g_2},
\\
III_3&= \sum_{N_1\le j\le N_2} \sum_{\substack{W\in \cW:\\ L(W)<j}}\sum_{\substack{W'\in \cW:\\ L(W')\ge j}}
\inn{T_j b_{1,W} }{b_{2,W'}},
\label{eq:IIIthree-new}
\\
III_4&= \sum_{N_1\le j\le N_2} \sum_{\substack{W\in \cW:\\ L(W)<j}}\sum_{\substack{W'\in \cW:\\ L(W')< j}} 
\inn{T_j b_{1,W} }{b_{2,W'}}.
\label{eq:IIIfour-new}
\end{align}
\end{subequations}
We use the weak type $(p,p)$ condition \eqref{bdness-wt} that $S$ maps  $L^p_{B_1}$ to
$L^{p,\infty}_{B_2}$, which is isometrically embedded in $ L^{p,\infty}_{B_2^{**}}$.
As $p>1$, we obtain  using \eqref{ri-g} for $r_2=p'<\infty$ 

\begin{align*} 
|III_1| &\le \Big\|S[ \sum_{W\in \cW} b_{1,W}]\Big\|_{L^{p,\infty}_{B_2^{**}}}
\|g_2\|_{L^{p',1}_{B_2^*}}
\\
&\lesssim_{d,p,\gamma} \|S\|_{L^p_{B_1}\to L^{p,\infty}_{B_2}} 
\Big\|\sum_{W\in \cW} b_{1,W}\Big\|_{L^p_{B_1}} 
|Q_0|^{1/p'} \jp{f_2}_{\tr{Q_0},q'}.
\end{align*} By 
\eqref{Lrbounds-sumsb-1} for $r=p$ we obtain 
\begin{align*}
|III_1|
\lc_{d,p,\gamma}  A(p) |Q_0| \jp {f_1}_{Q_0,p} 
\jp{f_2}_{\tr{Q_0},q'}.
\end{align*}

Likewise, the weak type $(p,p)$ condition \eqref{bdness-wt} implies 
$L^p_{B_1}\to L^{p,\infty}_{B_2}$ boundedness of $S_W$. Using this and $\supp (S_Wb_{1,W}) \subset \tr{W}$, \eqref{upperboundsWh},
\eqref{Linfty-g}, and $p>1$ we estimate, 
\begin{align*}
|III_2|&\le \sum_{W\in \cW}\|S_W  b_{1,W}\|_{L^{p,\infty}_{B_2^{**}}} \|g_2\bbone_{\tr{W}}\|_{L^{p',1}_{B_2^*}}
\\
&\le
A(p) 
\sum_{W\in \cW}\|b_{1,W}\|_{L^{p}_{B_1}} \|g_2\|_{L^\infty_{B_2^*}} \|\bbone_{\tr{W}}\|_{L^{p',1}_{B_2^*}}
\\
&\lc_{d,p,\gamma} A(p) \sum_{W\in \cW} |W|^{1/p} \jp{f_1}_{W,p} 
\|g_2\|_{L^\infty_{B_2^*}} |W|^{1/p'}
\\ &\lc_{d,\ga} A(p) \sum_{W\in \cW} |W| 
 \jp {f_1}_{Q_0,p} \jp{f_2}_{\tr{Q_0},q'}
\end{align*}
 and hence, by the disjointness of the cubes $W$,
\[ |III_2|\lesssim_{d,p,\gamma}  A(p) |Q_0|\jp {f_1}_{Q_0,p} 
 \jp{f_2}_{\tr{Q_0},q'}.
 \]
 
Next we estimate $III_3$ and first show that 
\begin{equation}\label{restriction on j}
\def\arraystretch{1.4}
\left.
\begin{array}{l}
      \inn{T_j b_{1,W} }{b_{2,W'}} \neq 0 \\ 
L(W)<j\le L(W')
\end{array}\right\} \quad  \implies  \quad
j\le L(W')\le L(W)+2\le j+2. 
\end{equation}
To see \eqref{restriction on j} first observe that
$T_j b_{1,W}$ is supported on a cube $R_W$ centered at $x_W$ with side length $2^{j+1}+2^{L(W)}$. Hence, if 
$\inn{T_j b_{1,W} }{b_{2,W'}}\neq 0$, 
then we get from
\eqref{wh} and the triangle inequality
\begin{align*}5\,\diam(W')&\le \mathrm{dist}(W',\Om^\complement)\le \diam(W') +\diam(R_W) +\mathrm{dist}(W, \Om^\complement)
\\&\le \diam(W') +(2^{j+1}+2^{L(W)})\sqrt d  +12 \sqrt d 2^{L(W)}.
\end{align*}
Hence since $L(W)<j\le L(W')$ we get $2^{L(W')+1}\le 13 \cdot 2^{L(W)}$ which gives \eqref{restriction on j}.
Also, with these specifications  $W\subset \tr{W'}$  if 
$\inn{T_j b_{1,W} }{b_{2,W'}}
\neq 0$.
By the single scale $(p,q)$ condition \eqref{p-q-rescaled},
\Be \label{eqn:p-qj}
\|T_j\|_{L^p\to L^q} \le 2^{-jd(1/p-1/q)} A_\circ(p,q).
\Ee
Hence using H\"older's inequality, \eqref{eqn:p-qj} and \eqref{upperboundsWh}  we get 
\begin{align*}
|III_3| &\le \sum_{N_1 \leq j \leq N_2}\sum_{\substack{W'\in \cW:\\ j\le L(W')\le j+2}}\,\, \sum_{\substack{W\in \cW:\\L(W')-2\le L(W)\le j\\ W\subset 3W'}}|\inn{T_j b_{1,W} }{b_{2,W'}}|
\\ &\le A_\circ(p,q) 
\sum_{N_1 \leq j \leq N_2} 2^{-jd(1/p-1/q)} 
\sum_{\substack{W,W'\in \cW:\,W\subset 3W'\\j\le L(W')\le j+2\\
L(W')-2\le L(W)\le j}} 
\|b_{1,W} \|_{L^p_{B_1}}\|b_{2,W'}\|_{L^{q'}_{B_2^*}}
\\
&\lc_{d,\ga}  A_\circ(p,q) \jp{f_1}_{Q_0,p} \jp {f_2}_{\tr{Q_0},q'} 
\\&\qquad\qquad \times 
\sum_{N_1 \leq j \leq N_2}
\sum_{\substack{W,W':\,W\subset 3W'\\j\le L(W')\le j+2\\
L(W')-2\le L(W)\le j}} 
2^{-jd(1/p-1/q)} |W|^{1/p} |W'|^{1-1/q}
\\
&
\lc_{d,\ga}  A_\circ(p,q)
\jp{f_1}_{Q_0,p} \jp {f_2}_{\tr{Q_0},q'} \sum_{\substack W'\in \cW} 
|W'| 
\end{align*} and thus, by disjointness of the $W'$,
\[|III_3| \lc_{d,\gamma}   A_\circ(p,q) 
\jp{f_1}_{Q_0,p} \jp {f_2}_{\tr{Q_0},q'} |Q_0|.\]

Finally, consider the term
\begin{equation}\label{eq:termIV}
III_4
= \sum_{N_1\le j\le N_2} \sum_{\substack{(W,W')\in \cW\times\cW \\ L(W)<j\\L(W')< j}} 
\inn{T_j b_{1,W} }{b_{2,W'}}.
\end{equation} Let $\eps'>0$ such that 
\Be\label{eqn:defeps'} \eps' <\min \{1/p,1/q',\eps\} \Ee and let $\ell$ be a positive integer so that
\Be\label{elldef}{\ell} < \frac{100}{\eps'} \log\big(2+\frac B{A_\circ(p,q)} \big) \le {\ell+1}.
\Ee We split 
\[\mathcal{V}_j=(-\infty,j)^2\cap \bbZ^2 = \mathcal{V}_{j,1}\cup \mathcal{V}_{j,2} \cup \mathcal{V}_{j,3}\] 
into three regions
putting %
\begin{align*} 
\cV_{j,1}= \{ (L_1,L_2)\in \cV_j:  j-\ell\le L_1<j,\,\, j-\ell\le L_2<j\},
\end{align*} %
\begin{align*} 
\cV_{j,2} = 
\{(L_1, L_2)\in \cV_j \setminus \cV_{j,1}: \, L_1\le L_2 \},\\
\cV_{j,3} = 
\{(L_1, L_2)\in \cV_j \setminus \cV_{j,1}: \, L_1> L_2 \}.
\end{align*} 
Then $III_4=\sum_{i=1}^3 IV_i$ where for $i=1,2,3$,
\begin{equation}\label{eq:IVi def}
IV_i= 
\sum_{N_1\le j\le N_2} \,\sum_{\substack {W,W' \in \cW
\\
(L(W), L(W'))\in \cV_{j,i}}}
\inn{T_j b_{1,W} }{b_{2,W'}}.
\end{equation}

Let $\fR_j$ be the collection of dyadic subcubes of $Q_0$ of side length $2^j$. 
To estimate $IV_1$ we tile $Q_0$ into such cubes  and write
 \begin{align}\label{eq:IV1} 
IV_1&=\sum_{N_1\le j\le N_2} \sum_{R\in \fR_j} \,
\biginn{ \sum_{\substack{W\subset R\\
j-\ell \le L(W)<j}}
   T_j b_{1,W} } 
{
\sum_{\substack{
j-\ell \le L(W')<j}}b_{2,W'} \bbone_{\tr{R}} }. 
\end{align}
By H\"older's inequality and the single scale $(p,q)$ condition \eqref{p-q-rescaled} (in the form of \eqref{eqn:p-qj})
we get
\begin{align*} 
|IV_1| &\le A_\circ(p,q) \sum_{N_1\le j\le N_2} 2^{-jd(1/p-1/q)}
\\
&\qquad\qquad\times\sum_{R\in \fR_j} \,
\Big\|\sum_{\substack{W\subset R\\
j-\ell \le L(W)<j}}
    b_{1,W} \Big\|_{L^p_{B_1}} \,\Big\|
\sum_{\substack{
j-\ell \le L(W')<j}}b_{2,W'} \bbone_{\tr{R}} \Big\|_{L^{q'} _{B_2^*}}
\\
&\le A_\circ(p,q) \sum_{N_1\le j\le N_2} \sum_{R\in \fR_j} |R|^{-(1/p-1/q)}\,
\\ &\qquad \qquad  \times
\Big(\sum_{\substack{W\subset R\\
j-\ell \le L(W)<j}}\|
    b_{1,W} \|_{L^p_{B_1}}^p  \Big)^{1/p}
    \Big( 
\sum_{\substack{W'\subset\tr{ R}\\
j-\ell \le L(W')<j}}\|b_{2,W'}\|_{L^{q'}_{B_2^*}}^{q'} \Big)^{1/q'}
\end{align*}
and using \eqref{upperboundsWh}  this expression is bounded by  $C_{d,\gamma} A_\circ(p,q) $  times 
\begin{align*}
& \sum_{N_1\le j\le N_2} 
\jp{f_1}_{Q_0,p} \jp{f_2}_{\tr{Q_0},q'} 
\sum_{R\in \fR_j} |R|^{-(1/p-1/q)}\,
\\ &\qquad \qquad  \times
\Big(\sum_{\substack{W\subset R \\
j-\ell \le L(W)<j}}|W| \Big)^{1/p}
    \Big( 
\sum_{\substack{W'\subset\tr{ R}\\
j-\ell \le L(W')<j}}|W'| \Big)^{1/q'}
\\
&
\lc_{d,\ga}  \sum_{N_1\le j\le N_2} 
\jp{f_1}_{Q_0,p} \jp{f_2}_{\tr{Q_0},q'} 
\\&\qquad \qquad  \times
 \sum_{R\in \fR_j} |R|^{-(1/p-1/q)}
 \Big(\sum_{\substack{W\subset \tr{R} \\
j-\ell \le L(W)<j}}|W| \Big)^{1/p+1-1/q}\,.
\end{align*}
Using $p \leq q$ and the disjointness of the $W$  we see that  the last expression is dominated by a constant $\tilde C_{d,\ga,p,q}$ times %
\begin{align*} 
&
\jp{f_1}_{Q_0,p} \jp{f_2}_{\tr{Q_0},q'} 
 \sum_{N_1\le j\le N_2} \sum_{R\in \fR_j} 
 \sum_{\substack{W\subset \tr{R} \\
j-\ell \le L(W)<j}}|W| 
\\
&\le 3^d 
\jp{f_1}_{Q_0,p} \jp{f_2}_{\tr{Q_0},q'} 
 \sum_{N_1\le j\le N_2} \sum_{R\in \fR_j} 
 \sum_{\substack{W\subset{R} \\
j-\ell \le L(W)<j}}|W| 
\\
&\lc_d
\jp{f_1}_{Q_0,p} \jp{f_2}_{\tr{Q_0},q'} 
\sum_{W\in \cW}|W|
\sum_{\substack{j: N_1\le j\le N_2\\ 
L(W)<j\le L(W)+\ell}}1
\\
 &\lc_d  \ell |Q_0|
\jp{f_1}_{Q_0,p} \jp{f_2}_{\tr{Q_0},q'} .
\end{align*}
Thus, using the definition of $\ell$ in \eqref{elldef} we get
\Be\label{IV1bound}
|IV_1| \lc_{d,\ga,\eps,p,q} A_\circ(p,q) \log \big(2+ \frac{B}{A_\circ(p,q)}\big)|Q_0|
\jp{f_1}_{Q_0,p} \jp{f_2}_{\tr{Q_0},q'} .
\Ee

We now turn to the terms $IV_2$, $IV_3$ and claim that
\Be\label{IV23bound}
|IV_2|+|IV_3| \lc_{d,\ga,p,q,\eps} A_\circ(p,q) |Q_0|
\jp{f_1}_{Q_0,p} \jp{f_2}_{\tr{Q_0},q'} .
\Ee

We first note that by the single scale $\varepsilon$-regularity conditions \eqref{p-q-rescaled-reg-a}, \eqref{p-q-rescaled-reg-b}, and Corollary \ref{cor:reg},
\begin{subequations}
\begin{align} \label{Tjrega}
\|T_j(\mathrm I -\bbE_{s_1-j})\|_{L^p_{B_1} \to L^q_{B_2}} &\lc_{\varepsilon} B 2^{-jd(\frac 1p-\frac 1q)} 2^{-\eps's_1} , \\
\label{Tjregb}
\|T_j^*(\mathrm I -\bbE_{s_2-j})\|_{L^{q'}_{B_2^*} \to L^{p'}_{B_1^*}} &\lc_{\varepsilon} B
2^{-jd(\frac 1p-\frac 1q)} 2^{-\eps' s_2}.
\end{align}
\end{subequations}
where $\eps'$ is as in \eqref{eqn:defeps'}.

Write, with $\fR_j$ as in \eqref{eq:IV1},
\begin{multline} \label{eq:IV2}
IV_2=\sum_{N_1\le j\le N_2} \sum_{R\in \fR_j} \,
\sum_{s_2=1}^\infty \sum_{s_1=\max\{s_2,\ell+1\}}^\infty
\\ \biginn{ \sum_{\substack{W\subset R\\
 L(W)=j-s_1}}
   T_j b_{1,W} } 
{
\sum_{\substack{W'\subset 3R\\
L(W')=j-s_2}}b_{2,W'} \bbone_{\tr{R}} }. 
\end{multline}
Note that for $L(W)=j-s_1$, $b_{1,W} =(I-\bbE_{s_1-j}) f_{1,W}$.
By H\"older's inequality and \eqref{Tjrega}  we have for $R\in \fR_j$,
\begin{align*}
&\Big|\biginn{ \sum_{\substack{W\subset R\\
 L(W)=j-s_1}}
   T_j b_{1,W} } 
{
\sum_{\substack{W'\subset\tr{R}\\
L(W')=j-s_2}}b_{2,W'} \bbone_{\tr{R}} } \Big|
\\
&\le\Big\| T_j (I-\bbE_{s_1-j}) \sum_{\substack{W\subset R\\
 L(W)=j-s_1}} f_{1,W} \Big\|_{L^q_{B_2}}
 \Big\|
\sum_{\substack{ W'\subset\tr{R}\\
L(W')=j-s_2}}b_{2,W'} \Big \|_{L^{q'}_{B_2^*}}
\\
&\lc_{\varepsilon}
B 2^{-\eps' s_1}
|R|^{-(\frac 1p-\frac 1q)}
\Big(
\sum_{\substack{W\subset R\\
 L(W)=j-s_1}}\| f_{1,W} \|_{L^p_{B_1}}^p\Big)^{1/p} 
 \Big(
\sum_{\substack{ W'\subset\tr{R}\\
L(W')=j-s_2}}\|b_{2,W'}\|_{L^{q'}_{B_2^*}}^{q'}\Big)^{1/q'}.
  \end{align*}
  In the above formula for $IV_2$ we interchange the $j$-sum and the $(s_1,s_2)$-sums, write $j=s_1n+i$ with $i=1,\dots, s_1$ and estimate 
  (invoking \eqref{upperboundsWh} again) 
  \begin{align*} 
  |IV_2|&\lc_{\varepsilon} \sum_{s_2=1}^\infty
  \sum_{s_1=
  \max\{s_2,\ell+1\}}^\infty B 2^{-\eps' s_1}\sum_{i=1}^{s_1} 
  \sum_{\substack{n\in \bbZ\\ s_1n+i\in [N_1,N_2]}}
  \sum_{R\in \fR_{s_1n+i}}|R|^{-(\frac 1p-\frac 1q)}
  \\ 
  &\qquad \times
    \Big(
\sum_{\substack{W\subset R\\
 L(W)=s_1n+i-s_1}}\|f_{1,W}\|_{L^p_{B_1}}^p\Big)^{\frac 1p} 
 \Big(
\sum_{\substack{ W'\subset\tr{R}\\
L(W')=s_1n+i -s_2}}\|b_{2,W'}\|_{L^{q'}_{B_2^*}}^{q'}\Big)^{\frac{1}{q'}}    \\
  &\lc_{d,\ga}
  \sum_{s_2=1}^\infty\sum_{s_1=
  \max\{s_2,\ell+1\}}^\infty  B 2^{-\eps' s_1}\,
  \\ & \qquad  \times
  \sum_{i=1}^{s_1}\sum_{\substack{n\in \bbZ
  \\ s_1n+i\in [N_1,N_2]}}
    \sum_{R\in \fR_{s_1n+i}} \jp{f_1}_{Q_0,p} 
    \jp{f_2}_{\tr{Q_0},q'} \Gamma(R,n,i),
  \end{align*}
where
  \[\Gamma(R,n,i) =
  |R|^{-(\frac 1p-\frac 1q)}
  \Big(
\sum_{\substack{W\subset R\\
 L(W)=s_1n+i-s_1}}|W|\Big)^{\frac 1p} 
 \Big(
\sum_{\substack{ W'\subset\tr{R}\\
L(W')=s_1n+i -s_2}}|W'|  \Big)^{1-\frac 1q}.
\]
We crudely estimate, using $p \leq q$,
\begin{align*} 
\Gamma(R,n,i) &\le
  |R|^{-(1/p-1/q)}
  \Big(
\sum_{\nu=s_1n+i-s_1}^{s_1n+i-s_2}\sum_{
\substack{W\subset \tr{R}\\
 L(W)=\nu}}|W|\Big)^{1/p+1-1/q} 
\\
&\le 3^{d(1/p-1/q)}
\sum_{\nu=s_1n+i-s_1}^{s_1n+i-s_2}\sum_{
\substack{W\subset \tr{R}\\
 L(W)=\nu}}|W|.
\end{align*}
For fixed $W\in \cW$ consider the set  of all triples $(R,n,i)$ such that $s_1n+i-s_1\le L(W)\le s_1n+i-s_2$, $R\in \fR_{s_1n+i}$ and $W\subset \tr{R}$, and observe that the cardinality of this set is bounded above by
$3^d(s_1-s_2+1)$. Combining this with the above estimates and summing over $W\in \cW$ we obtain the bound
\begin{align*} 
|IV_2|&\lc_{d,\ga,\varepsilon} 
\jp{f_1}_{Q_0,p} 
    \jp{f_2}_{\tr{Q_0},q'} |Q_0| 
    \sum_{s_2=1}^\infty\sum_{s_1=
  \max\{s_2,\ell+1\}}^\infty  B 2^{-\eps' s_1} (s_1-s_2+1)
  \end{align*}
and the double sum is bounded by
\begin{multline*}C_{\eps,p,q} \Big(\sum_{s_2=1}^{\ell+1} 
B 2^{-\eps' \ell} 
(\ell+1)+ \sum_{s_2=\ell+1}^\infty  B 2^{-\eps'  s_2}  \Big)\\ \lc_{\eps,p,q} B 2^{-\eps'\ell} (\ell+1)^2 
\lc_{\eps,p,q} B 2^{-\eps'\ell/2} 
\lc_{\eps ,p,q} A_\circ(p,q)
\end{multline*}
by the definition of $\ell$ in \eqref{elldef}.
This establishes \eqref{IV23bound} for the term $|IV_2|$.

The estimation of $IV_3$ is very similar.  We may write
\begin{multline}\label{eq:IV3} 
IV_3=\sum_{N_1\le j\le N_2} \sum_{R\in \fR_j} \,
\sum_{s_1=1}^\infty \sum_{s_2=\max\{s_1+1,\ell+1\}}^\infty
\\ \biginn{ \sum_{\substack{W\in \cW: \\W\subset R\\
 L(W)=j-s_1}}
    b_{1,W}\bbone_R } 
{
\sum_{\substack{W'\in \cW: \\
L(W')=j-s_2}}T_j^*[b_{2,W'} \bbone_{\tr{R}}] } 
\end{multline} 
By H\"older's inequality and  \eqref{Tjregb} 
we get  for $R\in \fR_j$ 
\begin{multline*}
\Big|\biginn{ \sum_{\substack{W\subset R\\
 L(W)=j-s_1}}
    b_{1,W} } 
{
\sum_{\substack{
L(W')=j-s_2}}T_j^*[b_{2,W'} \bbone_{\tr{R}}] } \Big|
\, \\  \lc_{\varepsilon} 
B 2^{-\eps' s_2}
|R|^{-(\frac{1}{p}-\frac{1}{q})} 
\Big(
\sum_{\substack{W\subset R\\
 L(W)=j-s_1}}\| b_{1,W} \|_{L^p_{B_1}}^p\Big)^{1/p} 
 \Big(
\sum_{\substack{ W'\subset\tr{R}\\
L(W')=j-s_2}}\|f_{2,W'}\|_{L^{q'}_{B_2^*}}^{q'}\Big)^{1/q'}
  \end{multline*}
  and from here on the argument is analogous to the treatment of the term $IV_2$.
\end{proof}

\begin{remarka} 
It is instructive to observe that the term $III_4$ can be treated more crudely if one does not aim to obtain the constant $A_\circ(p,q)\log(2+\frac{B}{A_\circ(p,q)})$ in \eqref{eqn:cCdef}. More precisely, one simply splits $(-\infty, j)^2 \cap \bbZ^2$ into two regions $\tilde{\mathcal{V}}_{j,2}$ and $\tilde{\mathcal{V}}_{j,3}$, where
  \begin{align*}
      \tilde{\mathcal{V}}_{j,2}=\{(L_1,L_2) : L_1 \leq L_2< j\} \quad \text{and} \quad 
      \tilde{\mathcal{V}}_{j,3}=\{(L_1,L_2) : j>L_1 > L_2\}.
  \end{align*}
  Then split $III_4=\sum_{i=2}^3 IV_{i}$, where $IV_i$ are as in \eqref{eq:IVi def} but with $\mathcal{V}_{j,i}$ replaced by $\tilde{\mathcal{V}}_{j,i}$. One then considers the sum in $s_1$ in \eqref{eq:IV2} to start directly from $s_2$, and the sum in $s_2$ in \eqref{eq:IV3} to start directly from $s_1+1$. Using the same arguments, one obtains
  $$
  |IV_2| + |IV_3| \lesssim_{d,\gamma,p,q,\varepsilon} B |Q_0| \jp{f_1}_{Q_0,p} \jp{f_2}_{\tr{Q_0},q'}
  $$
  instead of \eqref{IV23bound}. Note that, as the term $IV_1$ does not appear in this case (see the bound \eqref{IV1bound}), this yields sparse domination with the constant $\mathcal{C}$ in \eqref{eqn:cCdef} replaced by
  $
  A(p)+A(q)+A_\circ(p,q) + B.
  $
\end{remarka}

\section{Maximal operators, square functions and long variations}\label{sec:squarefct-etc}

In this section we show that Corollary \ref{mainthm-cor} yields sparse domination results for maximal functions,  $\ell^r$-valued variants, $r$-variation norm operators and maximal and variational truncations of sums of operators. An application of Theorem \ref{thm:necessarycor} also yields necessary conditions for our sparse domination inequalities. We will formally state necessary conditions only for maximal functions and $\ell
^r$-valued functions (Theorem \ref{thm:ellr}) 
and leave to the reader the analogous formulations of those  conditions for $r$-variation norm operators (Theorem \ref{thm:Vr}), maximal truncations (Theorem \ref{mainthmtrunc}) and variational truncations (Theorem \ref{mainthmtruncvar}).

\subsection{Maximal functions and \texorpdfstring{$\ell^r$-variants}{variants}}\label{sec:maxfctandellr}

Given a family of operators $\{T_j\}_{j \in \bbZ}$ in $\mathrm{Op}_{B_1,B_2}$,
{consider the operators}
\Be \label{eqn:SrTdef}
S_rT f(x)= \Big(\sum_{j\in \bbZ}|T_j f(x)|_{B_2}^r\Big)^{1/r}
\Ee 
when $1\le r<\infty$ and also  the maximal operator
\Be\label{eqn:Sinfty}
S_\infty Tf(x)=\sup_{j \in \bbZ}|T_jf(x)|_{B_2}.
\Ee

 \begin{thm}\label{thm:ellr} 
 Let $1<p\le q<\infty$ and $1 \leq r \leq \infty$.
Let $\{T_j\}_{j\in \bbZ}$ be a family of operators in $\mathrm{Op}_{B_1,B_2}$
satisfying  \eqref{support-assu}. 

(i) Suppose that the inequalities
\begin{equation}\label{bdness-ellr}
    \big\|S_rTf \big\|_{L^{p,\infty}}\le A(p) \|f\|_{L^p_{B_1}} \qquad \text{and} \qquad
    \big\|S_rTf \big\|_{L^q}\le A(q) \|f\|_{L^{q,1}_{B_1}}
\end{equation}
hold for all $f \in \simpleone$. Moreover, assume that the rescaled operators $\dil_{2^j} T_j$ satisfy the %
single scale $(p,q)$ condition
\eqref{p-q-rescaled}
and %
single scale $\varepsilon$-regularity conditions
\eqref{p-q-rescaled-reg-a} and 
\eqref{p-q-rescaled-reg-b}.  Let $\cC$ be as in \eqref{eqn:cCdef}.
Then for all $f \in \simpleone$ and all $\bbR$-valued nonnegative measurable functions $\omega$,
\Be  \label{eqn:sparse-ellr}
\inn{S_rT f}{\omega}  \lc \cC 
\Lamaxga_{p,q'}(f,\om).
\Ee

(ii) In addition, assume $1< p < q < \infty$. If the family of operators $\{T_j\}_{j \in \bbZ}$ satisfies $T_j: \simpleone \to L^1_{B_2, \loc}$ and the strengthened support condition \eqref{eqn:strengthenedsupp}, then %
the condition
\[\|S_r T\|_{L^p_{B_1} \to L^{p,\infty} } +\|S_r T\|_{L^{q,1}_{B_1} \to L^q} +\sup_{j \in \bbZ}\|\dil_{2^j} T_j\|_{L^p_{B_1}\to L^q_{B_2} }  <\infty
\]
is necessary for the conclusion \eqref{eqn:sparse-ellr} to hold.
 \end{thm}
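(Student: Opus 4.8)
The strategy is to realize the scalar-valued objects $S_rT$ as operators $\widetilde T = \sum_j \widetilde T_j$ taking values in a new Banach space and then invoke Corollary \ref{mainthm-cor} and Theorem \ref{thm:necessarycor}. For part (i), fix $r\in[1,\infty]$ and let $\ell^r(\bbZ;B_2)$ be the Banach space of two-sided sequences valued in $B_2$ with the $\ell^r$ norm; write $\tB_2 = \ell^r(\bbZ;B_2)$. Define $\widetilde T_j : \simpleone \to \mathrm{Op}_{B_1,\tB_2}$ by $\widetilde T_j f = (\delta_{j,k} T_j f)_{k\in\bbZ}$, i.e. the sequence supported at index $k=j$ with entry $T_j f$. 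Then $\sum_{j=N_1}^{N_2}\widetilde T_j f$ has $k$-th component $T_k f$ for $N_1\le k\le N_2$ and $0$ otherwise, so $|\sum_{j=N_1}^{N_2}\widetilde T_j f(x)|_{\tB_2} = (\sum_{N_1\le k\le N_2}|T_k f(x)|_{B_2}^r)^{1/r}$, which converges monotonically to $S_rTf(x)$ as $N_1\to-\infty$, $N_2\to\infty$. First I would verify the hypotheses of Corollary \ref{mainthm-cor} for the family $\{\widetilde T_j\}$: the support condition \eqref{support-assu} for $\widetilde T_j$ is immediate since the single nonzero component is $T_j f$, which already satisfies it; the weak type $(p,p)$ and restricted strong type $(q,q)$ bounds \eqref{bdness-wt}, \eqref{bdness-rt} for $\sum_{j=N_1}^{N_2}\widetilde T_j$ are exactly the uniform-in-$(N_1,N_2)$ versions of \eqref{bdness-ellr}, using that $|\sum_{j=N_1}^{N_2}\widetilde T_jf|_{\tB_2}\le S_rTf$ pointwise; and the single-scale bounds \eqref{p-q-rescaled}, \eqref{p-q-rescaled-reg-a}, \eqref{p-q-rescaled-reg-b} for $\dil_{2^j}\widetilde T_j$ reduce to the corresponding bounds for $\dil_{2^j}T_j$, since $|\dil_{2^j}\widetilde T_j g|_{\tB_2} = |\dil_{2^j}T_j g|_{B_2}$ and, for the adjoint, $(\dil_{2^j}\widetilde T_j)^* : L^{q'}_{\tB_2^*}\to L^{p'}_{B_1^*}$ acting on a sequence $(\lambda_k)_k$ returns $(\dil_{2^j}T_j)^*\lambda_j$, whose operator norm is bounded by that of $(\dil_{2^j}T_j)^*$. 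Hence Corollary \ref{mainthm-cor} gives, for each $N_1\le N_2$, $\int |\sum_{j=N_1}^{N_2}\widetilde T_jf(x)|_{\tB_2}\,\omega(x)\,dx \lesssim \cC\,\Lamaxga_{p,B_1,q',\bbR}(f,\omega)$, and letting $N_1\to-\infty$, $N_2\to\infty$ via the monotone convergence theorem yields \eqref{eqn:sparse-ellr}. (When $r=\infty$ one replaces $\ell^\infty(\bbZ;B_2)$ by $c_0(\bbZ;B_2)$ to retain the appropriate duality, or simply argues with finite truncations $\sup_{|j|\le M}$ and passes to the limit; since the constant $\cC$ is independent of the Banach space, this causes no loss.)

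For part (ii), I would again pass to $\{\widetilde T_j\}$ with $\tB_2$ as above. The hypothesis $T_j:\simpleone\to L^1_{B_2,\loc}$ gives $\widetilde T_j:\simpleone\to L^1_{\tB_2,\loc}$, and the strengthened support condition \eqref{eqn:strengthenedsupp} for $T_j$ transfers verbatim to $\widetilde T_j$. The assumed conclusion \eqref{eqn:sparse-ellr} together with the pointwise bound $|\sum_{j=N_1}^{N_2}\widetilde T_jf|_{\tB_2}\le S_rTf$ shows that the family $\{\sum_{j=N_1}^{N_2}\widetilde T_j\}$ satisfies the hypothesis of Theorem \ref{thm:necessarycor}. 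That theorem then delivers: the uniform weak type $(p,p)$ bound $\|\sum_{j=N_1}^{N_2}\widetilde T_j\|_{L^p_{B_1}\to L^{p,\infty}_{\tB_2}}\le c\,\sC$; the uniform restricted strong type $(q,q)$ bound $\|\sum_{j=N_1}^{N_2}\widetilde T_j\|_{L^{q,1}_{B_1}\to L^q_{\tB_2}}\le c\,\sC$; and, from part (ii) of that theorem, $\sup_j\|\dil_{2^j}T_j\|_{L^p_{B_1}\to L^q_{B_2}} = \sup_j\|\dil_{2^j}\widetilde T_j\|_{L^p_{B_1}\to L^q_{\tB_2}}\le c\,\sC$. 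To conclude $\|S_rT\|_{L^p_{B_1}\to L^{p,\infty}}\le c\,\sC$ and $\|S_rT\|_{L^{q,1}_{B_1}\to L^q}\le c\,\sC$, I would let $N_1\to-\infty$, $N_2\to\infty$ and use that $|\sum_{j=N_1}^{N_2}\widetilde T_jf|_{\tB_2}\uparrow S_rTf$ pointwise: for the $L^q$ bound this is the monotone convergence theorem directly, and for the weak $L^p$ bound one uses that a monotone pointwise limit of functions with uniformly bounded $L^{p,\infty}$ quasinorms again has bounded $L^{p,\infty}$ quasinorm (e.g. by Fatou for the distribution function). This gives all three asserted necessary conditions.

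The routine verifications above — the support, boundedness, and regularity transfers to $\widetilde T_j$ — are bookkeeping, and the main point requiring a little care is the \textbf{duality/measurability of the adjoint} $(\dil_{2^j}\widetilde T_j)^*$ and, in the $r=\infty$ case, the correct choice of target space ($c_0$ versus $\ell^\infty$) so that Theorem \ref{mainthm} and Theorem \ref{thm:necessarycor} apply with no hidden reflexivity or Radon–Nikodym obstruction; recall however that Theorem \ref{thm:necessarycor} needs only $T_j:\simpleone\to L^1_{B_2,\loc}$ and no condition on $B_2$, so working with $c_0(\bbZ;B_2)$ (whose dual is $\ell^1(\bbZ;B_2^*)$) is harmless, and for part (i) the same holds since $\cC$ in Theorem \ref{mainthm} does not depend on $B_1,B_2$. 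The other mild subtlety is justifying the passage $N_1\to-\infty$, $N_2\to\infty$ uniformly — handled by monotone convergence on the left and the Banach-space-independence of $\cC$ on the right — so that the one-parameter family of partial sums produces a clean statement about the full operators $S_rT$.
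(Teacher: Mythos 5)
Your proof is correct and takes essentially the same route as the paper: the paper likewise lifts the family to $\ell^r_{B_2}$-valued operators $H_jf(x,k)=\delta_{jk}T_jf(x)$, checks the hypotheses exactly as you do, applies Corollary \ref{mainthm-cor} (and Theorem \ref{thm:necessarycor} for part (ii)), and passes to the limit $N_1\to-\infty$, $N_2\to\infty$ by monotone convergence, exploiting the Banach-space independence of the constants. The only cosmetic difference is in the case $r=\infty$, where the paper works with $\ell^\infty_{B_2}$ over the finite index set $[N_1,N_2]\cap\bbZ$ (so the dual is simply $\ell^1_{B_2^*}$ over that finite set) instead of your $c_0(\bbZ;B_2)$ or truncation device, which is an equivalent way of sidestepping the duality issue you correctly flag.
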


\begin{proof} [Proof of Theorem \ref{thm:ellr}]
We begin with the proof for $1\le r<\infty$.

Let $\delta_{kk}=1$ and $\delta_{jk}=0$ if $j\ne k$. Let $N_1\le N_2$ be integers, and for each integer $j\in [N_1,N_2]$, we define the operator $H_j$ sending $L^p_{B_1}$ functions to $\ell^r_{B_2}$-valued functions  by
\Be\label{eqn:Hjdef}  H_j f(x,k)= \begin{cases} \delta_{jk} T_jf(x)&\text{ if } N_1\le k\le N_2,
\\ 0 &\text{ if } k\notin [N_1,N_2].
\end{cases}
\Ee
We note that 
\Be\label{eqnLHT-identification} 
\Big(\sum_{j=N_1}^{N_2} |T_jf(x)|_{B_2}^r\Big)^{1/r} = \Big(\sum_{k=-\infty}^{\infty} \Big| \sum_{j=N_1}^{N_2} H_j f(x,k) \Big|^r_{B_2} \Big)^{1/r}.
\Ee 
By \eqref{bdness-ellr} we have 
\[ \Big\|\sum_{j=N_1}^{N_2} H_j \Big\|_{L^p_{B_1}\to L^{p,\infty}(\ell^r_{B_2})} \le A(p), \qquad 
\Big\|\sum_{j=N_1}^{N_2} H_j\Big\|_{L^{q,1}_{B_1}\to L^q(\ell^r_{B_2})} \le A(q),
\]
where we write $L^{p,\infty}(\ell^r_{B_2})$ to denote $L^{p,\infty}_{\ell^r_{B_2}}$.
The adjoint of $H_j$, acting on   $\ell^{r'}_{B_2^*}$-valued functions $g$,  is given by 
\[H_j^*g(x)= \sum_{k=N_1}^{N_2} \delta_{jk} T_j^*g_k(x).\]
The assumptions on $\dil_{2^j}T_j$ can be rewritten as 
\[
\sup_{j \in \bbZ} \|\dil_{2^j} H_j \|_{L^p_{B_1}\to L^q(\ell^r_{B_2})} \le A_\circ(p,q)
\] and 
\begin{align*} 
&\sup_{|h|\le 1}|h|^{-\eps}\sup_{j \in \bbZ} \|(\dil_{2^j} H_j )\circ\Delta_h\|_{L^p_{B_1}\to L^q(\ell^r_{B_2})} \le B
\\
&\sup_{|h|\le 1}|h|^{-\eps}\sup_{j\in \bbZ} \|(\dil_{2^j} H_j ^*)\circ\Delta_h\|_{L^{q'}(\ell^{r'}_{B_2^*})\to L^{p'}_{B_1^*}}\le B.
\end{align*} 
By Corollary \ref{mainthm-cor} applied to the sequence $\{H_j\}_{j \in \bbZ}$ in $\mathrm{Op}_{B_1,\ell^r_{B_2}}$, we get the conclusion  
\[ \int_{\bbR^d} \Big(\sum_{k=-\infty}^{\infty}\Big|\sum_{j=N_1}^{N_2} H_j f(x,k) \Big|^r_{B_2}\Big)^{1/r} \omega(x) dx
\lc \cC  
\Lamaxga_{p,q'}(f,\om),
\] which by 
\eqref{eqnLHT-identification} implies 
\[ \int_{\bbR^d} \Big(\sum_{j=N_1}^{N_2} |T_j f(x)|_{B_2}^r\Big)^{1/r}  \omega(x) dx
\lc \cC  
\Lamaxga_{p,q'}(f,\om).\] 
We apply the monotone convergence theorem to let $N_1\to-\infty$ and $N_2\to \infty$ and obtain the desired conclusion. This is possible since the implicit constant in the conclusion of Corollary \ref{mainthm-cor} does not depend on $B_1, B_2$.

The proof for $r=\infty$ is essentially the same, with notional changes. Since $H_jf(\cdot,k)=0$ when $k\notin [N_1,N_2]$, we can work with $\ell^\infty_{B_2}$ over the finite set $\bbZ \cap [N_1,N_2]$. Then there are no complications with the dual space, which is $\ell^1_{B_2^*}$ over $\bbZ \cap [N_1,N_2]$.

For  part (ii) one uses Theorem \ref{thm:necessarycor} in conjunction with  \eqref{eqnLHT-identification} and immediately arrives at the desired conclusion, via the monotone convergence theorem.\end{proof}

\subsection{Variation norms} \label{sec:variationnorms}
We now turn to the variation norms $V^r_{B_2} \equiv V^r_{B_2}(\bbZ)$ defined on $B_2$-valued functions of the integers $n\mapsto a(n)$. Let
$|a|_{V^\infty_{B_2}}=|a|_{L^\infty_{B_2}}$ and, for $1\le r<\infty$,
\Be\label{eqn:Vrdef} |a|_{V^r_{B_2}}= \sup_{n_1<\dots< n_M} |a(n_1)|_{B_2}+\Big(\sum_{\nu=1}^{M-1} |a(n_{\nu+1})-a(n_{\nu})|_{B_2}^r\Big)^{1/r}
\Ee 
where the supremum is taken over all positive integers $M$ and all finite increasing sequences of integers $n_1<...<n_M$.
Similarly, if $I_{N_1,N_2}= [N_1,N_2]\cap \bbZ$ we define the $V^r_{B_2}(I_{N_1,N_2})$ norm on functions on $I_{N_1,N_2}$ in the same way, restricting $n_1,\dots, n_M$ to $I_{N_1,N_2}$.

Given a sequence $T=\{T_j\}_{j\in \bbZ}$  in $\mathrm{Op}_{B_1,B_2}$ we define $\cV^r Tf(x)$ to be the $V^r_{B_2}$ norm of the sequence $j\mapsto T_j f(x)$. The $L^p$ norm of $\cV^rT f$ is just the $L^p(V^r_{B_2})$ norm of 
the sequence $\{T_j f\}_{j\in \bbZ}$. 
We define $\cV_{N_1,N_2}^r Tf(x)$ to be the $V^r_{B_2}$ norm of the sequence
$j\mapsto \bbone_{I_{N_1,N_2}} (j) T_j f(x)$.

The proof of the following theorem is almost identical to that of Theorem \ref{thm:ellr}.
 
 \begin{thm} \label{thm:Vr} Let $1 < p\le q< \infty$ and  $1\le r\le \infty$. Let $\{T_j\}_{j \in \bbZ}$ be a family of operators in $\mathrm{Op}_{B_1,B_2}$ satisfying \eqref{support-assu}.
 Suppose that the inequalities
 \Be \label{bdness-Vr}\begin{aligned}
    \big \|\cV^rT f\big\|_{L^{p,\infty}}\le A(p) \|f\|_{L^p_{B_1}} \qquad \text{and} \qquad
    \big \|\cV^rTf\big\|_{L^q}\le A(q) \|f\|_{L^{q,1}_{B_1}}
 \end{aligned}\Ee hold for all $f \in \simpleone$.
Moreover, assume that the rescaled operators $\dil_{2^j} T_j$ satisfy the %
single scale $(p,q)$ condition
\eqref{p-q-rescaled}
and %
single scale $\varepsilon$-regularity conditions
\eqref{p-q-rescaled-reg-a} and 
\eqref{p-q-rescaled-reg-b}.  Let $\cC$ be as in \eqref{eqn:cCdef}.
Then for all $f \in \simpleone$ and all $\bbR$-valued nonnegative measurable functions $\omega$,
\Be \label{eqn:sparse-Vr}
\inn{ 
 \cV^rT f}{ \omega}  \lc \cC 
\Lamaxga_{p,q'}(f,\om).
\Ee
\end{thm}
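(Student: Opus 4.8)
The plan is to deduce Theorem~\ref{thm:Vr} from Corollary~\ref{mainthm-cor} by the linearization used for Theorem~\ref{thm:ellr}, now with the target Banach space $B_2$ replaced by a variation norm space. Fix integers $N_1\le N_2$, put $I=I_{N_1,N_2}$, and let $\widetilde{B}_2=V^r_{B_2}(I)$, which is a Banach space for every $r\in[1,\infty]$. Let $\iota_j\colon B_2\to V^r_{B_2}(I)$ be the map sending $b$ to the sequence on $I$ supported at $j$ with value $b$; a crude count of the at most two nonzero consecutive differences gives $\|\iota_j\|\le 3$. For $j\in\bbZ$ define $H_jf=\iota_j\circ(T_jf)$ when $j\in I$ and $H_j=0$ otherwise. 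Since $\iota_j$ is bounded and linear and acts only on the $B_2$-value, $H_j\in\mathrm{Op}_{B_1,\widetilde{B}_2}$ and $H_j\colon\simpleone\to L^1_{\widetilde{B}_2,\loc}$, and one has the identity
\[\Big|\sum_{j=N_1}^{N_2}H_jf(x)\Big|_{\widetilde{B}_2}=\big|(T_jf(x))_{j\in I}\big|_{V^r_{B_2}(I)}=\cV^r_{N_1,N_2}Tf(x).\]
Hence \eqref{eqn:sparse-Vr} with $\cV^rT$ replaced by $\cV^r_{N_1,N_2}T$ is exactly the conclusion of Corollary~\ref{mainthm-cor} for the family $\{H_j\}_{j\in\bbZ}$.

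Next I would verify the hypotheses of Corollary~\ref{mainthm-cor} for $\{H_j\}$ with $B_2$ replaced by $\widetilde{B}_2$. The support condition \eqref{support-assu} is inherited from $\{T_j\}$ since $\dil_{2^j}H_jf=\iota_j\circ(\dil_{2^j}T_jf)$ has the same support in $x$. For any partial sum, $\big|\sum_{j=M_1}^{M_2}H_jf(x)\big|_{\widetilde{B}_2}$ is the $V^r_{B_2}(I)$-norm of the sequence on $I$ that equals $T_jf(x)$ on $[M_1,M_2]\cap I$ and vanishes elsewhere; bounding the at most two boundary jumps by $\sup_j|T_jf(x)|_{B_2}\le\cV^rTf(x)$ yields $\big|\sum_{j=M_1}^{M_2}H_jf(x)\big|_{\widetilde{B}_2}\le 3\,\cV^rTf(x)$, so the conditions \eqref{bdness-wt} and \eqref{bdness-rt} follow from \eqref{bdness-Vr} with $A(p),A(q)$ enlarged by a factor $3$. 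The single scale conditions \eqref{p-q-rescaled} and \eqref{p-q-rescaled-reg-a} follow from $\dil_{2^j}H_jf=\iota_j\circ(\dil_{2^j}T_jf)$, the bound $\|\iota_j\|\le 3$, and the fact that post-composition by the fixed linear map $\iota_j$ commutes with $\dil_{2^j}$ and with $\Delta_h$; this replaces $A_\circ(p,q)$ and $B$ by $3A_\circ(p,q)$ and $3B$. For the adjoint condition \eqref{p-q-rescaled-reg-b} one has $H_j^*g(x)=T_j^*(\pi_jg(x))$, where $\pi_j=\iota_j^*\colon(V^r_{B_2}(I))^*\to B_2^*$ has $\|\pi_j\|\le 3$ and again commutes with $\dil_{2^j}$ and $\Delta_h$; so this reduces to \eqref{p-q-rescaled-reg-b} for $\{T_j\}$. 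I expect the only point requiring care to be this bookkeeping with the dual space: for $1<r<\infty$ the space $(V^r_{B_2}(I))^*$ has no simple description, but the argument only uses boundedness of $\iota_j$ and $\pi_j$, exactly as in the $r=\infty$ discussion in the proof of Theorem~\ref{thm:ellr}.

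With these verifications, Corollary~\ref{mainthm-cor} applied to $\{H_j\}_{j\in\bbZ}$ gives, for all $f\in\simpleone$ and all nonnegative measurable $\omega$,
\[\int_{\bbR^d}\cV^r_{N_1,N_2}Tf(x)\,\omega(x)\,dx\lc_{p,q,\varepsilon,\gamma,d}\cC\,\Lamaxga_{p,q'}(f,\omega),\]
with $\cC$ as in \eqref{eqn:cCdef}; the implicit constant does not depend on the Banach spaces, and in particular not on $\widetilde{B}_2$, hence not on $N_1,N_2$. Since $\cV^r_{N_1,N_2}Tf(x)$ is nondecreasing as the interval $I_{N_1,N_2}$ grows and converges to $\cV^rTf(x)$ (every finite increasing sequence of integers is eventually contained in $I_{N_1,N_2}$), the monotone convergence theorem yields \eqref{eqn:sparse-Vr}. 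For $r=\infty$ the same argument applies, working with $V^\infty_{B_2}(I)=\ell^\infty_{B_2}(I)$ on the finite set $I$, whose dual is $\ell^1_{B_2^*}(I)$, so that the dual space causes no difficulty.
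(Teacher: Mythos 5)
Your proposal is correct and follows essentially the same route as the paper: you embed the $j$-th piece into $V^r_{B_2}(I_{N_1,N_2})$ (your $\iota_j\circ T_j$ is exactly the paper's $H_jf(x,k)=\delta_{jk}T_jf(x)$), transfer the hypotheses of Corollary \ref{mainthm-cor} via the uniform boundedness of $\iota_j$ and its adjoint, and conclude by the Banach-space independence of the constant plus monotone convergence in $N_1,N_2$. The only cosmetic differences are that the paper computes the norms of the delta sequences exactly ($2^{1/r}$, $2^{1/r'}$) where you use crude bounds, and it dispatches $r=\infty$ by citing Theorem \ref{thm:ellr}, while you treat it directly; your explicit check that the truncated partial sums are controlled by $\cV^rTf$ is a point the paper leaves implicit.
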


\begin{proof} In view of Theorem \ref{thm:ellr}, it suffices to consider the case $r<\infty$.
Given $N_1\le N_2$ we define 
$H_jf (x,k)$ as in 
\eqref{eqn:Hjdef}, for $N_1\le k\le N_2$.
Note that for fixed $x$, $N_1\le n_1<\dots< n_M\le N_2$,
\begin{multline} \label{eqn:secondHjidentificaton}
|T_{n_1} f(x)|_{B_2}+ \Big(\sum_{\nu=1}^{M-1} |T_{n_{\nu+1}}f(x)- T_{n_{\nu}}f(x)|_{B_2}^r\Big)^{1/r}
\\=\Big|\sum_{j=N_1}^{N_2} H_jf(x,n_1)\Big|_{B_2} +
\Big(\sum_{\nu=1}^{M-1} \Big|\sum_{j=1}^N H_j f(x,n_{\nu+1}) - \sum_{j=1}^N H_j f(x,n_{\nu})  \Big|_{B_2}^r\Big)^{1/r}.
\end{multline}
By \eqref{bdness-Vr} we have 
\[ \Big\|\sum_{j=N_1}^{N_2} H_j \Big\|_{L^p_{B_1}\to L^{p,\infty}(V^r_{B_2})} \le A(p), \qquad 
\Big\|\sum_{j=N_1}^{N_2} H_j\Big\|_{L^{q,1}_{B_1}\to L^q(V^r_{B_2})} \le A(q),
\]
where $V^r_{B_2}$ is interpreted to be the space $V^r_{B_2}(I_{N_1,N_2})$ and all the constants in what follows will be independent of $N_1$ and $N_2$. The pairing between $V^r_{B_2}(I_{N_1,N_2})$ and its dual is the standard one, \[\inn{a}{b}=\sum_{n=N_1}^{N_2} \inn{a(n)}{ b(n)}_{(B_2,B_2^*)}\]
and we have, 
\[|b|_{(V^r_{B_2}(I_{N_1,N_2}))^*}=\sup_{|a|_{V^r_{B_2}(I_{N_1,N_2})}\le 1 } \Big|\sum_{n=N_1}^{N_2} \inn{a(n)}{b(n)}_{(B_2,B_2^*)}\Big|.\] For  $\delta^{(j)}=(\delta_{j,N_1},\dots, \delta_{j,N_2}) $ we have,
for $j=N_1,\dots, N_2$, 
\[|\delta^{(j)}|_{V^r_{B_2}(I_{N_1,N_2})}=2^{1/r}\quad
\text{and}\quad 
|\delta^{(j)}|_{(V^r_{B_2}(I_{N_1,N_2}))^*}=2^{1/r'}.\]
The adjoint of $H_j$, acting on   $(V^r_{B_2}(I_{N_1,N_2}))^*$-valued 
functions $g=\{g_k\}_{k=N_1}^{N_2}$,   is given by 
\[H_j^*g(x)= \sum_{k=N_1}^{N_2} \delta_{jk} T_j^*g_k(x) .\]
These observations imply
\begin{align*}
  &\|\dil_{2^j} H_j \|_{L^p_{B_1}\to L^q(V^r_{B_2})} = 2^{1/r} \|\dil_{2^j} T_j\|_{L^p_{B_1}\to L^q_{B_2}},
    \\
    &\|(\dil_{2^j} H_j )\circ\Delta_h\|_{L^p_{B_1}\to L^q(V^r_{B_2})} 
    =2^{1/r}\|(\dil_{2^j} T_j )\circ\Delta_h\|_{L^p_{B_1}\to L^q_{B_2}} ,
    \\
    &\|(\dil_{2^j} H_j ^*)\circ\Delta_h\|_{L^{q'}((V^r_{B_2})^{*}) \to L^{p'}_{B_1^*}}
    =2^{1/r'}\|(\dil_{2^j} T_j ^*)\circ\Delta_h\|_{L^{q'}_{B_2^*}\to L^{p'}_{B_1^*}}.
\end{align*}
The hypothesis of Theorem \ref{mainthm} are then satisfied for the sequence $\{H_j\}_{j \in \bbZ}$ in $\mathrm{Op}_{B_1,V^r_{B_2}}$. Thus, by Corollary \ref{mainthm-cor} we obtain 
\[ \int_{\bbR^d} \Big| \sum_{j=N_1}^{N_2} H_j f(x,\cdot)\Big|_{V^r_{B_2}}  \omega(x) dx
\lc \cC  
\Lamaxga_{p,q'}(f,\om), 
\] which by \eqref{eqn:secondHjidentificaton}
implies 
\[ \int_{\bbR^d} \cV_{N_1,N_2}^rTf(x)  \omega(x) dx
\lc \cC  
\Lamaxga_{p,q'}(f,\om).
\] 
As the implicit constant in Corollary \ref{mainthm-cor} does not depend on the Banach spaces $B_1$, $B_2$ we may apply the  monotone convergence theorem and let $N_1\to-\infty$ and $N_2\to \infty$ to  obtain the desired conclusion \eqref{eqn:sparse-Vr}.
\end{proof}

\subsection{Truncations of sums} 
We will give a variant of Corollary \ref{mainthm-cor} in the spirit of Cotlar's inequality on maximal operators for truncations of singular integrals. %
 \begin{thm} \label{mainthmtrunc}
Let $1<p \leq q<\infty$.
Let $\{T_j\}_{j\in \bbZ}$ be a family of operators in $\mathrm{Op}_{B_1,B_2}$
satisfying  \eqref{support-assu}, 
\eqref{p-q-rescaled}, \eqref{p-q-rescaled-reg-a}, and \eqref{p-q-rescaled-reg-b}.
Moreover, assume that the estimates
\begin{subequations}
 \begin{align}\label{bdness-wt-trunc}  & \Big\|\sup_{\substack{(n_1,n_2):\\N_1\le n_1\le n_2\le N_2}}\big| \sum_{j=n_1}^{n_2}T_j f\big|_{B_2} \Big\|_{L^{p,\infty} }
 \le A(p) \|f\|_{L^p_{B_1}}, \\ 
 &\Big\|\sup_{\substack{(n_1,n_2):\\N_1\le n_1\le n_2\le N_2}}\big| \sum_{j=n_1}^{n_2}T_j f\big|_{B_2} \Big\|_{L^{q} }
 \le A(q) \|f\|_{L^{q,1}_{B_1}} 
 \label{bdness-rt-trunc}
 \end{align} 
 \end{subequations} hold uniformly for all $(N_1,N_2)$ with $N_1\le N_2$. Let $\cC$ be as in \eqref{eqn:cCdef}.
 Then
for all  $f\in \simpleone$, all $\bbR$-valued nonnegative
measurable %
functions $\omega$, and all integers $N_1, N_2$ with $N_1\le N_2$, 
 \begin{equation*}
\int_{\bbR^d}
\sup_{\substack{(n_1,n_2):\\N_1\le n_1\le n_2\le N_2}}\big| \sum_{j=n_1}^{n_2}T_j f(x)\big|_{B_2} \, \omega(x) dx  \lesssim \cC 
\Lamaxga_{p,q'}(f,\om).
\end{equation*}
\end{thm}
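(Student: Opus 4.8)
The plan is to reduce Theorem~\ref{mainthmtrunc} to Corollary~\ref{mainthm-cor} by the same linearization device used in the proofs of Theorems~\ref{thm:ellr} and~\ref{thm:Vr}. Fix $N_1\le N_2$ and let $\Pi=\{(n_1,n_2)\in\bbZ^2: N_1\le n_1\le n_2\le N_2\}$, a finite index set. Define a new Banach space $\tilde B_2=\ell^\infty_{B_2}(\Pi)$ and, for each $j\in[N_1,N_2]$, an operator $H_j\in\mathrm{Op}_{B_1,\tilde B_2}$ by
\[
H_jf(x)=\big(\bbone_{[n_1,n_2]}(j)\,T_jf(x)\big)_{(n_1,n_2)\in\Pi}.
\]
Then $\big|\sum_{j=N_1}^{N_2}H_jf(x)\big|_{\tilde B_2}=\sup_{(n_1,n_2)\in\Pi}\big|\sum_{j=n_1}^{n_2}T_jf(x)\big|_{B_2}$, which is exactly the maximal truncation we wish to dominate. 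The support condition \eqref{support-assu} for $\{H_j\}$ is inherited from that of $\{T_j\}$ since each component of $H_jf$ is either $0$ or $T_jf$.

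Next I would check that the family $\{H_j\}$ satisfies the five hypotheses of Theorem~\ref{mainthm} with the same constants (up to harmless factors). The weak type $(p,p)$ bound \eqref{bdness-wt} for the partial sums $\sum_{j=M_1}^{M_2}H_j$ follows from \eqref{bdness-wt-trunc}: for $M_1\le M_2$ in $[N_1,N_2]$ one has $\big|\sum_{j=M_1}^{M_2}H_jf(x)\big|_{\tilde B_2}\le\sup_{(n_1,n_2)\in\Pi}\big|\sum_{j=\max(n_1,M_1)}^{\min(n_2,M_2)}T_jf(x)\big|_{B_2}$ where the inner sum is empty (hence $0$) when $\max(n_1,M_1)>\min(n_2,M_2)$; this is bounded pointwise by the maximal truncation over $[M_1,M_2]$, and \eqref{bdness-wt-trunc} applies with $N_1,N_2$ replaced by $M_1,M_2$. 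The same reasoning gives \eqref{bdness-rt} from \eqref{bdness-rt-trunc}. For the single scale conditions, $\dil_{2^j}H_j$ has components that are either $0$ or $\dil_{2^j}T_j$, so $\|\dil_{2^j}H_j\|_{L^p_{B_1}\to L^q_{\tilde B_2}}=\|\dil_{2^j}T_j\|_{L^p_{B_1}\to L^q_{B_2}}$ and likewise $(\dil_{2^j}H_j)\circ\Delta_h$ inherits \eqref{p-q-rescaled-reg-a} from $(\dil_{2^j}T_j)\circ\Delta_h$. For the adjoint regularity \eqref{p-q-rescaled-reg-b}, note the dual of $\ell^\infty_{B_2}(\Pi)$ over the finite set $\Pi$ is $\ell^1_{B_2^*}(\Pi)$, and $H_j^*g(x)=\sum_{(n_1,n_2)\in\Pi}\bbone_{[n_1,n_2]}(j)\,T_j^*g_{(n_1,n_2)}(x)$, so $\|(\dil_{2^j}H_j^*)\circ\Delta_h\|_{L^{q'}_{\ell^1_{B_2^*}}\to L^{p'}_{B_1^*}}\le(\#\Pi)\,\|(\dil_{2^j}T_j^*)\circ\Delta_h\|_{L^{q'}_{B_2^*}\to L^{p'}_{B_1^*}}$; this crude bound suffices since Corollary~\ref{mainthm-cor} is applied for each fixed $(N_1,N_2)$ and, crucially, its implicit constant does not depend on the Banach spaces or on their dimension.

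Then I would apply Corollary~\ref{mainthm-cor} to the finite family $\{H_j\}_{N_1\le j\le N_2}$ (extending by $H_j=0$ for $j\notin[N_1,N_2]$), which is legitimate provided $H_j:\simpleone\to L^1_{\tilde B_2,\loc}$; this holds because each $T_j$ maps $\simpleone$ to $L^1_{B_2,\loc}$ and $\tilde B_2$ is a finite $\ell^\infty$-sum, so the $\tilde B_2$-norm of $H_jf$ is a finite maximum of $L^1_\loc$ scalar functions. The corollary yields, with $\cC$ as in \eqref{eqn:cCdef},
\[
\int_{\bbR^d}\Big|\sum_{j=N_1}^{N_2}H_jf(x)\Big|_{\tilde B_2}\,\omega(x)\,dx\lesssim_{p,q,\eps,\gamma,d}\cC\,\Lamaxga_{p,B_1,q',\bbR}(f,\om),
\]
which is exactly the claimed inequality after unwinding the definition of $\tilde B_2$. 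Finally, by monotone convergence one could let $N_1\to-\infty$, $N_2\to+\infty$ if an unrestricted statement were wanted, but the theorem is stated for fixed $N_1\le N_2$ so no limiting argument is needed.

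The only genuinely delicate point is the adjoint regularity hypothesis \eqref{p-q-rescaled-reg-b} for $\{H_j\}$: one must verify it holds with a constant that does \emph{not} blow up in a way that would spoil the final bound. Since $\cC$ in \eqref{eqn:cCdef} depends on $B$ only logarithmically and, more importantly, the implicit constant in Corollary~\ref{mainthm-cor} is independent of $B_1,B_2$ (Remark (i) after Corollary~\ref{mainthm-cor}), replacing $B$ by $(\#\Pi)B$ for a fixed finite $\Pi$ changes the conclusion only by a factor $\log(\#\Pi)$ absorbed into $\lesssim_{N_1,N_2}$; but to keep the bound uniform in $N_1,N_2$ one should instead observe that for translation-invariant situations, or more generally, one can argue directly that $\|(\dil_{2^j}H_j^*)\circ\Delta_h\|\le B|h|^\eps$ without the $\#\Pi$ loss, because for each fixed $(n_1,n_2)$ the map $g\mapsto\bbone_{[n_1,n_2]}(j)T_j^*g_{(n_1,n_2)}$ acting on $\ell^1_{B_2^*}(\Pi)$ has the same norm as $T_j^*$ on $B_2^*$, and summing in $(n_1,n_2)$ against an $\ell^1$ input is a contraction. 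Making this last observation precise — that the $\ell^\infty$-to-$\ell^1$ duality structure does not introduce a cardinality factor in the operator norm — is the main thing to get right; everything else is a routine transcription of the proof of Theorem~\ref{thm:ellr}.
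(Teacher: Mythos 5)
Your proposal is correct and is essentially the paper's own proof: the paper defines exactly the same operators $H_jf(x,n_1,n_2)=T_jf(x)$ for $N_1\le n_1\le j\le n_2\le N_2$ (and $0$ otherwise), viewed in $\mathrm{Op}_{B_1,\ell^\infty_{B_2}}$ over the finite index set $U(N_1,N_2)=\{(n_1,n_2):N_1\le n_1\le n_2\le N_2\}$, and then applies Corollary \ref{mainthm-cor} to $\sum_{j=N_1}^{N_2}H_j$ just as in the proof of Theorem \ref{thm:ellr}. Your handling of the one delicate point, the adjoint regularity, is the right one and is cleanest if stated as the factorization $H_j^*g=T_j^*\bigl(\sum_{(n_1,n_2):\,n_1\le j\le n_2}g_{(n_1,n_2)}\bigr)$ combined with the fact that the coordinate-sum map $\ell^1_{B_2^*}(\Pi)\to B_2^*$ has norm at most one, which yields \eqref{p-q-rescaled-reg-b} for the $H_j$ with the same constant $B$ and no cardinality loss, so the conclusion is uniform in $N_1,N_2$.
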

\begin{proof}
Define $U(N_1,N_2)=\{(n_1,n_2): N_1\le n_1\le n_2\le N_2\}$ and $\ell^\infty_{B_2}$ as the space of all bounded $B_2$-valued functions on $U(N_1,N_2)$.  Define operators  $H_j$ in  $\mathrm{Op}_{B_1,\ell^\infty_{B_2}}$ by 
\Be \notag H_jf(x,n_1,n_2)= \begin{cases} T_j f(x) & \text{ if } N_1 \le n_1\le j\le n_2 \le N_2,
\\
0 & \text{ otherwise.}\end{cases} 
\Ee
Then apply 
Corollary \ref{mainthm-cor}
to the operators $\sum_{j=N_1}^{N_2} H_j$ as in the proof of Theorems \ref{thm:ellr}.
\end{proof}

We also have a variational analogue.
\begin{thm} \label{mainthmtruncvar}
Let $1<p\leq q<\infty$.
Let $\{T_j\}_{j\in \bbZ}$ be a family of operators in $\mathrm{Op}_{B_1,B_2}$
satisfying  \eqref{support-assu}, \eqref{bdness-wt}, \eqref{bdness-rt},
\eqref{p-q-rescaled}, \eqref{p-q-rescaled-reg-a}, and \eqref{p-q-rescaled-reg-b}.
Moreover, assume that  the estimates
\begin{subequations}
 \begin{align}\label{bdness-wt-trunc-var}  & \Big\|\sup_{M \in \bbN}\sup_{N_1\le n_1<\dots< n_{M} \le N_2} 
 \Big(\sum_{\nu=1}^{M-1} \big| \sum_{j=n_\nu+1}^{n_{\nu+1}} T_j f\big|_{B_2} ^r\Big)^{1/r} \Big\|_{L^{p,\infty}} 
  \le A(p) \|f\|_{L^p_{B_1}} \\ \label{bdness-rt-trunc-var} 
   & \Big\|\sup_{M \in \bbN}\sup_{N_1\le n_1<\dots< n_{M} \le N_2} 
 \Big(\sum_{\nu=1}^{M-1} \big| \sum_{j=n_\nu+1}^{n_{\nu+1}} T_j f\big|_{B_2} ^r\Big)^{1/r} \Big\|_{L^{q}} 
  \le A(p) \|f\|_{L^{q,1}_{B_1}} 
  \end{align} 
  \end{subequations}
  hold uniformly for all $(N_1,N_2)$ with $N_1 \leq N_2$. Let $\cC$ be as in \eqref{eqn:cCdef}.
 Then
for all  $f\in \simpleone$, all $\bbR$-valued nonnegative
measurable %
functions $\omega$, and  all integers $N_1, N_2$ with $N_1\le N_2$,
 \begin{multline}\label{norms-trunc}
\int_{\bbR^d} 
\sup_{M \in \bbN}\sup_{N_1\le n_1<\dots< n_{M} \le N_2} 
 \Big(\sum_{\nu=1}^{M-1} \big| \sum_{j=n_\nu+1}^{n_{\nu+1}} T_j f(x)\big|_{B_2} ^r\Big)^{1/r} 
 \omega(x) dx  \\ \lesssim  \cC 
\Lamaxga_{p,q'}(f,\om).
\end{multline}
\end{thm}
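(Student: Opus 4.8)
The plan is to follow the reductions behind Theorems \ref{thm:ellr}, \ref{thm:Vr} and \ref{mainthmtrunc}: to realize the variational truncation as the norm of a single operator sum $\sum_j H_j f$ in a suitably chosen Banach space $\cB$, and then to invoke Corollary \ref{mainthm-cor}. I will give the argument for $1\le r<\infty$; the case $r=\infty$ requires only notational changes (and is close in spirit to Theorem \ref{mainthmtrunc}). Fix integers $N_1\le N_2$, put $I=[N_1,N_2]\cap\bbZ$, and let $\cB=V^r_{B_2}(I)$, a genuine Banach space whose dual I would handle through the pairing $\inn{a}{b}=\sum_{n\in I}\inn{a(n)}{b(n)}_{(B_2,B_2^*)}$ exactly as in the proof of Theorem \ref{thm:Vr}. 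Whereas there $H_j$ was built from point masses $\delta^{(j)}$ in the sequence variable, here I would instead use the step sequences
\[
H_jf(x)(n)=\begin{cases}T_jf(x)&\text{if }N_1\le j\le n\le N_2,\\0&\text{otherwise},\end{cases}
\]
so that $H_j=0$ unless $j\in I$. Then $\sum_{j=N_1}^{N_2}H_jf(x)$ is the sequence of partial sums $n\mapsto S_nf(x):=\sum_{N_1\le j\le n}T_jf(x)$, and discarding the nonnegative initial term in \eqref{eqn:Vrdef} gives the pointwise lower bound $\big|\sum_{j=N_1}^{N_2}H_jf(x)\big|_\cB\ge\sup_M\sup_{N_1\le n_1<\dots<n_M\le N_2}\big(\sum_{\nu=1}^{M-1}\big|S_{n_{\nu+1}}f(x)-S_{n_\nu}f(x)\big|_{B_2}^r\big)^{1/r}$, whose right-hand side, after rewriting $S_{n_{\nu+1}}f-S_{n_\nu}f=\sum_{j=n_\nu+1}^{n_{\nu+1}}T_jf$, is exactly the integrand of \eqref{norms-trunc}. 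So it will suffice to prove the sparse bound $\int_{\bbR^d}\big|\sum_{j=N_1}^{N_2}H_jf\big|_\cB\,\om\lc\cC\,\Lamaxga_{p,q'}(f,\om)$.

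To apply Corollary \ref{mainthm-cor} to $\{H_j\}_{j\in\bbZ}$ I would verify its hypotheses while preserving the constant $\cC$ of \eqref{eqn:cCdef}. The key observation is that the step sequence $n\mapsto\bbone_{[j,N_2]}(n)T_jh(x)$ takes only the values $0$ and $T_jh(x)$, so $|H_jh(x)|_\cB=|T_jh(x)|_{B_2}$ pointwise in $x$; this transfers the support condition \eqref{support-assu}, the single-scale bound \eqref{p-q-rescaled} and the regularity bound \eqref{p-q-rescaled-reg-a} to $\{H_j\}$ with the very same constants $A_\circ(p,q)$ and $B$, and it also makes clear (together with \eqref{p-q-rescaled}) that $H_j\colon\simpleone\to L^1_{\cB,\loc}$. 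For the adjoint regularity \eqref{p-q-rescaled-reg-b} I would compute from the pairing that $H_j^*=T_j^*\circ G_j$, where $G_j$ is the pointwise-in-$x$ partial-summation operator $(G_jg)(x)=\sum_{n=j}^{N_2}g(x)(n)$; testing against the step sequences $a(n)=\varphi\,\bbone_{[j,N_2]}(n)$, which satisfy $|a|_\cB=|\varphi|_{B_2}$, yields $\|G_j\|_{L^{q'}_{\cB^*}\to L^{q'}_{B_2^*}}\le1$, and since $G_j$ commutes with $\dil_{2^j}$ and with $\Delta_h$ this gives \eqref{p-q-rescaled-reg-b} for $\{H_j\}$ with constant $B$.

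The remaining, and I expect the main, point is the weak type $(p,p)$ and restricted strong type $(q,q)$ conditions \eqref{bdness-wt}, \eqref{bdness-rt} for $\{H_j\}$, uniformly over truncation endpoints. The subtlety is the initial-value term $|S_{n_1}f|$ in the $V^r$-norm \eqref{eqn:Vrdef}, which the purely variational hypotheses \eqref{bdness-wt-trunc-var}, \eqref{bdness-rt-trunc-var} do not control directly, since there the blocks $\sum_{j=n_\nu+1}^{n_{\nu+1}}$ always start strictly above the lower truncation endpoint. I would handle it by the splitting, with $N_1'$ the effective lower endpoint, $|S_{n_1}f|\le|T_{N_1'}f|+\sup_{n'>N_1'}\big|\sum_{j=N_1'+1}^{n'}T_jf\big|$: the first term is a single-index partial sum, controlled in $L^{p,\infty}$ and $L^q$ by \eqref{bdness-wt}, \eqref{bdness-rt} (which is precisely why those two hypotheses are listed in Theorem \ref{mainthmtruncvar}, in contrast to Theorem \ref{mainthmtrunc}, where the maximal truncation hypothesis already absorbs them), and the supremum in the second term is dominated by the $M=2$ instance of the variational quantity, hence by \eqref{bdness-wt-trunc-var}, \eqref{bdness-rt-trunc-var}. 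Since all hypotheses in play are assumed uniformly in the endpoints, this produces $\big\|\sum_{j=M_1}^{M_2}H_j\big\|_{L^p_{B_1}\to L^{p,\infty}_\cB}\lc A(p)$ and $\big\|\sum_{j=M_1}^{M_2}H_j\big\|_{L^{q,1}_{B_1}\to L^q_\cB}\lc A(q)$ for all $M_1\le M_2$. With all hypotheses of Theorem \ref{mainthm} now verified for $\{H_j\}$, Corollary \ref{mainthm-cor} applied over the index range $[N_1,N_2]$ yields $\int_{\bbR^d}\big|\sum_{j=N_1}^{N_2}H_jf\big|_\cB\,\om\lc\cC\,\Lamaxga_{p,q'}(f,\om)$, and combining this with the lower bound from the first paragraph completes the proof of \eqref{norms-trunc}.
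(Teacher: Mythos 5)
Your proposal is correct and follows essentially the same route as the paper: the same step-sequence operators $H_j$ with values in $V^r_{B_2}(I_{N_1,N_2})$, the same identification of $\big|\sum_j H_jf\big|_{V^r_{B_2}}$ with the truncated variational expression, an application of Corollary \ref{mainthm-cor}, and the same observation that the conditions \eqref{bdness-wt}, \eqref{bdness-rt} for $\sum_j H_j$ follow from \eqref{bdness-wt-trunc-var}, \eqref{bdness-rt-trunc-var} together with the assumed \eqref{bdness-wt}, \eqref{bdness-rt} for $\{T_j\}$ (to absorb the initial-value term of the $V^r$ norm). Your explicit verification of the single-scale and adjoint regularity conditions via $|H_jh|_{V^r_{B_2}}=|T_jh|_{B_2}$ and $H_j^*=T_j^*\circ G_j$ simply fills in the details the paper delegates to the proof of Theorem \ref{thm:Vr}.
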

\begin{proof}
Let $V^r_{B_2} \equiv V_{B_2}^r (I_{N_1,N_2})$ denote the $r$-variation space of $B_2$-valued functions over the integers in $[N_1,N_2]$ and for $N_1\le j\le N_2$, 
$N_1\le n\le  N_2$, define the  operators $H_j \in \mathrm{Op}_{B_1,V^r_{B_2}}$ by 
\begin{equation*}
    H_jf(x,n)= \begin{cases}
     T_j f(x) & \text{ if } N_1\le j\le n \le N_2, \\
     0 & \text{ if } j>n.
    \end{cases}
\end{equation*}
Note that, by definition of $H_j$, $| \sum_{j=N_1}^{N_2} H_j f(x, \cdot) |_{V^r_{B_2}}$ equals to
\begin{equation}\label{eq:Vr trunc expanded}
\sup_{\substack {M\in \bbN\\ N_1 \leq n_1 < \cdots < n_M \leq N_2}} \big| \sum_{j=N_1}^{n_1} T_j f(x) \big|_{B_2} + \Big( \sum_{\nu=1}^{M-1} \big| \sum_{j=N_1}^{n_{\nu+1}} T_j f(x) - \sum_{j=N_1}^{n_\nu} T_j f(x)  \big|_{B_2}^r  \Big)^{1/r}
\end{equation}
and $|  H_j f(x, \cdot) |_{V^r_{B_2}}=| T_j f(x)|_{B_2}$. Arguing as in Theorem \ref{thm:Vr}, one may apply  Corollary \ref{mainthm-cor} to the operators $\sum_{j=N_1}^{N_2} H_j$ in $\mathrm{Op}_{B_1,V_{B_2}^r}$. Note, in particular, that in view of \eqref{eq:Vr trunc expanded} the conditions \eqref{bdness-wt} and \eqref{bdness-rt} for $\sum_{j=N_1}^{N_2} H_j$ follow from \eqref{bdness-wt-trunc-var} and \eqref{bdness-rt-trunc-var} together with the fact that $\{T_j\}_{j \in \bbZ}$ in $\mathrm{Op}_{B_1,B_2}$ satisfy \eqref{bdness-wt} and \eqref{bdness-rt}. This automatically yields \eqref{norms-trunc}.
\end{proof}

\subsection{Some simplifications for maximal operators}\label{sec:more max}
The goal of this section is to remark that the proof of Theorem \ref{thm:ellr} can be simplified in the case $q \leq r \leq \infty$. Rather than deducing it  from Corollary \ref{mainthm-cor}, we shall apply the proof method of Theorem \ref{mainthm} to the operators $S_r$ and observe that a Calder\'on--Zygmund decomposition on $f_2$ is not required for the proof to work. In particular, this allows us to remove the regularity hypothesis \eqref{p-q-rescaled-reg-b} on the adjoints $T_j^*$. The precise statement reads as follows.

\begin{thm}\label{thm:max weak ass}
 Let $1<p\leq q<\infty$ and $q \leq r \leq \infty$.
Let $\{T_j\}_{j\in \bbZ}$ be a family of operators in $\mathrm{Op}_{B_1,B_2}$
satisfying  \eqref{support-assu}. 
Suppose that the inequalities
\begin{equation}\label{bdness-ellr-alt}
    \big\|S_rTf \big\|_{L^{p,\infty}}\le A(p) \|f\|_{L^p_{B_1}} \qquad \text{and} \qquad
    \big\|S_rTf \big\|_{L^q}\le A(q) \|f\|_{L^{q,1}_{B_1}}
\end{equation}
hold for all $f \in \simpleone$. Moreover, assume that the rescaled operators $\dil_{2^j} T_j$ satisfy the %
single scale $(p,q)$ condition
\eqref{p-q-rescaled}
and %
single scale $\varepsilon$-regularity condition
\eqref{p-q-rescaled-reg-a}.  Let $\cC$ be as in \eqref{eqn:cCdef}.
Then for all $f \in \simpleone$ and all $\bbR$-valued nonnegative measurable functions $\omega$,
\Be \notag  \label{eqn:sparse-ellr-alt}
\inn{ S_rT f}{\omega}  \lc \cC \Lamaxga_{p,q'} (f,\om) .
\Ee
\end{thm}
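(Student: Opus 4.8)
The plan is to revisit the induction-based proof of Theorem \ref{mainthm} and run essentially the same argument directly on the sublinear operator $S_rT$, taking advantage of the fact that no decomposition of the second function $f_2$ is needed. We set up, for integers $N_1\le N_2$, the truncated operator $S_{r,N_1,N_2}Tf(x)=(\sum_{j=N_1}^{N_2}|T_jf(x)|_{B_2}^r)^{1/r}$ (with the obvious $\sup$ when $r=\infty$), and we define quantities $U(n)$ exactly as in \S\ref{genthmpf}, except that instead of $|\langle\sum_{j=N_1}^{N_2}T_jf_1,f_2\rangle|$ we bound $\langle S_{r,N_1,N_2}Tf_1,\omega\rangle$ by $U\,\fG_{Q_0}(f_1,\omega)$, where now $\omega$ is a nonnegative simple scalar function supported in $\tr{Q_0}$. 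The base case $U(0)\lesssim A_\circ(p,q)$ follows from Lemma \ref{single-scale-lem} applied with $B_2$ replaced by $\ell^r_{B_2}$ (or just directly, since for a single $j$ the operator $S_{r}T$ reduces to $|T_j\cdot|_{B_2}$). The core is an inductive Claim analogous to Claim \ref{claim}, and I would prove it by the same Whitney/Calder\'on--Zygmund machinery of \S\ref{proofofclaim}, but with a Calder\'on--Zygmund decomposition performed \emph{only} on $f_1=g_1+b_1$, with $\Omega$ now defined using just $\cM_pf_1$ (no $\Omega_2$).

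With this setup the splitting $\langle S_{r}Tf_1,\omega\rangle\le\langle S_rTg_1,\omega\rangle+\langle S_r T b_1,\omega\rangle$ is replaced by the sublinear triangle inequality $|S_rTf_1(x)|\le |S_rTg_1(x)|+|S_rTb_1(x)|$ pointwise (this is where $r\ge1$ matters); the good part $\langle S_rTg_1,\omega\rangle$ is handled exactly as \eqref{good-fct}, using \eqref{bdness-ellr-alt} (the $L^{q,1}\to L^q$ bound for $S_rT$ in the form stated) together with $\|g_1\|_{L^{q,1}_{B_1}}\lesssim|Q_0|^{1/q}\langle f_1\rangle_{Q_0,p}$ and H\"older against $\omega\bbone_{\tr{Q_0}}$ with exponent $q'$; note here we need $q\le r$ so that $S_rT$ controls each $|T_jg_1|$ and the $L^q$ bound applies. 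For the bad part, since $b_1=\sum_{W}b_{1,W}$ with $b_{1,W}=(\Id-\bbE_{-L(W)})f_{1,W}$, we split exactly as in \eqref{three terms}: term $I=\langle S_r(\{S_Wf_1\}_j),\omega\rangle$ is subsumed by the induction hypothesis after noting $S_r$ of the localized pieces has the right support in $\tr{W}\subset Q_0$; term $II$ involving $\av_W f_1$ is dominated using $\|S_rT_W\|_{L^{q,1}\to L^q}\le A(q)$ just as in \eqref{IIest}; and the error term $III=\langle S_r(\{(S-S_W)b_{1,W}\}_j),\omega\rangle$ is again decomposed, but now \emph{without} the $f_2=g_2+\sum b_{2,W}$ step. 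Concretely, $III$ splits into a piece where the ``outer'' scale $j$ is close to $L(W)$ (the analogue of $IV_1$, giving the logarithmic factor $\log(2+B/A_\circ(p,q))$) and a piece with $j-L(W)$ large, the latter controlled by $\|T_j(\Id-\bbE_{L(W)-j})\|_{L^p_{B_1}\to L^q_{B_2}}\lesssim B2^{-jd(1/p-1/q)}2^{-\varepsilon'(j-L(W))}$ from Corollary \ref{cor:reg}(ii) (which uses only \eqref{p-q-rescaled-reg-a}), H\"older's inequality, and summation of a geometric series — all pairing against $\omega$ by H\"older with exponent $q'$ on triple cubes, exactly as in the estimates for $IV_1$ and $IV_2$. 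The point is that the roles of $IV_2$ and $IV_3$ in \S\ref{proofofclaim} were symmetric in $(f_1,f_2)$ only because of the second Calder\'on--Zygmund decomposition; here only the $f_1$-side smoothing $\bbE$ is used, so only \eqref{p-q-rescaled-reg-a} enters.

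Having proved the inductive Claim, the passage from the $\fG_{Q_0}$-form to the genuine maximal sparse form $\Lamaxga_{p,q'}$ is verbatim the argument in \S\ref{genthmpf} using the Three Lattice Theorem and the Carleson-family splitting of \cite{lerner-nazarov}, and it is entirely insensitive to the replacement of $\langle Tf_1,f_2\rangle$ by $\langle S_rTf_1,\omega\rangle$. Finally one removes the truncation: the constant in the conclusion is independent of $N_1,N_2$ and of the Banach space, so for $r<\infty$ the monotone convergence theorem lets $N_1\to-\infty$, $N_2\to\infty$; for $r=\infty$ one works with $\ell^\infty_{B_2}$ over $\bbZ\cap[N_1,N_2]$ and takes a supremum. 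The main obstacle I anticipate is bookkeeping rather than conceptual: one must verify carefully that every place in \S\ref{proofofclaim} where cancellation or boundedness of $f_2$ (via $g_2$, $b_{2,W}$, or $\bbE$ acting on the $B_2^*$ side) was invoked can indeed be dropped — in particular checking that the term previously called $IV_3$ (where $L(W)>L(W')$, i.e. the $f_2$-side was the rougher one) genuinely disappears when $f_2$ is not decomposed, so that no adjoint regularity \eqref{p-q-rescaled-reg-b} is ever needed — and that the weak-type hypothesis \eqref{bdness-ellr-alt} on $S_rT$ is strong enough to play the role that \eqref{bdness-wt} played for $\sum T_j$ in bounding the analogues of $III_1$ and $III_2$ (it is, since $|S_rTb_{1,W}|_{}\le S_rT|\{b_{1,W}\}|$ and $L^{p,\infty}$ embeds isometrically, exactly as used in \S\ref{proofofclaim}).
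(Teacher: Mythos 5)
Your plan is essentially the paper's own proof: run the induction of Claim \ref{claim} on the truncated operators $S_{r,N_1,N_2}T$, perform the Calder\'on--Zygmund decomposition only on $f_1$, treat the good part and the terms $I$, $II$ as before, and estimate $III$ directly by splitting the scale gap $s=j-L(W)$ at $\ell$ as in \eqref{elldef}, the near block producing the logarithm via the single scale $(p,q)$ bound and the far blocks producing geometric decay via Corollary \ref{cor:reg} applied to $\dil_{2^j}T_j$, so that only \eqref{p-q-rescaled-reg-a} is used; the Lerner--Nazarov passage to $\Lamaxga_{p,q'}$ and the limit in $N_1,N_2$ are unchanged. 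The only structural difference is that the paper first linearizes: it chooses $\{a_j(x)\}$ with $|\{a_j(x)\}|_{\ell^{r'}(B_2^*)}\le 1$ (built from $\lambda_j\bbone_{X_j}$ when $r=\infty$) so that $S_{r,N_1,N_2}Tf_1(x)=\sum_j\inn{T_jf_1(x)}{a_j(x)}$, whereas you keep the sublinear operator and use Minkowski's inequality in $\ell^r$ for the various splittings; both routes are workable.

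Two points in your write-up do need repair. First, and most substantively, you misplace the hypothesis $q\le r$: it plays no role in the good part (that step uses only the $L^{q,1}\to L^q$ bound in \eqref{bdness-ellr-alt} applied to $g_1$), but it is exactly what makes your estimate of $III$ close. After the scale splitting you must commute the $\ell^r$ norm in $j$ with the $L^q$ norm in $x$ before the single scale and regularity bounds can be applied scale by scale: in the paper this is the joint H\"older inequality in $x$ and $j$ using $|\{a_j(x)\}|_{\ell^{q'}(B_2^*)}\le |\{a_j(x)\}|_{\ell^{r'}(B_2^*)}\le 1$; in your sublinear formulation it is the pointwise bound $(\sum_j|F_j(x)|_{B_2}^r)^{1/r}\le(\sum_j|F_j(x)|_{B_2}^q)^{1/q}$ followed by $\|(\sum_j|F_j|_{B_2}^q)^{1/q}\|_q^q=\sum_j\|F_j\|_{L^q_{B_2}}^q$. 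Both require $r\ge q$, and your phrase about pairing against $\omega$ ``exactly as for $IV_1$ and $IV_2$'' hides precisely this step; for $r<q$ the interchange, and hence the argument, fails. Second, you drop $\Omega_2$ from the exceptional set but then estimate $II$ ``just as in \eqref{IIest}'': that display uses $\jp{\omega}_{\tr{W},q'}\lc\jp{\omega}_{\tr{Q_0},q'}$, which is exactly what $\Omega_2$ provided. Either keep $\Omega_2$ (defined with $\cM_{q'}\omega$; this is perfectly consistent with not decomposing $\omega$, and is what the paper does), or modify $II$ by applying H\"older across the disjoint Whitney cubes to the sum of $|W|^{1/q}\|\omega\bbone_{\tr{W}}\|_{q'}$. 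Minor further slips: the operator bound you invoke should read $\|T_j(\Id-\bbE_{-L(W)})\|_{L^p_{B_1}\to L^q_{B_2}}\lc B\,2^{-jd(1/p-1/q)}2^{-\eps'(j-L(W))}$ (your subscript $L(W)-j$ is off), and your worry about analogues of $III_1$, $III_2$ is moot: those terms arise only from decomposing the second function, which neither you nor the paper does here, so the weak-type part of \eqref{bdness-ellr-alt} never enters the error estimate.
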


\begin{proof}
We sketch the main changes with respect to the proof of Theorem \ref{mainthm}. As in Theorem \ref{mainthm}, it suffices to show
\[ \int_{\bbR^d}  S_{r,N_1,N_2}f_1(x)  f_2(x) dx
\lc \cC  \Lamaxga_{p,q'} (f_1,f_2) %
\]
uniformly in $N_1 \leq N_2$ for all $f_1 \in \simpleone$ and $f_2 \in S_{\bbR}$, where
\begin{align*}
S_{r,N_1,N_2} f(x)&:=\Big( \sum_{j=N_1}^{N_2} |T_jf(x)|_{B_2}^r\Big)^{1/r}
\intertext{for $q \leq r < \infty$ and}
S_{\infty,N_1,N_2} f(x)&:= \sup_{N_1 \leq j \leq N_2} |T_jf(x)|_{B_2}.
\end{align*}
This will in turn follow from verifying the inductive step in Claim \ref{claim} for the operators $S_{r,N_1,N_2}$.

If $r=\infty$, let $\lambda_j(x) \in B_2^*$ with $|\lambda_j(x)  |_{B_2^*} \leq 1$ such that $| T_{j} f_1 (x)|_{B_2}= \inn{T_j f_1}{\lambda_j}_{(B_2,B_2^*)}$ and let $x \mapsto j(x)$ be a measurable function such that
$$
S_{\infty, N_1,N_2} f_1(x) \leq 2 \, |T_{j(x)} f_1(x)|_{B_2}.
$$
Setting $X_j:=\{x : j(x)=j\}$, note that
$$ Sf_1(x) \equiv |T_{j(x)} f_1(x)|_{B_2} =  \sum_{j=N_1}^{N_2} \inn{ T_j f_1(x)}{ \lambda_j(x)\bbone_{X_j} (x) }_{(B_2,B_2^*)} 
$$
and that the $X_j$ are disjoint measurable sets such that $\sum_j \bbone_{X_j} \leq \bbone_{\tr{Q_0}}$.  If $q \leq r < \infty$, we linearise the $\ell^r(B_2)$-norm for each $x$. That is, there exists $\{a_j(x)\}_{j \in \bbZ} \in \ell^{r'}(B_2^*)$, with $\| a_j(x) \|_{\ell^{r'}(B_2^*)}\leq 1$, such that
$$
Sf_1(x) \equiv S_{r,N_1,N_2} f_1 (x) = \sum_{j=N_1}^{N_2}  \inn{T_j f_1 (x)}{a_j(x)}_{(B_2,B_2^*)}.
$$
Note that we can treat the cases $r=\infty$ and $q \leq r < \infty$ together by setting $a_j(x)=\lambda_j(x) 1_{X_j}(x)$ for all $x \in \tr{Q_0}$  and all $N_1 \leq j \leq N_2$, and $a_j(x)=0$ otherwise; then $\{a_j(x)\}_{j \in \bbZ} \in \ell^1(B_2^*)$. Clearly, the operator $S$ satisfies the bounds \eqref{bdness-wt}, \eqref{bdness-rt} in view of \eqref{bdness-ellr-alt}.

We then perform a Calder\'on--Zygmund decomposition of $f_1$ as in \eqref{first splitting}. The first term in \eqref{first splitting}, corresponding to $g_1$, can be treated analogously. The second term in \eqref{first splitting}, corresponding to $\sum_{W \in \mathcal{W}} b_{1,W}$ can be further split as in \eqref{three terms}, and $I$ and $II$ can be treated analogously. One is then left with proving \eqref{error-claim} for $III$. Rather than performing a Calder\'on--Zygmund decomposition on $f_2$, we estimate the term directly.

Indeed, the analysis for $III$ amounts to a simplified version of the analysis of the term $III_4$ in \eqref{eq:termIV}. One can define $\ell$ as in \eqref{elldef} and split
\[(-\infty, j) \cap \bbZ=\mathscr{V}_{j,1} \cup \mathscr{V}_{j,2},\] %
where $\mathscr{V}_{j,1}:=\{L :  j - \ell \leq L < j \}$ and $\mathscr{V}_{j,2} = \{ L : L<j-\ell \}$.
Note that here there is no further need to split $\mathscr{V}_{j,2}$, since we do not make use of a Calder\'on--Zygmund decomposition of $f_2$. Write $III=IV_1^\flat+IV_2^\flat$,
where for $i=1,2$,
\[ IV_{i}^\flat = \biginn{\sum_{N_1\le j\le N_2} \sum_{\substack{W\in \cW,\\ L(W)\in \mathscr{V}_{j,i} }}T_j b_{1,W}}{f_2}. \]
We first focus on $IV^\flat_{1} $. By H\"older's inequality with respect to $x$ and $j$
\begin{equation}\label{eq:Holder x and j}
IV^\flat_1 \le IV_{1,1}^\flat IV^\flat_{1, 2},  
\end{equation}
where
\begin{align*} 
IV^\flat_{1,1}&= \Big(\sum_{j=N_1}^{N_2}\int \Big|
 \sum_{j-\ell \le L(W)<j} T_j b_{1,W}(x) \Big|_{B_2}^qdx\Big)^{1/q},
\\
IV^\flat_{1,2} &= \Big(\sum_{j=N_1}^{N_2}\int |a_j(x)|_{B_2}^{q'} |f_2(x)|^{q'} dx\Big)^{1/q'}.
\end{align*}
Using that $\| a_j(x) \|_{\ell^{q'}(B_2^*)} \leq \| a_j(x) \|_{\ell^{r'}(B_2^*)} =1$ if $1 \leq r'\leq q'$, we get
\Be \label{III2n}
IV^\flat_{1,2}\lc \Big(\int_{\tr{Q_0}} |f_2(x)|^{q'}  dx\Big)^{1/q'} \lc |Q_0|^{1-1/q} \jp{f_2}_{\tr{Q_0},q'}.
\Ee
For the term $IV^\flat_{1,1}$, introduce as in \eqref{eq:IV1} the family $\fR_j$ of subcubes of $Q_0$ of side length $2^j$ and use the bounded overlap of $\tr{R}$ to write
\[
IV^\flat_{1,1} \lesssim \Big( \sum_{j=N_1}^{N_2} \sum_{R \in \fR_j} \Big\|  \sum_{\substack{ W \subset R \\ j-\ell\le L(W)<j}} T_j b_{1,W}  \Big\|_{L^q_{B_2}}^q \Big)^{1/q}.
\]
The right-hand side above can then be handled essentially as $IV_1$ in \eqref{eq:IV1} after using the %
single scale $(p,q)$ condition \eqref{p-q-rescaled} for each $T_j$ (in the form \eqref{eqn:p-qj}); the only difference is the presence of an $\ell^q$-sum. More precisely,
\begin{align*}
    IV^\flat_{1,1} &\lesssim A_\circ(p,q) \Big(\sum_{j=N_1}^{N_2}\sum_{R\in \fR_j} |R|^{-(\frac 1p-\frac1q)q}\Big( \sum_{\substack{W \subset R \\ j-\ell \leq L(W) < j}}  \| b_{1,W} \|_{L^p_{B_1}}^p \Big)^{q/p}\Big)^{1/q} \\
    & \lesssim_{d} A_\circ(p,q) \jp{f_1}_{Q_0,p, B_1} \Big(\sum_{j=N_1}^{N_2}\sum_{R\in \fR_j} |R|^{-(\frac 1p-\frac1q)q}\Big( \sum_{\substack{W \subset R \\ j-\ell \leq L(W) < j}}  |W| \Big)^{q/p}\Big)^{1/q} \\
&\lesssim_{d} A_\circ(p,q) \jp{f_1}_{Q_0,p,B_1}
\Big(\sum_{j=N_1}^{N_2}\sum_{R\in \fR_j} \sum_{\substack{W\subset R\\j-\ell\le L(W)<j}} |W| 
\Big)^{1/q} \\
&\lesssim \ell^{1/q} \jp{f_1}_{Q_0,p, B_1} |Q_0|^{1/q},
\end{align*}
and combining this with \eqref{III2n}, the bound for $IV^\flat_{1}$ immediately follows. %

Regarding $IV^\flat_{2}$, write $IV^\flat_{2}= \sum_{s=\ell+1}^\infty IV^\flat_{2}(s)$, where $IV^\flat_{2}(s)$ has the sum in $L(W)< j-\ell$ further restricted to $L(W)=j-s$. For each fixed $s$, one can apply H\"older's inequality with respect to $x$ and $j$ as in \eqref{eq:Holder x and j},
\[ IV^\flat_2(s) \le IV^\flat_{2,1}(s) IV^\flat_{2,2}, \]
where the term $IV^\flat_{2,2}$ (which is independent of $s$) %
can be treated as $IV^\flat_{1,2}$ in \eqref{III2n}. For each $IV^\flat_{2,1}(s)$ we write again
$$
IV^\flat_{2, 1}(s) \lesssim \Big( \sum_{j=N_1}^{N_2} \sum_{R \in \mathfrak{R}_j} \Big\|  \sum_{\substack{ W \subset R \\  L(W)=j-s}} T_j b_{1,W}  \Big\|_{L^q_{B_2}}^q \Big)^{1/q}.
$$
This term can now be treated as the term $IV_2$ in \eqref{eq:IV2} {using} the $\varepsilon$-regularity condition \eqref{p-q-rescaled-reg-a} to get a decay of $2^{-s\varepsilon'}$ (as in \eqref{Tjrega}). The only difference with respect to \eqref{eq:IV2} is the presence of the $\ell^q$-sum, which introduces no difficulty, as shown above for $IV^\flat_{1,1}$. This completes the proof.
\end{proof}

\section{Fourier multipliers}\label{sec:Fourier-multipliers} 

In this section we deduce Theorems \ref{thm:localtoglobal-sparse} and \ref{thm:localtoglobal-sparse-s} from a  more general result which will lead to more precise sparse domination results and also cover Hilbert space valued versions. We are given two separable Hilbert spaces $\sH_1$, $\sH_2$ and denote by $\sL(\sH_1,\sH_2)$ the space of bounded linear operators from $\sH_1$ to $\sH_2$ (in our applications one of the Hilbert spaces will be usually $\bbC$).
Consider the translation invariant operator  $\cT=\cT_m$ mapping $\sH_1$-valued functions to $\sH_2$-valued functions given via a multiplier 
\[\widehat {\cT f}(\xi)=m(\xi)\widehat f(\xi),\]
where $m(\xi)\in \sL(\sH_1,\sH_2)$ for almost every $\xi$. For $1\le p\le q\le \infty$ we write $m\in M^{p,q}_{\sH_1,\sH_2}$ 
if the inequality
\[\|\cT f\|_{L^q(\sH_2)} \le C \|f\|_{L^p(\sH_1)} \] holds for all $\sH_1$-valued Schwartz functions, and the best constant 
defines the norm  in $M^{p,q}_{\sH_1,\sH_2}$.
We may occasionally drop the Hilbert spaces if it is understood from the context and also write $M^p$ for $M^{p,p}$.  Note that $m\in M^{p,q}_{\sH_1,\sH_2}$ implies by a duality argument that 
$m\in M^{q',p'}_{\sH_2^*, \sH_1^*}$. The $M^{2,2}_{\sH_1,\sH_2}$ norm is bounded by $\|m\|_{L^\infty_{\sH_1,\sH_2}}  $ where  we write $L^\infty_{\sH_1,\sH_2} $ for $L^\infty_{\sL(\sH_1,\sH_2)}$. Also note that by the  Marcinkiewicz--Zygmund theorem  \cite[\S2.1b]{hytonen-etal} any scalar multiplier in $M^{p,q}$ extends naturally, for any {separable}  Hilbert space $\sH$,  to a multiplier \[ m\otimes  I_{\sH}\in M^{p,q}_{\sH, \sH}, \text{ with } m \otimes I_\sH: 
\begin{cases} \bbR^d\to \sL(\sH, \sH), \\ \xi \mapsto (v \mapsto m(\xi) v)\end{cases} \]
 and we have $\|m\otimes I_{\sH}\|_{M^{p,q}_{\sH,\sH}}\le C \|m\|_{M^{p,q}}$ where $C$ does not depend on the Hilbert space.

\subsection{The main  multiplier theorem}\label{sec:more-precise}

In what follows let $\phi$ be a radial $C^\infty$ function supported in $\{\xi\in \widehat {\bbR}^d: 1/2<|\xi|<2\}$ (not identically zero). 
Let $\Psi_0\in C^\infty(\bbR^d)$ be supported in $\{x \in \bbR^d:|x|<1/2\}$ such that $\Psi_0(x)=1$ for $|x|\le 1/4$. For $\ell>0$ define 
\Be \label{eqn:defofPsiell}
\Psi_\ell(x)= \Psi_0(2^{-\ell} x)-\Psi_0(2^{-\ell+1}x)\Ee
which is supported in $\{x:2^{\ell-3}\le |x|\le 2^{\ell-1}\}.$
Define \begin{subequations}
\begin{align}
\label{eqn:Bm}
\cB[m]
&:=\sum_{\ell\ge 0} \sup_{t>0} \|[\phi m(t\cdot)]*\widehat {\Psi}_\ell\|_{M^{p,q}_{\sH_1,\sH_2}} 2^{\ell d(1/p-1/q)} (1+\ell),
\\
\label{eqn:Bmcirc}
\cB_\circ[m]
&:=\sum_{\ell\ge 0} \sup_{t>0} \|[\phi m(t\cdot)]*\widehat {\Psi}_\ell\|_{L^\infty_{\sH_1,\sH_2} }.
\end{align}
\end{subequations}

\begin{thm}\label{thm:sparsemult}
Let $1<p\le q<\infty$, $1/q'=1-1/q$,  and assume that $m\in L^\infty_{\sH_1, \sH_2}$ is such that
$\cB_\circ[m]$ and $\cB[m]$ are finite.  Then $\cT_m\in \Sp(p,\sH_1,q',\sH_2^*)$ with 
\[\|\cT_m\|_{\Sp_\gamma(p,\sH_1,q',\sH_2^*)} \lc_{d,\gamma,p,q} \cB[m]+\cB_\circ[m].\]
The implicit constant does not depend on $m,\sH_1, \sH_2$.
\end{thm}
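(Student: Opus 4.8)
The plan is to deduce Theorem~\ref{thm:sparsemult} from the abstract sparse domination result in Theorem~\ref{mainthm} (via Corollary~\ref{mainthm-cor} in its bilinear form) by setting up an appropriate multi-scale decomposition of $\cT_m$. First I would fix a Littlewood--Paley partition of unity: let $\varphi$ be a radial $C^\infty_c$ function supported in an annulus $\{1/2<|\xi|<2\}$ with $\sum_{j\in\bbZ}\varphi(2^{-j}\xi)=1$ for $\xi\neq 0$, chosen so that $\varphi$ is a fixed dilate of the function $\phi$ appearing in the definition of $\cB[m],\cB_\circ[m]$ (or a sum of such). Write $m=\sum_{i\in\bbZ} m\,\varphi(2^{-i}\cdot)$ and set $\cT_j$ to be the convolution operator with multiplier $m(\xi)\varphi(2^{-i}\xi)$, reindexed so that the scale $2^{-i}$ in frequency corresponds to spatial scale $2^j$, i.e. $j=-i$. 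Then $\cT_m=\sum_{j\in\bbZ}\cT_j$ at least on Schwartz functions, and by construction $\dil_{2^j}\cT_j$ is convolution with the fixed-scale multiplier $\phi\, m(2^{-j}\cdot)$ (up to harmless adjustments), so the single-scale objects are exactly the $\phi\,m(t\cdot)$ with $t=2^{-j}$.

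The core of the argument is then to verify the five hypotheses of Theorem~\ref{mainthm} for the family $\{\cT_j\}$, with appropriate constants. The support condition \eqref{support-assu} is \emph{not} literally satisfied — the kernels of $\cT_j$ are Schwartz-tailed, not compactly supported — so the real first step is to split each $\cT_j=\sum_{\ell\ge 0}\cT_j^\ell$ where $\cT_j^\ell$ has multiplier $[\phi\,m(2^{-j}\cdot)]*\widehat\Psi_\ell$ dilated to scale $j$; concretely, $\dil_{2^j}\cT_j^\ell$ is convolution with $\Psi_\ell\cdot\cF^{-1}[\phi\,m(2^{-j}\cdot)]$, which is supported in $|x|\lesssim 2^\ell$. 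Thus $\cT^\ell:=\sum_j\cT_j^\ell$ has kernel supported in a $C2^{j+\ell}$-neighborhood of the diagonal, so after rescaling $j\mapsto j+\ell$ (absorbing the $2^\ell$ into the scale) it satisfies \eqref{support-assu}. For this shifted family I would compute: $A_\circ(p,q)\lesssim \sup_{t}\|[\phi m(t\cdot)]*\widehat\Psi_\ell\|_{M^{p,q}}$, which by Young's inequality (since on the compact frequency support we may insert a bump) is the $\ell$-th term of $\cB[m]$ without the $2^{\ell d(1/p-1/q)}(1+\ell)$ weight; and $B\lesssim 2^{\ell}\cdot(\text{same})$ coming from the regularity conditions \eqref{p-q-rescaled-reg-a}, \eqref{p-q-rescaled-reg-b} — indeed $(\dil T^\ell_j)\circ\Delta_h$ has multiplier $(e^{ih\cdot\xi}-1)[\ldots]$, and since the symbol is supported where $|\xi|\lesssim 1$ one gets an $|h|^\eps$ bound for any $\eps<1$ with constant $O(2^\ell)$ after the rescaling (because the true frequency scale before rescaling was $2^{-j}$ but we pushed $j\to j+\ell$). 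The weak type $(p,p)$ and restricted strong type $(q,q)$ bounds \eqref{bdness-wt},\eqref{bdness-rt} for the partial sums $\sum_{j=N_1}^{N_2}\cT^\ell_j$ follow from the $L^p$-boundedness of a single-scale Mikhlin-type piece together with the Fourier support in a fixed annulus: the partial sum $\sum_{j=N_1}^{N_2}$ of the pieces at spatial scale-$\ell$ is itself a Fourier multiplier whose $M^p$ (hence $M^{p,p}$, and by interpolation $L^{q,1}\to L^q$, $L^p\to L^{p,\infty}$) norm is controlled by $\sup_t\|[\phi m(t\cdot)]*\widehat\Psi_\ell\|_{L^\infty}$ times a logarithmic-free bound — here one uses a square-function / Rademacher argument or the standard fact that a sum of boundedly-overlapping annular pieces with uniformly bounded $L^\infty$ multiplier norms gives $A(p),A(q)\lesssim \sup_t\|[\phi m(t\cdot)]*\widehat\Psi_\ell\|_{L^\infty}\cdot C$, i.e. the $\ell$-th term of $\cB_\circ[m]$.

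With these constants in hand, Theorem~\ref{mainthm} applied to (the rescaled) $\cT^\ell$ yields
\[
\|\cT^\ell\|_{\Sp_\gamma(p,\sH_1,q',\sH_2^*)}\lesssim_{p,q,\gamma,d}\ a_\ell^\circ + a_\ell\log\!\Big(2+\tfrac{2^\ell a_\ell}{a_\ell}\Big)\lesssim a_\ell^\circ + (1+\ell)\, a_\ell,
\]
where $a_\ell:=\sup_t\|[\phi m(t\cdot)]*\widehat\Psi_\ell\|_{M^{p,q}}2^{\ell d(1/p-1/q)}$ (the extra $2^{\ell d(1/p-1/q)}$ arises because rescaling $j\mapsto j+\ell$ changes the single-scale $L^p\to L^q$ normalization by exactly that factor) and $a_\ell^\circ:=\sup_t\|[\phi m(t\cdot)]*\widehat\Psi_\ell\|_{L^\infty}$. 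Summing over $\ell\ge 0$ and using subadditivity of the sparse norm (the space $\Sp(p,q')$ is a normed space, cf.\ the discussion after \eqref{sparse norm}, and the norms for different $\gamma$ are equivalent), I get
\[
\|\cT_m\|_{\Sp_\gamma(p,\sH_1,q',\sH_2^*)}\le\sum_{\ell\ge0}\|\cT^\ell\|_{\Sp_\gamma(p,\sH_1,q',\sH_2^*)}\lesssim\sum_{\ell\ge0}\big(a_\ell^\circ+(1+\ell)a_\ell\big)=\cB[m]+\cB_\circ[m],
\]
which is the claimed bound, and one checks convergence of $\sum_j\cT^\ell_j$ to $\cT_m$ in an appropriate sense (on Schwartz functions / using density, cf.\ Remark (v) and Lemma~\ref{lem:density}) so that the sparse bound genuinely applies to $\cT_m$. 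The independence of constants from $m,\sH_1,\sH_2$ is inherited from Theorem~\ref{mainthm} (Remarks~(i),(vi)) and the Marcinkiewicz--Zygmund transference recalled above. The main obstacle I anticipate is bookkeeping the rescaling $j\mapsto j+\ell$ cleanly — verifying that \eqref{support-assu} holds after the shift, tracking how $A_\circ$, $B$, $A(p)$, $A(q)$ each pick up their powers of $2^\ell$ and $2^{\ell d(1/p-1/q)}$, and confirming that the logarithm $\log(2+B/A_\circ)$ collapses to $O(1+\ell)$ — together with establishing the partial-sum bounds \eqref{bdness-wt},\eqref{bdness-rt} uniformly in $N_1,N_2$, which is the one place a genuine (if standard) Littlewood--Paley/Mikhlin estimate, rather than bare Young's inequality, is needed.
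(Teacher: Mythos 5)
Your overall architecture coincides with the paper's: decompose into dyadic frequency pieces, split each piece spatially with the cutoffs $\Psi_\ell$, reorganize so that the $\ell$-th family $\{\cT^\ell_j\}_j$ has kernels of compact support at the rescaled spatial scale, apply Theorem \ref{mainthm} to each $\cT^\ell$ with $A_\circ(p,q)\lesssim 2^{\ell d(1/p-1/q)}\sup_t\|[\phi m(t\cdot)]*\widehat\Psi_\ell\|_{M^{p,q}}$ and $B\lesssim 2^{\ell\eps}A_\circ$, so that $\log(2+B/A_\circ)=O(1+\ell)$, and sum in $\ell$ using the $(1+\ell)$ weight built into $\cB[m]$. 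The bookkeeping of the index shift and of the powers $2^{\ell d(1/p-1/q)}$, $2^{\ell\eps}$ is exactly as in the paper, and your use of Lemma \ref{lem:density} to pass from compactly supported frequency pieces to general $m$ is also what the paper does.

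There is, however, a genuine gap in your verification of the global hypotheses \eqref{bdness-wt} and \eqref{bdness-rt}. You assert that the partial sums $\sum_{j=N_1}^{N_2}\cT^\ell_j$ are bounded on $L^p$ and $L^q$ with constant controlled by $\sup_t\|[\phi m(t\cdot)]*\widehat\Psi_\ell\|_{L^\infty}$, via a ``standard fact'' or a Rademacher/square-function argument, because the pieces live on boundedly overlapping annuli. This is false for $p\neq 2$: uniform bounds (even uniform $M^{p_0}$ bounds, let alone $L^\infty$ bounds) on the dyadic frequency localizations of a multiplier do not imply $L^p$-boundedness of the sum -- this is precisely the Littman--McCarthy--Rivi\`ere and Stein--Zygmund phenomenon \cite{lmr1968,stein-zygmund} recalled in the introduction, and a Rademacher randomization only controls a square function, not the multiplier norm, unless $p=2$. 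What rescues the argument is the Carbery--Seeger result recalled in \S\ref{sec:combinescales-mult}: applying \eqref{multconcl} with $\fa_\ell=2^{\ell d(1/p-1/q)}\sup_t\|[\phi m(t\cdot)]*\widehat\Psi_\ell\|_{M^{p,q}}$, $\fa_{\circ,\ell}=\sup_t\|[\phi m(t\cdot)]*\widehat\Psi_\ell\|_{L^\infty}$ and the crude symbol bound $\fb_\ell\lesssim\|m\|_\infty 2^{\ell(d+1)}$ (available because the kernel pieces are supported where $|x|\approx 2^\ell$) yields
\[
A(p)+A(q)\ \lesssim\ \fa_{\circ,\ell}+\big((1+\ell)^{|\frac1p-\frac12|}+(1+\ell)^{|\frac1q-\frac12|}\big)\fa_\ell ,
\]
the $L^q$ case being handled by duality since $q'\le p'$. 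The logarithmic loss $(1+\ell)^{|\frac1p-\frac12|}$ is then absorbed by the factor $(1+\ell)$ in the definition \eqref{eqn:Bm} of $\cB[m]$ -- indeed this is the reason that weight appears in the hypothesis. So your proof goes through once the incorrect ``uniform annular pieces $\Rightarrow L^p$ bound'' step is replaced by this quantitative multi-scale multiplier estimate; without it, the hypotheses \eqref{bdness-wt}, \eqref{bdness-rt} of Theorem \ref{mainthm} are not verified and the argument does not close.
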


We note that the finiteness of  $\cB_\circ[m]$ 
is implied by the finiteness of $\cB[m]$ in the case $\sH_1=\sH_2=\bbC$.
\begin{rem}\label{remark multiplier}
The function space of all $m$ with $\cB_\circ[m]+\cB[m]<\infty$ exhibits familiar properties of similarly defined function spaces in multiplier theory. For example:
\begin{enumerate}
\item  The space is invariant under multiplication by a standard smooth symbol of order 0. %
This fact will be used in the proof of Theorem \ref{thm:sparsemult} and for the convenience of the reader, the precise statement and proof are contained in \S \ref{sec:bmsmooth-pf} below.

\item The finiteness of $\cB[m]$ and  $\cB_\circ[m]$ is independent of the choice of the specific functions $\phi$ and $\Psi$. This observation will be convenient in the proof of Theorem \ref{thm:sparsemult}. It can be verified by standard arguments but, for completeness, the proof is provided in \ref{sec:finiteness Bm} below.
\end{enumerate}
\end{rem}

We begin by showing how Theorem \ref{thm:sparsemult} implies Theorems \ref{thm:localtoglobal-sparse} and \ref{thm:localtoglobal-sparse-s}. Then we review some known facts and estimates for Fourier multipliers and deduce the proof of Theorem \ref{thm:sparsemult} from our main Theorem \ref{mainthm}.

\begin{proof}[Proof of Theorem \ref{thm:localtoglobal-sparse} using Theorem \ref{thm:sparsemult}]\label{sec:proofoflocaltoglobal-sparse}
We have to check the assumptions of Theorem \ref{thm:sparsemult}. %
Assumption \eqref{eqn:Holder-single-scale} is equivalent with 
\[ \|[\phi m(t\cdot)]*\widehat{\Psi}_\ell \|_{M^2} \le 2^{-\ell\eps}.\]
Thus interpolating \eqref{eqn:Holder-single-scale} and \eqref{eqn:Mp-single-scale} we get for $p\in (p_0,2)$,
\[ \|[\phi m(t\cdot)]*\widehat{\Psi}_\ell\|_{M^p} \le 2^{-\ell\eps(p)} \text{ where } \eps(p)=\eps \big(\tfrac 1{p_0}-\tfrac 1p\big)\big / \big(\tfrac 1{p_0}-\tfrac 12 \big). \]
 Let $\chi\in C^\infty_c(\widehat {\bbR}^d\setminus \{0\})$ so that $\chi(\xi)=1$ in a neighborhood of $\supp (\phi)$. Then by Young's convolution inequality for all $p\in (p_0,2)$, $q\in [p,\infty]$, $t>0$,
 \[
 \|\chi ([\phi m(t\cdot)]*\widehat{\Psi}_\ell)\|_{M^{p,q}} \lc\| [\phi m(t\cdot)]*\widehat{\Psi}_\ell\|_{M^{p}}  \lc 2^{-\ell\eps(p) }.
 \]
On the other hand we claim that
\begin{equation}\label{eqn:localtoglobalpf-tail}
 \|(1-\chi) ([\phi m(t\cdot)]*\widehat{\Psi}_\ell)\|_{M^{p,q}} \lc_N \|\varphi m(t\cdot)\|_1  2^{-\ell N}.
 \end{equation}
 Indeed, integration by parts in $\xi$ in the integral
 \[ \int \int e^{ix\cdot \xi} (1-\chi)(\xi) [\phi m(t\cdot)](\zeta) 2^{\ell d} \widehat{\Psi}(2^\ell (\xi-\zeta)) d\xi d\zeta \]
 implies the pointwise estimate
 \[ |\mathcal{F}^{-1}\big((1-\chi) ([\phi m(t\cdot)]*\widehat{\Psi}_\ell)\big)(x)| \lesssim_N 2^{-\ell N} (1+|x|)^{-N} \|\phi m(t\cdot)\|_1, \]
 whence \eqref{eqn:localtoglobalpf-tail} follows from Young's convolution inequality.
 
Fix $p\in (p_0,2)$. Combining the two estimates we see that condition \eqref{eqn:Bm} holds for a pair of exponents $(p_1,q_1)$ if  $p_1\in (p_0,p)$, $q_1>p_1$ and \[d(1/p_1-1/q_1) <\eps(p_1).\] Then Theorem \ref{thm:sparsemult} gives $\cT\in \Sp(p_1,q_1')$.
One can then choose $\delta=\delta(p)>0$ small enough so that $p_1=p-\delta$ and $q_1'=p'-\delta$ satisfy the above conditions. This concludes the proof.
\end{proof}

\begin{proof}[Proof of Theorem \ref{thm:localtoglobal-sparse-s} using Theorem \ref{thm:sparsemult}]\label{sec:proofoflocaltoglobal-sparse-s}
We need to check that $\cB[m]~<~\infty$, which will follow from showing that
\Be \label{eqn:multcond}
\sup_{t>0} \|[\phi m(t\cdot)]* \widehat {\Psi}_\ell\|_{M^{p,q}} \lc 2^{-\ell s}
\Ee for some $s>d(1/p-1/q)$. Here we are in the case $\sH_1=\sH_2=\bbC$, so this also implies $\cB_\circ[m]< \infty$.

We decompose $\Psi_\ell$ into slighly smaller pieces.
Recall that $\Psi_1$ is supported in $\{x:1/4\le |x|\le 1\}$ and $\Psi_\ell(x)=\Psi_1(2^{1-\ell}x)$. We form a  partition of unity $\{\varsigma_\nu: \nu\in \cI\}$ such that $\sum_{\nu\in \cI} \varsigma_\nu(x)=1$ for $|x|\in [1/8,2]$,
and $\varsigma$ is a $C^\infty$ function supported in a ball $B(x_\nu,r_\nu)$ centered at $x_\nu$, with $|x_\nu|\in [1/4,1]$ and radius $r_\nu\le 10^{-2}$. Let 
\[u_\nu = \frac{\pi}{2} \frac{x_\nu}{|x_\nu|^2}\] so that 
$|u_\nu|\in [1, 8]$ and $\inn{x_\nu}{u_\nu}= \pi/2$. This implies that $|\Im (e^{i\inn{x}{u_\nu}}-1)|>1/2$ for $x\in \supp(\varsigma_\nu)$. Define, for $M$ as in \eqref{eqn:MpqHolder},
\[ \Psi_{1,\nu}(x)= \frac{\Psi_1(x) \varsigma_\nu(x)}
{(e^{i\inn{x}{u_\nu}} -1)^M }, \quad \Psi_{\ell,\nu}(x) = \Psi_{1,\nu}(2^{1-\ell }x) 
\]
and note that $\Psi_{1,\nu}$ is smooth and $\Psi_\ell(x)= \sum_\nu \Psi_{\ell,\nu} (x) (e^{i\inn{x}{2^{1-\ell}u_\nu}}-1)^M$.
Hence 
\[ \phi m(t\cdot)* \widehat {\Psi}_\ell= \sum_\nu 
\Delta^M_{-2^{1-\ell}u_\nu} [\phi m(t\cdot)] *\widehat{\Psi}_{\ell,\nu}\]
and by assumption we have for some $s>d(1/p-1/q)$, 
\[\|\phi m(t\cdot)* \widehat {\Psi}_\ell\|_{M^{p,q}}
\lc \sum_\nu \|
\Delta^M_{-2^{1-\ell}u_\nu} [\phi m(t\cdot)]\|_{M^{p,q}} \lc 2^{-\ell s}.
\] 
This implies \eqref{eqn:multcond}
and  now Theorem 
\ref{thm:localtoglobal-sparse-s} follows from  Theorem \ref{thm:sparsemult}.
\end{proof}

\subsection{A result involving  localizations of Fourier multipliers}\label{sec:combinescales-mult}
We recall   a theorem  from \cite{See88} (see also \cite{carbery-revista} for a similar result) which we will formulate in the vector-valued version (see also \cite{GuoRoosYung}).

Let $\phi$ be as before,  
and fix $1<p<\infty$. Assume 
\begin{subequations}
\begin{align}
\label{Lpcond}
&\sup_{t>0} \|\phi m(t\cdot)\|_{M^p_{\sH_1,\sH_2}} \le \fa
\\&\sup_{t>0} \|\phi m(t\cdot) \|_{L^\infty_{\sH_1,\sH_2}} \le \fa_\circ,\label{L2cond}
\end{align}
and
\Be\label{Hoerm-cond}
\sum_{|\alpha|\le d+1} \sup_{t>0} \sup_{\xi \in \widehat{\bbR}^d} |\partial_\xi^\alpha
(\phi m(t\cdot))(\xi)|_{\sL(\sH_1,\sH_2)}  \le \fb,
\Ee
\end{subequations}
where $\alpha \in \bbN_0^d.$ Then
\Be\label{multconcl}
\|m\|_{M^p_{\sH_1,\sH_2}} \lc \fa_\circ+\fa \log(2+\fb/\fa)^{|\frac 1p-\frac 12|}.
\Ee
Of course, in the special case $\sH_1=\sH_2=\bbC$ the $L^2$-boundedness condition \eqref{L2cond} with $\fa_\circ\le \fa$ is implied by \eqref{Lpcond} (cf. an analogous remark following  Theorem \ref{thm:sparsemult}).

\subsection{Proof of Theorem \ref{thm:sparsemult}}\label{sec:proofofFM}
First assume that $m$ is compactly supported in $\widehat{\bbR}^d\setminus\{0\}$ without making any quantitative assumption on the support.

Note that by Remark \ref{remark multiplier} we have some freedom to make a convenient choice of the localizing function $\phi$, and we will denote this choice by $\varphi$.  In what follows, let $\theta\in C^\infty_c(\bbR^d)$  be radial such that $\theta $ is supported in $\{x \in \bbR^d: |x|<1/2\}$, such that 
$\int\theta(x)\pi(x) dx=0$ for all polynomials $\pi$ of degree at most $10 d$,
 and such that $\widehat \theta(\xi)>0$ for $1/4\le |\xi|\le 4$.
We then choose $\varphi$  to be a radial $C^\infty$ function supported in $\{\xi\in \widehat {\bbR}^d: 1/2<|\xi|<2\}$ such that
\Be \notag \label{eqn:varphitheta}
\sum_{k\in \bbZ}  \varphi(2^{-k}\xi) \widehat \theta(2^{-k}\xi)=1
\Ee
for all $\xi\neq 0$.

We then decompose $\cT$ by writing 
\begin{equation} \notag \label{eqn:comp-m}
m(\xi)= \sum_{k=n_1}^{n_2} \widehat \theta(2^{-k}\xi) \varphi(2^{-k} \xi)m(\xi)
\end{equation}
where $n_1, n_2\in \bbZ$.
We then decompose 
\[\F^{-1}[ \varphi m(2^k\cdot)](x)= \sum_{\ell\ge 0} 
\F^{-1}[ \varphi m(2^k\cdot)](x)\Psi_\ell(x)\]
which yields 
\[\cF^{-1}[m\widehat f] (x)=
\sum_{\ell\ge 0} \cT^\ell f(x)= \sum_{\ell\ge 0}
\sum_{k=n_1}^{n_2}\cT^{\ell,k} f(x) 
\]
where
\Be \notag \widehat{\cT^{\ell,k} f}(\xi)=  \widehat \theta(2^{-k}\xi)  \,\,[\varphi m(2^k\cdot)]\!*\!\widehat \Psi_\ell (2^{-k} \xi)
.
\Ee
We can write $\cT^{\ell,k}f= K^\ell_k*f$ with 
\[ K^\ell_k (x) = \int \cF^{-1} [\varphi(2^{-k}\cdot)m](x-y)
\Psi_\ell (2^k (x-y)) 2^{kd} \theta(2^ky)\, dy.
\]
Observe that $K^\ell_k(x)$ is supported in $\{x \in \bbR^d:|x|\le 2^{\ell+1-k}\}$.
We wish to  apply Theorem \ref{mainthm} to the operators $\cT^\ell$ defined by
\Be\notag \label{Telldef}\cT^\ell f=\sum_{k=n_1}^{n_2} K^\ell_k*f= \sum_{j=\ell+1-n_2}^{\ell+1-n_1} T^\ell_jf \quad \text{ with } T^\ell_jf =K^\ell_{\ell+1-j}*f.
\Ee
The operators $T_j^\ell$ satisfy the support condition \eqref{support-assu}.
To check the conditions \eqref{bdness-wt}, \eqref{bdness-rt} we apply the above mentioned theorem from \cite{See88} (see \eqref{multconcl}).
We first claim that
\Be \label{claimone}
\|\varphi \sum_{k=n_1}^{n_2} \widehat {K^\ell_k}(s\cdot) \|_{M^p_{\sH_1,\sH_2}} 
\lc \fa_\ell:=\sup_{t>0} \|[\varphi m(t\cdot)]*\widehat {\Psi}_\ell
\|_{M^{p,q}_{\sH_1,\sH_2}} 2^{\ell d(1/p-1/q)}
\Ee
and 
\Be \label{claimonemod}
\|\varphi \sum_{k=n_1}^{n_2} \widehat {K^\ell_k}(s\cdot) \|_{L^\infty_{\sH_1,\sH_2}} 
\lc \fa_{\circ,\ell}:=\sup_{t>0} \|[\varphi m(t\cdot)]*\widehat {\Psi}_\ell
\|_{L^\infty_{\sH_1,\sH_2}},  
\Ee
uniformly in $n_1,n_2$.
We only give the proof of \eqref{claimone} as the the proof of \eqref{claimonemod} is similar but more straightforward. 
To see this we estimate, using dilation invariance,
\begin{align*}
\|\varphi \sum_{k=n_1}^{n_2} \widehat {K^\ell_k}(s\cdot) \|_{M^p_{\sH_1,\sH_2}} 
\le \sum_{k=n_1}^{n_2} 
\|\varphi \widehat\theta(2^{-k} s\cdot) \|_{M^p}
\| [\varphi m(2^k\cdot)]\!*\!\widehat{ \Psi}_\ell\|_{M^p_{\sH_1,\sH_2}}.
\end{align*}
Since $\theta\in \cS(\bbR^d)$ and since all moments of $\eta$ up to order $10d$  vanish we get
\begin{equation}\label{theta moments}
\|\varphi \widehat\theta(2^{-k} s\cdot) \|_{M^p}\lc \min\{ (2^{-k} s)^{10d}, (2^{-k} s)^{-10d}\}.
\end{equation}
Moreover,
\Be\label{eqn:Mp-from-Mpq}\| [\varphi m(2^k\cdot)]\!*\!\widehat \Psi_\ell\|_{M^p_{\sH_1,\sH_2}} 
\lc 2^{\ell d(1/p-1/q)} 
\| [\varphi m(2^k\cdot)]\!*\!\widehat \Psi_\ell\|_{M^{p,q}_{\sH_1,\sH_2}} 
\Ee
and \eqref{claimone} follows  combining the above. To verify \eqref{eqn:Mp-from-Mpq}
we decompose \[f=\sum_\nu f_\nu,\] where $f_\nu=f\bbone_{R_{\ell,\nu}}$ and  the $R_{\ell,\nu}$ form a grid of cubes of side length $2^\ell$. Note that the convolution kernel
$\cK_\ell:=\cF^{-1}[\varphi m(2^k\cdot)\!*\!\widehat \Psi_\ell]$ is supported in the ball of radius $2^\ell$ centered at the origin. Hence, by H\"older's inequality
\begin{align*} 
\|\cK_\ell  * f\|_{L^{p}_{\sH_2}} & = \Big\|\sum_\nu \cK_\ell *f_\nu\Big\|_{L^{p}_{\sH_2}} 
\lc \Big(\sum_\nu \| \cK_\ell* f_\nu\|_{L^{p}_{\sH_2}}^p\Big)^{1/p}
\\
&\lc 2^{\ell d(1/p-1/q)} \Big (\sum_\nu \| \cK_\ell* f_\nu\|_{L^{q}_{\sH_2}} ^p\Big)^{1/p}
\\&\lc 2^{\ell d(1/p-1/q)} \|\widehat{\cK_\ell} \|_{M^{p,q}_{\sH_1,\sH_2}} 
     \Big (\sum_\nu \|  f_\nu\|_{L^{p}_{\sH_1}}^p\Big)^{1/p}
\end{align*}
and since $(\sum_\nu \| f_\nu\|_{L^{p}_{\sH_1}}^p)^{1/p} =\|f\|_{L^{p}_{\sH_1}}$ we get \eqref{eqn:Mp-from-Mpq}.

Straightforward calculation  using \eqref{theta moments} yields
\Be\label{claimtwo}
\sum_{|\alpha|\le d+1} \sum_{k=n_1}^{n_2} \sup_{t>0} \sup_{\xi \in \hat{\bbR}^d} |\partial_\xi^\alpha
(\varphi \widehat {K^\ell_k} (t\xi))|_{\sL(\sH_1,\sH_2)}  \le\fb_\ell:= \|m\|_{L^\infty_{\sH_1,\sH_2}}  2^{\ell(d+1)}
\Ee
uniformly in $n_1,n_2$.
We combine the two estimates \eqref{claimone}, \eqref{claimonemod} and \eqref{claimtwo}  and using  \eqref{multconcl} we get
\begin{equation*}
\|\cT^\ell \|_{L^{p}_{\sH_1}\to L^{p}_{\sH_2}}  \lc (1+\ell)^{|\frac 1p-\frac 12|} \fa_\ell + \fa_{\circ, \ell}. 
\end{equation*}

The $L^q$ estimates are similar. 
For $m(\xi)\in \sL(\sH_1,\sH_2)$ denote by $m^*(\xi)\in \sL(\sH_2^*, \sH_1^*)$ the adjoint. Note that 
\[\|[\varphi m^*(t\cdot)]*\widehat \Psi_\ell\|_{L^\infty_{\sH_2^*,\sH_1^*}}
=
\|[\varphi m(t\cdot)]*\widehat \Psi_\ell\|_{L^\infty_{\sH_1,\sH_2}} \le \fa_{\circ,\ell}.
\]
Since $q'\le p'$ the previous calculation gives   
\begin{align*}&\|\cT^\ell \|_{L^{q}_{\sH_1}\to L^{q}_{\sH_2}} =\|(\cT^\ell)^* \|_{L^{q'}_{\sH_2^*}\to L^{q'}_{\sH_1^*}} 
\\&\lc (1+\ell)^{|\frac 1{q'}-\frac 12|} 2^{\ell d(1/{q'}-1/p')} \sup_{t>0}
\| [\varphi m^*(t\cdot)]\!*\!\widehat \Psi_\ell\|_{M^{q',p'}_{\sH_2^*,\sH_1^*}} 
+  \fa_{\circ,\ell}\\&
\lc
(1+\ell)^{|\frac 1q-\frac 12|} 2^{\ell d(1/p-1/q)} \sup_{t>0}
\| [\varphi m(t\cdot)]\!*\!\widehat \Psi_\ell\|_{M^{p,q}_{\sH_1,\sH_2}} + \fa_{\circ,\ell}\\
& = (1+\ell)^{|\frac 1q-\frac 12|}  \fa_\ell + \fa_{\circ, \ell}.
\notag
\end{align*}
To summarize, 
\begin{multline} \label{eqn:TellLp}
\Big\|\sum_j T^\ell_j\Big\|_{L^p\to L^p}+\Big\|\sum_j T^\ell_j\Big\|_{L^q\to L^q}\\
\le \fa_{\circ, \ell} +\fa_{\ell} ((1+\ell)^{|\frac 1p-\frac 12|} +(1+\ell)^{|\frac 1q-\frac 12|} ).
\end{multline}

To verify the single scale $(p,q)$ condition \eqref{p-q-rescaled} we next  examine the $L^{p}_{\sH_1}\to L^{q}_{\sH_2}$-norms of the convolution operators  
 $\dil_{2^j} T_j^\ell$ with convolution kernels
$2^{jd} K^\ell_{\ell+1-j}(2^j\cdot)$. We have
\[\|\dil_{2^j} T_j^\ell\|_{L^{p}_{\sH_1}\to L^{q}_{\sH_2}} =
\|\widehat {K^\ell_{\ell+1-j}} (2^{-j}\cdot)\|_{M^{p,q}_{\sH_1,\sH_2}}
\]
and 
\[\widehat {K^\ell_{\ell+1-j}} (2^{-j}\xi)=
\widehat \theta(2^{-\ell-1}\xi) [\varphi m(2^{\ell+1-j}\cdot)]*\widehat\Psi_\ell  (2^{-\ell-1}\xi),
\] and we get
\begin{equation*}
\|\widehat {K^\ell_{\ell+1-j}} (2^{-j}\cdot)\|_{M^{p,q}_{\sH_1,\sH_2}} 
\le \|\theta\|_1  2^{(\ell+1)d(1/p-1/q)}\|[\varphi m(2^{\ell+1-j}\cdot)]*\widehat\Psi_\ell 
\|_{M^{p,q}_{\sH_1,\sH_2}}.
\end{equation*}
Hence
\begin{equation} \label{eqn:TellLpLq} 
\sup_j\|\dil_{2^j} T_j^\ell\|_{L^p\to L^q} 
\lesssim \fa_\ell
\end{equation}

Next we turn to the $\varepsilon$-regularity  conditions
\eqref{p-q-rescaled-reg-a}  and \eqref{p-q-rescaled-reg-b}. By translation invariance of the operators $T_j^\ell$ it suffices to verify \eqref{p-q-rescaled-reg-a}.
Using the above formulas for the Fourier transform of
$2^{jd} K^{\ell}_{\ell+1-j}(2^j\cdot)$ we get 
\begin{align*}
&\|(\dil_{2^j} T_j^\ell)\circ\Delta_h\|_{L^{p}_{\sH_1}\to L^{q}_{\sH_2}}
\\
&\quad=%
\|\widehat\theta(2^{-\ell-1}\cdot) 
[(\varphi m(2^{\ell+1-j}\cdot))*\widehat{\Psi}_\ell ]
 (2^{-\ell-1}\cdot)  (e^{i\inn{\cdot}{h}}-1)\|_{M^{p,q} _{\sH_1,\sH_2}}
\\
&\quad\le 2^{(\ell+1)d(1/p-1/q)}
\big\| \widehat\theta [e^{i\inn{2^\ell\cdot}{h}} -1]\big\|_{M^p}
\big\|
[\varphi m(2^{\ell+1-j}\cdot)]*\widehat{\Psi}_\ell\big\|_{M^{p,q}_{\sH_1,\sH_2}}.
\end{align*}
Observe that  for $0<\eps<1$, 
\[
|h|^{-\eps} \big\| \widehat\theta [e^{i\inn{2^\ell\cdot}{h}} -1]\big\|_{M^p}\lc 2^{\ell\eps}
\]
and hence we get 
\begin{equation} 
\label{eqn:TellLpLqreg} 
\sup_{|h|\le 1} |h|^{-\eps}\sup_j\|(\dil_{2^j} T_j^\ell)\circ\Delta_h\|_{L^{p}_{\sH_1}\to L^{q}_{\sH_2}} \lc 2^{\ell \varepsilon} \fa_\ell
\end{equation}
In view of \eqref{eqn:TellLp}, \eqref{eqn:TellLpLq} and \eqref{eqn:TellLpLqreg} we can now apply Theorem \ref{mainthm} and obtain 
\begin{multline*} \|\cT^\ell\|_{\Sp(p,q)}\lc 
\sup_{t>0}
\|[\varphi m(t\cdot)]*\widehat{\Psi}_\ell 
\|_{L^\infty_{\sH_1,\sH_2}} \,+\,\\
\quad \big((1+\ell)^{|\frac 1p-\frac 12|}+(1+\ell)^{|\frac 1q-\frac 12|}+ (1+\ell) \big) 2^{\ell d(\frac 1p-\frac 1q)}
\sup_{t>0}
\|[\varphi m(t\cdot)]*\widehat{\Psi}_\ell 
\|_{M^{p,q}_{\sH_1,\sH_2}}.
\end{multline*}
The desired conclusionthen follows from summing in $\ell \geq 0$.

Finally, to remove the assumption of $m$ being compactly supported we observe that by Lemma \ref{lem:density} it suffices to prove the sparse bound 
\Be\label{eqn:mult-sparse}\int_{\bbR^d} \cF^{-1}[m \widehat f_1](x) f_2(x) dx \le C \big(\cB_\circ[m]+\cB[m]\big)\Lamaxga_{p_1,p_2} (f_1,f_2)\Ee
for $f_i$ in the dense class $\cS_0(\bbR^d, \sH_i)$ of functions whose Fourier transform is compactly supported in $\widehat{\bbR}^d\setminus \{0\}$. But for those functions we have 
$\cF^{-1}[m \widehat f_1]= \cF^{-1} [m_{n_1,n_2} \widehat f_1]$, where 
\[m_{n_1,n_2}=\sum_{k=n_1}^{n_2} \widehat \theta(2^{-k}\xi) \varphi(2^{-k} \xi)m(\xi)\]
with  suitable $n_1, n_2\in \bbZ$ (depending on $f_1$). 
By invariance under multiplication by smooth symbols (see Lemma \ref{lem:mult-by-symbols0}) we have  \Be \notag \label{eqn:comp-vs-global} \sup_{n_1,n_2} \cB[m_{n_1,n_2}] \lc \cB[m] \Ee and an analogous  inequality involving $\cB_\circ[m]$. We  then get  \eqref{eqn:mult-sparse} for  $f_i\in \cS_0(\bbR^d, \sH_i)$, $i=1,2$.
A second application of Lemma \ref{lem:density} yields \eqref{eqn:mult-sparse} for all $f\in L^{p}_{\sH_1}$ and all $f_2\in L^{p'}_{\sH_2^*}$. 
\qed

\section{Sample applications}\label{sec:applications} 

In this section we give a number of specific examples of operators to which Theorem \ref{mainthm} and its consequences can be applied. Some of the resulting sparse bounds are well-known and others appear to be new.

\subsection{Operators generated by compactly supported distributions}\label{subsec:max op Fourier} 
In what follows let 
$\sigma$ be a distribution which is compactly supported 
and let $\sigma_t\equiv \dil_{1/t}\sigma$ denote the $t$-dilate $t^{-d}\sigma(t^{-1}\cdot)$ given by
\[
\inn{\sigma_t}{f}= \inn {\sigma }{f(t\cdot)}.
\] 
Without loss of generality we may assume that the support of $\sigma$ is contained in  $\{x:|x|\le 1\}$, otherwise argue with a  rescaling.

Let \Be A_tf(x)=f*\sigma_t(x)
\label{eqn:Atdef} \Ee 
which is well defined on Schwartz functions as a continous function of $(x,t)$.
  Many interesting operators in harmonic analysis are generated by dilations of such a single compactly supported distribution (often a measure) and we shall be interested in the corresponding maximal and variational operators.  The  domain of the dilation parameter $t$ will be  either $(0,\infty)$ or $[1,2]$ or  a more general subset $E$ of $(0,\infty)$. 

\subsubsection{Maximal functions}\label{sec:MEmax}
We are interested in sparse domination results for the maximal functions,
as defined in \eqref{eqn:MEsigma}, 
$$
M_E^\sigma f(x)= \sup_{t\in E} |A_tf(x)|
$$
where $E\subset (0,\infty)$.

If we assume that $f$ is a Schwartz function then $M_E^\sigma$ is well defined as a measurable function, but for general $L^p$ functions the measurability of $M_E^\sigma$ is a priori not clear unless we assume that $E$ is countable.  In our statements we will restrict ourselves to a priori estimates, but note that in many applications the proof of $L^p$ bounds  shows also a priori estimates for the function $t\mapsto \sigma_t*f(x)$ in suitable subspaces of $C(\bbR)$,  for almost all  $x\in \bbR^d$. This observation then ensures the measurability of the maximal functions for $f$ in the relevant $L^p$ classes. In the general case, 
let $I_{k,n} = [k2^{-n}, (k+1)2^{-n} )$ and pick, for each $(k,n)$ such that 
$E\cap I_{k,n}\neq\emptyset$, a representative $t_{k,n} \in E\cap I_{k,n}$ and let $\widetilde E$ consist of these picked $t_{k,n}$.   Then $\widetilde E$ is countable 
and we have
$M_{E}^\sigma f(x)= M_{\widetilde E} ^\sigma f(x)$
for all $x\in \bbR^d$ and all Schwartz functions  $f$.
Thus one can assume that $E$ is countable without loss of generality.

We shall now discuss sparse domination inequalities for the operator  $M_E^\sigma$.
Recall the local variants $M^\sigma_{E_j}$, with the rescaled sets $E_j\subset [1,2]$ as in \eqref{def:Ej}.
In what follows  recall that \[\La^\fS_{p,q'}(f,\om)=\sum_{Q\in \fS}|Q| \langle f\rangle_{Q,p} \langle \om \rangle_{Q, q'},\] with $\Lamaxga_{p,q'}(f,\om)$ the supremum of all $\Lambda^{\fS}_{p,q'}(f,\om)$ over all $\gamma$-sparse families $\fS$. %

\begin{prop}\label{prop:MEsigma}
(i) Let $1<p\le q<\infty$. Let $\sigma$ be a compactly supported distribution such that
\Be\label{eqn:pp-qq-max}  \|M^\sigma_E\|_{L^p\to L^{p,\infty} } 
+\|M^\sigma_E\|_{L^{q,1}\to L^{q}} <\infty,
\Ee
\Be\label{eqn:pq-max} \sup_{j \in \bbZ} \|M^\sigma_{E_j} \|_{L^p\to L^q}  
<\infty,  \Ee
and assume that there is an $\epsilon>0$ so that for all $\la\ge 2$,
\Be  \label{eqn:pqmodregularity-cond} \|M^\sigma_{E_j} f\|_q\le C \la^{-\eps} \|f\|_p, \quad   f\in \Eann(\la).\Ee
Then for all $f\in L^p$ and all simple non-negative functions $\omega$, we have  the sparse domination inequality 
\Be\inn{ M_E^\sigma f}{\om}\lc \Lamaxga_{p,q'}  (f,\om).
\label{eqn:sparseMEsigma}
\Ee
(ii) Conversely, if $\sigma$  has compact support in $\bbR^d\setminus\{0\}$ then the sparse bound  \eqref{eqn:sparseMEsigma} for $p < q$ implies that
conditions \eqref{eqn:pp-qq-max} and \eqref{eqn:pq-max} hold. 
\end{prop}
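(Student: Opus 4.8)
\textbf{Plan for the proof of Proposition \ref{prop:MEsigma}.}

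The plan is to obtain both parts by reducing to the abstract machinery already developed: Corollary \ref{mainthm-cor} for part (i) and Theorem \ref{thm:necessarycor} for part (ii). The first task is to linearize $M_E^\sigma$ into a family of single-scale operators indexed by $j\in\bbZ$. Write $E$ as a countable set (which we may by the discussion preceding the proposition) and decompose it dyadically: for each $j$, the piece $E\cap[2^j,2^{j+1})$ rescales to $E_j = 2^{-j}E\cap[1,2]\subset[1,2]$. The natural choice is to take $B_1=\bbC$ and $B_2 = C(E_j)$ (or, after the usual reduction to countable $E$ and to finite-dimensional subspaces as in Remark (i) following Theorem \ref{mainthm}, $\ell^\infty$ over a finite set), and to define $T_j f(x)$ to be the function $t\mapsto A_{t}f(x)$ for $t$ ranging over $E\cap[2^j,2^{j+1})$, embedded in the appropriate $B_2$; then $M_E^\sigma f = \sup_j |T_j f|_{B_2}$, which is exactly the operator $S_\infty T$ of \eqref{eqn:Sinfty}. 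One then invokes Theorem \ref{thm:ellr} (the case $r=\infty$), whose hypotheses we must verify. The support condition \eqref{support-assu} holds because $\sigma$ is supported in $\{|x|\le 1\}$, so $A_t f$ for $t\approx 2^j$ has kernel supported in a $2^j$-neighborhood of the diagonal. The weak-type and restricted-strong-type conditions \eqref{bdness-ellr} are precisely \eqref{eqn:pp-qq-max} once one checks that $S_\infty T f = M_E^\sigma f$; the single-scale $(p,q)$ condition \eqref{p-q-rescaled} for $\dil_{2^j}T_j$ is exactly $\sup_j\|M^\sigma_{E_j}\|_{L^p\to L^q}<\infty$, which is \eqref{eqn:pq-max} after noting that $\dil_{2^j}T_j$ corresponds to the rescaled averages over $E_j\subset[1,2]$.

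The remaining hypothesis, the $\varepsilon$-regularity condition \eqref{p-q-rescaled-reg-a} (note \eqref{p-q-rescaled-reg-b} is not needed here, by Theorem \ref{thm:max weak ass}, since we are in the maximal-function case with $r=\infty\ge q$), is where the work lies. The assumption \eqref{eqn:pqmodregularity-cond} states that $M^\sigma_{E_j}$ maps $\Eann(\la)\to L^q$ with norm $O(\la^{-\eps})$ at the exponent pair $(p_0,p_0)$ rather than $(p,q)$; more precisely \eqref{eq:eps reg cond max fn} is stated with a single exponent $p_0$. So I must interpolate: combining the $L^p\to L^q$ bound \eqref{eqn:pq-max} with the $L^{p_0}\to L^{p_0}$ decay \eqref{eqn:pqmodregularity-cond} on frequency-annuli, one upgrades to an $L^{p}\to L^q$ bound on $\Eann(\la)$ with a (possibly smaller) power $\la^{-\eps'}$, $\eps'>0$. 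Here the Littlewood--Paley frequency decomposition $\{\eta_\ell\}$ and Lemma \ref{lem:regbyFourier} (applied with $\sH=\bbC$, and with $B_2$ the sup-space, to the vector-valued operator $T_j$) convert the annular decay into the difference-operator bound $\sup_{|h|\le1}|h|^{-\eps'}\|(\dil_{2^j}T_j)\circ\Delta_h\|_{L^p\to L^q}<\infty$, which is exactly \eqref{p-q-rescaled-reg-a}. One subtlety: Lemma \ref{lem:regbyFourier} is phrased for $B_1$ a Hilbert space, so $B_1=\bbC$ is fine; and one must check that the annular bound for the scalar averages $A_t$ transfers to the $B_2=C(E_j)$-valued operator uniformly in $j$, which follows since $E_j\subset[1,2]$ is a fixed compact window and the kernel bounds are uniform. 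Having verified all hypotheses, Theorem \ref{thm:ellr}(i) (equivalently Corollary \ref{mainthm-cor} applied to the linearized family) yields \eqref{eqn:sparseMEsigma}.

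For part (ii), the hypothesis $\supp\sigma\subset\bbR^d\setminus\{0\}$ gives the strengthened support condition \eqref{eqn:strengthenedsupp}: since $\sigma$ is supported in an annulus $\{\delta_1'\le|x|\le1\}$ for some $\delta_1'>0$, the averages $A_t$ with $t\approx 2^j$ have kernels supported in $\{(\delta_1'-\text{(diam)})2^j\le|x-y|\le 2^j\}$, so after the rescaling and for $f$ of small diameter one gets the required two-sided bound on $\dist(x,\supp f)$. We also need the a priori regularity $T_j:\simpleone\to L^1_{B_2,\loc}$, which holds because $A_t f$ is continuous in $(x,t)$ for $f$ simple (indeed $\sigma*\bbone_E$ is a bounded compactly supported function). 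Then Theorem \ref{thm:ellr}(ii), or directly Theorem \ref{thm:necessarycor} applied to the linearized family $\{T_j\}$, gives that \eqref{eqn:pp-qq-max} and \eqref{eqn:pq-max} are necessary for \eqref{eqn:sparseMEsigma} — part (i) of Theorem \ref{thm:necessarycor} yields the weak-type and restricted-strong-type bounds (hence \eqref{eqn:pp-qq-max}), and part (ii) yields $\sup_j\|\dil_{2^j}T_j\|_{L^p\to L^q}<\infty$ (hence \eqref{eqn:pq-max}). I expect the main obstacle to be the interpolation/transference step in part (i): carefully passing from the single-exponent annular decay \eqref{eqn:pqmodregularity-cond} to the vector-valued $L^p\to L^q$ annular bound with a quantitative power of $\la$, uniformly in $j$, and in a form to which Lemma \ref{lem:regbyFourier} directly applies. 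Everything else is bookkeeping with the linearization.
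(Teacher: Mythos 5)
Your proposal follows essentially the same route as the paper: linearize $M^\sigma_E$ (first for a finite, then countable, subset of $E$) into $\ell^\infty$-valued single-scale operators $T_j$ with $S_\infty Tf=M^\sigma_{E'}f$ and $|\dil_{2^j}T_jf|_{B_2}=M^\sigma_{E'_j}f$, verify the support condition and read off \eqref{bdness-ellr} and \eqref{p-q-rescaled} from \eqref{eqn:pp-qq-max} and \eqref{eqn:pq-max}, obtain the single scale $\varepsilon$-regularity from the annular decay via Lemma \ref{lem:regbyFourier} (or bypass the adjoint condition via Theorem \ref{thm:max weak ass}), apply Theorem \ref{thm:ellr}(i) and pass to all of $E$ by monotone convergence, and deduce part (ii) from Theorem \ref{thm:ellr}(ii) using the strengthened support condition; this is exactly the paper's argument. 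One correction: the hypothesis \eqref{eqn:pqmodregularity-cond} of the proposition is already stated at the exponent pair $(p,q)$, not at a single exponent $p_0$ as in \eqref{eq:eps reg cond max fn}, so no interpolation is needed here and Lemma \ref{lem:regbyFourier} applies directly; the interpolation you describe belongs to the deduction of Theorem \ref{thm:MEintro} from this proposition, and note that, as you phrase it, combining $(p_0,p_0)$ decay with the $(p,q)$ bound \eqref{eqn:pq-max} would only produce decay at pairs strictly between the two, which is precisely why that step is performed for $(1/p,1/q)$ in the interior of $\mathscr{L}(\sigma,E)$ rather than at a fixed pair. Finally, in part (ii) your justification that $T_j$ maps simple functions into $L^1_{B_2,\loc}$ because ``$\sigma*\bbone_E$ is bounded'' is only valid when $\sigma$ is a finite measure; for a general compactly supported distribution this is an a priori interpretation matter, handled by restricting to suitable test functions and a priori estimates as in the discussion of \S\ref{sec:MEmax}, and it does not affect the structure of the argument.
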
 

\begin{proof} 
We will apply   {Theorem \ref{thm:ellr}}   with $r=\infty$, $B_2=\ell^\infty(E')$,  where $E'$ is a finite subset of $E$,  and 
\Be\label{Tjformax} T_jf(x,t)=
\begin{cases}\sigma_t*f(x) &\text{ if $t\in E'\cap[2^j, 2^{j+1}) $}
\\0 &\text{ otherwise.} \end{cases}
\Ee
Note that

\begin{subequations} \Be
\label{eqn:sumN1N2max}
S_\infty T f(x) \equiv \sup_{j \in \bbZ} |T_j f(x)|_{B_2}=
M^\sigma_{E'} f(x), 
\Ee and, with $E'_j= 2^{-j}E'\cap[1,2]$, 
\Be\label{eqn:dilTjmax} |\dil_{2^j} T_j f (x)|_{B_2}= M^\sigma_{E'_j} f(x), \qquad j \in \bbZ.\Ee
\end{subequations}

As $\sigma$ is supported in $\{x: |x| \leq 1\}$, the operators $T_j$ satisfy the support condition \eqref{support-assu}. Moreover, \eqref{eqn:pp-qq-max} and \eqref{eqn:sumN1N2max}  guarantee \eqref{bdness-ellr} with $r=\infty$, and similarly  \eqref{eqn:pq-max} and 
\eqref{eqn:dilTjmax} guarantee the single scale $(p,q)$ condition \eqref{p-q-rescaled}. It remains to verify  the single scale  $\varepsilon-$regularity conditions \eqref{p-q-rescaled-reg} for the operators $T_j$. But this {follows from \eqref{eqn:pqmodregularity-cond} and \eqref{eqn:pq-max} via} Lemma 
\ref{lem:regbyFourier} 
and the fact that for translation-invariant operators $T_j$, the conditions \eqref{p-q-rescaled-reg-a} and \eqref{p-q-rescaled-reg-b} are equivalent (alternatively, one can apply Theorem \ref{thm:max weak ass} for maximal functions). All hypotheses in the first part of Theorem \ref{thm:ellr}  are then satisfied and we thus obtain a sparse bound
for the maximal operator $M_{E'}^\sigma$. An application of the monotone convergence theorem then yields the desired sparse bound for $M^\sigma_E$ and concludes the proof of part (i). 

For part (ii)
note that the assumption that $\sigma$ is supported away from the origin corresponds to the strengthened support condition  \eqref{eqn:strengthenedsupp}.
Thus we can deduce part (ii) directly from part (ii) of  Theorem \ref{thm:ellr}.
\end{proof} 

\begin{proof}[Proof of Theorem \ref{thm:MEintro}]
Because of the $L^p\to L^q$ condition on the operators $M^\sigma_{E_j}$ in \eqref{eqn:apriori} and the  $\eps$-regularity assumption \eqref{eq:eps reg cond max fn}, it follows by interpolation that the condition 
\eqref{eqn:pqmodregularity-cond} is satisfied for all $(1/p,1/q)$ in the interior of $\mathscr{L}(\sigma,E)$. Thus Proposition \ref{prop:MEsigma} establishes the sufficiency of the conditions, that is, \eqref{eqn:type-sparse-implication}.
The converse follows immediately from part (ii) of Proposition \ref{prop:MEsigma}.
\end{proof}

Prototypical examples for Proposition \ref{prop:MEsigma}
are the spherical maximal functions where $\sigma$ is the surface measure on the sphere (for $L^p$ bounds see the classical results by Stein \cite{SteinPNAS1976} and Bourgain \cite{BourgainJdA1986}, and for $L^p\to L^q$ bounds see \cite{Schlag1997, SchlagSogge1997}). 
The proposition covers the results by Lacey \cite{laceyJdA19} for the lacunary and  full spherical maximal functions and also the extension to  spherical maximal operators with  suitable  assumptions about various  fractal dimensions of $E$, see \cite{SeegerWaingerWright1995, AHRS, RoosSeeger}.
In this context we note that in \cite{BHRT, GT}, Lacey's approach was used to establish   sparse domination results for  two versions  of  lacunary spherical maximal functions on the Heisenberg group, defined via the  automorphic dilations, and essentially optimal results for the problem considered in  \cite{BHRT} can be obtained by combining the sparse technique developed in that paper with recent $L^p\to L^q$ bounds in \cite{roos-seeger-srivastava}.

One  can also cover more singular  measures with Fourier conditions (as in \cite{DR86}, \cite{DuoandikoetxeaVargas}) and this leads to questions about the precise range of  $L^p$ improving  estimates for the local variants of the  maximal functions.
As an example consider a curve $s\mapsto \gamma(s)$  in $\bbR^3$ with nonvanishing curvature and torsion, and the  measures $\mu_t$ given by
\Be \notag \label{eqn:momentcurve} \inn{f}{\mu_t} =\int f(t\gamma(s) ) \chi(s) ds 
\Ee
with compactly supported $\chi$. A result in \cite{PramanikSeeger2007}, applied in combination with decoupling results in \cite{Wolff2000, BourgainDemeter2015} yields that the maximal operators 
$M_E$ 
are bounded on $L^p(\bbR^3)$ for $p>4$. The optimal result for $p>3$ was recently obtained in \cite{BeltranGuoHickmanSeeger} and in \cite{KoLeeOh}.  Moreover, 
the  analysis in these papers  yield, for the local analogues of these maximal functions (i.e. $E=[1,2]$),   certain $L^p\to L^{q} $ bounds for some $q>p$. It would be very interesting to find precise ranges of $L^p\to L^q$ boundedness of $M_E$ depending on $E$, and corresponding sparse bounds for related global maximal functions. Similar questions can be considered in higher dimensions but the optimal bounds  are currently unknown (for partial results see
\cite{BeltranGuoHickmanSeeger2}, \cite{KoLeeOh2}).

\subsubsection{Variational operators}
Given $1\le r\le \infty$ and a set $E\subset (0,\infty)$ we define the $r$-variation seminorm $|\cdot|_{\mathrm v^r(E)} $ and the $r$-variation norm $|\cdot|_{V^r(E)} $  of a function  $a:E \to \bbC$ 
by 
\begin{align*} 
|a|_{\mathrm v^r(E)} &=
\sup_{M \in \bbN} \sup_{\substack{t_1 < \cdots < t_M \\ t_i \in E}}
\Big( \sum_{i=1}^{M-1} |a(t_{i+1}) -a(t_i)|^r \Big)^{1/r} 
\\
|a|_{V^r(E)} &= 
\sup_{M \in \bbN} \sup_{\substack{t_1 < \cdots < t_M \\ t_i \in E}}
\Big\{ |a(t_1)| +\Big( \sum_{i=1}^{M-1} |a(t_{i+1}) -a(t_i)|^r \Big)^{1/r}\Big\}.
\end{align*} 
Define the $r$-variation operators $\mathrm v^r_E A$, 
$\sV_E^rA $ 
for the family of operators of convolution with $\sigma_t$ by taking the $r$-variation norm in $t$,
\begin{equation}\label{eq:def var op}
\mathrm v^r_E A f(x):=   | \{\sigma_t*f(x) \}|_{\mathrm v^r(E)}, \qquad
\sV^r_E A f(x):=  | \{\sigma_t*f(x) \}|_{V^r(E)}.
\end{equation}
This that the above definition of variation is analogous to the definition in \eqref{eqn:Vrdef} where we considered the $r$-variation for functions integers. The results in \S\ref{sec:squarefct-etc} mostly apply to the situation where the current $E$ is a subset of $\{2^j: j\in \bbZ\}$. For general sets $E \subset (0,\infty)$, we will deduce results directly from Corollary \ref{mainthm-cor} and Theorem \ref{thm:necessarycor}.

As before we may assume that $E$ is countable (as  this does not affect  priori estimates).
Let $E_j\subset [1,2]$  be the rescaled sets as in \eqref{def:Ej}.

\begin{prop}\label{prop:VEsigma}
(i) Let $1<p\le q<\infty$.  
Let $\sigma$ be a compactly supported distribution such that
\Be\label{eqn:pp-qq-var}  \|\sV^r_E A\|_{L^p\to L^{p,\infty} } 
+\|\sV^r_E A\|_{L^{q,1}\to L^{q}} <\infty, \Ee
\Be \label{eqn:pq-var} \sup_{j \in \bbZ} \|\sV^r_{E_j}  A \|_{L^p\to L^q}  
<\infty \Ee
and assume that there is an $\epsilon>0$ so that for all $\la\ge 2$,
\Be  \label{eqn:pqmodregularity-condV} \|\sV^r_{E_j}  f\|_q\le C \la^{-\eps} \|f\|_p, \quad   f\in \Eann(\la).\Ee
Then for all $f\in L^p$ and all simple nonnegative functions $\omega$, we have  the sparse domination inequality 
\Be \label{eqn:sparseVEsigma} \inn{ \sV^r_E f}{\om}\lc \Lamaxga_{p,q'}  (f,\om).
\Ee 
(ii) Conversely,  if $\sigma$  has compact support in $\bbR^d\setminus\{0\}$ then the sparse bound  \eqref{eqn:sparseVEsigma} for $p < q$ implies that
conditions \eqref{eqn:pp-qq-var} and \eqref{eqn:pq-var} hold. 
\end{prop}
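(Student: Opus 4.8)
The plan is to deduce Proposition~\ref{prop:VEsigma} from Corollary~\ref{mainthm-cor} and Theorem~\ref{thm:necessarycor} in essentially the same way that Proposition~\ref{prop:MEsigma} was deduced from Theorem~\ref{thm:ellr}, but now using the vector-valued variational machinery of \S\ref{sec:variationnorms} rather than the $\ell^\infty$-valued maximal machinery. For part~(i), the first step is to reduce to a finite subset $E'\subset E$ by the monotone convergence theorem (recalling that a priori estimates are unaffected by passing to a countable $E$, so in particular to finite exhausting subfamilies), and for each $E'$ set $B_2=V^r(E')$ — a Banach space — with
\[
T_j f(x,t)=
\begin{cases}\sigma_t*f(x) &\text{ if $t\in E'\cap[2^j,2^{j+1})$,}\\ 0 &\text{ otherwise.}\end{cases}
\]
Since $\supp\sigma\subset\{|x|\le 1\}$, the support condition \eqref{support-assu} holds. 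The key identities, analogous to \eqref{eqn:sumN1N2max} and \eqref{eqn:dilTjmax}, are $|\sum_j T_j f(x,\cdot)|_{V^r(E')} = \sV^r_{E'}Af(x)$ (up to the standard reindexing accounting for how $V^r$ combines across dyadic blocks, exactly as in \eqref{eqn:secondHjidentificaton}) and $|\dil_{2^j}T_jf(x,\cdot)|_{V^r(E'_j)}=\sV^r_{E'_j}Af(x)$ with $E'_j=2^{-j}E'\cap[1,2]$. Then \eqref{eqn:pp-qq-var} gives the weak-type $(p,p)$ and restricted strong-type $(q,q)$ bounds \eqref{bdness-wt}, \eqref{bdness-rt}; \eqref{eqn:pq-var} gives the single scale $(p,q)$ bound \eqref{p-q-rescaled}.

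For the $\varepsilon$-regularity conditions \eqref{p-q-rescaled-reg-a}, \eqref{p-q-rescaled-reg-b}, I would invoke Lemma~\ref{lem:regbyFourier} with $\sH=\bbC$ and target Banach space $V^r(E'_j)$: the hypotheses \eqref{eq:p-q hyp regbyFourier}, \eqref{eq:p-q hyp ann regbyFourier} of that lemma are precisely \eqref{eqn:pq-var} and \eqref{eqn:pqmodregularity-condV}, so the lemma produces the H\"older-type bound \eqref{p-q-rescaled-reg-a} on $\dil_{2^j}T_j$; the adjoint condition \eqref{p-q-rescaled-reg-b} then follows automatically because the $T_j$ are translation-invariant (Remark~(ii) after Corollary~\ref{mainthm-cor}). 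With all hypotheses of Corollary~\ref{mainthm-cor} verified for the family $\{T_j\}$ in $\mathrm{Op}_{\bbC,V^r(E')}$ — crucially, $T_j:\simpleone\to L^1_{V^r(E'),\loc}$ holds since $E'$ is finite and $\sigma_t*f$ is continuous — we obtain $\inn{\sV^r_{E'}Af}{\om}\lc\Lamaxga_{p,q'}(f,\om)$ with an implicit constant independent of $E'$ (and of the Banach space, by Remark~(i) after Corollary~\ref{mainthm-cor}), and letting $E'\uparrow E$ via monotone convergence yields \eqref{eqn:sparseVEsigma}.

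For part~(ii): when $\supp\sigma\subset\bbR^d\setminus\{0\}$ we may assume $\supp\sigma\subset\{\delta_1'\le|x|\le 1\}$ for some $\delta_1'>0$, which translates into the strengthened support condition \eqref{eqn:strengthenedsupp} for the $T_j$ above (with $\delta_2$ chosen small enough that diam-restricted inputs have $\sigma_t$-convolutions landing in the annular shell). Since $V^r(E')$-valued functions $T_jf$ are locally integrable, Theorem~\ref{thm:necessarycor} applies: the assumed sparse bound \eqref{eqn:sparseVEsigma}, tested against the $V^r(E')$-valued family (and using $|{\sum_j T_j f(x,\cdot)}|_{V^r(E')}=\sV^r_{E'}Af(x)$ together with the trivial comparison $\sV^r_{E'}\le\sV^r_E$ for the weak/restricted bounds, and $\sV^r_{E'_j}$ for the single-scale bound), forces \eqref{bdness-wt}, \eqref{bdness-rt}, \eqref{p-q-rescaled} with constants uniform in $E'$; monotone convergence then gives \eqref{eqn:pp-qq-var} and \eqref{eqn:pq-var}. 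I would then remark that the reader may supply the analogous statements for $\mathrm v^r$ in place of $V^r$ (the seminorm version), the only change being the absence of the $|a(t_1)|$ term, which does not affect any estimate.

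The main obstacle I anticipate is purely bookkeeping rather than conceptual: matching the $r$-variation norm of the \emph{continuous-parameter} family $\{\sigma_t*f\}_{t\in E'}$ with the $V^r_{B_2}(I_{N_1,N_2})$-norm of the \emph{discrete} auxiliary family $\{\sum_j H_jf(\cdot,n)\}$ used in the proof of Theorem~\ref{thm:Vr} — here one should instead work with $B_2=V^r(E')$ directly as the target of the original $T_j$, bypassing the $H_j$ construction, so that the identity $|\sum_j T_jf(x,\cdot)|_{V^r(E')}=\sV^r_{E'}Af(x)$ is immediate from the definition \eqref{eq:def var op}. A secondary point requiring a line of care is that $V^r(E')$ for finite $E'$ is finite-dimensional, hence reflexive, so there is no issue with $L^1_{\loc}$-valuedness or with the duality arguments in \S\ref{sec:nec}; this is exactly the situation anticipated in Remark~(iv) after Theorem~\ref{thm:necessarycor}.
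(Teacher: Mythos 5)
Your proposal is correct and follows essentially the same route as the paper: the paper also applies Corollary \ref{mainthm-cor} directly with $B_2=V^r(E')$ for finite $E'\subset E$, uses the identities corresponding to \eqref{eqn:sumN1N2var}--\eqref{eqn:dilTjvar}, obtains the regularity conditions from \eqref{eqn:pq-var}, \eqref{eqn:pqmodregularity-condV} via Lemma \ref{lem:regbyFourier} together with translation invariance, and derives part (ii) from Theorem \ref{thm:necessarycor} and monotone convergence. Your additional remarks (e.g.\ the reindexing of $V^r$ over dyadic blocks and the finite-dimensionality of $V^r(E')$) are consistent with the paper's argument and introduce no gap.
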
 

\begin{proof}
We are aiming to apply 
Corollary \ref{mainthm-cor} with $B_2= V^r(E')$ for any finite $E'\subset E$. 
With $T_j f(x,t) $ as in \eqref{Tjformax} 
and  $ E'(N_1,N_2)= E'\cap[2^{N_1}, 2^{N_2+1} ]$,  
we get 
\begin{subequations} \Be\label{eqn:sumN1N2var}\Big |\sum_{j=N_1}^{N_2}T_j f(x)\Big|_{V^r_{E'} }   =  \sV^r_{E'(N_1,N_2) } A f(x) \Ee and  
\Be\label{eqn:dilTjvar} |\dil_{2^j} T_j f(x) |_{V^r_{E'}}=  \sV^r_{2^{-j} E' \cap[1,2] } A f(x).\Ee
\end{subequations}
We need to check the assumptions of Corollary \ref{mainthm-cor}  (i.e. the assumptions of Theorem \ref{mainthm}).
Conditions \eqref{bdness-wt},  
\eqref{bdness-rt} hold by  \eqref{eqn:pp-qq-var} and \eqref{eqn:sumN1N2var},   condition \eqref{p-q-rescaled} holds by \eqref{eqn:pq-var} and \eqref{eqn:dilTjvar} and condition 
\eqref{p-q-rescaled-reg-a} follows from {\eqref{eqn:pq-var}},
\eqref{eqn:pqmodregularity-condV},  and Lemma \ref{lem:regbyFourier}. Condition \eqref{p-q-rescaled-reg-a}  is equivalent with \eqref{p-q-rescaled-reg-b} in the current translation invariant setting.

For the necessity, observe  that the 
assumption that $\sigma$ is supported away from the origin which corresponds to the strengthened support condition in Theorem \ref{thm:necessarycor}.
A sparse bound for $V^r_EA$ implies via \eqref{eqn:sumN1N2var} a sparse bound for $\sum_{j=N_1}^{N_2} T_j$ for any pair of integers $N_1 \leq N_2$.
 We apply Theorem \ref{thm:necessarycor} and obtain via \eqref{eqn:sumN1N2var}
and \eqref{eqn:dilTjvar} 
that
\begin{align*} &\| \sV^r_{E'(N_1,N_2)} A\|_{L^p\to L^{p,\infty} } + \|\sV^r_{E'(N_1,N_2) } A\|_{L^{q,1}\to L^q} \le C,
\\
&\sup_{N_1\le j\le N_2} \|\sV^r_{2^{-j}E'\cap [1,2]}  A \|_{L^p\to L^q} \le C, 
\end{align*}
with the constant $C$ independent of $N_1, N_2$ and the particular finite subset $E'$ of $E$. Applications of the monotone convergence theorem then yield the asserted necessary conditions for $\sV^r_EA$, that is, \eqref{eqn:pp-qq-var} and \eqref{eqn:pq-var}.
\end{proof} 

Proposition \ref{prop:VEsigma}  can be applied to obtain a sparse domination inequality for the $r$-variation operator associated with the spherical means in $\bbR^d$. For the necessary global $L^p\to L^p$ bounds see \cite{JonesSeegerWright} and for  $L^p\to L^q$ bounds for the local variation operators we refer to the recent paper \cite{BGORSS}.   This addresses a question posed in \cite{laceyJdA19} and \cite{aimPL}.

\begin{rem}\label{rem:longshort}
In verifying $L^p\to L^{p,\infty}$ and $L^{q,1}\to L^q$ assumptions for the variation operators it is 
(as shown in \cite{JonesKaufmanRosenblattWierdl, JonesSeegerWright}) 
often advantageous to   write
  $  V^r_EA f(x) \le V^r_{\mathrm{dyad}}A f(x) + V^r_{E,\mathrm{sh}}A f(x) $
where \[V^r_{\mathrm{dyad}} A f(x):= \sV^r_{2^{(\bbZ)}} Af(x) \] is the standard variation norm over $2^{(\bbZ)}:=\{2^j:j\in \bbZ\}$, labeled  the {\it dyadic}  or  {\it long variation operator}
and where 
\[V^r_{E,\mathrm{sh}} A f(x):=
    \Big(\sum_{j\in \bbZ}|\mathrm v^r_{E\cap[2^j, 2^{j+1}]} A f(x)|^r\Big)^{1/r},
\] 
is the so-called {\it short variation operator} which uses  only variation seminorms  over $E$ within dyadic intervals. The $L^p$-boundedness of the long variation operators is usually reduced to Lepingle's theorem \cite{Lepingle} (which requires $r>2$) while the short variation operator is often estimated  using a Sobolev embedding inequality (see \cite{JonesKaufmanRosenblattWierdl}, \cite{JonesSeegerWright}). 
We note that it is possible to prove results analogous to Proposition \ref{prop:VEsigma} for the long variation operator and the short variation operators individually as direct consequences of Theorems \ref{thm:Vr} and \ref{thm:ellr} respectively; the details are left to the reader.
\end{rem}
    
\subsubsection{Lacunary maximal functions for convolutions associated with the wave equation}\label{sec:wavelac} %
In this 
section we consider  a   maximal function   generated by convolutions with dilates of a tempered distribution,
which is  not compactly supported (but still concentrated  on a compact set). This class is  associated with
$L^p$ regularity results for solutions of  the wave equation.
For  both simplicity and definiteness of results  we shall only consider  a  lacunary version, but the argument to deduce the sparse bound extends to other sets of dilations and also to variational variants (for which Lemma \ref{lem:nonlocalerror} would be useful to treat nonlocal error terms).

For $\beta>0$ define 
\[ m_\beta(\xi)= \frac{\cos|\xi|}{(1+|\xi|^2)^{\beta/2} }
\]
and let
\[\cM^\beta_\lac f(x) =\sup_{k\in \bbZ} |m_\beta(2^k D) f(x) |.\]
It was shown by Peral in \cite{Peral1980} and Miyachi in \cite{Miyachi-wave-1980} that $m_\beta(D)$ is bounded on $L^p$ for $\beta\ge(d-1)|1/p-1/2|$, $1<p<\infty$.
$L^p\to L^q$ results for $m_\beta$ go back to  \cite{Strichartz1970,littman, Brenner1975}; it is known that $m_\beta(D):L^p\to L^q$ is bounded if either  %
\begin{enumerate}
    \item[(W)] $1<p\le 2, \, p\le q\le p', \,\beta \geq  (d-1)\big(\frac 1p-\frac 12) +\frac 1p-\frac 1q$, or
\item[(W$'$)] $ 1<p<\infty,  \, \max\{p,p'\} \le q<\infty, \,
    \beta \geq  (d-1)\big(\frac 12-\frac 1q) +\frac 1p-\frac 1q$.
\end{enumerate}

Note that (W$'$) follows from (W) by duality. Moreover it can be shown that $\cM^\beta_\lac$ is bounded on $L^p$ for $\beta>(d-1)|1/p-1/2|$ via a single scale analysis, and either Littlewood--Paley theory for $p \geq 2$ or the result stated in \S \ref{sec:combinescales-mult} for $1 < p < 2$.

We have the following sparse bound for $\cM^\beta_\lac$ in the non-endpoint case.
\begin{prop}
Suppose $1<p\le q<\infty$ and that one of the following two  conditions holds.
\begin{enumerate}
    \item[\upshape ($\mathrm{W}_*$)] $1<p\le 2, \, p\le q\le p', \,\beta> (d-1)\big(\frac 1p-\frac 12) +\frac 1p-\frac 1q$.
\item[\upshape ($\mathrm{W}_*^\prime$)] $ 1<p<\infty,  \, \max\{p,p'\} \le q<\infty, \,
    \beta >  (d-1)\big(\frac 12-\frac 1q) +\frac 1p-\frac 1q$.
\end{enumerate}
Then 
 $\cM_\lac^\beta \in \mathrm{Sp}(p,q')$.
\end{prop}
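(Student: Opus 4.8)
The strategy is to apply Theorem~\ref{thm:sparsemult} (equivalently, Theorem~\ref{thm:localtoglobal-sparse-s} or the machinery of \S\ref{sec:squarefct-etc}) to the lacunary maximal operator $\cM^\beta_\lac$. The first step is to realize $\cM^\beta_\lac$ as an $\ell^\infty$-valued convolution operator: set $B_2=\ell^\infty(\bbZ)$ and let $T_j$ be the operator whose $k$-th component is $m_\beta(2^k D)$ composed with a Littlewood--Paley localization to frequencies $|\xi|\approx 2^{-k+j}$ (so that after rescaling by $\dil_{2^j}$ each $T_j$ becomes a fixed single-scale Fourier multiplier operator). Concretely one decomposes $m_\beta(2^k\xi)=\sum_{j}\widehat\theta(2^{-j}\cdot)\varphi(2^{-j}\cdot)\,m_\beta(2^k\cdot)(\xi)$ using the Littlewood--Paley functions from \S\ref{sec:resofid} or \S\ref{sec:more-precise}; the kernel of the resulting $T_j$ is essentially supported where $|x|\lesssim 2^j$, giving the support condition \eqref{support-assu}. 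Because $m_\beta$ is not compactly supported in frequency, the localized pieces $\varphi\, m_\beta(t\cdot)$ have Schwartz (not compactly supported) Fourier inverses, but they still satisfy the required decay; the ``concentrated on a compact set'' remark means the error from the tail of the kernel is harmless, and Lemma~\ref{lem:nonlocalerror} (alluded to in the text) handles any nonlocal pieces.

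The second step is to verify the five hypotheses of Theorem~\ref{mainthm} for the family $\{T_j\}$ with values in $\ell^\infty$. The weak-type $(p,p)$ and restricted strong-type $(q,q)$ bounds \eqref{bdness-wt}, \eqref{bdness-rt} for $\sum_{j=N_1}^{N_2}T_j$ reduce, via Littlewood--Paley theory and the single-scale combination theorem of \S\ref{sec:combinescales-mult}, to the known $L^p$-boundedness of $\cM^\beta_\lac$ for $\beta>(d-1)|1/p-1/2|$ and the known $L^p\to L^q$ bounds (W)/(W$'$) for $m_\beta(D)$; the strict inequalities in ($\mathrm W_*$)/($\mathrm W_*'$) give room to absorb the logarithmic and $\epsilon$ losses. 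For the single-scale $(p,q)$ condition \eqref{p-q-rescaled}, after rescaling $\dil_{2^j}T_j$ is convolution with a fixed kernel obtained from $m_\beta$ restricted to frequencies $|\xi|\approx 2^{j}$ (or $|\xi|\approx 1$ in the normalized picture); its $L^p\to L^q$ norm is $O(1)$ provided $\beta$ satisfies the stated inequality, which is exactly where conditions (W)/(W$'$) enter in the form ``single-scale $L^p\to L^q$ with $\beta\ge(d-1)(\tfrac1p-\tfrac12)+\tfrac1p-\tfrac1q$'' or its dual. The $\varepsilon$-regularity conditions \eqref{p-q-rescaled-reg-a}, \eqref{p-q-rescaled-reg-b} follow from Lemma~\ref{lem:regbyFourier}: one needs an extra $\la^{-\epsilon}$ decay in the $L^p\to L^q$ bound for the piece of $T_j$ with output frequencies $\approx\la$, which is available because the strict inequalities in ($\mathrm W_*$)/($\mathrm W_*'$) let us trade a bit of smoothing for decay; by translation invariance \eqref{p-q-rescaled-reg-b} is automatic from \eqref{p-q-rescaled-reg-a}, as noted in Remark (ii) after Theorem~\ref{mainthm}. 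Since $B_2=\ell^\infty$ and the implied constants in Theorem~\ref{mainthm} do not depend on the Banach spaces, one may even first work with $\ell^\infty$ over a finite set of scales and pass to the limit by monotone convergence, exactly as in the proof of Theorem~\ref{thm:ellr}.

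The third step is bookkeeping: having verified the hypotheses for each dyadic block, one sums the sparse bounds over the Littlewood--Paley index $\ell$ as in the proof of Theorem~\ref{thm:sparsemult}, using that the single-scale norms decay geometrically in $\ell$ (this is where the \emph{strict} inequalities on $\beta$ are essential — at the endpoint one only gets $O(1)$ per block and the sum diverges). Alternatively, and more cleanly, one checks directly that the localized multipliers $\phi\, m_\beta(t\cdot)$ satisfy the hypothesis \eqref{eqn:MpqHolder} of Theorem~\ref{thm:localtoglobal-sparse-s} with $s>d(1/p-1/q)$: the $M^{p,q}$ norm of $\Delta_h^M[\phi\, m_\beta(t\cdot)]$ decays like $|h|^{s}$ with $s$ governed by the gap between $\beta$ and the endpoint exponent, and then Theorem~\ref{thm:localtoglobal-sparse-s} immediately yields $\cM^\beta_\lac\in\Sp(p,q')$. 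The main obstacle I anticipate is the handling of the non-compactly-supported multiplier $m_\beta$ — specifically, controlling the ``nonlocal'' part of the kernel of $T_j$ (the tail beyond $|x|\approx 2^j$) so that the support condition \eqref{support-assu} is genuinely met; this requires a Schwartz-tail estimate showing the tail contributes an operator with rapidly decaying single-scale norms, which can be folded into the $\ell$-summation, and is precisely the role of a lemma like Lemma~\ref{lem:nonlocalerror}. Everything else is a routine verification of Fourier-multiplier estimates for $m_\beta$ that are already in the literature (\cite{Peral1980, Miyachi-wave-1980, Strichartz1970, Brenner1975, littman}).
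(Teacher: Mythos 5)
Your overall plan (dyadic frequency decomposition, truncation of kernels plus a rapidly decaying tail, vector-valued sparse machinery per frequency block, summation in the frequency parameter using the strict inequality on $\beta$) is the same in spirit as the paper's argument, and you correctly identify the non-compact support of the kernels as the main obstacle. However, your concrete decomposition in the first step does not work. For the wave multiplier, the kernel of \emph{any} frequency-localized piece of $m_\beta(2^kD)$ is concentrated, by stationary phase, near the sphere $|x|\approx 2^k$: restricting to frequencies $|\xi|\approx 2^{j-k}$ only sharpens the concentration around that sphere, it does not move the kernel to the ball $|x|\lesssim 2^j$. Hence if $T_j$ is defined with $k$-th component $m_\beta(2^kD)$ localized to $|\xi|\approx 2^{j-k}$, the components with $k>j$ have kernels living at scale $2^k\gg 2^j$ and the support condition \eqref{support-assu} fails badly; likewise $\dil_{2^j}T_j$ is not a fixed single-scale operator, since its components still depend on $k$. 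The correct slicing is forced by the geometry: the sparse-machinery scale index must be the dilation index $k$ (each dilate's kernel lives at scale $2^k$), and the relative frequency must be an \emph{outer} parameter. This is what the paper does: it sets $K_{\ell,0}=\cF^{-1}[m_\beta\widehat\eta_\ell]$, splits $K_{\ell,0}=A_{\ell,0}+R_{\ell,0}$ with $A_{\ell,0}$ supported in $|x|\le 1$ and the tail $R_{\ell,0}$ handled by $2^{-\ell N}$ times the Hardy--Littlewood maximal function (which already has a sparse bound), and then, for each fixed $\ell$, applies Proposition \ref{prop:MEsigma} to the compactly supported distribution $A_{\ell,0}$ with lacunary dilation set, feeding in the global $L^p$ and $L^q$ bounds with gain $2^{\ell((d-1)|\frac1r-\frac12|+\eps-\beta)}$ (from the known $L^p$ theory via the result of \S\ref{sec:combinescales-mult}) and the single-scale $L^p\to L^q$ wave estimates; the strict inequalities ($\mathrm W_*$)/($\mathrm W_*'$) give geometric decay in $\ell$, and one sums.

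Your proposed ``cleaner alternative'' also has a gap as stated: Theorem \ref{thm:localtoglobal-sparse-s} (and Theorem \ref{thm:sparsemult}) gives sparse bounds for a \emph{single} Fourier multiplier operator, so it cannot ``immediately yield'' a bound for the lacunary supremum $\cM^\beta_\lac$. To reach the maximal function by the multiplier route you would have to apply the multiplier theorem to the randomized multiscale symbol $\sum_k r_k(v)\,m_\beta(2^k\cdot)$, verify the hypothesis \eqref{eqn:MpqHolder} (equivalently the finiteness of $\cB[m]$) for that multiscale symbol rather than for a single $\phi\,m_\beta(t\cdot)$, and then pass to the square function dominating the maximal function via Khinchine's inequality, exactly as in Corollary \ref{cor:laccor}; none of these steps appear in your write-up. (With those steps supplied, the exponent bookkeeping does close up in the ranges ($\mathrm W_*$)/($\mathrm W_*'$), because the factor $2^{\ell d(1/p-1/q)}$ in $\cB[m]$ is exactly compensated by the dilation gain of the localized pieces; but this is a different, and not shorter, argument than the paper's, which avoids the multiplier theorem altogether and instead invokes Proposition \ref{prop:MEsigma} blockwise.)
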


\begin{proof}
Let
$K= \cF^{-1}[m_\beta(2\cdot)]$  so that the singular support of $K$ is $\{x:|x|=1/2\}$.  Let  $K_{\ell,0}=K*\eta_\ell= \mathcal{F}^{-1}[ m_\beta \widehat\eta_\ell] $, with $\eta_\ell$ defined as in \eqref{eqn:resofidmod}, 
and split $K_{\ell,0}*f=\sA_\ell f+\sR_\ell f$ where the convolution kernel $R_\ell$ of $\sR_\ell $ is supported in  $\{x: |x|\geq 1\}$.
The maximal function associated to $R_\ell$ is dominated by $2^{-\ell N} $ times the Hardy--Littlewood maximal function of $f$, similarly the maximal functions associated to $A_\ell $ are controlled by the Hardy--Littlewood maximal function for small $\ell$  and therefore satisfy a $(p,q')$ sparse bound by \S \ref{subsec:sparse HL}. 
We use the notation $A_{\ell,0}$, $R_{\ell,0}$ for the convolution kernels of $\sA_\ell$ and $\sR_\ell$. Set 
$K_{\ell,k}= 2^{-kd} K_{\ell,0}(2^{-k}\cdot)$, and similarly define the kernels $A_{\ell,k}$ and $R_{\ell,k}$.

By the $L^p\to L^p$ result for $m_\beta(D)$
together with the multiplier result mentioned in \S\ref{sec:combinescales-mult} one can easily derive for $\ell>0$ and any $\varepsilon>0$ 
\[
\Big \| \Big(\sum_{k\in \bbZ} | K_{\ell,k} *f |^2\Big)^{1/2} \Big\|_p \lc 2^{\ell ( (d-1)|\frac 1p-\frac 12|+\eps-\beta )} \|f\|_p
\]
for all $1 < p <\infty$ which of course implies
\Be \notag \label{eqn:p-pforwave}
\big \| \sup_{k\in \bbZ} | K_{\ell,k} *f |\big\|_p \lc 2^{\ell ( (d-1)|\frac 1p-\frac 12|+\eps-\beta )} \|f\|_p.
\Ee

We also have the single scale results  %
\Be \label{q less than p'}
2^{\ell \beta}
\| K_{\ell,0} * f\|_q 
\lc 2^{\ell ((d-1)(\frac 1p-\frac 12)+\frac 1p-\frac 1q)} \| f \|_p
\Ee
if 
$1<p\le 2$, $ p\le q\le p'$, and
\Be
\label{q bigger than p'}
2^{\ell\beta}\|K_{\ell,0} * f\|_q 
\lc 2^{\ell ((d-1)(\frac 12-\frac 1q)+\frac 1p-\frac 1q)} \| f \|_p
\Ee
if $1<p\le 2$, $p'<q<\infty$ or  $2\le p<\infty$, $p\le q<\infty$.

By the above mentioned bounds for the operator $\sR_\ell$ and the lacunary maximal operator generated by it we can replace $K_{\ell,k}$ and $K_{\ell,0}$ by
$A_{\ell,k}$ and $A_{\ell,0}$, respectively,

Note that the exponents for $L^p$-boundedness 
and for $L^q$ boundedness, i.e.
$(d-1)|1/p-1/2|$, $(d-1)|1/q-1/2|$ 
are not larger than  the exponents in the displayed inequalities \eqref{q less than p'} and \eqref{q bigger than p'} in their respective ranges.
 An application of Proposition \ref{prop:MEsigma} gives  the desired sparse results for
 the maximal function generated by the $A_{\ell,k}$ and then also for the maximal function generated by convolution with $K_{\ell,k}$. Summing in $\ell$ we can complete the proof of the proposition.
\end{proof}

\begin{remarka} The multiplier $m_\beta$ can be replaced by other variants such as
\[m_{\beta,1}(\xi)  = \frac{\sin|\xi|}{|\xi|} \frac{1}{ (1+|\xi|^2)^{(\beta-1)/2}},
\qquad 
m_{\beta,2}(\xi) = \frac{ J_{\beta- 1/2}(|\xi|)} {|\xi|^{\beta-1/2}}\, .
\]
\end{remarka}

\subsection{General classes of multipliers} \label{sec:genclassesFM} 
It is well known that the classical Mikhlin--H\"ormander multiplier theorem \cite{hormander1960, Ste70}  can be interpolated with  the $L^2$-estimate for multiplier transformations $m(D)$ with bounded multipliers \cite{LionsPeetre1964, Littman1965}. In particular one gets for $1<p\le 2$,
  \Be\label{eqn:Hinterpolated}  \|m\|_{M^p}  \lc \sup_{t>0} \|\phi m(t\cdot)\|_{L^r_\alpha}, 
  \quad 1/r= 1/p- 1/2, \quad \alpha>d/r\,, 
  \Ee where $\|g\|_{L^r_\alpha} =\|(1+|D|^2)^{\alpha/2}g\|_r$  and $\phi$ is a nontrivial radial function supported with compact support away from the origin.
  
  We give a sparse bound for this class of multipliers.

  \begin{prop}\label{prop:Hinterpol}
  Let $1<p\le 2$, $1/r=1/p-1/2$ 
  and let  $m$ satisfy    \Be \label{eqn:Hcond}  \sup_{t>0} \|\phi m(t\cdot) \|_{L^r_\alpha} \le A.\Ee 
  Suppose one of the following holds:
\begin{enumerate}[\upshape(i)]
    \item  $1 < p\le q\le 2$,  and  $\alpha> d(1/p-1/2)$.
    
    \item  $2\le q < \infty$ and $\alpha>d(1/p-1/q)$.
\end{enumerate}
\noindent   Then
  \[ \|m(D)\|_{\Sp_\ga(p,q') } \lc_{p,r,q,\alpha,\gamma} A.\]
  
  \end{prop}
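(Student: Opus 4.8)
The strategy is to verify that the hypotheses of Theorem \ref{thm:sparsemult} are satisfied, so that the sparse bound for $m(D)$ follows. Since we are in the scalar case $\sH_1=\sH_2=\bbC$, it suffices to check that $\cB[m]<\infty$, which amounts to establishing a decay estimate of the form
\Be\label{eqn:Hinterpol-target}
\sup_{t>0} \|[\phi m(t\cdot)]*\widehat{\Psi}_\ell\|_{M^{p_1,q_1}} \lesssim A\, 2^{-\ell s}
\Ee
for some pair $(p_1,q_1)$ with $p\leq p_1\leq q_1$, $s>d(1/p_1-1/q_1)$, and such that moreover $p_1<p$ should be avoidable, i.e. we want $(p_1,q_1)=(p,q)$ if the strict inequalities in (i) or (ii) permit it. Indeed, if \eqref{eqn:Hinterpol-target} holds with $(p,q)$ and $s>d(1/p-1/q)$, then $\cB[m]\lesssim A\sum_{\ell\geq 0} 2^{-\ell s}2^{\ell d(1/p-1/q)}(1+\ell)<\infty$, and Theorem \ref{thm:sparsemult} yields $m(D)\in\Sp_\gamma(p,q')$ with the asserted norm bound.

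\textbf{Key steps.} First I would record the elementary fact that $\widehat{\Psi}_\ell$ is (up to harmless factors) a Schwartz function adapted to frequency scale $\approx 2^{-\ell}$ with all moments vanishing through a large order, so convolution with $\widehat{\Psi}_\ell$ acts like a smooth frequency localization at scale $2^{-\ell}$; precisely, $\widehat{\Psi}_\ell = 2^{\ell d}\check{\Psi}_1^{\vee}(2^\ell\cdot)$-type scaling in frequency with cancellation. Thus $[\phi m(t\cdot)]*\widehat{\Psi}_\ell$ extracts the ``piece of $\phi m(t\cdot)$ that is rough at scale $2^{-\ell}$.'' Second, from the hypothesis \eqref{eqn:Hcond} that $\phi m(t\cdot)\in L^r_\alpha$ uniformly in $t$, with $1/r=1/p-1/2$, I would deduce the $L^2$-based estimate
\[
\| [\phi m(t\cdot)]*\widehat{\Psi}_\ell\|_{M^2} = \| [\phi m(t\cdot)]*\widehat{\Psi}_\ell\|_{\infty} \lesssim A\, 2^{-\ell\alpha} 2^{\ell d(1/2-1/r)}
= A\, 2^{-\ell(\alpha - d(1/p-1/2)) \cdot \frac{?}{?}}\,,
\]
more carefully: by the cancellation of $\widehat{\Psi}_\ell$ and a Bernstein/Sobolev argument, $\|[\phi m(t\cdot)]*\widehat{\Psi}_\ell\|_\infty \lesssim 2^{-\ell\alpha}\|\phi m(t\cdot)\|_{L^\infty_\alpha}$ is too crude; instead one interpolates using that $\widehat{\Psi}_\ell$ is $L^{r'}$-normalized suitably, giving $\| [\phi m(t\cdot)]*\widehat{\Psi}_\ell\|_\infty \lesssim 2^{\ell d/r}\|(\text{high-frequency part of }\phi m(t\cdot))\|_{r}\lesssim A\,2^{\ell d/r}2^{-\ell\alpha}=A\,2^{-\ell(\alpha-d/r)}$. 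So the $M^2=L^\infty$ norm decays like $2^{-\ell(\alpha-d/r)}$ with $\alpha-d/r>0$ by hypothesis. Third, I would record the trivial $M^p$ bound $\|[\phi m(t\cdot)]*\widehat{\Psi}_\ell\|_{M^p}\lesssim \|[\phi m(t\cdot)]*\widehat{\Psi}_\ell\|_{M^p}$ and, more usefully, estimate the $M^p$ norm of the whole localized piece via the $L^r_\alpha\hookrightarrow M^p$ embedding of \eqref{eqn:Hinterpolated}: since convolution with $\widehat{\Psi}_\ell$ followed by multiplication against a fixed bump is bounded on $L^r_\alpha$ uniformly (after rescaling) up to a factor $2^{\ell\alpha}$, one gets $\|[\phi m(t\cdot)]*\widehat{\Psi}_\ell\|_{M^p}\lesssim A$ uniformly in $\ell$ — no decay, but also no growth. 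Fourth, I would interpolate (Riesz--Thorin / Stein interpolation on the multiplier, or simply the Marcinkiewicz-type interpolation of $M^p$ spaces) between the decaying $M^2$ bound and the bounded $M^p$ bound to get, for $p\leq p_1\leq 2$, an estimate $\|[\phi m(t\cdot)]*\widehat{\Psi}_\ell\|_{M^{p_1}}\lesssim A\,2^{-\ell\kappa(p_1)}$ with $\kappa(p_1)=(\alpha-d/r)\theta$ where $\theta$ is the interpolation parameter with $1/p_1 = (1-\theta)/p + \theta/2$. Then, using Young's inequality to pass from $M^{p_1}$ to $M^{p_1,q_1}$ for $q_1\geq p_1$ (the kernel piece $\cK_\ell$ is supported on a ball of radius $\approx 2^\ell$, so $\|\cK_\ell\|_{M^{p_1,q_1}}\lesssim 2^{\ell d(1/p_1-1/q_1)}\|\cK_\ell\|_{M^{p_1}}$, as in \eqref{eqn:Mp-from-Mpq}), one obtains \eqref{eqn:Hinterpol-target} with $s = \kappa(p_1) - d(1/p_1-1/q_1) + \ldots$; the arithmetic must be arranged so that in case (i) one can take $(p_1,q_1)=(p,q)$ with $q\leq 2$ and the condition $\alpha>d(1/p-1/2)$ gives $s>d(1/p-1/q)$, while in case (ii) one uses a duality/interpolation on the $q$-side (the multiplier is also in $M^{q'}$ with $q'\leq 2$, to which the same analysis applies, then dualize) to handle $q\geq 2$.

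\textbf{Main obstacle.} The genuinely delicate point is the bookkeeping of exponents: one must verify that the decay rate $\kappa(p_1)$ obtained by interpolation, minus the loss $d(1/p_1-1/q_1)$ from the Young-inequality step, is strictly positive precisely under the stated hypotheses $\alpha>d(1/p-1/2)$ in range (i) and $\alpha>d(1/p-1/q)$ in range (ii), and — crucially — that one can take $(p_1,q_1)=(p,q)$ rather than only a nearby pair (so that the conclusion is genuinely $\Sp_\gamma(p,q')$ and not merely $\Sp_\gamma(p-\delta,q'-\delta)$). For this, in range (ii) with $q\geq 2$ I would not interpolate on the $p$-side at all: I would instead use that $m\in M^{q'}$ via \eqref{eqn:Hinterpolated} applied with exponent $q'\leq 2$ (note $\phi m(t\cdot)\in L^{\tilde r}_\alpha$ with $1/\tilde r = 1/q'-1/2 = 1/2-1/q$, and the hypothesis $\alpha>d(1/p-1/q)\geq d(1/2-1/q)=d/\tilde r$ is exactly what is needed), obtain the $M^{q'}$-decay for the localized pieces, dualize to $M^{q}$-type statements for the adjoint, and combine with the already-established uniform $M^p$ bound; a single interpolation between these two endpoints $(M^p\text{ uniform}, M^q\text{ decaying})$ — equivalently running the argument through Theorem \ref{thm:localtoglobal-sparse-s}'s mechanism with $M=1$ and the regularity measured in $M^{p,q}$ — then produces \eqref{eqn:Hinterpol-target} with the sharp $s$. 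The rest is routine once this exponent arithmetic is confirmed, and the appeal to Theorem \ref{thm:sparsemult} closes the argument.
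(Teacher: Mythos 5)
There is a genuine gap, and it sits exactly at the point you flag as ``delicate bookkeeping.'' The step you call Young's inequality, namely $\|\cK_\ell\|_{M^{p_1,q_1}}\lesssim 2^{\ell d(1/p_1-1/q_1)}\|\cK_\ell\|_{M^{p_1}}$ for a kernel supported in a ball of radius $\approx 2^\ell$, is the \emph{reverse} of \eqref{eqn:Mp-from-Mpq} and is false: compact spatial support of the kernel lets one lower the target Lebesgue exponent (that is what \eqref{eqn:Mp-from-Mpq} says), not raise it. A Littlewood--Paley-type multiplier at frequency $\la$ with kernel truncated to the unit ball has $M^{p,p}$ norm $O(1)$ but $M^{p,q}$ norm $\approx \la^{d(1/p-1/q)}$ by Bernstein, so no such implication can hold. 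Moreover, even granting some passage to off-diagonal norms, your interpolation between the uniform (non-decaying) $M^{p}$ bound and the decaying $M^{2}=L^\infty$ bound yields $\kappa(p_1)=(\alpha-d/r)\theta$ with $\theta=0$ at $p_1=p$, i.e.\ no decay at the very pair $(p,q)$ you need; as written, your scheme can only give $\Sp(p_1,q_1')$ for some $p_1>p$, which is the loss you set out to avoid. The missing ingredient is an off-diagonal estimate that decays already at $p$: the embedding $L^r\hookrightarrow M^{p,2}$, $1/r=1/p-1/2$ (interpolate the trivial cases $p=1$, $p=2$; this is the inequality preceding \eqref{eqn:Mp2}), which gives $\|[\phi m(t\cdot)]*\widehat\Psi_\ell\|_{M^{p,2}}\lesssim \|[\phi m(t\cdot)]*\widehat\Psi_\ell\|_{L^r}\lesssim A\,2^{-\ell\alpha}$ directly. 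For $q\le 2$ one then lowers the target exponent from $2$ to $q$ at cost $2^{\ell d(1/q-1/2)}$ (the legitimate direction of the localization argument), so the summability condition for $\cB[m]$ in \eqref{eqn:Bm} becomes $\alpha>d(1/q-1/2)+d(1/p-1/q)=d(1/p-1/2)$, exactly hypothesis (i); for $q\ge 2$ one uses compact frequency support and Young's inequality to get $\|g\|_{M^{p,q}}\lesssim_K\|g\|_{M^{p,2}}$ at no cost, giving hypothesis (ii). This is essentially how the paper argues, phrased through the Besov embeddings $\|g\|_{M^{p,q}}\lesssim\|g\|_{B^{d(1/q-1/2)}_{r,1}}$ (interpolating \eqref{eqn:Mp2} and \eqref{eqn:Mpp}) and the estimate $\|\Delta_h^M g\|_{B^{\beta}_{r,1}}\lesssim |h|^s\|g\|_{B^{s+\beta}_{r,1}}$, feeding into Theorem \ref{thm:localtoglobal-sparse-s}; your route through Theorem \ref{thm:sparsemult} would also work once these off-diagonal inputs replace your third and fourth steps (with a small extra argument for the Schwartz tails of $[\phi m(t\cdot)]*\widehat\Psi_\ell$ away from $\supp\phi$ when using the compact-support trick).

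Your fallback for case (ii) has a separate flaw: from $\phi m(t\cdot)\in L^r_\alpha$ you cannot conclude $\phi m(t\cdot)\in L^{\tilde r}_\alpha$ with $1/\tilde r=1/2-1/q$, since $\tilde r>r$ whenever $q<p'$, and higher integrability is not free even for compactly supported functions. So the appeal to \eqref{eqn:Hinterpolated} with exponent $q'$ is unjustified on part of the claimed range, and the duality detour is in any case unnecessary once the $M^{p,2}$ estimate above is in hand.
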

  
  \begin{proof} 
  We deduce this result from Theorem \ref{thm:localtoglobal-sparse-s}. Observe the inequality 
  \[\|g\|_{M^{p,2} } \le \|g\|_{L^r},\qquad 1/r=1/p-1/2,\]
  valid for $1\le p\le 2$ %
  which follows by interpolation from the standard cases  $p=1$ and $p=2$.
  In view of the embedding  $B^0_{r,1} \hookrightarrow L^r$ (see \cite{triebel1983} for the definition and properties of Besov spaces) we get, for $1\le p\le 2,$
  \Be\label{eqn:Mp2} \|g\|_{M^{p,2} } \lc \| g\|_{B_{r,1}^0}, \qquad 1/r=1/p-1/2.
  \Ee 
  Interpolating Bernstein's theorem %
  $B^{d/2}_{2,1}\hookrightarrow \widehat {L^1}$ (which follows from the Cauchy--Schwarz inequality and Plancherel's theorem) %
  with the embedding $B^0_{\infty,1} ~\hookrightarrow~ L^\infty$, {we also have for $1 \leq p \leq 2$},
  \Be\label{eqn:Mpp}
  \|g\|_{M^{p,p} } \lc \|g\|_{B^{d/r}_{r,1} } , \qquad  1/r=1/p-1/2.
  \end{equation}
  A further interpolation of \eqref{eqn:Mp2} and \eqref{eqn:Mpp} yields  
  for ${1 \leq }p\le q\le 2$
  \[ \|g\|_{M^{p,q} } \lc \|g\|_{B^{d(\frac 1q-\frac 12)}_{r,1} } \,,
 \qquad 1/r=1/p-1/2.
 \]
  Finally, we have for $M\ge s$,  the well-known inequality
  \[ \|\Delta_h^M g
  \|_{ B^{\beta}_{r,1} }   
  \lc |h|^s  \|g\|_{B^{s+\beta}_{r,1}}
  \] which we shall use for $\beta=d(1/q-1/2)$ and which can be deduced from standard $L^1$-convolution inequalities.
  
  Now  let $r=2p/(2-p)$, \textit{i.e.}  $1/r=1/p-1/2$. Applying the above inequalities to $g=\phi m(t \cdot)$ we get for $M\ge s$, 
  \[
  \sup_{|h|\le 1} |h|^{-s} \|\Delta_h^M[\phi m(t\cdot) ]\|_{M^{p,q} } \lc 
  \|\phi m(t\cdot) \|_{B^{d(\frac 1q-\frac 12) +s}_{r,1}} 
  \]
  Now since $\alpha> d(1/p-1/2)$ we can find $s>d(1/p-1/q)$ such that 
  $$\alpha>d(1/q-1/2) +s>d(1/p-1/2).$$  Thus if  $\alpha$  is as in the display, then 
  $L^r_\alpha\hookrightarrow B^{d(1/q-1/2)+s}_{r,1} $ and 
  an application of Theorem 
  \ref{thm:localtoglobal-sparse-s} yields the  sparse bound stated in part (i).

  For part (ii) let $2\le q{< \infty}$ and observe that 
  \Be\label{eqn:compmult} \|g\|_{M^{p,q} }\lc_K  \|g\|_{M^{p,2}} \quad \text{ if } \supp(g)\subset K, \quad K \text{ compact.}
  \Ee To see this take a Schwartz function $\upsilon$ whose Fourier transform  equals $1$ on $K$ and observe  that by Young's inequality convolution with $\upsilon$ maps $L^2$ into $L^q$. We  see from \eqref{eqn:compmult} and \eqref{eqn:Mp2} that  for such compactly supported $g$ and $M>s$,
  \[|h|^{-s} \|\Delta_h^M g\|_{M^{p,q} }\lc |h|^{-s} \|\Delta^M_h  g\|_{B^0_{r,1}} \lc  \|g\|_{B^s_{r,1}}. \]  This we use  for $g=\phi m(t\cdot)$ and ${\alpha >}s>d(1/p-1/q)$. Then part (ii) follows by {the embedding $L^r_\alpha \hookrightarrow B_{r,1}^s$ and} an application of Theorem \ref{thm:localtoglobal-sparse-s}. 
   \end{proof}
  
  \begin{rem} \label{rem:dualityrem}
  The assumption $p\le 2$ is not a significant restriction. Indeed observe that by definition of the sparse operator classes  we have $T\in \Sp(p_1,p_2) $ if and only if $T^*\in \Sp(p_2,p_1)$. For multiplier transformations we have 
  $m(D)^*=m(-D)$ and $m(-D) f(-x)= m(D) [f(-\cdot)] (x)$ which implies that
  $m(D)\in \Sp (p_1,p_2) $ if and only if $m(D)\in \Sp(p_2,p_1)$.
  
  We can draw two conclusions from this duality argument. First, the range $1\le p\le 2$, $ q\ge p'$ in Proposition \ref{prop:Hinterpol} could be deduced
  from the result in the range $1\le p\le 2$, $2\le q\le p'$. Second, the result in Proposition \ref{prop:Hinterpol} also implies a result in the range  $2\le p\le q<\infty$. Namely, in this case, if $1/r=1/2-1/q$ and $\alpha>d(1/2-1/q)$ then  one gets  
  $m(D)\in\Sp(p,q')$ under the assumption  \eqref{eqn:Hcond}.
    \end{rem}
\subsubsection{Miyachi classes and subdyadic H\"ormander conditions}  We now  discuss some consequences for multiplier classes considered by Miyachi \cite{Miyachi1980} and their corresponding versions under a subdyadic H\"ormander-type formulation \cite{BeltranBennett}. 
Given $a>0, b \in \bbR$, let $\mathrm{Miy}(a,b)$  denote the class of smooth functions $m:\bbR^d \to \mathbb{C}$ supported on $\{\xi : |\xi| \geq 1\}$ and satisfying the differential inequalities
\Be\label{eqn:miyachibounds}
|\partial^\iota m(\xi) | \lesssim_{\iota} |\xi|^{-b + |\iota|(a-1)}
\Ee
for all $|\xi| \geq 1$ and all multiindices $\iota \in \bbN_0^d$ satisfying  $|\iota| \leq \lfloor d/2 \rfloor +1$.
  The oscillatory multipliers $m_{a,b}$ defined below in 
  \eqref{eqn:oscmultdef} are considered model cases, at least in regards to the  $L^p\to L^p$ boundedness properties.  It is known that multipliers in $\mathrm{Miy}(a,b)$ belong to $M^p$ whenever $b\ge ad|1/p-1/2|$ and $1<p<\infty$, see \cite{FeffermanStein-Acta,Miyachi1980}.
  It has also been observed that these endpoint results are special cases of H\"ormander-type multiplier theorems involving certain endpoint Besov spaces, see \cite{BaernsteinSawyer, SeegerStudia90}.
  Sparse bounds for multipliers in $\mathrm{Miy}(a,b)$ in the non-endpoint range $b>ad|1/p-1/2|$ were obtained by Cladek and the first author in \cite{beltran-cladek} via a single scale analysis, under the additional assumption that \eqref{eqn:miyachibounds} hold for all multiindices $\iota \in \bbN_0^d$. We note that in the range $0<a<1$ they also extended these results  to larger closely related  classes of  pseudo-differential operators, \cf.  \cite{fefferman-pseudo, beltran-cladek}.
  
    The subdyadic H\"ormander-type classes, also   extending the class $\mathrm{Miy}(a,b)$  are  obtained by replacing  the pointwise condition \eqref{eqn:miyachibounds} by \begin{equation}\label{HormSD}
\sup_{B}\; \dist(B,0)^{b+(1-a)|\iota|} \Big( \frac{1}{|B|}\int_B |D^\iota m(\xi)|^2 d\xi \Big)^{1/2}< \infty
\end{equation}
for all $\iota \in \bbN^d_0$ with $|\iota|\leq \lfloor d/2 \rfloor +1$. Here the supremum is taken over all euclidean balls $B$ in $\mathbb{R}^d$ with $\dist(B,0)\geq 1$
such that $r(B) \sim \dist(B,0)^{1-a}$, where $r(B)$ denotes the radius of $B$. This class was considered in \cite{BeltranBennett} which contains sharp weighted inequalities of Fefferman--Stein type that can be used to recover the sharp $L^p$ estimates.
In \cite[\S3]{beltran-cladek} the question was  raised  whether the results on sparse bounds for multiplier transformations in the Miyachi class can be extended to multipliers satisfying a subdyadic condition above, in the sense that it is sufficient to assume that \eqref{eqn:miyachibounds} or \eqref{HormSD} hold for all $|\iota| \leq \lfloor d/2 \rfloor + 1$ rather than for all $\iota \in \bbN_0
^d$. We shall see that this is the case, and that such and more general multi-scale results can be obtained from Proposition \ref{prop:Hinterpol}. The following  simple observation will be helpful; note that condition \eqref{HormSD} (and therefore \eqref{eqn:miyachibounds}) implies \eqref{eqn:Horm type cond}.

  \begin{lem} Let $a>0$,  $2\le r<\infty$, $b>ad/r$.
  Suppose $m_k$ are supported  in $ \{\xi: |\xi|\ge 1\} $ and suppose that there is a constant $C$ such that
  \begin{equation}\label{eqn:Horm type cond}
  \sup_{t>1}  t^{b-a|\imath|}\Big( t^{-d} \int_{t/2 \le |\xi|\le 2t}   \big[|\xi|^{|\imath|} \partial^\imath m_k(\xi)|\big]^r d\xi\Big)^{1/r} \le C
  \end{equation}
  for all multiindices $\imath$ with $|\imath|\le \lfloor d/r\rfloor+1$ and for all $k\in \bbZ$.  Then  the family $\{m_k\}$ satisfies condition  
  \Be\label{eqn:miyachi-limited}  \sup_{k\in \bbZ} \sup_{t>0} t^{b-a\alpha} \|\phi m_k(t\cdot) \|_{L^r_\alpha} <\infty.\Ee
  \end{lem}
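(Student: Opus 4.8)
The statement is a routine but bookkeeping-heavy comparison of two ways of measuring smoothness of a family of multipliers: a scaled $L^r$-Sobolev norm of the localized piece $\phi m_k(t\cdot)$ on one side, and $L^r$-averaged derivative bounds on dyadic annuli on the other. The plan is to fix $k$ (all estimates will be uniform in $k$, so I suppress it) and a dilation parameter $t>0$, and reduce \eqref{eqn:miyachi-limited} to the hypothesis \eqref{eqn:Horm type cond} by unwinding the definitions. Since $\phi$ is supported in $\{|\xi|\approx 1\}$, the function $\phi m(t\cdot)$ is supported where $|\xi|\approx 1$, hence where $|t\xi|\approx t$. Thus only the behavior of $m$ on the single annulus $\{|\eta|\approx t\}$ is relevant, and the assumption \eqref{eqn:Horm type cond} for that particular value of $t$ gives exactly control of the $L^r$-averaged derivatives $|\eta|^{|\imath|}\partial^\imath m(\eta)$ on $\{t/2\le|\eta|\le 2t\}$ of size $C t^{a|\imath|-b}$.

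First I would record that it suffices to prove \eqref{eqn:miyachi-limited} for one integer value of $\alpha$, namely $\alpha=\lfloor d/r\rfloor+1$: the hypothesis $b>ad/r$ ensures $b-a\alpha$ can be taken with $\alpha$ slightly above $d/r$, and monotonicity/interpolation of $L^r_\alpha$ norms together with the compact frequency support of $\phi m(t\cdot)$ lets us pass from integer $\alpha$ to the (possibly non-integer) exponent in the statement; alternatively one simply notes $L^r_{\alpha}\hookrightarrow L^r_{\alpha'}$ for $\alpha\ge\alpha'$ on a fixed compact frequency band. Next, since $\phi m(t\cdot)$ has compact support, $\|\phi m(t\cdot)\|_{L^r_\alpha}$ is comparable to $\sum_{|\imath|\le\alpha}\|\partial^\imath[\phi m(t\cdot)]\|_r$. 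Expanding by the Leibniz rule, $\partial^\imath[\phi m(t\cdot)](\xi)=\sum_{\imath'\le\imath}\binom{\imath}{\imath'} (\partial^{\imath-\imath'}\phi)(\xi)\, t^{|\imath'|}(\partial^{\imath'}m)(t\xi)$, and each term is supported in $|\xi|\approx 1$. Changing variables $\eta=t\xi$ converts $\|(\partial^{\imath-\imath'}\phi)(\cdot)\, t^{|\imath'|}(\partial^{\imath'}m)(t\cdot)\|_r$ into $t^{|\imath'|-d/r}\|(\partial^{\imath-\imath'}\phi)(\eta/t)(\partial^{\imath'}m)(\eta)\|_{L^r_\eta}$, an integral over $|\eta|\approx t$. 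Using $|\eta|\approx t$ to write $t^{|\imath'|}\approx|\eta|^{|\imath'|}$ and the uniform bound $|\partial^{\imath-\imath'}\phi|\lesssim 1$, this is $\lesssim t^{-d/r}\big(\int_{t/2\le|\eta|\le 2t}[|\eta|^{|\imath'|}|\partial^{\imath'}m(\eta)|]^r d\eta\big)^{1/r}$, which by \eqref{eqn:Horm type cond} (applied with $\imath'$ in place of $\imath$, legitimate since $|\imath'|\le|\imath|\le\alpha=\lfloor d/r\rfloor+1$) is $\lesssim C\, t^{a|\imath'|-b}$. Summing over $\imath'\le\imath$ and $|\imath|\le\alpha$, and using $t^{a|\imath'|}\le t^{a\alpha}(1+t^{-a\alpha})$ — or more cleanly splitting the supremum over $t$ into $t\le 1$ and $t>1$ — yields $\|\phi m(t\cdot)\|_{L^r_\alpha}\lesssim C\, t^{a\alpha-b}$, that is $t^{b-a\alpha}\|\phi m(t\cdot)\|_{L^r_\alpha}\lesssim C$, uniformly in $t$ and $k$, which is \eqref{eqn:miyachi-limited}.

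\textbf{Main obstacle.} There is no genuine analytic difficulty here; the one point requiring a little care is the handling of the exponent $t^{a|\imath'|}$ versus the target $t^{a\alpha}$: for $t$ large the worst term is $|\imath'|=\alpha$ and one gets exactly $t^{a\alpha-b}$, but for $t$ small ($t<1$) the terms with $|\imath'|<\alpha$ dominate and give $t^{-b}$-type growth unless one observes that the support condition on $m$ (only $|\eta|\ge 1$) forces $\phi m(t\cdot)\equiv 0$ once $t$ is small enough that $\{|\eta|\approx t\}\cap\{|\eta|\ge 1\}=\emptyset$; for the finitely many intermediate $t$ comparable to $1$, boundedness is trivial. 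So the cleanest writeup fixes the support issue first, thereby restricting to $t\gtrsim 1$, and then the exponent bookkeeping is immediate. The remaining step — passing from integer $\alpha=\lfloor d/r\rfloor+1$ to the stated $\alpha$, and the Leibniz expansion with the change of variables — is entirely routine.
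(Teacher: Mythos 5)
Your rescaling/Leibniz computation and your use of the support of $m_k$ in $\{|\xi|\ge 1\}$ to dispose of small $t$ are exactly the paper's argument: after the change of variables, \eqref{eqn:Horm type cond} amounts to $\|\partial^{\imath}[\phi m_k(t\cdot)]\|_{r}\lesssim t^{a|\imath|-b}$ for all $|\imath|\le \lfloor d/r\rfloor+1$, uniformly in $k$ and $t$. The gap is in how you handle the exponent $\alpha$. The conclusion \eqref{eqn:miyachi-limited} is needed (and is proved in the paper) for a possibly non-integer $\alpha$ chosen in $(d/r,\lfloor d/r\rfloor+1)$ with $a\alpha<b$; this choice is what makes the lemma usable in Proposition \ref{Miyachiprop}, where one sums $2^{-n(b-a\alpha)}$ and needs $b-a\alpha>0$, while $b-a\alpha_0$ with $\alpha_0:=\lfloor d/r\rfloor+1$ may well be negative. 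Your proposed reduction, namely to prove the bound at the integer order $\alpha_0$ and then pass down via $L^r_{\alpha_0}\hookrightarrow L^r_{\alpha}$, does not deliver the statement at such an $\alpha$: the embedding only gives $\|\phi m_k(t\cdot)\|_{L^r_{\alpha}}\lesssim t^{a\alpha_0-b}$, and for $t\ge 1$ this is strictly weaker than the required $t^{a\alpha-b}$ because $a\alpha_0-b>a\alpha-b$; the weighted quantity $t^{b-a\alpha}\|\phi m_k(t\cdot)\|_{L^r_\alpha}$ would then only be bounded by $t^{a(\alpha_0-\alpha)}$, which is unbounded. The compact support of $\phi m_k(t\cdot)$ is irrelevant here: the issue is the power of $t$, not the comparability of norms.

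The repair is short and is what the paper's one-line proof implicitly does. Your Leibniz computation already yields the whole scale of integer bounds, in particular the order-zero bound $\|\phi m_k(t\cdot)\|_{r}\lesssim t^{-b}$ together with $\|\phi m_k(t\cdot)\|_{L^r_{\alpha_0}}\lesssim t^{a\alpha_0-b}$. For $1<r<\infty$ the Bessel-potential norms satisfy the convexity (interpolation) inequality $\|f\|_{L^r_{\theta\alpha_0}}\lesssim \|f\|_{r}^{1-\theta}\,\|f\|_{L^r_{\alpha_0}}^{\theta}$, so with $\alpha=\theta\alpha_0$ one gets precisely $\|\phi m_k(t\cdot)\|_{L^r_{\alpha}}\lesssim t^{-b(1-\theta)}t^{(a\alpha_0-b)\theta}=t^{a\alpha-b}$ for every $0<\alpha<\alpha_0$; applying this with some $\alpha\in(d/r,\alpha_0)$ satisfying $a\alpha<b$ (such $\alpha$ exist since $b>ad/r$) gives \eqref{eqn:miyachi-limited} exactly as in the paper. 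In other words, you must interpolate keeping both endpoint powers of $t$ rather than discard the order-zero estimate; your "alternatively one simply notes the embedding" shortcut fails, and the "monotonicity/interpolation" variant is only correct once this is made explicit. (A minor additional point: the values of $t$ comparable to $1$ form a continuum, not a finite set, but they are indeed harmless, since the hypothesis for, say, $t=2$ controls $m_k$ on $\{1\le|\eta|\le 4\}$ and the weights $t^{b-a\alpha}$ are bounded above and below there.)
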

  
  \begin{proof}
  A change of variable shows that the condition \eqref{eqn:Horm type cond} is equivalent to
   \[ \big\| \partial^\imath [ \phi m_k(s\cdot)]\big\|_{r} \lc s^{a|\imath|-b} \]
  for all multiindices $\imath$ with $|\imath|\le \floor{d/r}+1$.
  Pick $\alpha\in (d/r, \floor{d/r}+1)$ such that $\alpha a<b$. Then the condition implies
  \[\sup_s\|\phi m_k(s\cdot) \|_{L^r_\alpha} s^{b-a\alpha} <\infty,\]
  which implies \eqref{eqn:miyachi-limited} in view of the assumption on the supports  since $\alpha>d/r$.
  \end{proof}

We shall now formulate a result for families of multipliers satisfying condition \eqref{eqn:miyachi-limited}.
For simplicity of our statements, we consider only the case $p\le 2$ and argue 
by duality for $p>2$ (see Remark \ref{rem:dualityrem}).

  \begin{prop} \label{Miyachiprop}
  Let $1<p\le 2$, 
  $r=\tfrac{2p}{2-p}$
  (i.e. $\tfrac 1r=\tfrac 1p-\tfrac 12$),  and  let, for $k\in \bbZ$, $m_k$  be supported in $\{\xi:|\xi|\ge 1\} $.  Let $a, b\ge 0$ such that $b>a d(\tfrac 1p-\tfrac 12)$ and suppose that either
  \begin{enumerate}[\upshape (i)]
  \item  ${1<}p\le q\le 2$ and $b>ad(\tfrac 1p-\tfrac 12)$,   or 
  \item  $2\le q{<\infty}$ and $b>ad(\tfrac 1p-\tfrac 1q)$. 
 \end{enumerate} 
\noindent  Let $\alpha>d(\tfrac 1p-\tfrac 12)$ 
  and   assume 
  $\sup_{k\in \bbZ} \sup_{t>0} t^{b-a\alpha} \|\phi m_k(t\cdot) \|_{L^r_\alpha} <\infty.$
  
      Then  
     $m:= \sum_{k\in \bbZ} m_k(2^k\cdot) \in M_p$ and $m(D) \in \Sp(p,q')$
  \end{prop}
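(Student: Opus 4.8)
The plan is to deduce both conclusions from the single-scale local-to-global machinery already in hand (Proposition \ref{prop:Hinterpol}, or Theorem \ref{thm:localtoglobal-sparse-s}), the key observation being that the support condition $\supp m_k\subset\{|\xi|\ge 1\}$ turns the sum over scales in $m=\sum_k m_k(2^k\cdot)$ into a geometrically convergent one once the regularity exponent is chosen just above the natural threshold.

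For the sparse bound I would first note that for $\xi$ in $\supp\phi\subset\{1/2<|\xi|<2\}$ and any $t>0$ the term $\phi(\xi)m_k(2^kt\xi)$ vanishes unless $2^kt\gtrsim 1$, so that
\[\phi\, m(t\cdot)=\sum_{k:\,2^kt\gtrsim 1}\phi\, m_k(2^kt\cdot),\]
with the \emph{same} fixed bump $\phi$ in every summand, whence $\phi(\xi)m_k(2^kt\xi)=[\phi\,m_k((2^kt)\cdot)](\xi)$ matches the quantity appearing in the hypothesis. Therefore
\[\|\phi\, m(t\cdot)\|_{L^r_\alpha}\le\sum_{k:\,2^kt\gtrsim 1}\|\phi\, m_k(2^kt\cdot)\|_{L^r_\alpha}\lesssim\sum_{k:\,2^kt\gtrsim 1}(2^kt)^{a\alpha-b}.\]
Since $b>a\,d(\tfrac1p-\tfrac12)$ (and $b>a\,d(\tfrac1p-\tfrac1q)\ge a\,d(\tfrac1p-\tfrac12)$ in case (ii)) we may take $\alpha$ in the range permitted by Proposition \ref{prop:Hinterpol} — namely $\alpha>d(\tfrac1p-\tfrac12)$ in case (i) and $\alpha>d(\tfrac1p-\tfrac1q)$ in case (ii) — while still keeping $a\alpha<b$; then $a\alpha-b<0$ and the displayed series is a geometric series which is $O(1)$ uniformly in $t$. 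Thus $\sup_{t>0}\|\phi\, m(t\cdot)\|_{L^r_\alpha}<\infty$, and Proposition \ref{prop:Hinterpol} gives $m(D)\in\Sp_\gamma(p,q')$. (If one prefers to avoid the index juggling in case (ii), the same scale-by-scale estimate for $\|\Delta_h^M[\phi\, m(t\cdot)]\|_{M^{p,q}}$, bounding each piece through the Besov-space chain in the proof of Proposition \ref{prop:Hinterpol} and summing the $2^{k(a\alpha-b)}$, feeds directly into Theorem \ref{thm:localtoglobal-sparse-s}.)

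For the claim $m\in M^p$ one cannot just sum the $M^p$-norms of the pieces $m_k(2^k\cdot)$, so I would reassemble by frequency: write $m_k=\sum_{\ell\ge0}m_{k,\ell}$ with $m_{k,\ell}$ a dyadic frequency piece supported where $|\xi|\approx 2^\ell$ for $\ell\ge1$ and where $|\xi|\approx 1$ for $\ell=0$ (possible because $\supp m_k\subset\{|\xi|\ge1\}$), and set $M_j:=\sum_{\ell\ge0}m_{j+\ell,\ell}(2^{j+\ell}\cdot)$, which is supported in $\{|\xi|\approx 2^{-j}\}$ and satisfies $m=\sum_{j\in\bbZ}M_j$. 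Using \eqref{eqn:Hinterpolated} and a bump-function transfer (in the style of Remark \ref{remark multiplier}) one gets $\|m_{j+\ell,\ell}(2^{j+\ell}\cdot)\|_{M^p}\lesssim\sup_{s\approx 2^\ell}\|\phi\, m_{j+\ell}(s\cdot)\|_{L^r_\alpha}\lesssim 2^{\ell(a\alpha-b)}$; summing in $\ell\ge0$ yields $\sup_j\|M_j\|_{M^p}\lesssim 1$. Since the $M_j$ have lacunary frequency support, the Littlewood--Paley theorem for $1<p<\infty$ gives $\|m\|_{M^p}=\big\|\sum_jM_j\big\|_{M^p}\lesssim_p\sup_j\|M_j\|_{M^p}<\infty$.

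The auxiliary dyadic decompositions and the bump-function transfers are routine (handled exactly as in Remark \ref{remark multiplier}); the only step that genuinely matters is the scale summation, where negativity of the exponent $a\alpha-b$ is essential, and this is precisely what the hypotheses on $b$ provide once $\alpha$ is fixed slightly above the threshold forced by Proposition \ref{prop:Hinterpol}. I expect the main technical nuisance to be making the exponent bookkeeping in case (ii) fully precise — ensuring the available smoothness still exceeds $d(\tfrac1p-\tfrac1q)$ after passing to $M^{p,q}$-regularity — which is why I would keep Theorem \ref{thm:localtoglobal-sparse-s} with a large difference order $M$ as a fallback for that case.
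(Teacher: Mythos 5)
Your treatment of the sparse bound is sound, and it takes a slightly more direct route than the paper. The paper first splits $m=\sum_{n}m^n$ with $m^n(\xi)=\sum_k m_k(2^k\xi)\,\widehat\eta_n(2^k\xi)$, notes that for fixed $t$ only $O(1)$ values of $k$ contribute to $\phi m^n(t\cdot)$, so that $\|\phi m^n(t\cdot)\|_{L^r_\alpha}\lesssim 2^{-n(b-a\alpha)}$, applies Proposition \ref{prop:Hinterpol} (and \eqref{eqn:Hinterpolated}) to each $m^n$, and sums the resulting bounds geometrically in $n$. You instead perform the geometric summation over $k$ at the level of the localized Sobolev norm, obtaining $\sup_{t>0}\|\phi m(t\cdot)\|_{L^r_\alpha}<\infty$ directly, and invoke Proposition \ref{prop:Hinterpol} only once; this is a legitimate reorganization and loses nothing. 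Two bookkeeping remarks: the geometric summation needs $a\alpha<b$, exactly as in the paper's proof (``summing in $n$ as $\alpha<b/a$''), and you cannot simply ``take $\alpha$'' at will, since the hypothesis is attached to the given $\alpha$ (lowering $\alpha$ trades away $b$, as $\|\phi m_k(t\cdot)\|_{L^r_{\alpha'}}\lesssim t^{a\alpha-b}$ only yields the hypothesis with $b$ replaced by $b-a(\alpha-\alpha')$); likewise your worry about case (ii), where Proposition \ref{prop:Hinterpol}(ii) requires $\alpha>d(1/p-1/q)$, is real, but the paper's own proof applies that proposition with the same given $\alpha$, so your handling is consistent with it.

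There is, however, a genuine flaw in your argument for $m\in M^p$. The concluding step --- that because the $M_j$ have pairwise disjoint dyadic-annular supports, ``the Littlewood--Paley theorem'' gives $\|\sum_j M_j\|_{M^p}\lesssim_p\sup_j\|M_j\|_{M^p}$ --- is false for $p\neq 2$. Littlewood--Paley theory only reduces the question to the $\ell^2$-valued inequality $\big\|\big(\sum_j|M_j(D)g_j|^2\big)^{1/2}\big\|_p\lesssim\big\|\big(\sum_j|g_j|^2\big)^{1/2}\big\|_p$, and uniform scalar $M^p$ bounds do not imply this, since different operators act on different components (Marcinkiewicz--Zygmund only covers one operator acting diagonally). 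Indeed the Littman--McCarthy--Rivi\`ere and Stein--Zygmund examples cited in the introduction produce $m$ with $\sup_{t>0}\|\phi m(t\cdot)\|_{M^{p_0}}<\infty$ --- hence with uniformly $M^{p_0}$-bounded pieces supported in (after splitting into even and odd generations, disjoint) dyadic annuli --- for which $m\notin M^{p_0}$; this is precisely the phenomenon your step would exclude. Fortunately the repair is immediate and makes the detour through the $M_j$ unnecessary: since you have already established $\sup_{t>0}\|\phi m(t\cdot)\|_{L^r_\alpha}<\infty$ with $\alpha>d(\tfrac1p-\tfrac12)=d/r$, the membership $m\in M^p$ follows at once from \eqref{eqn:Hinterpolated} applied to $m$ itself; this is in effect how the paper obtains it, applying \eqref{eqn:Hinterpolated} to each $m^n$ and summing the geometrically decaying $M^p$ norms.
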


  \begin{proof}
  We split, by a dyadic decomposition $m_k(\xi)=\sum_{n=1}^\infty m_{k,n}(\xi) $ where $m_{k,n}$ is supported in an annulus  $\{\xi: |\xi|\approx 2^n\} $, for all $k\in \bbZ$; in fact we can set $m_{k,n}(\xi)=m_k(\xi)\widehat\eta_n(\xi)$ with $\widehat \eta_n$ as in \eqref{eqn:suppetaell}. Observe that $m_{k}\widehat\eta_0=0$ by the support properties of $m_k$ and $\widehat\eta_0$. Now form $m^n(\xi)=\sum_{k\in \bbZ} m_{k,n}(2^k \xi)$.
  We wish to apply Proposition  \ref{prop:Hinterpol} to $m^n$, for every $n\ge 0$.
  
  Fix any $t>0$ and use the assumption to compute
  \begin{align*}
  \|\phi m^n(t\cdot )\|_{L^r_\alpha} \lc \sum_{\substack {k\in \bbZ: \\c_1 2^n\le  2^k t\le C_1 2^n}} \|\phi m_{k,n}(2^k t\cdot) \|_{L^r_\alpha} \lc 2^{-n(b-a\alpha)}. 
  \end{align*}
  By  \eqref{eqn:Hinterpolated} we obtain $\| m^n \|_{M^p} \lesssim 2^{-n(b-a \alpha)}$ and similarly,  by Proposition \ref{prop:Hinterpol} we obtain
  $\|m^n(D) \|_{\Sp_\ga(p,q')} \lc 2^{-n(b-a\alpha)}  $. The desired bounds follow by summing in $n$ as $\alpha<b/a$. 
  \end{proof}

As a consequence we can obtain a sparse bound  for the lacunary   maximal function $\sup_{k}| m(2^k D) f|$ and indeed a  square function that dominates it.

  \begin{cor} \label{cor:laccor} Let $p,r,q,a,b$ as in Proposition \ref{Miyachiprop}.  Let $m$ be supported in $\{\xi: |\xi|\ge 1\}$ satisfying  
  $\sup_{t>0} t^{b-a \alpha}\|\phi m(t\cdot)\|_{L^r_\alpha} < \infty$. Then we have the $(p,q')$-sparse bound
  \[\int_{\bbR^d} \Big(\sum_{k\in\bbZ} |m(2^k D) f(x)|^2\Big)^{1/2} \om(x) dx \lc \Lamaxga_{p,q'} (f,\om) 
 .\]
 \end{cor}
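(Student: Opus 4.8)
The plan is to deduce Corollary \ref{cor:laccor} from Proposition \ref{Miyachiprop} together with the sparse domination results for $\ell^r$-valued operators proven in \S\ref{sec:maxfctandellr}, in the same spirit that the lacunary maximal bounds were handled in \S\ref{subsec:max op Fourier}. First I would observe that the square function $f\mapsto (\sum_{k\in\bbZ}|m(2^kD)f|^2)^{1/2}$ is precisely $S_2 Tf$ with $T_j f = m(2^{-j}D)f$ (after the standard change of index $k=-j$), so that $T_j$ is the convolution operator with kernel $2^{jd}K(2^j\cdot)$, $K=\cF^{-1}[m]$. Since $m$ is supported in $\{|\xi|\ge 1\}$, the kernel $K$ is Schwartz away from the origin but not compactly supported; however, as in \S\ref{sec:wavelac}, I would split $K=\sum_{\ell\ge 0}K\ast\eta_\ell$ (with $\eta_\ell$ as in \eqref{eqn:resofidmod}) and further decompose each $K\ast\eta_\ell$ into a piece supported in $\{|x|\lesssim 2^\ell\}$ and a rapidly decaying tail supported in $\{|x|\gtrsim 2^\ell\}$; the tail contributes an operator whose associated square function is dominated by $2^{-\ell N}\cM f$, for which the sparse bound follows from \S\ref{subsec:sparse HL}, and this piece is summable in $\ell$.

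For the main (local) piece, fix $\ell$ and write $m = \sum_{\ell\ge 0} m^{(\ell)}$ where $m^{(\ell)}(\xi) = m(\xi)\widehat\eta_\ell(\xi)$; grouping dyadic frequency blocks as in the proof of Proposition \ref{Miyachiprop}, each $m^{(\ell)}$ still satisfies an estimate of the form $\sup_{t>0} t^{b-a\alpha}\|\phi m^{(\ell)}(t\cdot)\|_{L^r_\alpha} \lesssim 2^{-\ell(b-a\alpha)}$. I would then set up the operators $T_j^{(\ell)}$ of convolution with $2^{jd}K^{(\ell)}_{\mathrm{loc}}(2^j\cdot)$ and verify the five hypotheses of Theorem \ref{mainthm}, exactly as in the proof of Theorem \ref{thm:sparsemult}: the support condition \eqref{support-assu} holds because the local kernel lives in $\{|x|\lesssim 2^\ell\}$; the single-scale $(p,q)$ bound \eqref{p-q-rescaled} and the $\varepsilon$-regularity \eqref{p-q-rescaled-reg-a}, \eqref{p-q-rescaled-reg-b} follow from the $M^{p,q}$ estimates on $\phi m^{(\ell)}(t\cdot)$ obtained via the Besov-space embeddings in the proof of Proposition \ref{prop:Hinterpol}, with a loss at most $2^{\ell\varepsilon}$; and the weak type $(p,p)$ and restricted strong type $(q,q)$ bounds \eqref{bdness-wt}, \eqref{bdness-rt} for the full sums follow from Littlewood--Paley theory (for $p\ge 2$) or the single-scale-to-global result of \S\ref{sec:combinescales-mult} (for $1<p<2$), giving $A(p), A(q)=O(2^{-\ell\delta})$ for some $\delta>0$ under the strict inequalities in the hypotheses. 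Crucially, all these bounds hold uniformly when one replaces $B_2=\bbC$ by $\ell^2$ over any finite index set, since the kernel manipulations and the multiplier theorems used are insensitive to the (finite-dimensional, hence trivially Hilbert) range space; this is exactly the setting of Theorem \ref{thm:ellr}(i).

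Applying Theorem \ref{thm:ellr}(i) (with $r=2$, $B_1=\bbC$, $B_2=\bbC$) to the family $\{T_j^{(\ell)}\}$ yields, for each $\ell$, the sparse bound $\langle S_2 T^{(\ell)} f,\om\rangle \lesssim \cC_\ell\,\Lamaxga_{p,q'}(f,\om)$ with $\cC_\ell = O(\ell\,2^{-\ell\delta'})$ for some $\delta'>0$ (here the logarithmic factor in \eqref{eqn:cCdef} only costs an extra factor of $\ell$, harmless for summation, as noted in Remark (iii) following Corollary \ref{mainthm-cor}). Summing over $\ell\ge 0$, using the triangle inequality $\big(\sum_k|\sum_\ell m^{(\ell)}(2^kD)f|^2\big)^{1/2}\le \sum_\ell \big(\sum_k|m^{(\ell)}(2^kD)f|^2\big)^{1/2}$ and Minkowski's inequality, gives the claimed $(p,q')$-sparse bound for the square function, and since the square function pointwise dominates $\sup_k|m(2^kD)f|$ this also recovers the sparse bound for the lacunary maximal operator. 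The main obstacle I anticipate is purely bookkeeping: carefully propagating the $2^{-\ell(b-a\alpha)}$ gain through the Besov embeddings into the $M^{p,q}$ and $\varepsilon$-regularity constants so that, after the unavoidable $2^{\ell\varepsilon}$ and $2^{\ell d(1/p-1/q)}$ losses in Theorem \ref{mainthm}, one is still left with a geometrically decaying series in $\ell$ — this forces the strict inequalities $b>ad(1/p-1/2)$ (resp. $b>ad(1/p-1/q)$) and $\alpha>d(1/p-1/2)$ exactly as in Proposition \ref{Miyachiprop}, and checking that these are compatible is the one place where a small computation is genuinely required.
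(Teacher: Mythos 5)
Your route is genuinely different from the paper's, and as written it has a gap. The paper proves Corollary \ref{cor:laccor} with a short randomization argument that piggybacks entirely on Proposition \ref{Miyachiprop}: since that proposition allows arbitrary coefficients $|c_k|\le 1$, one applies it to $m_k=r_k(v)m$ with Rademacher functions $r_k$, obtaining a sparse bound for $m_v(D)$, $m_v=\sum_k r_k(v)m(2^k\cdot)$, uniformly in $v$; one then linearizes with a unimodular factor $u_v$, integrates in $v$, and uses Khinchine's inequality to pass to the square function. Your plan instead re-runs the multi-scale machinery in an $\ell^2$-valued setting via Theorem \ref{thm:ellr}(i); this amounts to reproving a Hilbert-space-valued version of Theorem \ref{thm:sparsemult} (which is how the paper treats, for instance, Stein's square function), and it gives up exactly the leverage that the arbitrary-signs formulation of Proposition \ref{Miyachiprop} was set up to provide.

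The concrete step that fails is the tail estimate: you claim each frequency block $K*\eta_\ell$ splits into a piece supported in $\{|x|\lc 2^\ell\}$ plus a tail whose associated square function is dominated by $2^{-\ell N}\cM f$ for arbitrary $N$. That kind of rapid decay is available in \S\ref{sec:wavelac} because there the multiplier is a smooth symbol; here $m$ has only finite regularity, namely $\alpha$ derivatives in $L^r$ with $\alpha$ barely exceeding $d(1/p-1/2)$, so the kernel of $m\widehat\eta_\ell(D)$ has tails decaying only at a finite polynomial rate tied to $\alpha$, and no spatial cut-off produces an error that is simultaneously $O(2^{-\ell N})$ for all $N$ and pointwise controlled by $\cM f$. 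Handling the tails with the decay actually available is precisely where the work lies: it is the shell-by-shell control of $\|[\phi m(t\cdot)]*\widehat\Psi_\ell\|_{M^{p,q}}$ encoded in $\cB[m]$ (see \eqref{eqn:Bm}, and compare the analogous computation \eqref{eqn:decay-ine-ell} in the proof of Proposition \ref{cor:radial mult}, where the gain per shell is only a small power of $2^{-\ell}$), and the content of Proposition \ref{prop:Hinterpol} is that the Besov embeddings convert the $L^r_\alpha$ hypothesis into just enough such decay to beat the losses $2^{\ell d(1/p-1/q)}$ and $2^{\ell\varepsilon}$. Asserting an $O(2^{-\ell N})$ error and relegating the rest to ``bookkeeping'' skips the step where the hypotheses on $a,b,\alpha$ are actually consumed. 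A secondary caution: the square function is not the diagonal action of a scalar multiplier on $\ell^2$-valued data, so uniformity ``in the range space'' is not the Marcinkiewicz--Zygmund extension; you need either the operator-valued ($\bbC\to\ell^2$) formulation of \eqref{multconcl} and Theorem \ref{thm:sparsemult}, or a randomization in the hypotheses --- and once you randomize, you have essentially rediscovered the paper's shorter proof.
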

 
 \begin{proof} 
  Consider  the multiplier 
  $m_v(\xi)=\sum_{k\in \bbZ} r_k(v) m(2^k\xi)$ where 
  $(r_k)_{k\in \bbN}$ denotes the sequence of Rademacher functions defined on the unit interval.  Then by Proposition \ref{Miyachiprop} applied to $m_k(\xi)=r_k(v) m(\xi)$ we obtain
   \Be \label{eqn:m_v}\Big| \int_{\bbR^d} m_v(D)  f_1 (x)f_2(x) dx\Big|\lc \Lamaxga_{p,q'} (f_1,f_2) ,
   \Ee
   with the implicit constant independent of $v$. Let $u_v(x) = \frac{m_v(D) f(x)} {|m_v(D) f(x) |} $ so that $u_v$ is unimodular, and we also get by \eqref{eqn:m_v} with $f_2=\om u_v$
  \begin{align*} \int_{\bbR^d} |m_v(D)  f |\om(x) dx & = \int_{\bbR^d} m_v(D) f(x)\,\omega(x) u_v(x) dx \\
    &\lc \Lamaxga_{p,q'} (f,u_v \om) =\Lamaxga_{p,q'} (f,\om).
   \end{align*}
  Integrating in $v$ and using Fubini's theorem and 
  Khinchine's inequality, one obtains
  \begin{align*}
      &\int_{\bbR^d}\Big(\sum_{k\in \bbZ}|m_k(2^k D) f|^2\Big)^{1/2} \om(x) dx\lc 
      \int_{\bbR^d} \int_0^1 |m_v(D) f(x) | \, dv\, \om(x) \,dx
      \\
        &=\int_0^1 \int_{\bbR^d} |m_v(D) f(x) | \,  \om(x) \,dx \, dv\,\lc  \Lamaxga_{p,q'} (f,\om)
   \end{align*}
   and the proof is complete.
  \end{proof}

  \begin{remarka}
   Similar results can be obtained for versions of the previous multiplier classes if $a<0$ and $m$ is supported in $\{ \xi : |\xi| \leq 1\}$. We omit the statements.
  \end{remarka}
  
  \subsubsection{Multiscale variants of oscillatory multipliers}\label{sec:oscmultiscale}
 Given $a>0$, $a \neq 1$, $b \in \bbR$,  consider  the oscillatory Fourier multipliers \Be\label{eqn:oscmultdef}
m_{a,b} (\xi)=\chi_{\infty} (\xi) |\xi|^{-b} e^{i|\xi|^a}, 
\Ee where $\chi_\infty \in C^\infty (\bbR^d)$ is such that $\chi(\xi)=0$ for $|\xi|\le 1$ and $\chi_\infty(\xi)=1 $ for $|\xi|\ge 2$. As already mentioned the operators $m_{a,b}(D)$ are sometimes  considered model cases of the class $\mathrm{Miy}(a,b)$, known to be bounded on $L^p$ if and only if $b\ge a d|1/p-1/2|$ and $1 < p < \infty$; see %
\cite{Stein1971-ann}, \cite{FeffermanStein-Acta}, \cite{Miyachi1980}. This result is sharp when $a\neq 1$; the case $a=1$ forms an exceptional case  corresponding to the wave multipliers considered in \S\ref{sec:wavelac}; we exclude it in this section.

Given a sequence $(c_k)_{k\in \bbZ}$ with $|c_k|\le 1$ we form the multiscale variant
\Be\label{eqn:mab-mscale}
m(\xi) =\sum_{k\in \bbZ} c_k m_{a,b}(2^k\xi)
\Ee which is bounded on $L^p$ for $b>ad|1/p-1/2|$.  
Proposition \ref{Miyachiprop} shows that for $1<p\le 2$ we have  $m(D)\in \Sp(p,2)$  for $q\le 2$, but in order to get a $\Sp(p,q')$ bound for  $q>2$ we  had to impose the  more restrictive condition  $b>ad(1/p-1/q)$. \
We show that this estimate can be improved, in particular an  additional restriction is not necessary for $q\le p'$ and  in this range we can upgrade the $\Sp(p,2)$ bound  to an $\Sp(p,p) $ bound for the  multipliers in \eqref{eqn:mab-mscale} (see Figure \ref{fig:sparse region osc mult}).

This improvement relies on special features of the multipliers $m_{a,b}$ which are not shared by a general multiplier in the class $\mathrm{Miy}(a,b)$.
Unlike in the proof of Proposition \ref{Miyachiprop} we can no longer rely on analyzing the problem on the multiplier side. Instead we have to analyze Schwartz kernels and employ stationary phase estimates, taking advantage of the fact that  the Hessian of the phase function $\xi\mapsto |\xi|^a$ is nondegenerate when  $a\neq 1$, $a>0$. Incidentally, this also reveals that the $m_{a,b}$ satisfy better $L^p \to L^q$ mapping properties than a general multiplier in $\mathrm{Miy}(a,b)$ when $1 < p \leq 2$, $2 < q \leq p'$. It is therefore more natural to base the proof directly on Theorem \ref{thm:sparsemult} rather than on  the formulation in Theorem \ref{thm:localtoglobal-sparse-s}.

\begin{figure}
\begin{tikzpicture}[scale=2]

\begin{scope}[scale=1.8]

\draw[thick,->] (0,0) -- (1.1,0) node[below] {\small{$ \frac 1 p$}};
\draw[thick,->] (0,0) -- (0,1.1) node[left] {\small{$ \frac{1}{q'}$}};

\draw[loosely dashed] (0,1) -- (1.,1.)  -- (1.,0); 
\draw[loosely dashed] (0,1) -- (1/6,5/6);
\draw[loosely dashed] (5/6,1/6) -- (1,0);
\draw (1/6,5/6) -- (0.5,0.5)  -- (5/6,1/6);

\draw (.5,.02) -- (.5,-.02) node[below] {\small{$ \tfrac 12$}};
\draw (.02,.5) -- (-.02,.5) node[left] {\small{$ \tfrac 12$}};

\draw (5/6,1/6) circle (.04em) ;  

\draw (5/6,.02) -- (5/6, -.02) node[below] {\small{$\tfrac{1}{2} + \tfrac{b}{ad}$}} ;  

\draw (1/6,5/6) circle (.04em) ;

\draw[loosely dashed]  (0.5,0.5)  -- (1,1);

\draw[densely dotted] (5/6,1/6) -- (5/6,1/2);
\draw[densely dotted] (1/6,5/6) -- (1/2,5/6);
\draw[densely dotted] (5/6,1/2) -- (1/2,5/6); 

\fill[pattern=north west lines, pattern color=gray] (5/6,1/6) -- (5/6,1/2) -- (1/2, 5/6) --  (1/6,5/6) -- (5/6,1/6);

\draw (5/6,1/2) circle (.04em)  ;
\draw (1/2,5/6) circle (.04em) ;

\draw (2/3,2/3) circle (.04em)  ;

\draw (.02, 2/3) -- (-.02, 2/3) node[left] {\small{$\tfrac{1}{2} + \tfrac{b}{2ad}$}} ;  

\begin{scope}[xshift=50] 
\draw[thick,->] (0,0) -- (1.1,0) node[below] {\small{$ \frac 1 p$}};
\draw[thick,->] (0,0) -- (0,1.1) node[left] {\small{$ \frac{1}{q'}$}};

\draw[loosely dashed] (0,1) -- (1.,1.)  -- (1.,0); 
\draw[loosely dashed] (0,1) -- (1/6,5/6);
\draw[loosely dashed] (5/6,1/6) -- (1,0);
\draw (1/6,5/6) -- (0.5,0.5)  -- (5/6,1/6);

\draw (.5,.02) -- (.5,-.02) node[below] {\small{$ \tfrac 12$}};
\draw (.02,.5) -- (-.02,.5) node[left] {\small{$ \tfrac 12$}};

\draw (5/6,1/6) circle (.04em) ;  

\draw (5/6,.02) -- (5/6, -.02) node[below] {\small{$\tfrac{1}{2} + \tfrac{b}{ad}$}} ;  

\draw (1/6,5/6) circle (.04em) ;

\draw[loosely dashed]  (0.5,0.5)  -- (1,1);

\draw[densely dotted] (5/6,1/6) -- (5/6,5/6)--(1/6,5/6);

\fill[pattern=north west lines, pattern color=gray] (5/6,1/6) -- (5/6,5/6) -- (1/6,5/6) -- (5/6,1/6);

\draw (5/6,5/6) circle (.04em)  ;

\end{scope}
\end{scope}

\end{tikzpicture}

\caption{Sparse bounds for a general multiplier in $\mathrm{Miy}(a,b)$ (left) and for the oscillatory multipliers $m_{a,b}$ (right) for given $a,b >0$. The condition (ii) in Proposition \ref{Miyachiprop} can be relaxed for the specific $m_{a,b}$ (Proposition \ref{prop:oscmult-msc}).}
\label{fig:sparse region osc mult}

\end{figure}
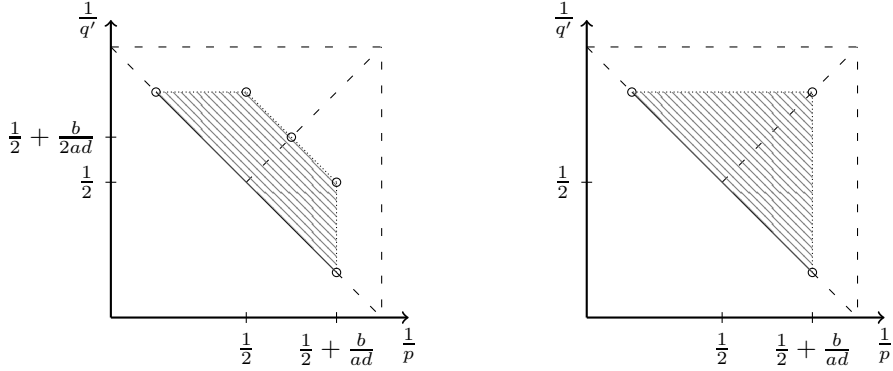

\begin{prop}\label{prop:oscmult-msc} 
Let $1<p\le 2$, 
$a\in (0,\infty)\setminus \{1\}$,   and
 $m$  as in \eqref{eqn:mab-mscale}, with $\sup_k|c_k|\le 1$.
 Let $b>ad(1/p-1/2)$. Then $m(D)\in \Sp(p,p)$.
\end{prop}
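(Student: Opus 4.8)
The strategy is to reduce the claim to an application of Theorem~\ref{thm:sparsemult}, but with $p_1=p$ and $q_1=p$ (so that the target exponent pair is $(p,p')$ rather than $(p,q')$ with $q\le p'$); in particular, in the notation of Theorem~\ref{thm:sparsemult} we take $\sH_1=\sH_2=\bbC$ and must verify that $\cB_\circ[m]<\infty$ and $\cB[m]<\infty$ with the choice $q=p$, i.e.\ with no $L^p\to L^q$ improving at the single scale level but requiring instead a sharp $M^{p,p}$-type decay on the localizations $[\phi m(t\cdot)]*\widehat\Psi_\ell$. Since $\sH_1=\sH_2=\bbC$, recall that the finiteness of $\cB[m]$ alone implies that of $\cB_\circ[m]$, so the whole matter comes down to proving
\Be \label{eqn:oscgoal}
\sup_{t>0}\big\|[\phi m(t\cdot)]*\widehat\Psi_\ell\big\|_{M^{p}} \lc 2^{-\ell\delta}
\Ee
for some $\delta>0$, summable against the factor $2^{\ell d(1/p-1/p)}(1+\ell)=(1+\ell)$.

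\textbf{Reduction to a single scale.} First I would dilate: since $m(\xi)=\sum_k c_k m_{a,b}(2^k\xi)$ and $\phi$ is supported in $|\xi|\approx 1$, for each fixed $t$ the product $\phi m(t\cdot)$ only sees $O(1)$ many values of $k$, and by $2^k$-dilation invariance of the $M^p$ norm it suffices to bound $\|[\phi m_{a,b}(t\cdot)]*\widehat\Psi_\ell\|_{M^p}$ uniformly in $t$ in a bounded range, say $t\approx 1$ (the contributions from $t\gg 1$ or $t\ll 1$ are handled by the smooth cutoff $\chi_\infty$ and the rapid decay of $\widehat\Psi_\ell$, exactly as in \eqref{eqn:localtoglobalpf-tail}). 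So the heart of the matter is a single oscillatory piece. Writing $K=\cF^{-1}[\phi m_{a,b}(t\cdot)]$ and $K_\ell=K\cdot\Psi_\ell$ (the $2^\ell$-spatial localization), I want $\|\widehat{K_\ell}\|_{M^p}\lc 2^{-\ell\delta}$.

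\textbf{Stationary phase for $K_\ell$.} Here is where one exploits the nondegeneracy of the Hessian of $\xi\mapsto|\xi|^a$ for $a\ne 1$. By the classical stationary phase / oscillatory integral estimates (Littman, Miyachi, Fefferman--Stein), the kernel $\cF^{-1}[|\xi|^{-b}e^{i|\xi|^a}\chi_\infty(\xi)]$ restricted to frequencies $|\xi|\approx\lambda$ is, up to lower-order terms, an oscillatory function of the form $\lambda^{d-b}e^{ic\lambda^{a/(a-1)}|x|^{a/(a-1)}}$ times a symbol, concentrated near $|x|\approx\lambda^{a-1}$; more precisely, decomposing $m_{a,b}$ into dyadic frequency annuli $|\xi|\approx 2^\mu$, the $\mu$-th piece has kernel supported essentially where $|x|\approx 2^{\mu(a-1)}$ with sup norm $\approx 2^{\mu(d-b)}$ and appropriate oscillation. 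Thus $K_\ell$, the part of $K$ living at spatial scale $|x|\approx 2^\ell$, picks out the single frequency block $2^\mu\approx 2^{\ell/(a-1)}$ (for $a>1$; for $0<a<1$ one has $2^\mu\approx 2^{-\ell/(1-a)}$ and $\mu\to-\infty$, but since $m_{a,b}$ is supported in $|\xi|\ge 1$ only $\mu\ge 0$ survives, so in that regime only $\ell$ in a bounded range contribute and the tail is even easier). For $a>1$: $\widehat{K_\ell}$ is then (essentially) $|\xi|^{-b}e^{i|\xi|^a}$ times a bump adapted to $|\xi|\approx 2^{\ell/(a-1)}$, and the sharp bound $\|\widehat{K_\ell}\|_{M^p}\lc 2^{-\ell/(a-1)\cdot b} \cdot 2^{\ell/(a-1)\cdot ad(1/p-1/2)} = 2^{-\ell\frac{b-ad(1/p-1/2)}{a-1}}$ follows from the $L^p$ boundedness theorem for $m_{a,b}$ applied at a single frequency scale (equivalently, Miyachi's single-annulus estimate $\|m_{a,b}\bbone_{|\xi|\approx\lambda}\|_{M^p}\lc \lambda^{ad(1/p-1/2)-b}$). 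Since $b>ad(1/p-1/2)$ by hypothesis, the exponent $\delta=\frac{b-ad(1/p-1/2)}{a-1}>0$, giving \eqref{eqn:oscgoal} with room to spare to absorb the $(1+\ell)$ factor.

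\textbf{Assembling.} With \eqref{eqn:oscgoal} in hand, $\cB[m]=\sum_\ell \sup_t\|[\phi m(t\cdot)]*\widehat\Psi_\ell\|_{M^p}(1+\ell) \lc \sum_\ell 2^{-\ell\delta}(1+\ell)<\infty$, hence $\cB_\circ[m]<\infty$ as well (Hilbert spaces trivial), and Theorem~\ref{thm:sparsemult} applied with the pair $(p,p)$ gives $\cT_m=m(D)\in\Sp(p,\bbC,p',\bbC)=\Sp(p,p)$ as claimed. For completeness one should also check $m\in M^p$, but this is classical (it follows from the same single-annulus estimate summed dyadically, or is quoted from \cite{Miyachi1980}); alternatively it is subsumed by the sparse bound via the embedding remarks in the introduction.

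\textbf{Main obstacle.} The delicate point is the stationary phase analysis identifying precisely which frequency block of $m_{a,b}$ is ``seen'' by the spatial localization $\Psi_\ell$, and controlling the error terms in the asymptotic expansion of the oscillatory kernel (the non-stationary and boundary contributions, which are rapidly decaying but must be shown not to spread across many spatial scales). This is standard but needs care; once the kernel is pinned to a single annulus, the rest is bookkeeping with the known $M^p$ bounds for $m_{a,b}$. A secondary nuisance is the bookkeeping for $0<a<1$ versus $a>1$, which is why I would treat the sign of $a-1$ separately, noting that in the subunit case the support condition $|\xi|\ge1$ truncates the relevant range of $\ell$.
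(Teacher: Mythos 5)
There is a genuine gap, and it lies in which exponent pair you feed into Theorem \ref{thm:sparsemult}. That theorem yields $\cT_m\in\Sp(p,q')$, where $q'$ is the exponent of the average taken on the \emph{second} function in the sparse form; to reach the asserted conclusion $m(D)\in\Sp(p,p)$ you must therefore take $q'=p$, i.e.\ $q=p'$, not $q=p$. With your choice $q=p$ the theorem only gives $\Sp(p,p')$, which (since $p\le 2\le p'$) is the \emph{weakest} bound in the admissible range and is already contained in Proposition \ref{Miyachiprop} (which even gives $\Sp(p,2)$); your final identification ``$\Sp(p,\bbC,p',\bbC)=\Sp(p,p)$'' is exactly where this slips. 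Running the theorem with the correct choice $q=p'$, the quantity $\cB[m]$ carries the exponentially growing weight $2^{\ell d(1/p-1/p')}=2^{2\ell d(1/p-1/2)}$, and a diagonal decay estimate $\|[\phi m(t\cdot)]*\widehat\Psi_\ell\|_{M^{p,p}}\lc 2^{-\ell\delta}$ of the kind you prove cannot absorb it. One needs genuinely off-diagonal $M^{p,p'}$ decay on the localizations, and this is precisely what the paper's proof supplies: after decomposing $m=\sum_n m^n$ by the frequency $|\xi|\approx 2^n$ of the building block $m_{a,b}$, stationary phase shows the kernel of $\phi m^n(t\cdot)$ is concentrated where $|x|\approx 2^{na}$ with sup norm $\lc 2^{-n(b+ad/2)}$ (an $M^{1,\infty}$ bound), which interpolated with the trivial $M^{2,2}$ bound $2^{-nb}$ gives $\|[\phi m^n(t\cdot)]*\widehat\Psi_\ell\|_{M^{p,p'}}\lc 2^{-n(b+ad(1/p-1/2))}$ for $\ell\approx na$ and rapid decay otherwise; multiplying by the weight $2^{\ell d(1/p-1/p')}$ at $\ell\approx na$ leaves $2^{-n(b-ad(1/p-1/2))}$, so the hypothesis $b>ad(1/p-1/2)$ is exactly what makes the sum converge. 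Your stationary-phase discussion is only used to locate a frequency block and then invoke Miyachi's diagonal single-annulus $M^p$ bound, so the dispersive $2^{-nad/2}$ gain — the whole point of restricting to the model multipliers $m_{a,b}$ rather than the general class $\mathrm{Miy}(a,b)$ — never enters, and the improvement over Proposition \ref{Miyachiprop} is not obtained.

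A secondary, repairable inaccuracy: for fixed $t$ it is not true that $\phi\, m(t\cdot)$ sees only $O(1)$ values of $k$; since $m_{a,b}(2^kt\xi)$ is supported in $|2^kt\xi|\ge 1$, every $k$ with $2^kt\gc 1$ contributes on the support of $\phi$. What is true — and what the paper exploits through the decomposition in $n$ — is that after the spatial localization $*\,\widehat\Psi_\ell$ only the scales with $(2^kt)^a\approx 2^\ell$ survive up to rapidly decaying errors; if you rebuild your argument this way, at the level of $M^{p,p'}$ rather than $M^{p,p}$, you recover the paper's proof.
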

\begin{proof}

We decompose as in the proof of Proposition \ref{Miyachiprop}. Recall that 
$\widehat\eta_0$ is supported in $\{|\xi|<1\}$ and $m_{a,b}$ in $\{|\xi|>1\}$, hence $\widehat{\eta_0}m_{a,b}=0$, and  we can write
$m=\sum_{n=1}^\infty m^n$ where
\Be \notag \label{eqn:mn} m^n(\xi)=\sum_{k\in\bbZ} c_k m_{a,b}(2^k\xi) \widehat\eta_n(2^k\xi). \Ee
We shall show that 
\Be\label{eqn:mnsparse}\|m^n(D)\|_{\Sp_\ga(p,p)}\lc 2^{-n\eps(p)}\Ee
with $\eps(p)>0$ and then sum  in $n$.

To verify the claim \eqref{eqn:mnsparse}
we use Theorem \ref{thm:sparsemult}. For this we have to analyze, for radial $\phi\in C^\infty_c$ supported in $\{\xi: 1/2<|\xi|<2\}$, the expression 
\[ \|[\phi m^n(t\cdot)]*\widehat \Psi_\ell\|_{M^{p,p'}} 
\le \sum_{\substack{k\in \bbZ\\2^{n-3}\le 2^k t\le 2^{n+1}}}
\|[\phi  m_{a,b}(2^kt\cdot) \widehat\eta_n(2^kt\cdot)]*\widehat \Psi_\ell \|_{M^{p,p'}}
\]
and show that for some $\eps>0$
\Be\label{eqn:verificationThm} 
\sum_{\ell\ge 0} \sup_{t>0} \|[\phi m^n(t\cdot)]*\widehat \Psi_\ell\|_{M^{p,p'}} 2^{\ell (d(1/p-1/p')+\eps)} \lc 2^{-n\eps(p)} .
\Ee

To this end, fix $k,t$ with $2^{n-3} \le 2^kt\le 2^{n+1}$ and analyze the Fourier inverse of 
$\phi m_{a,b}(2^kt\cdot)\widehat {\eta}_n(2^kt\cdot)$, i.e. 
\[K_n(x) =(2\pi)^{-d}\int \phi(\xi) \widehat\eta_n(2^kt \xi) \chi_\infty(2^kt \xi) (2^kt|\xi|)^{-b} e^{i\inn{x}{\xi} + i(2^k t)^a|\xi|^a } d\xi.
\]
The phase function $\inn{x}{\xi} + (2^k t)^a|\xi|^a $ becomes stationary on the support of $\phi$ only when $|x|\approx (2^kt)^a\approx 2^{na}$ and  the  Hessian  of $|\xi|^a$ is nondegenerate there.  Thus by integration by parts we see that there are constants $c_1<1$, $C_1>1$ such that
\[ |K_n(x) | \le \begin{cases}
 C_N 2^{-n(b+N) }&\text{ for } |x|\le c_1 2^{na} 
 \\
 C_N 2^{-nb} |x|^{-N} &\text{ for }|x| \ge C_1 2^{na}
\end{cases}
\]
and by the method of stationary phase 
\[ |K_n(x) |\lc 2^{-n(b+ad/2)},  \quad \text{ for } \,\,c_1 2^{na}\le|x|\le C_1 2^{na}.\]
This implies for  $2^{n-3}\le 2^k t\le 2^{n+1} $ and suitable $C$, independently of $k$,  $t$, 
\begin{multline}  \label{eqn:oneinfty}  \|[\phi m^n(2^k t\cdot)]*\widehat \Psi_{\ell}\|_{M^{1,\infty}} \\ \lc 
\begin{cases} 2^{-n(b+ad/2)} &\text{ for } |\ell-na|\le C
\\ \min\{ C_N2^{-n(b+N) },\, C_N2^{-n b} 2^{-\ell N}  \} &\text{ for } |\ell-na|>C.
\end{cases}
\end{multline}  We also have the $M^{2,2}$ bound 
\begin{multline}\label{eqn:twotwo}
\|[\phi m^n(2^k t\cdot)]*\widehat \Psi_{\ell}\|_\infty \\\lc 
\begin{cases} 2^{-nb}  &\text{ for } |\ell-na|\le C
\\ \min\{ C_N2^{-n(b+N-d) } , C_N\,2^{-n b} 2^{-\ell (N-d)} \}&\text{ for } |\ell-na|>C.
\end{cases}
\end{multline}
Interpolating \eqref{eqn:oneinfty} and \eqref{eqn:twotwo} we get 
\[\|[\phi m^n(2^k t\cdot)]*\widehat \Psi_{\ell}\|_{M^{p,p'}} \lc 
\begin{cases} 2^{-n(b+ad(1/p-1/2))}  &\text{ for } |\ell-na|\le C
\\ 2^{-nb} C_N\min\{ 2^{-nN}, 2^{-\ell N} \}  &\text{ for } |\ell-na|>C.
\end{cases}  
\]
Only the five terms with $2^{n-3}\le 2^k t\le 2^{n+1} $ make a contribution. We sum those terms, then take a supremum in $t$ (observing that the displayed bound above is independent of $t$) and then sum in $\ell\ge 0$. We obtain 
\begin{multline*}
\sum_{\ell\ge 0} \sup_{t>0} \|[\phi m^n(t\cdot)]*\widehat\Psi_\ell\|_{M^{p,p'} } 2^{\ell(d(\frac 1p-\frac{1}{p'}) +\ep)} 
\\
\lc_\ep 2^{-n(b+ad(\frac 1p-\frac 12))} 2^{na(d(\frac 1p-\frac 1{p'})+\eps)}\lc
2^{-n ( b-ad(\frac 1p-\frac 12-\ep))} .
\end{multline*}
Since we assume $b>ad(1/p-1/2)$ this leads to  
\eqref{eqn:verificationThm} and then to the claim 
\eqref{eqn:mnsparse} via Theorem \ref{thm:sparsemult}. 
\end{proof}

Given Proposition \ref{prop:oscmult-msc}, we can now derive an improved sparse bound for a lacunary maximal function and a corresponding square function associated with the multipliers $m_{a,b}$; thus for these examples we improve on Corollary \ref{cor:laccor}.

\begin{cor} Let $1<p\le 2$, $a>0$, $b>ad(1/p-1/2)$. Then 
  \[\int_{\bbR^d} \Big(\sum_{k\in\bbZ} |m_{a,b}(2^k D) f(x)|^2\Big)^{1/2}  \om(x) dx \lc \Lamaxga_{p,p} (f,\om) 
 .\] \end{cor}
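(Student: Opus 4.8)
The strategy is to reduce the square function bound to the already-established multiscale sparse domination for a single multiplier, exactly as in the proof of Corollary~\ref{cor:laccor}, but now invoking the sharper Proposition~\ref{prop:oscmult-msc} in place of Proposition~\ref{Miyachiprop}. The key observation is that the square function $f\mapsto (\sum_{k\in\bbZ}|m_{a,b}(2^kD)f|^2)^{1/2}$ can be linearized by Rademacher randomization: introducing the sequence $(r_k)_{k\in\bbN}$ of Rademacher functions on $[0,1]$, one considers for each $v\in[0,1]$ the randomized multiplier
\[ m_v(\xi)=\sum_{k\in\bbZ} r_k(v)\, m_{a,b}(2^k\xi), \]
which is of precisely the form \eqref{eqn:mab-mscale} with coefficients $c_k=r_k(v)$ satisfying $\sup_k|c_k|\le 1$. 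Hence Proposition~\ref{prop:oscmult-msc} applies uniformly in $v$ and yields $m_v(D)\in\Sp(p,p)$ with a bound independent of $v$; equivalently, for all simple $f_1\in\mathrm S_{\bbR}$ and simple $f_2\in\mathrm S_{\bbR}$,
\[ \Big|\int_{\bbR^d} m_v(D)f_1(x)\,f_2(x)\,dx\Big|\lesssim \Lamaxga_{p,p}(f_1,f_2), \]
with implicit constant uniform in $v$.

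The next step is to pass from the bilinear form bound to a bound on the integral of $|m_v(D)f|$ against a nonnegative weight. Given a nonnegative simple $\omega$, set $u_v(x)=\overline{m_v(D)f(x)}/|m_v(D)f(x)|$ (and $u_v(x)=0$ where $m_v(D)f(x)=0$), so that $u_v$ is bounded by $1$ and $|m_v(D)f(x)|=m_v(D)f(x)\,u_v(x)$ pointwise. Applying the bilinear bound with $f_2=\omega u_v$ and using $\langle\omega u_v\rangle_{Q,p}\le\langle\omega\rangle_{Q,p}$ (which gives $\Lamaxga_{p,p}(f,\omega u_v)\le\Lamaxga_{p,p}(f,\omega)$) yields
\[ \int_{\bbR^d}|m_v(D)f(x)|\,\omega(x)\,dx\lesssim\Lamaxga_{p,p}(f,\omega), \]
again uniformly in $v$. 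One then integrates this inequality in $v$ over $[0,1]$, applies Fubini's theorem to interchange the $v$-integral with the $x$-integral, and invokes Khinchine's inequality in the form $\big(\sum_k|a_k|^2\big)^{1/2}\lesssim\int_0^1\big|\sum_k r_k(v)a_k\big|\,dv$ with $a_k=m_{a,b}(2^kD)f(x)$ for each fixed $x$. This produces
\[ \int_{\bbR^d}\Big(\sum_{k\in\bbZ}|m_{a,b}(2^kD)f(x)|^2\Big)^{1/2}\omega(x)\,dx\lesssim\Lamaxga_{p,p}(f,\omega), \]
for all simple $f$ and simple nonnegative $\omega$, which is the desired sparse bound. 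A routine limiting/density argument (as in Lemma~\ref{lem:density}, or directly by monotone convergence after truncating the sum over $k$) extends this to general $f\in L^p$ and nonnegative measurable $\omega$.

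The only genuinely nontrivial input here is Proposition~\ref{prop:oscmult-msc} itself, which is already proved; the remaining steps are the standard Rademacher--Khinchine linearization of square functions. Thus I anticipate no real obstacle: the main point to be careful about is justifying the interchange of integration (Fubini) and the use of Khinchine's inequality pointwise in $x$, which requires that the relevant quantities be finite, guaranteed by working first with finitely many $k$ (so that $\sum_{k}|m_{a,b}(2^kD)f(x)|^2$ is a finite sum for Schwartz $f$) and then letting the truncation parameter tend to infinity via monotone convergence, using that the constant in Proposition~\ref{prop:oscmult-msc} is independent of the coefficient sequence and in particular of the truncation. Since the square function pointwise dominates the lacunary maximal function $\sup_k|m_{a,b}(2^kD)f|$, the same bound holds for that maximal operator, giving the improvement over Corollary~\ref{cor:laccor} for this specific family.
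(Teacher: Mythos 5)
Your argument is correct and is essentially identical to the paper's proof: the paper also deduces the corollary from Proposition \ref{prop:oscmult-msc} via the Rademacher randomization of \eqref{eqn:mab-mscale} with $c_k=r_k(v)$, the linearization $f_2=\om u_v$, and Fubini plus Khinchine, exactly as in the proof of Corollary \ref{cor:laccor}. Your added care with the complex conjugate in $u_v$ and with truncating the $k$-sum before applying Fubini/Khinchine only makes the same argument more explicit.
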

 \begin{proof}
Choose $c_k=\pm 1$ in \eqref{eqn:oscmultdef}. Then  Proposition \ref{prop:oscmult-msc} together with a randomization argument exactly as in the proof of Corollary \ref{cor:laccor}  yields the assertion.
\end{proof}

\subsection{Prototypical versions of singular Radon transforms} \label{sec:SingRadon} 
Let $\sigma $ be a bounded Borel measure 
supported in $\{x:|x|\le 1\}$ and  satisfying 
\begin{align}\label{eqn:SRTassu}
    \int d\sigma=0 \quad \text{ and }\quad \sup_{\xi \in \widehat{\bbR}^d} (1+|\xi|)^{b}  |\widehat \sigma(\xi)|<\infty 
    \quad \text{for some  $b>0$}.
    \end{align}
    Let $\{a_j\}_{j\in \bbZ}$ satisfy \Be\label{eqn:ajbd} |a_j|\le 1
    \Ee and define 
     \Be \notag \label{eqn:SRTdefN1N2} \cS^{N_1,N_2}  f(x) =\sum_{j =N_1}^{N_2} a_j 2^{-jd} \sigma(2^{-j} \cdot)* f(x)\Ee
     and 
     \Be\label{eqn:SRTdef} \cS f(x)= \lim_{\substack {N_2\to \infty\\N_1\to-\infty} }\cS^{N_1,N_2} f(x).
     \Ee
     This is the ``prototypical" singular Radon transform considered by R. Oberlin 
\cite{roberlin}, see also  Duoandikoetxea and Rubio de Francia \cite{DR86}. It is easy to see using the cancellation of the kernel  that the limit exists pointwise for $C^\infty_c$ functions.

In addition, we assume that  $\sigma$ is $L^{p_0}$ improving, i.e. 
    \Be \label{Lpimproving}
    \|\sigma*f\|_{q} \le A \|f\|_{p_0}\Ee
    for some $q$ with $p_0<q<\infty$. 
    The  following result  is due to R.  Oberlin. 
    \begin{prop}[\cite{roberlin}]\label{prop:oberlin} 
    Let $\sigma$ be as in \eqref{eqn:SRTassu}, and $\{a_j\}_{j \in \bbZ}$, $\cS$  be as in \eqref{eqn:ajbd}, \eqref{eqn:SRTdef}.  Let $1<p_0<p<q<\infty$ and assume that \eqref{Lpimproving} holds.  Then $\cS$ satisfies the $(p,q')$-sparse bound
    \[|\inn{\cS f}{\om} |\lc \Lamaxga_{p,q'}(f,\om). \] The same sparse bound holds for the operators $\cS_{N_1,N_2}$, uniformly in $N_1,N_2$.
    \end{prop}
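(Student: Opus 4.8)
The strategy is to verify that the family $\{T_j\}_{j\in\bbZ}$ defined by $T_jf = a_j\,2^{-jd}\sigma(2^{-j}\cdot)*f$ satisfies the hypotheses of Theorem~\ref{mainthm} for the exponents $(p,q)$ in the stated range, and then to pass to the limit defining $\cS$ using the uniformity of the constant in $N_1,N_2$ (invoking a density/limiting argument, e.g.\ Lemma~\ref{lem:density}). First I would record the trivial reductions. The support condition \eqref{support-assu} is immediate since $\sigma$ is supported in $\{|x|\le 1\}$, so $\dil_{2^j}T_j$ has convolution kernel $a_j\sigma$, supported in the unit ball. The single scale $(p,q)$ bound \eqref{p-q-rescaled} follows because $\dil_{2^j}T_j$ is convolution with $a_j\sigma$ and $\|a_j\sigma*f\|_q\le A\|f\|_{p_0}\lesssim A\|f\|_p$ by \eqref{Lpimproving} together with the compact support of $\sigma$ (Young's inequality upgrades $L^{p_0}\to L^q$ to $L^p\to L^q$ for $p_0<p$, as noted after Theorem~\ref{thm:localtoglobal-sparse}); here $|a_j|\le 1$ gives uniformity in $j$.

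The two remaining groups of hypotheses are the weak/restricted strong type bounds \eqref{bdness-wt}, \eqref{bdness-rt} for the partial sums, and the $\eps$-regularity \eqref{p-q-rescaled-reg-a}, \eqref{p-q-rescaled-reg-b}. For the regularity: $\dil_{2^j}T_j$ is convolution with $a_j\sigma$, which has multiplier $a_j\widehat\sigma(\xi)$; by \eqref{eqn:SRTassu} we have $|\widehat\sigma(\xi)|\lesssim (1+|\xi|)^{-b}$, so on the annulus $\{|\xi|\approx\la\}$ the multiplier is $O(\la^{-b})$. Since $\sigma$ is $L^{p_0}$-improving and supported in the unit ball, interpolating the $L^2\to L^2$ bound $O(\la^{-b})$ on frequency-localized pieces with the trivial $L^p\to L^q$ bound gives a power decay $\la^{-\eps}$ for frequency-localized input, for some $\eps>0$ depending on $p,q,b$; Lemma~\ref{lem:regbyFourier} then converts this into \eqref{p-q-rescaled-reg-a}, and since the $T_j$ are translation invariant, \eqref{p-q-rescaled-reg-b} is equivalent (Remark (ii) after Theorem~\ref{mainthm}). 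For the weak-type and restricted-strong-type bounds on $\cS^{N_1,N_2}$, this is the standard Duoandikoetxea--Rubio de Francia square-function argument \cite{DR86}: decompose $\sigma*f=\sum_\ell \sigma*\eta_\ell*f$ using a Littlewood--Paley decomposition, use the cancellation $\int d\sigma=0$ and the decay $|\widehat\sigma(\xi)|\lesssim|\xi|^{-b}$ to get geometric decay of the $L^2$ operator norms of the corresponding square functions, sum in $\ell$, and conclude $L^p$-boundedness of $\cS^{N_1,N_2}$ uniformly in $N_1,N_2$ for all $1<p<\infty$; since $L^p$ boundedness implies both weak type $(p,p)$ and restricted strong type $(q,q)$ bounds, \eqref{bdness-wt} and \eqref{bdness-rt} hold (with $p<q$ and $p>1$ as required).

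With all hypotheses verified, Theorem~\ref{mainthm} yields $\|\cS^{N_1,N_2}\|_{\Sp_\ga(p,q')}\lesssim_{p,q,\eps,\ga,d}\cC$ uniformly in $N_1\le N_2$, where $\cC$ is bounded independently of $N_1,N_2$ since each input constant is. This gives the sparse bound for $\cS^{N_1,N_2}$ claimed in the last sentence of the proposition. For $\cS$ itself, since $\cS^{N_1,N_2}f\to\cS f$ pointwise for $f\in C^\infty_c$ (by the cancellation of the kernel, as remarked in the excerpt), Fatou's lemma applied to $|\inn{\cS^{N_1,N_2}f}{\om}|\le \cC\,\Lamaxga_{p,q'}(f,\om)$ — or more carefully a density argument as in Lemma~\ref{lem:density} together with the uniformity — gives $|\inn{\cS f}{\om}|\lesssim\Lamaxga_{p,q'}(f,\om)$. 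The main obstacle, such as it is, is not conceptual but bookkeeping: carefully interpolating to extract the quantitative $\eps$ in the regularity condition from the decay of $\widehat\sigma$ and the $L^{p_0}$-improving property, and making sure all the constants genuinely do not degenerate as $N_1\to-\infty$, $N_2\to\infty$. The $\log(2+B/A_\circ(p,q))$ factor in $\cC$ is harmless because $B$ here is a fixed finite constant depending only on $\sigma$ and the exponents.
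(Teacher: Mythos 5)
Your proposal is correct, and it follows what is essentially the intended route: Theorem \ref{mainthm} abstracts Oberlin's argument, and your verification of its hypotheses (support from $\supp\sigma\subset\{|x|\le 1\}$; uniform $L^r$ bounds for the partial sums via the Duoandikoetxea--Rubio de Francia square function argument; the single scale $(p,q)$ improving bound; the $\eps$-regularity from the Fourier decay together with Lemma \ref{lem:regbyFourier}, with \eqref{p-q-rescaled-reg-b} free by translation invariance) is exactly the skeleton the paper uses for the companion statement on maximal and variational truncations, quoting \cite{DR86} and \cite{JonesSeegerWright}. For Proposition \ref{prop:oberlin} itself the paper cites \cite{roberlin} and instead records, in \S\ref{sec:SingRadonFM}, a different derivation through the convolution result Theorem \ref{thm:sparsemult}: one checks the multiplier condition \eqref{eqn:SRT-mult} by estimating $\|[\phi\widehat\sigma(2^jt\cdot)]*\widehat\Psi_\ell\|_{M^{p,q}}$ in the regimes $2^jt\le 1$, $1\le 2^{j+4}t\le 2^\ell$, $2^{j+4}t\ge 2^\ell$ and sums in $j$; that route does not presuppose the uniform $L^p$ bounds for the full sums (they are produced by the machinery, via the result of \cite{See88} quoted in \S\ref{sec:combinescales-mult}), whereas you take them from \cite{DR86} as an input. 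Two details in your write-up should be tightened, though neither is a gap: the upgrade of \eqref{Lpimproving} to $\|\sigma*f\|_q\lesssim\|f\|_p$ for $p_0<p\le q$ is not the Young-type remark after Theorem \ref{thm:localtoglobal-sparse} (which lowers the target exponent, not the source); the clean argument is to interpolate $L^{p_0}\to L^q$ with the $L^q\to L^q$ bound coming from $\sigma$ being a finite measure (this is how the paper reaches \eqref{eqn:largej}), or to use locality of the kernel and H\"older on unit cubes. Likewise, interpolating the annulus $L^2$ decay $O(\la^{-b})$ with an $O(1)$ bound at the same pair $(p,q)$ does not land back at $(p,q)$; one must start from a nearby pair $(p_1,q_1)$ with $p_0<p_1<p$, which is available precisely because $p_0<p<q$ is an open condition --- this is the bookkeeping you flagged, and it does go through.
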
 We emphasize that Oberlin also proved certain endpoint estimates for $p=p_0$, working with local Orlicz norms in the definition of sparse forms. 

One  can extend Proposition \ref{prop:oberlin} 
to cover associated maximal truncation and variational truncation operators defined by 
    \begin{align*}%
    \cS_*f(x)&= \sup_{N_1<N_2} \Big| 
        \sum_{j=N_1}^{N_2} a_j 2^{-jd} \sigma(2^{-j} \cdot)* f(x)\Big|,
      \\ %
      \cV^{r}_* \cS f(x) &=\sup_{M \in \bbN}\sup_{n_1<\dots< n_{M}} 
 \Big(\sum_{i=1}^{M-1} \big| \sum_{j=n_i+1}^{n_{i+1}} a_j2^{-jd}\sigma(2^{-j}\cdot)*f (x) \big| ^r\Big)^{1/r}   .
  \end{align*} 
  \begin{prop}
  Let $1<p_0<p<q<\infty$, $r>2$ and $\sigma$, $\{a_j\} $ be as in   \eqref{eqn:SRTassu}, \eqref{eqn:ajbd}, \eqref{Lpimproving}.  Then $\cS_*$ and $\cV^{r}_*\cS$ satisfy the sparse bounds
    \begin{align*} 
    |\inn{\cS_* f}{\om}| + |\inn{\cV^r_* \cS f}{\om} | &\lc \Lamaxga_{p,q'}(f,\om). \
    \end{align*}
  \end{prop}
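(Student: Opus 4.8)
The strategy is to apply Theorems \ref{mainthmtrunc} and \ref{mainthmtruncvar} to a suitable family $\{T_j\}_{j\in\bbZ}$, exactly as Proposition \ref{prop:oberlin} was obtained from Corollary \ref{mainthm-cor}. First I would set $T_jf(x)=a_j\,2^{-jd}\sigma(2^{-j}\cdot)*f(x)$, so that the $T_j$ are translation-invariant scalar convolution operators ($B_1=B_2=\bbC$). Since $\sigma$ is supported in $\{|x|\le 1\}$, the support condition \eqref{support-assu} holds. The single scale $(p,q)$ condition \eqref{p-q-rescaled} follows from \eqref{Lpimproving}: indeed $\dil_{2^j}T_j$ is convolution with $a_j\sigma$, so $\|\dil_{2^j}T_j\|_{L^p\to L^q}\le|a_j|\,\|\sigma*(\cdot)\|_{L^{p_0}\to L^q}\cdot(\text{interpolation/Young})$; more precisely one uses that $\sigma$ has compact support together with \eqref{Lpimproving} and $\|\sigma\|\lesssim 1$ to obtain $L^p\to L^q$ bounds for all $(1/p,1/q)$ in an appropriate region containing the relevant exponents (this is the same reduction as in the proof of Proposition \ref{prop:oberlin}). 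The single scale $\varepsilon$-regularity conditions \eqref{p-q-rescaled-reg-a}, \eqref{p-q-rescaled-reg-b} follow from \eqref{eqn:SRTassu}: the Fourier decay $|\widehat\sigma(\xi)|\lesssim(1+|\xi|)^{-b}$ combined with the $L^p\to L^q$ bound gives, via Lemma \ref{lem:regbyFourier}, the estimate $\|(\dil_{2^j}T_j)\circ\Delta_h\|_{L^p\to L^q}\lesssim|h|^{\varepsilon}$ for $\varepsilon=\min\{b,1\}\cdot c$ small enough; the adjoint condition \eqref{p-q-rescaled-reg-b} is automatic since the $T_j$ commute with translations (Remark (ii) after Theorem \ref{mainthm}), noting $\widehat{\sigma^*}(\xi)=\overline{\widehat\sigma(-\xi)}$ also has the required decay.

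The remaining hypotheses are the weak-type and restricted-strong-type bounds for the maximal and variational truncations, namely \eqref{bdness-wt-trunc}, \eqref{bdness-rt-trunc} for $\cS_*$ and \eqref{bdness-wt-trunc-var}, \eqref{bdness-rt-trunc-var} for $\cV^r_*\cS$ (the latter also needs \eqref{bdness-wt}, \eqref{bdness-rt} for the $T_j$ themselves, which hold since each partial sum $\sum_{j=N_1}^{N_2}T_j$ is bounded on $L^2$ uniformly, by the Fourier decay and cancellation of $\widehat\sigma$ producing a uniformly bounded multiplier $\sum_j a_j\widehat\sigma(2^j\xi)$, and in fact bounded on all $L^p$, $1<p<\infty$, by standard Calder\'on--Zygmund / Littlewood--Paley arguments as in \cite{DR86,roberlin}). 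For $\cS_*$, the uniform $L^p\to L^p$ boundedness of the maximal truncation operator $\sup_{N_1\le n_1\le n_2\le N_2}|\sum_{j=n_1}^{n_2}T_jf|$ for $1<p<\infty$ is classical: one splits into a ``long'' piece controlled by a Cotlar-type inequality (or directly by the square function $(\sum_j|T_jf|^2)^{1/2}$ together with the maximal function of the smooth part) and uses the $L^p$-boundedness of $\cS$ from Proposition \ref{prop:oberlin} (or the underlying estimate) plus the Hardy--Littlewood maximal function. For $\cV^r_*\cS$ with $r>2$, one decomposes the $r$-variation into a long (dyadic) variation and a short variation as in Remark \ref{rem:longshort}: the long variation is handled by L\'epingle's inequality \cite{Lepingle} applied after a Littlewood--Paley decomposition (using the Fourier decay to get square-function control of the martingale-like differences), and the short variation is estimated by a Sobolev embedding argument converting $\mathrm v^r$ over a dyadic block into an $L^r(dt/t)$-integral of a $t$-derivative, each piece of which is again controlled using the Fourier-analytic decay of $\widehat\sigma$ and its derivative. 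These are by now standard and give the uniform $L^p\to L^{p,\infty}$ (hence $L^{p,\infty}$, a fortiori) and $L^{q,1}\to L^q$ bounds required, for $1<p<q<\infty$ in the range $p_0<p<q<\infty$.

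With all hypotheses verified, Theorem \ref{mainthmtrunc} applied to $\{T_j\}$ yields
\[
\int_{\bbR^d}\cS_*f(x)\,\om(x)\,dx=\int_{\bbR^d}\sup_{N_1\le n_1\le n_2\le N_2}\Big|\sum_{j=n_1}^{n_2}T_jf(x)\Big|\,\om(x)\,dx\lesssim\mathcal C\,\Lamaxga_{p,q'}(f,\om)
\]
uniformly in $N_1\le N_2$, with $\mathcal C\lesssim_{p,q,\sigma}1$ by the above; letting $N_1\to-\infty$, $N_2\to\infty$ and using monotone convergence gives the sparse bound for $\cS_*$. Likewise, Theorem \ref{mainthmtruncvar} applied to $\{T_j\}$ gives the same sparse bound for $\cV^r_*\cS$, and summing the two estimates yields $|\inn{\cS_*f}{\om}|+|\inn{\cV^r_*\cS f}{\om}|\lesssim\Lamaxga_{p,q'}(f,\om)$. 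The main obstacle is the verification of the uniform $L^p\to L^{p,\infty}$ and $L^{q,1}\to L^q$ truncation bounds \eqref{bdness-wt-trunc-var}, \eqref{bdness-rt-trunc-var}; once one commits to the long/short decomposition of Remark \ref{rem:longshort} these reduce to L\'epingle's inequality plus a Sobolev-embedding estimate for the short variation, both of which go through using only the cancellation and polynomial Fourier decay in \eqref{eqn:SRTassu} together with the $L^{p_0}$-improving hypothesis \eqref{Lpimproving}, so no genuinely new difficulty arises beyond bookkeeping.
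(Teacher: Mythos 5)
Your proposal is correct and follows essentially the same route as the paper: apply Theorems \ref{mainthmtrunc} and \ref{mainthmtruncvar} to $T_jf=a_j2^{-jd}\sigma(2^{-j}\cdot)*f$, verify the single scale and $\varepsilon$-regularity conditions via \eqref{Lpimproving}, the Fourier decay in \eqref{eqn:SRTassu} and Lemma \ref{lem:regbyFourier}, and import the uniform truncation bounds from the known maximal and variational results. The only difference is cosmetic: the paper simply cites \cite[Theorem E]{DR86} and \cite[Theorem 1.2]{JonesSeegerWright} for those truncation bounds, whereas you sketch their standard proofs (Cotlar-type estimate, L\'epingle plus long/short variation).
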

    
\begin{proof}
We apply Theorems \ref{mainthmtrunc} and 
\ref{mainthmtruncvar}.
To verify  the assumptions \eqref{bdness-wt-trunc}, \eqref{bdness-rt-trunc} see \cite[Theorem E] {DR86}. To verify assumptions
    \eqref{bdness-wt-trunc-var}, \eqref{bdness-rt-trunc-var} see 
    \cite[Theorem 1.2]{JonesSeegerWright}.
    Interpolation arguments using the Fourier decay assumption in \eqref{eqn:SRTassu}, and Lemma \ref{lem:regbyFourier} can be used to establish the  additional H\"older condition in \eqref{p-q-rescaled-reg}. 
\end{proof}
The setup above is  also similar in spirit to the theorems on truncations of rough singular integrals with bounded kernels \cite{DiPlinioHytonenLi}. We have been deliberately short in our presentation as the results in this section are essentially known. 
For a more detailed exposition the reader may consult \S\ref{sec:anotherSRT} below, in which  a singular Radon transform built on spherical integrals,  and other versions of maximal functions associated to singular Radon transforms, are considered.

\subsubsection{An approach via Fourier multipliers} \label{sec:SingRadonFM}
In order to understand the scope of our multiplier theorems, it is instructive to deduce the sparse bounds for the prototypical singular Radon transform $\mathcal{S}$ in Proposition \ref{prop:oberlin} from  Theorem \ref{thm:sparsemult} (or Theorem  \ref{thm:localtoglobal-sparse-s}). 
Since $\sigma$ is a finite Borel measure we have 
$\|\phi \widehat \sigma(2^j t\cdot) \|_{M^{q,q} }=O(1)$ for $1\le q\le \infty$. By \eqref{eqn:SRTassu} and interpolation with $L^2\to L^2$ bounds we have 
 for some $\eps_0(q) >0$
\Be\label{eqn:Mqq:Fourier}
\|\phi \widehat \sigma(2^j t\cdot) \|_{M^{q,q}} \lc C_q \min\{ (2^j t)^{\eps_0(q)}, (2^j t)^{-\eps_0(q) }\} , \quad 1<q<\infty,
\Ee
using either cancellation or decay,  and by Young's inequality we get the same bound for 
$\|\phi \widehat \sigma(2^j t\cdot) \|_{M^{p,q}}$ when $1\le p\le q$, $1 < q < \infty$. This takes care of the term $\ell=0$ in the condition \eqref{eqn:Bm}. To verify the remaining hypothesis of Theorem \ref{thm:sparsemult} it suffices to check that for $\ell>0$
 the condition
\Be\label{eqn:SRT-mult} 
\sup_{t>0} \sum_{j\in \bbZ } \| [\phi \widehat \sigma( 2^j t\cdot)] *\widehat \Psi_\ell\|_{M^{p,q}} \lc 2^{-\ell(d(1/p-1/q)+\eps) }
\Ee
is satisfied, as the condition \eqref{eqn:Bmcirc}  trivially follows by the assumption \eqref{eqn:SRTassu}.

Since $\widehat \sigma$ is smooth we have  for $2^j t\le 1$
\[ \|[\phi \widehat{\sigma}(2^j t\cdot)]*\widehat \Psi_\ell \|_{M^{r,s}} \lc C_N 2^{-\ell N}, \quad 2^j t\le 1,
\]
for $1\le r\le s\le \infty$  and therefore by interpolation with \eqref{eqn:Mqq:Fourier} and taking geometric means we see that there is an $\ep_1(r,s)$ such that  $\ep_1(r,s)>0$ if $1<r\le s<\infty$ and  
\Be\label{eqn:neg-terms}  \|[\phi \widehat{\sigma}(2^j t\cdot)]*\widehat \Psi_\ell \|_{M^{r,s} }\lc C_{N} 2^{-\ell d} (2^j t)^{\ep_1(r,s) }, \quad 2^j t\le 1.
\Ee

The contributions for $2^j t\ge 1$ are more interesting.  Since $\sigma$ is supported in $\{x: |x|\le 1\}$  we have the kernel estimate 
\[ |\cF^{-1} [\phi \widehat \sigma (2^j t\cdot)] (x)|\lc_N |x|^{-N} \quad \text{ for } |x|\ge 2^{j+1} t\]
and hence 
\Be \label{eqn:intermediatej} \| [\phi \widehat \sigma( 2^j t\cdot)] *\widehat \Psi_\ell \|_{M^{p,q}} \lc  2^{-\ell N} \quad \text { for } 2^\ell\ge 2^{j+4} t \ge 1.\Ee 
For $2^\ell \le 2^{j+4} t $ we do a rescaling argument to estimate
\[\| [\phi \widehat \sigma( 2^j t\cdot)] *\widehat \Psi_\ell\|_{M^{p_0,q}} \lc 
\|\widehat \sigma (2^j t\cdot) \|_{M^{p_0,q}} = (2^j t)^{-d(1/p_0-1/q) } \|\widehat \sigma\|_{M^{p_0,q} }
\]
and by assumption $\widehat \sigma\in M^{p_0,q}$. Interpolating with the $M^{q,q}$ estimate in \eqref{eqn:Mqq:Fourier} we get for $p_0<p\le q$
\Be\label{eqn:largej} 
\| [\phi \widehat \sigma( 2^j t\cdot)] *\widehat \Psi_\ell\|_{M^{p,q}} \lc (2^{j} t)^{- d(1/p-1/q) -\ep(p,q)}  \quad \text{ for }  2^{j+4}  t\ge 2^\ell.
\Ee
Combining \eqref{eqn:neg-terms}, \eqref{eqn:intermediatej} and \eqref{eqn:largej} and summing in $j$ 
we get
\eqref{eqn:SRT-mult} for a suitable $\eps=\eps(p,q)>0$. 

\subsection{Densities on spheres: Maximal singular integrals}\label{sec:anotherSRT} 

As discussed in \S\ref{sec:SingRadon}  the Corollary \ref{mainthm-cor}  covers classes of singular Radon transforms and also associated maximal operators for truncations. Here we will consider a natural singular integral variant of the spherical maximal function, and obtain a sparse domination inequality analogous to the one for spherical maximal functions with specific dilation sets in  \cite{AHRS, RoosSeeger}.
Let  $\sigma$ be the surface  measure on the unit sphere $\{x: |x|=1\}$ in $\bbR^d$ for $d\ge 2$ and  $\mu=\chi \sigma$ with a choice of smooth $\chi$ such that $$\int d\mu=0.$$
For every $t\in [1,2]$ we consider,  for fixed $t\in [1,2]$,   the prototypical singular Radon transform  as in  the previous section 
\Be\label{eqn:Htdef} \cS_t^{N_1,N_2} f=\sum_{j=N_1}^{N_2}\mu_{2^j t} * f , \qquad \cS_t f=\lim_{\substack{N_2\to \infty,\\N_1\to -\infty}} \cS_t^{N_1,N_2} f \Ee
and then form,  for  $E\subset [1,2]$,    the maximal function \Be \label{eqn:HEdef} \sS_E f(x)=\sup_{t\in E} |\cS_t f(x)|.\Ee  

For $0\le \beta\le \alpha\le 1$ define $\mathscr{R}(\beta,\alpha)\subset [0,1]^2$ as the union of the interior of the convex hull of the points 
\begin{gather*}
Q_1=(0,0), \qquad  Q_{2,\beta}=(\tfrac{d-1}{d-1+\beta}, \tfrac{d-1}{d-1+\beta}),\\ Q_{3,\beta}=(\tfrac{d-\beta}{d-\beta+1}, \tfrac{1}{d-\beta+1}), \qquad  Q_{4,\alpha}=(\tfrac{d(d-1)}{d^2+2\alpha-1}, \tfrac{d-1}{d^2+2\alpha-1}) 
\end{gather*}
with the open segment connecting $Q_1$ and $Q_{2,\beta}$.

\newcommand{\definecoords}{
    \def\ptsize{.1pt}
	\coordinate (Q1) at (0,0);
	\coordinate (Q2) at ( {(\d-1)/(\d-1+\b)}, {(\d-1)/(\d-1+\b)}  );
	\coordinate (Q3) at ( {\Qthreex},  { \Qthreey }  );
	\coordinate (Q4b) at ( { \Qfourx{\b}  }, { \Qfoury{\b} }  );
	\coordinate (Q4g) at ( { \Qfourx{\g}  }, { \Qfoury{\g} }  );
	\coordinate (R) at ( { \d*(\d-1)/ (\d*\d-1+\b) } , { (\d-1)/(\d*\d-1+\b) } );
	\coordinate (C1) at ( { (\Qfourx{\b}+\Qfourx{\g})/2 }, {(\Qfoury{\b}+\Qfoury{\g})/2}  ); 
	\coordinate (C2) at (Q4b);
}

\newcommand{\Qfourx}[1]{\d*(\d-1)/(\d*\d+2*#1-1)}
\newcommand{\Qfoury}[1]{(\d-1)/(\d*\d+2*#1-1)}
\def\Qthreex{(\d-\b)/( \d-\b+1)}
\def\Qthreey{1/(\d-\b+1)}
\newcommand{\drawauxlines}[2]{ 
	\draw (0,0) [->] -- (0,1) node [left] {$\frac1q$};
	\draw (0,0) [->] -- (1,0) node [below] {$\frac1p$};
	\draw [dashed,opacity=.3] (1,0) -- (0,1);
	\draw [dashed,opacity=.3] (#1) -- (1,{1/\d}); 
	\draw [dashed,opacity=.3] (#2) -- (1,1); 
	\draw [dashed,opacity=.1]
	(.5, 0) -- (.5, .5);
}

\newcommand{\drawQbg}{
	\fill (Q1) node [left] {$Q_1$} circle [radius=.02em];
	\fill (Q2) node [right] {$Q_{2,\beta}$} circle [radius=\ptsize];
	\fill (Q3) node [right] {$Q_{3,\beta}$} circle [radius=\ptsize];
	\fill (Q4g) node [below] {$Q_{4,\alpha}$} circle [radius=\ptsize];
	\fill [opacity=.2] (Q1) -- (Q2) -- (Q3) -- (Q4g) -- cycle;
	\draw [opacity=.6] (Q1) -- (Q2) -- (Q3) -- (Q4g) -- cycle;
}	

\begin{figure}[ht]
\begin{tikzpicture}[scale=4.25]
\def\d{3}
\def\b{.75}
\def\g{.9}
\definecoords
\drawauxlines{Q4g}{Q2}
\drawQbg
\end{tikzpicture}
\caption{The region $\mathscr R(\beta,\alpha)$ with  $\beta=0.75$, $\alpha=0.9$, $d=3$.}\label{quadrangle}
\end{figure}
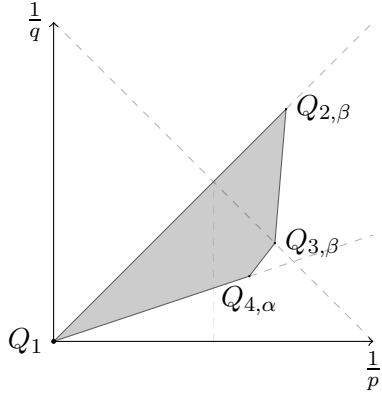

For $E\subset [1,2]$ denote by $\mathrm{dim}_\mathrm{M}\,E$ the upper Minkowski dimension of $E$ and by $\mathrm{dim}_{\mathrm{qA}}\,E$ the quasi-Assouad dimension of $E$ (see \cite{RoosSeeger} for definitions and background, and for a discussion of classes of sets $E$ for which the single-scaled $L^p\to L^q$  results described above are sharp). 

\begin{prop}\label{prop:maxsingintsparse}
Let $d\ge 2$, $E\subset [1,2]$ and $(1/p,1/q)\in \mathscr{R}(\beta,\alpha)$ with $\beta=\mathrm{dim}_\mathrm{M}\,E$, $\alpha=\mathrm{dim}_{\mathrm{qA}}\,E$. 
Then there is the $(p,q')$-sparse domination inequality
\[ %
|\inn{\sS_E f}{\om}| 
\leq  C \Lamaxga_{p,q'}(f,\om).\]
\end{prop}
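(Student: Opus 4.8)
The strategy is to apply Proposition \ref{prop:MEsigma} to the operators $\mathcal S_t$, treating $\mathcal S_E$ as a translation-invariant maximal operator of the type covered there. More precisely, I would split off the local singular Radon behavior from the tail and set things up so that the role of ``$\sigma$'' in Proposition \ref{prop:MEsigma} is played by the distribution whose $t$-dilate gives $\mathcal S_t$. However, since $\mathcal S_t=\sum_{j\in\bbZ}\mu_{2^jt}$ is genuinely a sum over scales rather than a single compactly supported distribution, the cleaner route is to go directly through Corollary \ref{mainthm-cor} (equivalently Theorem \ref{thm:ellr} with $r=\infty$): set $B_2=\ell^\infty(E')$ for finite $E'\subset E$, and define
\[
T_jf(x,t)=\begin{cases}\mu_{2^jt}*f(x)&\text{if }t\in E',\\ 0&\text{otherwise,}\end{cases}
\]
so that $S_\infty Tf(x)=\sup_{t\in E'}|\sum_j T_jf(x)|=\mathcal S_{E'}f(x)$ and $|\dil_{2^j}T_jf(x)|_{B_2}=\sup_{t\in E'}|\mu_t*(f(2^{-j}\cdot))(2^j x)|$-type rescalings reduce to the single-scale operator $f\mapsto\sup_{t\in[1,2]}|\mu_t*f|$. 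The support condition \eqref{support-assu} holds since $\mu$ is supported in $\{|x|\le 1\}$ and $t\le 2$. After an application of the monotone convergence theorem in $E'\uparrow E$ and in $N_1\to-\infty$, $N_2\to\infty$ (legitimate because the constant in Corollary \ref{mainthm-cor} is independent of $B_1,B_2$), one obtains the claimed sparse bound.

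\textbf{Verification of the hypotheses.} The three remaining inputs to Theorem \ref{thm:ellr} are: (a) the weak-type $(p,p)$ and restricted-strong-type $(q,q)$ bounds \eqref{bdness-ellr} for $S_\infty T$, uniform over truncations $N_1\le N_2$; (b) the single-scale $(p,q)$ bound \eqref{p-q-rescaled} for $\dil_{2^j}T_j$; and (c) the $\eps$-regularity \eqref{p-q-rescaled-reg}. For (b), the rescaled operator is (up to the harmless $j$-independent summation $\sum_{j}\mu_{2^{j}t}\mathbbm 1_{2^{j}t\in[1,2]}$, which collapses to a single term) the local maximal-singular-integral operator $\sup_{t\in E\cap[1,2^{L}]}|\sum_{|j|\le L}\mu_{2^jt}*f|$; here the relevant $L^p\to L^q$ bounds in the range $\mathscr R(\beta,\alpha)$ are exactly the single-scale local smoothing / $L^p$-improving estimates for the truncated spherical singular integral with dilation set of Minkowski dimension $\beta$ and quasi-Assouad dimension $\alpha$ — these are established in \cite{RoosSeeger, AHRS} (and are the source of the polytope $\mathscr R(\beta,\alpha)$). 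For (c), one uses Lemma \ref{lem:regbyFourier}: since $\widehat\mu$ inherits the decay $|\widehat\mu(\xi)|\lesssim(1+|\xi|)^{-(d-1)/2}$ of the sphere, an interpolation between this Fourier decay on frequency annulus $\Eann(\la)$ and the $L^p\to L^q$ bound from (b) gives $\|\dil_{2^j}T_jf\|_{L^q(B_2)}\lesssim\la^{-\eps}\|f\|_{L^p}$ for $f\in\Eann(\la)$, for some $\eps>0$ depending on $(1/p,1/q)$ being in the \emph{open} region; Lemma \ref{lem:regbyFourier} then yields \eqref{p-q-rescaled-reg-a}, and \eqref{p-q-rescaled-reg-b} follows by translation-invariance (Remark (ii) after Theorem \ref{mainthm}). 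For (a), one writes $\mathcal S_t = \mathcal S_{t,\mathrm{loc}} + \mathcal S_{t,\mathrm{glob}}$ by separating the scales $2^jt\lesssim 1$ from $2^jt\gtrsim 1$; the global part has a rapidly decaying (in fact Schwartz-tail) kernel and its maximal function is pointwise dominated by the Hardy--Littlewood maximal operator, while the local part is a standard singular integral of Calder\'on--Zygmund type (bounded kernel, cancellation, Hörmander condition via the Fourier decay in \eqref{eqn:SRTassu}), so its maximal truncations satisfy weak-$(1,1)$ and hence, by interpolation with $L^2$ and the $L^p\to L^q$ improving bounds, the required $L^{p,\infty}$ and $L^{q,1}\to L^q$ estimates uniformly in $N_1,N_2$; alternatively one invokes the vector-valued / $\ell^\infty$ maximal bounds from \cite{DR86, JonesSeegerWright}.

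\textbf{Main obstacle.} The genuinely nontrivial ingredient is hypothesis (b): the sharp single-scale $L^p\to L^q$ estimate for the \emph{maximal truncated} spherical singular integral $\sup_{t\in E}|\sum_{|j|\le L}\mu_{2^jt}*f|$ with $E$ of prescribed Minkowski and quasi-Assouad dimension, with the precise polytopal range $\mathscr R(\beta,\alpha)$. This is exactly the local smoothing type inequality whose proof in \cite{RoosSeeger, AHRS} uses decoupling and Sobolev-embedding arguments for the underlying wave propagator, and it is this input (together with the Fourier decay driving the $\eps$-regularity) that pins down the region $\mathscr R(\beta,\alpha)$ appearing in the statement. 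Everything else — assembling the hypotheses, the Calder\'on--Zygmund splitting of the tail, the monotone convergence passages, and the regularity interpolation — is routine given the machinery developed in \S\ref{sec:squarefct-etc} and \S\ref{sec:single scale}. A minor technical point is checking that the local part $\mathcal S_{t,\mathrm{loc}}$ genuinely satisfies the support condition with constants uniform in $t\in[1,2]$ and that the global tail can be absorbed into the Hardy--Littlewood maximal bound (handled by \S\ref{subsec:sparse HL} exactly as in the proof in \S\ref{sec:wavelac}); neither presents difficulty.
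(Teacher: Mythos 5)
Your setup of the sparse machinery is the same as the paper's (Corollary \ref{mainthm-cor}/Theorem \ref{thm:ellr} with $B_2=\ell^\infty$ over a finite subset of $E$, single-scale input from \cite{RoosSeeger} plus Lemma \ref{lem:regbyFourier} for the $\eps$-regularity, monotone convergence at the end), but your verification of hypothesis (a) — the uniform $L^p\to L^{p,\infty}$ and $L^{q,1}\to L^q$ bounds for $\sS_E$ — has a genuine gap, and this is in fact the heart of the matter, not a routine step. Your proposed splitting of $\cS_t$ into a ``local CZ part'' and a ``global part with Schwartz-tail kernel dominated by Hardy--Littlewood'' does not work: every summand $\mu_{2^jt}$ is a singular measure on a sphere of radius $2^jt$, so the large-scale part is not pointwise controlled by $\cM$ (it is a lacunary-spherical-type object), and the small-scale part is not a Calder\'on--Zygmund operator with bounded kernel, so its maximal truncations do not come with weak $(1,1)$ bounds. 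More decisively, the range of $p$ for which $\sS_E$ is $L^p$-bounded genuinely depends on $\beta=\mathrm{dim}_\mathrm{M}E$ (only $(\tfrac1p,\tfrac1p)$ on the open segment $(Q_1Q_{2,\beta})$, i.e.\ $p>\tfrac{d-1+\beta}{d-1}$), so any argument that never sees $\beta$ — weak $(1,1)$ for truncations plus interpolation, or the maximal/truncation bounds of \cite{DR86, JonesSeegerWright}, which control the supremum over the truncation indices $N_1,N_2$ but not the supremum over the continuous parameter $t\in E$ — cannot deliver it.

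The paper proves exactly this missing estimate: it decomposes $\sS_E=\sum_{\ell\ge0}\sS^\ell_E$ with $\sS^\ell_Ef=\sup_{t\in E}|\sum_j\mu_{2^jt}*\eta_{\ell,j}*f|$ and shows $\|\sS^\ell_Ef\|_p\lc 2^{-\ell\delta(p)}\|f\|_p$ on $(Q_1Q_{2,\beta})$. For $d\ge3$ (or $d=2$, $\beta<1$) this uses the $L^2$ gain $2^{-\ell(d-1)/2}$ from the Fourier decay of $\mu$, the uniform $L^p$ bounds for the fixed-$t$ singular Radon transforms from \cite{DR86}, interpolation, and then a covering of $E$ by $O(2^{\ell(\beta+\eps)})$ intervals of length $2^{-\ell}$ with a Sobolev-embedding argument in $t$ as in \cite{SeegerWaingerWright1995} — this is where $\beta$ enters. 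For $d=2$, $\beta=1$ one needs $L^p$ bounds for $p>2$ which require genuine local smoothing square-function estimates (\cite{GuoRoosSeegerYung1}), not covered by any of the tools you invoke. So your ``main obstacle'' paragraph misplaces the difficulty: the single-scale $L^p\to L^q$ input (b) is indeed imported from \cite{RoosSeeger} (and note it is the local maximal function $\sup_{t\in E}|\mu_t*f|$, not a truncated singular integral), but the global bounds (a) are new and must be proved; as written, your argument for them is incorrect.
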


The two-dimensional version of our operator models a maximal operator associated to a family of Hilbert transforms on curves considered in \cite{GuoRoosSeegerYung1,GuoRoosSeegerYung2} where  nonisotropic dilations are used (see also the previous papers
\cite{MarlettaRicci,GuoHickmanLieRoos} for  related problems).
In this  nonisotropic  case one could also consider more general situations, i.e. when $E$ is not a subset of $[1,2]$ (see also the prior work  \cite{MarlettaRicci} on maximal functions) but this involves multi-parameter structures for which sparse domination result are difficult and in some cases are proved to not hold 
\cite{BarronCondeOuRey}.

\begin{proof}[Proof  of Proposition \ref{prop:maxsingintsparse}] 
Using the density Lemma \ref{lem:density}  we may assume that $f\in C^\infty_c$.
It is then easy to see that for any bounded set $U\in \bbR^n$ we have $\mu_{2^j t}*f(x)=0$ for all $x\in U$, $t\in [1,2]$ and sufficiently large $j$. Moreover using the cancellation of $\mu$ and the smoothness of $f$ we see that $\mu_{2^jt} *f(x)= O(2^j)$ as $j\to -\infty$. Thus we see that for $f\in C^\infty_c$ the function   $\cS_t f$  is well defined and \[\lim_{\substack {N_2\to \infty, \\ N_1\to -\infty }} \sup_{t\in [1,2]} | \cS_t f-\cS_t^{N_1,N_2} f| =0,\] 
where the limit is uniform on compact sets. It is therefore sufficient to prove a sparse bound for the maximal function  $\sup_{t\in E} |S_t^{N_1,N_2} f|$ which is uniform in $N_1$, $N_2$. In what follows we will drop the superscript in $\cS_t^{N_1,N_2} $ but assume that we still working with a truncated sum depending on $N_1$, $N_2$.

To verify  conditions \eqref{p-q-rescaled}, \eqref{p-q-rescaled-reg} in Corollary \ref{mainthm-cor} we first note that for $(\tfrac 1p, \tfrac 1q) \in \sR (\beta, \alpha) $  there is $\eps(p,q) >0$ such that for $\la>2$  
\Be \label{eqn:SErescaled}\|\sup_{t\in E} |\mu_{t}*f| \|_{q} \lesssim_{p,q} \la^{-\eps(p,q) } 
\|f\|_p , \quad f\in \Eann(\la).
\Ee
This is coupled with an elementary $L^p\to L^q$ estimate with constant $O(1)$ estimate for functions with frequency support near the origin to yield \eqref{p-q-rescaled-reg-a} via Lemma \ref{lem:regbyFourier}; this also settles \eqref{p-q-rescaled-reg-a} by translation invariance. For inequality \eqref{eqn:SErescaled} we may refer to \cite[Cor. 2.2]{RoosSeeger}. 

It remains to verify \eqref{bdness-wt} and \eqref{bdness-rt} which follow by verifying the $L^p$ boundedness of $\sS_E$  for $(\tfrac 1p,\tfrac 1p)$ on the open interval $(Q_1Q_{2,\beta})$.
To accomplish this we make a further decomposition on the frequency side.
Let $\eta_\ell$ be as in \eqref{eqn:resofidmod}, \eqref{eqn:suppetaell}, and set 
$\eta_{\ell,j}= 2^{-jd}\eta_\ell(2^{-j}\cdot)$, so that  $\widehat {\eta_{0,j}}$ is supported where $|\xi|\lc 2^{-j}$ and $\widehat {\eta_{\ell,j} }$ is supported where $|\xi|\approx 2^{\ell-j}$.
Setting 
\[ \sS^\ell_E f(x)=\sup_{t\in E} \Big |\sum_{j=N_1}^{N_2} \mu_{2^j t}* \eta_{\ell,j} * f (x) \Big| 
\] 
it then suffices to show 
\Be\label{SEellppest} \|\sS^\ell_E f\|_p 
\lc_p 2^{-\ell\delta(p) } \|f\|_p
\Ee
with $\delta(p)>0$ for $(\tfrac 1p,\tfrac 1p)\in (Q_1Q_{2,\beta})$.
The estimate for $\ell=0$ reduces to standard singular integral theory; this uses the cancellation of $\mu$. Thus from now on we assume $\ell>0$.

We shall first discuss the case when either $d\ge 3$ or $d=2$, $\beta<1$ where we use  arguments as in \cite{SeegerWaingerWright1995}.
Because of $|\widehat \mu(\xi)|\lc \min \{|\xi|, |\xi|^{-(d-1)/2}\}$ we get 
\[ \sup_{\xi \in \widehat{\bbR}^d} \Big| 
\sum_{j=N_1}^{N_2} \widehat {\mu_{2^j t} }(\xi) \widehat {\eta_{\ell,j} }(\xi) \Big| \lc  2^{-\ell(d-1)/2},
\]
which implies an $L^2$ boundedness result for the operators $\cS_t^{\ell}$ with constant $O(2^{-\ell(d-1)/2})$, uniformly in $t\in [1,2]$.

We also have the $L^p$ boundedness result
\[ \Big\| \sum_{j=N_1}^{N_2} \mu_{2^j t}\ast \eta_{\ell,j}  \ast f   \Big\|_{p} \lesssim C_p
\|f\|_p, \quad 1<p<\infty\]
which is a consequence of results on isotropic singular Radon transforms as, say,  in \cite{DR86}. By interpolation we get for all $\eps>0$
\[ \Big\| \sum_{j=N_1}^{N_2} \mu_{2^j t}\ast\eta_{\ell,j}   \ast f  \Big \|_{p} \lesssim C_{\eps,p} 2^{\ell\eps(1-1/p)}
\min( 2^{-\ell \frac{d-1}{p}}, 2^{-\ell \frac{d-1}{p'}}) \|f\|_p, 
\quad 1<p<\infty. \]
The same estimate with $\mu_{2^jt}*\eta_{\ell,j} $ replaced by $2^{-\ell} 
2^{-jd} [\frac{d}{dt} \mu_t *\eta_\ell](2^{-j}\cdot) $ also holds. 
We cover the set $E$ with $O(2^{\ell(\beta+\eps)})$ intervals of length $2^{-\ell}$ and argue  as in 
\cite[p.119]{SeegerWaingerWright1995}  to  obtain 
\[ \|\sS_E^{\ell} f\|_p \lesssim_\varepsilon 2^{\ell(\frac{\beta}p +\varepsilon)} \min( 2^{-\ell \frac{d-1}{p}}, 2^{-\ell \frac{d-1}{p'}}) \|f\|_p .\]
 This gives   \eqref{SEellppest}, provided that  $d\ge 3$ or $d=2$, $\beta<1$.

For the case $d=2$, $\beta=1$, we need to show $L^p$ boundedness for $p>2$. By a Sobolev embedding argument this follows from the inequality
\begin{multline}\label{eq:lsm-sing-rad}
  \Big(\int_1^2 \Big\| \sum_{j=N_1}^{N_2} \mu_{2^j t}*\eta_{\ell,j} \ast  f \Big\|_p^p dt \Big)^{1/p} 
  + 2^{-\ell}
  \Big(\int_1^2 \Big\| \frac{\partial}{\partial t} \sum_{j=N_1}^{N_2} \mu_{2^j t}*\eta_{\ell,j} \ast  f \Big\|_p^p dt \Big)^{1/p} 
  \\
  \lc 2^{-\ell/p-\ell a(p)} \|f\|_p
\end{multline}
where $a(p)>0$ for $2<p< \infty$. 
By Littlewood--Paley theory we see that 
the bound for the first term in
 \eqref{eq:lsm-sing-rad}   reduces to 
\Be\label{eqn:locsm-sq} \Big(\int_1^2 \Big\| \Big(\sum_{j=N_1}^{N_2}  |\mu_{2^j t}*\eta_{\ell,j} \ast  f_j |^2\Big)^{1/2}  \Big\|_p^p dt \Big)^{1/p} \lc 
 2^{-\ell/p-\ell a(p)} \Big\|\Big(\sum_j|f_j|^2\Big)^{1/2} \Big\|_p
\Ee
for $2<p<\infty$. 
\eqref{eqn:locsm-sq}    is established by a local smoothing argument as in  \cite{GuoRoosSeegerYung1} (see in particular an isotropic version of  Corollary 3.6 of that paper). We thus have established the bound for the first term in 
\eqref{eq:lsm-sing-rad}, and the argument for the second term is analogous.
Finally from \eqref{eq:lsm-sing-rad} we obtain \eqref{SEellppest}  by another application of Littlewood--Paley theory  (applying the inequality to functions $f_j$ with $\widehat f_j$  supported where $|\xi|\approx 2^{\ell-j} $).
\end{proof}

\subsection{On radial Fourier multipliers}\label{sec:rad-mult}
We consider radial Fourier multipliers on $\widehat \bbR^d$ with $d\ge 2$,  of the form $m(\xi)=h(|\xi|)$ where $h$ satisfies the condition  $\sup_{t>0}\|\beta h(t\cdot)\|_{L^2_\alpha}<\infty$ for suitable $\alpha$;
here $L^2_\alpha$ is the usual Sobolev space on the real line and $\beta$ is any nontrivial
$C^\infty_c$ function with compact support in $(0,\infty)$. By duality we only need to consider the range $p\le 2$.

The inequality
\Be \label{rad-mult-global} \|h(|\cdot|)\|_{M^{p,q}} \lc \sup_{t>0} t^{d(\frac 1p-\frac 1q)}\|\beta h(t\cdot)\|_{L^2_\alpha} , \quad \alpha> d(1/q-1/2)\Ee
is known to hold for $1<p<\frac{2(d+1)}{d+3}$, $p\le q\le 2$
and one may conjecture that it holds  for $\frac{2(d+1)}{d+3}<p\le \frac{2d}{d+1}$ and $p\le q< \frac{d-1}{d+1}p'$. Indeed, as a straightforward consequence of the Stein--Tomas restriction theorem and Littlewood--Paley theory one gets for the endpoint  $p=\frac{2(d+1)}{d+3}$, $q=2$, $\alpha=0$, a complete characterization of  radial  Fourier multipliers in  $M^{p,2}$; namely
\[\|h(|\cdot|)\|_{M^{p,2} } \approx \sup_{t>0} t^{d(\frac 1p-\frac 12)}\Big(\int_{t}^{2t} |h(s)|^2 \frac{ds}{s}\Big)^{1/2},
\]
see \textit{e.g.} \cite{GarrigosSeeger2008}.  
The case $p=q=\tfrac{2d}{d-1} $ has been settled only in two dimensions in \cite{Carbery1983,CarberyGasperTrebels}, but remains open in three and higher dimensions. Note that as a special case one has the Bochner--Riesz conjecture when $h(s)= (1-s^2)_+^\la$. 
For partial $L^p\to L^p$ results in higher dimensions (via the connection  \cite{CarberyGasperTrebels} with Stein's square function)  we refer to 
\cite{ChristProc, Seegerthesis1986, LeeRogersSeeger2012,  Lee2018},
\cf. \S\ref{Steinsqfctp>2}  below.

We formulate sparse bounds for the multipliers satisfying 
\eqref{rad-mult-global}; in fact our hypotheses will involve the  single scale variant
\Be \label{rad-mult} \|g(|\cdot|)\|_{M^{p,q}} \le C(\alpha) \|g\|_{L^2_\alpha}, \; \alpha> d(1/q-1/2),\,\, \supp(g)\subset [1/2,2]. \Ee
Typically, the assumption \eqref{rad-mult} will be  applied to $g$  of the form $\beta h(t\cdot) $.

Theorem \ref{thm:sparsemult} leads to the following result.
\begin{prop}\label{cor:radial mult} Let $1<p\le q\le 2$ and let $T_h$ be the convolution operator with multiplier $h(|\cdot|)$.
Then

(i) Assume \eqref{rad-mult} holds
for a specific exponent pair  $(p,q)$ with 
$1<p\le \frac{2d}{d+1}$,  $p\le q\le  \min\{\frac{d-1}{d+1}p', 2\}$, and all   $\alpha> d(1/q-1/2)$. Then 
\Be \label{sparse-radial} \|T_h\|_{\Sp_\ga(p,q' )} \le C_b \sup_{t>0} \|\beta h(t\cdot)
\|_{L^2_b} , \quad b>d(1/p-1/2).
\Ee

(ii) In particular, 
\eqref{sparse-radial} holds true for 
 $1<p\le \tfrac{2(d+1)}{d+3}$, $p\le q\le 2$. 
 \end{prop}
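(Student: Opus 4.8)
\textbf{Proof plan for Proposition \ref{cor:radial mult}.}
The plan is to deduce both parts from Theorem \ref{thm:sparsemult} by checking that the hypotheses $\cB[h(|\cdot|)]<\infty$ and $\cB_\circ[h(|\cdot|)]<\infty$ hold, with the quantitative bound governed by $\sup_{t>0}\|\beta h(t\cdot)\|_{L^2_b}$. Since we are in the scalar case $\sH_1=\sH_2=\bbC$, finiteness of $\cB_\circ$ follows automatically from finiteness of $\cB$ (as noted right after the statement of Theorem \ref{thm:sparsemult}), so the real task is to estimate, for $\ell\ge 0$,
\[
\sup_{t>0}\|[\phi h(t|\cdot|)]*\widehat\Psi_\ell\|_{M^{p,q}}\,2^{\ell d(1/p-1/q)}(1+\ell).
\]
First I would fix a radial $\phi\in C^\infty_c$ supported in $\{1/2<|\xi|<2\}$ and, exploiting the freedom in the choice of $\phi$ and $\Psi$ afforded by Remark \ref{remark multiplier}(2), work with a convenient such pair. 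The function $g_t:=\beta h(t\cdot)$ is supported in, say, $[1/2,2]$ (after adjusting $\beta$), and by hypothesis $\|g_t\|_{L^2_b}\le A$ uniformly in $t$ with $A=\sup_{t>0}\|\beta h(t\cdot)\|_{L^2_b}$.

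The key step is a frequency decomposition of $g_t$ adapted to $\Psi_\ell$: convolving the radial multiplier $\phi(\xi)h(t|\xi|)$ with $\widehat\Psi_\ell$ essentially isolates the part of $g_t$ at frequency $\approx 2^\ell$ (on the one-dimensional radial variable), up to rapidly decaying tails from the cutoff $\phi$ handled by a non-stationary phase / integration by parts argument exactly as in the proof of Theorem \ref{thm:localtoglobal-sparse} (see equation \eqref{eqn:localtoglobalpf-tail}). For the main term, I would use the single-scale hypothesis \eqref{rad-mult}: a Littlewood--Paley piece $g_t^{(\ell)}$ of $g_t$ localized at radial frequency $\approx 2^\ell$ satisfies, for any $\alpha>d(1/q-1/2)$,
\[
\|g_t^{(\ell)}(|\cdot|)\|_{M^{p,q}}\lesssim_\alpha \|g_t^{(\ell)}\|_{L^2_\alpha}\lesssim 2^{\ell\alpha}\|g_t^{(\ell)}\|_{L^2}\lesssim 2^{\ell(\alpha-b)}\|g_t\|_{L^2_b}\le 2^{\ell(\alpha-b)}A,
\]
where the last inequality uses that $g_t^{(\ell)}$ has $L^2_b$ norm comparable to $2^{\ell b}\|g_t^{(\ell)}\|_{L^2}$ and is an orthogonal piece of $g_t$. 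Since $b>d(1/p-1/2)$ by assumption in \eqref{sparse-radial}, we may pick $\alpha$ with $d(1/q-1/2)<\alpha$ and $\alpha<b-d(1/p-1/q)$; such a choice is possible precisely because $b>d(1/p-1/2)=d(1/q-1/2)+d(1/p-1/q)$. Then
\[
\sup_{t>0}\|[\phi h(t|\cdot|)]*\widehat\Psi_\ell\|_{M^{p,q}}\,2^{\ell d(1/p-1/q)}(1+\ell)\lesssim (1+\ell)\,2^{\ell(\alpha-b+d(1/p-1/q))}A=:(1+\ell)2^{-\ell\delta}A
\]
with $\delta>0$, which is summable in $\ell$. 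Hence $\cB[h(|\cdot|)]\lesssim A$, and Theorem \ref{thm:sparsemult} gives $\|T_h\|_{\Sp_\gamma(p,q')}\lesssim A$, proving part (i). (One small point to check: the precise hypothesis of part (i) requires $1<p\le \frac{2d}{d+1}$ and $p\le q\le\min\{\frac{d-1}{d+1}p',2\}$, which is exactly the range where \eqref{rad-mult} is assumed to hold; these constraints are inherited verbatim and need no further work.)

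For part (ii), I would invoke the endpoint characterization of radial $M^{p,2}$ multipliers coming from the Stein--Tomas restriction theorem and Littlewood--Paley theory (cited in the text, e.g.\ \cite{GarrigosSeeger2008}): for $p=\frac{2(d+1)}{d+3}$, $q=2$, one has $\|g(|\cdot|)\|_{M^{p,2}}\lesssim \|g\|_{L^2}$ for $g$ supported in $[1/2,2]$, which is \eqref{rad-mult} with $\alpha=0$ at that endpoint. Interpolating this (via analytic interpolation in the Sobolev smoothness parameter, or simply using the trivial $M^{2,2}=L^\infty$ bound $\|g(|\cdot|)\|_{M^{2,2}}\le\|g\|_\infty\lesssim\|g\|_{L^2_\alpha}$ for $\alpha>1/2$ together with Sobolev embedding) against the $L^2$ theory yields \eqref{rad-mult} for all pairs with $\frac{2(d+1)}{d+3}\le p\le q\le 2$ and all $\alpha>d(1/q-1/2)$, which in particular covers $1<p\le\frac{2(d+1)}{d+3}$, $p\le q\le 2$ after a further (easy) interpolation with the $p=1$ case $M^{1,q}\subset$ Fourier transforms of measures. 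Then part (i) applies and gives \eqref{sparse-radial} in this range.

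The main obstacle I anticipate is purely bookkeeping rather than conceptual: carefully justifying that convolution with $\widehat\Psi_\ell$ genuinely localizes the \emph{radial} profile $g_t$ at radial frequency $\approx 2^\ell$, and that the passage $\|[\phi h(t|\cdot|)]*\widehat\Psi_\ell\|_{M^{p,q}}\lesssim \|g_t^{(\ell)}\|_{L^2_\alpha}$ plus rapidly decaying errors is uniform in $t$. This is where the freedom in choosing $\phi,\Psi$ (Remark \ref{remark multiplier}) and the tail estimate \eqref{eqn:localtoglobalpf-tail} do the real work, and it parallels almost line-by-line the corresponding step in the proof of Theorem \ref{thm:localtoglobal-sparse}; I would model the write-up closely on that argument.
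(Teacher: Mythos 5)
Your route is essentially the paper's: verify finiteness of $\cB[h(|\cdot|)]$ in \eqref{eqn:Bm} by a one-dimensional Littlewood--Paley decomposition of the radial profile $g_t=\beta h(t\cdot)$, estimate the piece at radial frequency $\approx 2^\ell$ through the single-scale hypothesis \eqref{rad-mult} with an exponent $\alpha$ chosen in the window $d(\tfrac1q-\tfrac12)<\alpha<b-d(\tfrac1p-\tfrac1q)$ (nonempty precisely because $b>d(\tfrac1p-\tfrac12)$), and conclude by Theorem \ref{thm:sparsemult}; this is exactly the paper's choice of exponents, and part (ii) is, as in the paper, just part (i) combined with the known Stein--Tomas-based validity of \eqref{rad-mult} in the range $1<p\le\tfrac{2(d+1)}{d+3}$, $p\le q\le 2$. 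The one inaccuracy is your description of the off-diagonal pieces: saying that convolution with $\widehat\Psi_\ell$ isolates the radial-frequency-$2^\ell$ piece ``up to rapidly decaying tails from the cutoff $\phi$'' handled as in \eqref{eqn:localtoglobalpf-tail} misidentifies the mechanism. The profile pieces $g_{t,n}$ with $2^n\ll 2^\ell$ are small because their kernels have derivatives of order $k$ bounded by $O(2^{kn})$, so integration by parts against $\Psi_\ell$ (supported where $|x|\approx 2^\ell$) gains $2^{(n-\ell)N}$ -- this is what the paper does -- whereas the pieces with $2^n\gg 2^\ell$ are not negligible by any localization argument; they are simply summed using the very same hypothesis-based bound $2^{n(\alpha-b)}\le 2^{\ell(\alpha-b)}$, a geometric series, so the gap closes trivially but not by the argument you cite. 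Finally, in (ii) the clause ``the $p=1$ case $M^{1,q}\subset$ Fourier transforms of measures'' is not correct as stated; the elementary input one needs there is a kernel bound of the form $\|\cF^{-1}[g(|\cdot|)]\|_{q}\lesssim\|g\|_{L^2_\alpha}$ for $\alpha>d(\tfrac1q-\tfrac12)$ (Bessel asymptotics), or one can simply quote the cited characterization, as the paper does.
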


\begin{proof}   We need to verify the assumptions of Theorem \ref{thm:sparsemult}. 
This amounts to veryfing the finiteness of the condition \eqref{eqn:Bm}. Setting $g= \beta h(t \cdot)$ and fixing $b>d(1/p-1/2)$ this follows from proving that for $\ell\ge 1$ we have %
\Be \label{eqn:decay-ine-ell}
\| g(|\cdot|)* \widehat{\Psi}_\ell\|_{M^{p,q} }\lc 2^{-\ell(\frac dp-\frac dq+\eps(b))}
\|g\|_{L^2_b}  \Ee
for some $\eps(b)>0$.

Let $\upsilon_0$ be supported in $\{s\in \bbR :|s|<1/2\} $ such that $\int \upsilon_0(s) ds=1$.
For $n\ge 1$ let $\upsilon_n(s)= \upsilon_0(2^ns)-\upsilon_0(2^{n-1}s)$, and define $g_n(s)= g*\upsilon_n(s) $.
By assumption \eqref{rad-mult}, we have 
$\|g_n(|\cdot|)\|_{M^{p,q}} \lc  \|g_n\|_{L^2_\alpha}$ for any $\alpha > d(1/q-1/2)$ and hence also
\Be \label{eqn:gn}\|g_n(|\cdot|)*\widehat \Psi_\ell\|_{M^{p,q} }
\lc \|g_n(|\cdot|)\|_{M^{p,q} } \lc \|g_n\|_{L^2_{\alpha} } \lc 2^{-n(b-\alpha) } \|g\|_{L^2_b}.
\Ee
For our fixed choice of $b>d(1/p-1/2)$, we choose $b_1<b$ such that $d(1/p-1/2)<b_1<b$, so that \eqref{eqn:gn} holds for the choice
 $\alpha=b_1-d(1/p-1/q) > d(1/q-1/2)$.

We let $\ep=(b-b_1)(2d)^{-1}$ and use  \eqref{eqn:gn} for $n\ge \ell (1+\ep)^{-1}$. Since 
$$\frac{d(\frac 1p-\frac 1q) + b-b_1}{1+\ep}  -d(\frac 1p-\frac 1q)= \frac{b-b_1-\ep d(\frac 1p-\frac 1q)}{1+\ep}\le \frac{b-b_1}{1+\ep}:=\eps(b)>0$$
we get
\[\sum_{n\ge \frac{\ell}{1+\ep}} 
\|g_n(|\cdot|)*\widehat \Psi_\ell\|_{M^{p,q} }
\lc 2^{-\frac{\ell}{1+\ep} (\frac dp-\frac dq+b-b_1)}\| g \|_{L^2_b} \lc 
2^{-\ell (\frac dp-\frac dq +\eps(b))} \| g \|_{L^2_b}\]
For $n\le \ell/(1+\ep)$ we observe that any derivative of order $k$ of $g_n(|\cdot|)$ is $O(2^{kn}\|g\|_1)$ and  an $N$-fold integration by parts gives  
$|\cF^{-1} [g_n(|\cdot|)](x)|\le C_N2^{(n-\ell)N}$ for $|x|\approx 2^\ell$, 
for all $N\in \bbN$. We  use this with  $N:=10d\frac{1+\ep}{\ep}$.
By Young's inequality 
\begin{align*}\sum_{n\le \frac{\ell}{1+\eps_1}}
\| \cF^{-1} [g_n(|\cdot|)*\widehat\Psi_\ell] \|_{M^{p,q} }
\lc
C_N  \ell2^{\ell (d-d(\frac 1p-\frac 1q))}2^{-\ell( \frac{\ep}{1+\ep}N)} \lc C(\ep)2^{-8d\ell}
\end{align*}
and \eqref{eqn:decay-ine-ell} is verified.
\end{proof}
As an example in the   above class of multiplier transformations we consider a multi-scale version of Bochner--Riesz operators. The Bochner--Riesz means of the Fourier integral are defined by
\begin{equation}\label{BRmeansdef}
\widehat{S^\la_t f}(\xi) = (1-|\xi|^2/t^2)^\la_+ \widehat{f}(\xi) 
\end{equation} 
and are conjectured to be bounded from $L^p\to L^q$ if $\la> d(1/q-1/2)-1/2$ and $1\le p\le q\le \min\{ \frac{d-1}{d+1}p',\, 2\}$, with operator norm
$O(t^{d(1/p-1/q)})$. One may reduce to $t=1 $ by scaling, and if $h_\la(s)= (1-s^2)_+^\la$ then $h_\la\in L^2_\nu$ for $\la>\nu+1/2$.
Therefore, Proposition \ref{cor:radial mult} immediate leads to sparse bounds for operators such as 
$\sum_{k=-\infty}^\infty \pm \big(S^\la_{2^k} - S^\la_{2^{k+1}} \big)$.
with uniform bounds in the choice of  the sequence of signs. After  a standard averaging argument using Rademacher functions this implies  sparse bounds for lacunary square functions
The vector-valued version of Theorem \ref{thm:sparsemult} leads to   sparse domination  for the lacunary square-function 
\Be \notag \label{lac-square-fct} \Big(\sum_{k\in \bbZ}| S^\la_{2^k} f- S^\la_{2^{k+1}} f|^2\Big )^{1/2}\Ee and consequently to sparse bounds for lacunary Bochner--Riesz maximal functions  $M_\la f=\sup_{k \in \bbZ} |S_{2^k} ^\la f|$. These  results can be  viewed as a natural  multi-scale generalization of the sparse domination results for Bochner--Riesz means in \cite{benea-bernicot-luque,LaceyMenaReguera}.
In this context, we remark that there are sharper endpoint sparse domination result for Bochner--Riesz means \cite{keslerlacey} which yield back some of the known  weak type $(p,p)$ endpoint bounds for \[\la=d(1/p-1/2)-1/2.\] However,  currently there is no sparse bound for analogous endpoints which involve multiple frequency scales. We  intend to return to this question in the future.

\subsection{Stein's square function} 
In \cite{Stein1958} Stein introduced the square function defined via Bochner--Riesz means by,
\begin{align*}G^\alpha f(x) &= \Big(\int_0^\infty\Big| \frac{\partial
S^\alpha_t f(x)}{\partial t} \Big|^2 t\,dt \Big)^{1/2}
\\&= c_\alpha \Big(\int_0^\infty |S^{\alpha-1}_t f(x)-S^\alpha_t f(x)|^2 \frac{dt}{t}\Big)^{1/2}
\end{align*}
in order to  establish pointwise convergence and strong summability results. %
Another important connection was established in  \cite{CarberyGasperTrebels}, namely that an $L^p$-boundedness result for $G^\alpha$ implies that the condition $\sup_{t>0} \|\beta h(t\cdot)\|_{L^2_\alpha}<\infty$
is sufficient for  $h(|\cdot|)\in M_p$. Moreover, $G^\alpha$ also controls maximal operators associated to radial Fourier multipliers \cite{CarberyEscorial}.

The expression $G^\alpha f(x) $  is almost everywhere equivalent to many alternative square functions, which can be obtained via versions of Plancherel's theorem with respect to the $t$-variable;
see the paper by Kaneko and Sunouchi \cite{KanekoSunouchi}.
We distinguish the cases $1<p\le 2$, in which by a result of Sunouchi
\cite{Sunouchi1967} we have $L^p$ boundedness for $\alpha>d(1/p-1/2)+1/2$,  and the more subtle case $2\le p<\infty$,
where $L^p(\bbR^d)$ boundedness for $d\ge 2$ is conjectured for $p>\frac{2d}{d-1}$ and $\alpha>d(1/2-1/p)$, and known if $d=2$ \cite{Carbery1983}. $L^p$ boundedness in the latter problem is closely related to the multiplier problem discussed in \S \ref{sec:rad-mult}; see \cite{ChristProc, Seegerthesis1986, LeeRogersSeeger2012, Lee2018} for partial results and \cite{LeeRogersSeeger-Stein-vol, LeeSeeger2015} for certain endpoint and weighted bounds. 

We recall some basic decompositions of the Bochner--Riesz means. One splits
\[(1-|\xi|^2)_+^{\alpha-1} -(1-|\xi|^2)^{\alpha}_+=\sum_{n\ge 0} 2^{-n(\alpha-1)} u_n(|\xi|), \]
where $u_0(0)=0$, the $u_n$ are smooth, and for $n\ge 1$ we have \[\supp(u_n)\subset (1-2^{-n+1}, 1-2^{-n-1})\] and $ |\tfrac{d^j}{d s^j} u_n(s)|\le C_j 2^{nj}$ for $j \in \bbN_0.$ 
Let $K_n= \cF^{-1}[u_n(|\cdot|)]$, $K_{n,s} =s^d K_n(s\cdot)$ and 
\[G_n f(x)= \Big(\int_0^\infty |K_{n,s}*f(x)|^2 \frac{ds}{s}\Big)^{1/2},\] so that
\[G^\alpha f(x)\lc \sum_{n=0}^{\infty} 2^{-n(\alpha-1)} G_n f(x).\]
We shall rely on the standard pointwise estimates obtained by stationary phase calculations, 
\Be \label{eqn:Knptw} |K_n(x)|\lc_N (1+|x|)^{-\frac{d+1}{2}} (1+2^{-n}|x|)^{-N}.
\Ee

\subsubsection{The case \texorpdfstring{$1<p\le 2$}{p < 2}}\label{Steinsqfctp<2}
A pointwise sparse domination result for $\alpha>(d+1)/2$ was proved by Carro and Domingo-Salazar \cite{CarroDomingo-Salazar}. For $1/2<\alpha \leq (d+1)/2$ we have  $L^p$ boundedness ($p\le 2$) only in the restricted range $\frac{2d}{2\alpha+2d-1}<p\le 2$ by Sunouchi's result which is sharp. 
Thus in this range  we are seeking  sparse domination results for the forms $\inn {G^\alpha f_1}{f_2}$.
  Theorem \ref{thm:sparsemult} yields the following.
\begin{prop} Let $d\ge 2$, $\frac 12<\alpha \le \frac{d+1}{2}$. Then for $\frac{2d}{2\alpha+2d-1}<p
\le 2$ we have the $(p,p)$-sparse domination inequality
\[|\inn{G^\alpha f}{\om}| \leq C \Lamaxga_{p,p}(f,\om).
\]
\end{prop}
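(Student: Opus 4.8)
The plan is to realize $G^\alpha$ as a vector-valued operator of convolution type and to apply the Hilbert space valued multiplier theorem, Theorem \ref{thm:sparsemult}, with $\sH_1=\bbC$ and $\sH_2=L^2((0,\infty),\tfrac{ds}{s})$. As recalled above, it suffices to decompose $G^\alpha f(x)\lc \sum_{n\ge 0}2^{-n(\alpha-1)}G_nf(x)$ where $G_nf(x)=(\int_0^\infty|K_{n,s}*f(x)|^2\tfrac{ds}{s})^{1/2}$ and the kernels $K_n=\cF^{-1}[u_n(|\cdot|)]$ satisfy the stationary phase bound \eqref{eqn:Knptw}. For fixed $n$, let $\sH_2=L^2((0,\infty),\tfrac{ds}{s})$ and consider the operator $\cT^{(n)}$ with operator-valued multiplier $m^{(n)}(\xi)=(u_n(s|\xi|))_{s>0}\in\sL(\bbC,\sH_2)$; then $|\cT^{(n)}f(x)|_{\sH_2}=G_nf(x)$. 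First I would show that $\cB_\circ[m^{(n)}]$ and $\cB[m^{(n)}]$ are finite with geometric decay in $n$: precisely that for some $\eps>0$
\[
\cB[m^{(n)}]+\cB_\circ[m^{(n)}]\lc 2^{n(1-\alpha+\eps)+n\, d(\frac 1p-\frac 1{p'})\cdot 0}\cdot 2^{n\delta}
\]
--- let me instead simply target the clean estimate $\cB[m^{(n)}]+\cB_\circ[m^{(n)}]\lc 2^{n(d(1/p-1/2)+1/2-\alpha+\eps)}$, which after multiplication by $2^{-n(\alpha-1)}$ and summation over $n$ converges exactly when $\alpha>d(1/p-1/2)+1/2$, i.e. $p>\tfrac{2d}{2\alpha+2d-1}$.

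\textbf{Key steps.} The core is the estimation of $\sup_{t>0}\|[\phi m^{(n)}(t\cdot)]*\widehat\Psi_\ell\|_{M^{p,p}_{\bbC,\sH_2}}$ for each $\ell$. By scaling in $s$ we may fix $t=1$ and analyze the $\sH_2$-valued kernel $x\mapsto (\cF^{-1}[\phi\, u_n(s|\cdot|)]*\Psi_\ell)(x)\in\sH_2$. Since $\phi$ localizes $|\xi|\approx 1$, only $s\approx 1$ contributes to the $L^2(ds/s)$ norm at each point, and $\Psi_\ell$ localizes $|x|\approx 2^\ell$; using \eqref{eqn:Knptw} together with the support of $u_n$ (width $2^{-n}$ near $s=1$) I would prove a pointwise bound of the form $|\cF^{-1}[\phi u_n(s|\cdot|)]*\Psi_\ell(x)|_{\sH_2}\lc_N 2^{-\ell(d+1)/2}(1+2^{-n}2^\ell)^{-N}2^{-\ell N'}$ for $2^\ell\gg 2^n$, and for $2^\ell\lc 2^n$ a bound like $2^{-n/2}$ times a rapidly decaying factor, via integration by parts exploiting that the phase $\langle x,\xi\rangle+s|\xi|$ (frozen near $s\approx 1$) is nonstationary unless $|x|\approx 1$. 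Integrating these pointwise bounds via Young's inequality (for the $M^{1,1}$ and $M^{2,2}$ endpoints) and interpolating gives $\|[\phi u_n(s\cdot)]*\widehat\Psi_\ell\|_{M^{p,p}_{\bbC,\sH_2}}\lc 2^{-n/2}2^{\ell(d(1/p-1/2))}\min\{2^{-\ell N},2^{na}\ldots\}$ with enough room; the crucial point is that away from the range $2^\ell\approx 2^n$ one gets arbitrary decay, and at $2^\ell\approx 2^n$ the stationary phase gain $2^{-n(d-1)/(2)\cdot(\ldots)}$ combined with the $2^{-n/2}$ beats the loss $2^{na}$, leaving the net bound $2^{n(d(1/p-1/2)+1/2-\alpha+\eps)}$ after inserting the weight $2^{\ell d(1/p-1/p')}(1+\ell)$ in \eqref{eqn:Bm} and summing in $\ell$. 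Note $\cB_\circ$ is controlled by the same (easier) $M^{2,2}=L^\infty$ estimate. Then Theorem \ref{thm:sparsemult} gives $\|\cT^{(n)}\|_{\Sp_\gamma(p,\sH_1,p',\sH_2^*)}\lc 2^{n(d(1/p-1/2)+1/2-\alpha+\eps)}$, so by the density lemma and linearization of the $\sH_2$-norm, $\inn{G_nf}{\om}\lc 2^{n(d(1/p-1/2)+1/2-\alpha+\eps)}\Lamaxga_{p,p}(f,\om)$; multiplying by $2^{-n(\alpha-1)}$ and summing over $n$ (using $\alpha>d(1/p-1/2)+1/2$ and choosing $\eps$ small) yields the claimed sparse bound.

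\textbf{Main obstacle.} The main difficulty is obtaining the sharp exponent, i.e. extracting from the stationary phase analysis of $K_n$ the precise gain that matches Sunouchi's $L^p$ range; a crude use of \eqref{eqn:Knptw} only gives $\|K_n\|_1\lc 2^{n(d-1)/2}$, which would force $\alpha>(d+1)/2$ as in \cite{CarroDomingo-Salazar}, not the full range. The remedy is to exploit the $L^2(ds/s)$ averaging: by Plancherel in $s$ (or in a dual variable) the vector-valued kernel has square-function type cancellation that converts the $L^\infty$-in-$s$ bound $2^{n(d-1)/2}$ pointwise estimate into an $L^2$-in-$x$ gain, matching the known single-scale $M^{p,q}_{\bbC,\sH_2}$ mapping properties of the piece $u_n$; concretely one should check $\|[\phi u_n(s\cdot)]\|_{M^{p,p}_{\bbC,\sH_2}}\lc 2^{-n(\alpha_0-1)}$-type bounds directly from the $L^p$ theory of $G^{\alpha_0}$ at the critical $\alpha_0=d(1/p-1/2)+1/2$, and feed that in place of the crude pointwise estimate. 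Verifying that this single-scale input, combined with the regularity decay in $\ell$ coming from the compact support of $u_n$ and \eqref{eqn:Knptw}, suffices to make $\cB[m^{(n)}]$ summable against $2^{-n(\alpha-1)}$ for every $\alpha>d(1/p-1/2)+1/2$ is the technical heart of the argument.
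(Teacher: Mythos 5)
Your overall architecture coincides with the paper's: decompose $G^\alpha\lc\sum_{n\ge 0}2^{-n(\alpha-1)}G_n$, view each $G_n$ as an $\sH$-valued convolution operator with $\sH=L^2(\bbR^+,\tfrac{ds}{s})$, verify the hypotheses of Theorem \ref{thm:sparsemult} using the stationary phase bound \eqref{eqn:Knptw}, and sum in $n$. The execution, however, has a genuine gap at the key quantitative step. To land on the $(p,p)$-form $\Lamaxga_{p,p}$ via Theorem \ref{thm:sparsemult} you must take $q=p'$, i.e.\ control $\sup_{t>0}\|[\phi\, m^{(n)}(t\cdot)]*\widehat\Psi_\ell\|_{M^{p,p'}_{\bbC,\sH}}$ against the weight $2^{\ell d(1/p-1/p')}(1+\ell)$; the natural endpoints are then $M^{2,2}$ (the $\sup_\xi$ bound $\lc 2^{-n/2}$, coming from the $ds/s$-measure of the $s$-support of $u_n$) and $M^{1,\infty}$, i.e.\ the sup of the $\sH$-valued kernel, which on $|x|\approx 2^\ell$ is $\lc 2^{-\ell(d+1)/2}(1+2^{\ell-n})^{-N}$; this is exactly the paper's interpolation, and the $M^{1,\infty}$ endpoint is precisely what makes the growing weight $2^{\ell d(1/p-1/p')}$ affordable at the critical scale $\ell\approx n$. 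You instead interpolate $M^{1,1}$ with $M^{2,2}$ to get $M^{p,p}$ bounds; fed into the $(p,q)=(p,p)$ instance of Theorem \ref{thm:sparsemult} this yields only $\|\cT^{(n)}\|_{\Sp_\gamma(p,\sH_1,p',\sH^*)}$, i.e.\ domination by the larger form $\Lamaxga_{p,p'}$, and your final passage from that to $\Lamaxga_{p,p}(f,\om)$ is a non sequitur. Nor can you upgrade $M^{p,p}$ to $M^{p,p'}$ for free at the required quantitative level: the $(p,p')$ weight $2^{\ell d(1/p-1/p')}$ would then destroy the estimate at $\ell\approx n$. The missing idea is the $M^{1,\infty}$ endpoint at matched spatial scale.

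There are also bookkeeping inconsistencies that would need repair. The multiplier $m^{(n)}$ does not involve $\alpha$, so a bound of the form $\cB[m^{(n)}]\lc 2^{n(d(1/p-1/2)+1/2-\alpha+\eps)}$ cannot hold (for $\alpha>1$ it lies below the actual size), and multiplying such a bound again by $2^{-n(\alpha-1)}$ double-counts $\alpha$; the correct per-$n$ estimate is $2^{n(d(1/p-1/2)-1/2+\eps)}$, which after the weight $2^{-n(\alpha-1)}$ sums exactly under $\alpha>d(1/p-1/2)+1/2$, as in the paper. Finally, the remedy you sketch under ``main obstacle'' (Plancherel in $s$, or importing the critical-exponent $L^p$ theory of $G^{\alpha_0}$ as single-scale input) is neither carried out nor needed: the gain over the crude $\|K_n\|_1\lc 2^{n(d-1)/2}$ bound comes from combining the $M^{2,2}$ gain $2^{-n/2}$ with the $M^{1,\infty}$ kernel bound localized to $|x|\approx 2^\ell$ and interpolating, together with rapid decay in the regime $\ell> n(1+\eps_2)$ from \eqref{eqn:Knptw} — which is the paper's route.
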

\begin{proof}
The operators $G_n$ are defined through smooth kernels and therefore the result in \cite{CarroDomingo-Salazar} yields pointwise sparse bounds, with norms depending on $n$. This settles the case of small values of $n$. For large values of $n$, given $\eps>0$ we have to show
\Be\label{eqn:Gnsparse} |\inn{G_n f}{ \om}| \lc_\eps 2^{n (\frac dp-\frac d2-\frac 12+\eps)}   \,\La_{p,p}^{\gamma, *}(f,\om)
\Ee
since in the assumed range of $p$ we have $\alpha-1> d/p-d/2-1/2$ and therefore we can sum in $n$ to obtain the result for $G^\alpha$.
Let $\sH$ be the Hilbert space $L^2(\bbR^+, \frac{ds}{s})$.
By the linearization argument in \S\ref{sec:mainthm-part-two}, the inequality  \eqref{eqn:Gnsparse}  follows, for a scalar function $f_1$ and an $\sH$-valued function $f_2=\{f_{2,s}\}$,  from 
\Be \notag \label{eqn:Gnsparsevect} \Big|\iint K_{n,s} * f_1(x) f_{2,s}(x) \frac{ds}{s} dx \Big| \lc_\eps  2^{n (\frac dp-\frac d2-\frac 12+\eps)}  \,\La_{p,\bbC,p,\sH^*}^{\gamma, *}(f_1,f_2).
\Ee
By Theorem \ref{thm:sparsemult} this follows from 
\Be \label{eqn:pq-verification}
\sup_{t>0} \big\|[\beta(|\cdot|)u_n(t|\cdot|)]*\widehat \Psi_\ell\big\|_{M^{p,p'}_{\bbC,\sH^*} } 2^{\ell (\frac dp-\frac d{p'}+\ep_1)} \lc_\eps  2^{n (\frac dp-\frac d2-\frac 12+\eps)} 
\Ee
for some $\ep_1>0$.  To verify \eqref{eqn:pq-verification} we argue by interpolation and reduce to the cases $p=2$ and $p=1$.
It will be helpful to observe that for $1/8\le t/s\le 8$ we can replace the kernel $K_n$ on the left hand side of the inequality in \eqref{eqn:Knptw}
by $\cF^{-1} [\beta(|\cdot|)u_n(\frac ts|\cdot|)]$.
Thus
\Be \label{eqn:ptwise-vect}
\Big(\int| \cF^{-1} [ \beta(|\cdot|) u_n(\tfrac ts |\cdot|)](x) |^2 \tfrac{ds}{s}\Big)^{1/2} \lc_N (1+|x|)^{-\frac{d+1}{2}} (1+2^{-n}|x|)^{-N}.
\Ee 
Here we used that given $t$ the integrand is zero unless  $s\in [t/2, 2t]$. 
Hence for any $\ep_2>0$ (which we choose $\ll\min \{\eps,\ep_1\}$) we get 
\Be \label{eqn:Psi-error}
\sup_{t>0} \big\|[\beta(|\cdot|)u_n(t|\cdot|)]*\widehat \Psi_\ell\big\|_{M^{p,q}_{\bbC,\sH^*} } 
 \lc_{\ep_2,N} 2^{-\ell N} \quad 
 \text{ if } \ell>n(1+\ep_2) .
 \Ee
 If  $p=2$ we have also have, for $\ell\le n(1+\ep_2)$,
  \begin{align}  \label{eqn:p=2bd}
\big\|[\beta(|\cdot|)u_n(t|\cdot|)]*\widehat \Psi_\ell\big\|_{M^{2,2}_{\bbC,\sH^*} } 2^{\ell\ep_1}
 &\lc 2^{\ell\ep_1}\sup_\xi \Big(\int| \beta(|\xi|) u_n(\tfrac ts|\xi|)|^2 \tfrac{ds}s \Big)^{1/2}\\ \notag & \lc 2^{\ell\ep_1} 2^{-n/2}\lc 2^{n\ep_1(1+\ep_2)}2^{-n/2}.
 \end{align}
 Furthermore, for $p=1$ and $\ell\le n(1+\ep_2)$ we use \eqref{eqn:ptwise-vect} to see that
 \begin{align} \label{eqn:p=1bd}
&\big\|[\beta(|\cdot|)u_n(t|\cdot|)]*\widehat \Psi_\ell\big\|_{M^{1,\infty}_{\bbC,\sH^*} } 2^{\ell(d+\ep_1)}
 \\  \notag &\lc 2^{\ell(d+\ep_1)}  2^{-n(d+1)/2} \lc 2^{n(1+\ep_2)(d+\ep_1) - (d+1)/2} \lc 2^{n(d-1)/2+\eps}.
 \end{align}
 Combining \eqref{eqn:p=2bd}, \eqref{eqn:p=1bd} with \eqref{eqn:Psi-error} we obtain the cases of \eqref{eqn:pq-verification} for $p=1$ and $p=2$ and \eqref{eqn:pq-verification} follows by interpolation for $1\le p\le 2$.
\end{proof}

\subsubsection{The case \texorpdfstring{$2<p<\infty$}{p>2}} 
\label{Steinsqfctp>2}
The reduction to sparse bounds will be similar as in the case $p\le 2$, but the input information is more subtle.
 Instead of the  pointwise bounds \eqref{eqn:Knptw} we now use that 
 \Be\label{eqn:GnST}
 \Big\| \Big(\int_{1/8}^8 |K_{n,s}* f|^2\tfrac{ds}{s} \Big)^{1/2} \Big\|_p
 \lc 2^{n(\frac d2-\frac dr-1) } \|f\|_r,   
 \Ee
 for $2\le r\le p$, $p\ge \tfrac{2(d+1)}{d-1}$, which was proved using the Stein--Tomas restriction theorem \cite{ChristProc, Seegerthesis1986, LeeRogersSeeger-Stein-vol}.  We then obtain a satisfactory result for $\alpha \geq \frac{d}{d+1}$.
  
  \begin{prop} Let $d\ge 2$, $\frac d{d+1}\le \alpha \le \frac{d}{2}$. Then for $\frac{2d}{d-2\alpha}< p<\infty$ we have the $(2,p)$-sparse domination inequality
\[|\inn{G^\alpha f}{\om}| \leq C \Lamaxga_{2,p}(f,\om).\]
\end{prop}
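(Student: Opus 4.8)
The plan is to mimic the proof in \S\ref{Steinsqfctp<2} for the case $p \le 2$, now using the Stein--Tomas-type square function bound \eqref{eqn:GnST} as the basic single-scale input instead of the pointwise kernel bound \eqref{eqn:Knptw}. First I would reduce to the pieces $G_n$ via the decomposition $G^\alpha f \lc \sum_{n\ge 0} 2^{-n(\alpha-1)} G_n f$; for small $n$ the kernels $K_n$ are smooth so (a scalar-valued version of) the sparse domination for smooth square functions applies, leaving only large $n$. For those it suffices to prove, for each $\varepsilon>0$,
\begin{equation}\notag
|\inn{G_n f}{\om}| \lc_\varepsilon 2^{n(\frac d2 - \frac dp - 1 + \varepsilon)} \Lamaxga_{2,p}(f,\om),
\end{equation}
since $\alpha - 1 > \frac d2 - \frac dp - 1$ in the assumed range $\frac{2d}{d-2\alpha} < p < \infty$, and the exponents sum. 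By the linearization argument of \S\ref{sec:mainthm-part-two}, writing $\sH = L^2(\bbR^+, \tfrac{ds}{s})$ and viewing $G_n$ as the $L^2_\bbC \to L^p_\sH$ operator $f \mapsto \{K_{n,s}*f\}_s$, this follows from a bound $\|\cT_n\|_{\Sp_\gamma(2,\bbC,p',\sH^*)} \lc_\varepsilon 2^{n(\frac d2-\frac dp - 1+\varepsilon)}$, which we obtain from the Hilbert-space-valued multiplier theorem, Theorem \ref{thm:sparsemult}.

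Next I would verify the hypotheses of Theorem \ref{thm:sparsemult} for the $\sL(\bbC,\sH)$-valued multiplier $m_n(\xi) = \{u_n(s|\xi|)\}_{s}$ localized appropriately: concretely, one must estimate $\sup_{t>0}\|[\beta(|\cdot|) u_n(t|\cdot|)] * \widehat\Psi_\ell\|_{M^{2,p'}_{\bbC,\sH^*}}$ (and the analogous $L^\infty$ quantity $\cB_\circ$). As in the $p\le 2$ case, the key point is that for $1/8 \le t/s \le 8$ one may replace $K_n$ by $\cF^{-1}[\beta(|\cdot|) u_n(\tfrac ts |\cdot|)]$, and that given $t$ the $s$-integrand vanishes unless $s \in [t/2, 2t]$, so the annular $\tfrac ds s$-integral is over a bounded range. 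The error estimate $\|[\beta u_n(t\cdot)]*\widehat\Psi_\ell\|_{M^{2,p'}_{\bbC,\sH^*}} \lc_N 2^{-\ell N}$ for $\ell > n(1+\varepsilon_2)$ follows from the decay in \eqref{eqn:Knptw} (or its vector-valued refinement as in \eqref{eqn:ptwise-vect}). For $\ell \le n(1+\varepsilon_2)$, I would interpolate between: (a) the $M^{2,2}_{\bbC,\sH^*}$ bound, which by Plancherel is $\sup_\xi (\int |\beta(|\xi|) u_n(\tfrac ts|\xi|)|^2 \tfrac{ds}{s})^{1/2} \lc 2^{-n/2}$; and (b) an $M^{2,p}_{\bbC,\sH^*}$ bound coming directly from \eqref{eqn:GnST}, rescaled — namely $\|[\beta u_n(t\cdot)]*\widehat\Psi_\ell\|_{M^{2,p}_{\bbC,\sH^*}} \lc 2^{\ell d(\frac12-\frac1p)} 2^{n(\frac d2 - \frac d2 - 1)} = 2^{\ell d(\frac12-\frac1p)} 2^{-n}$, using the cube-decomposition trick of \eqref{eqn:Mp-from-Mpq} to pass from $M^{2,p}$ to the $\Psi_\ell$-localized estimate with the $2^{\ell d(\frac12-\frac1p)}$ loss. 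Interpolating (a) and (b), multiplying by the required weight $2^{\ell(d(\frac12 - \frac1{p'}) + \varepsilon_1)}$, summing the finitely many $t$-contributions and then summing in $\ell$, should give exactly $2^{n(\frac d2 - \frac dp - 1 + \varepsilon)}$ as needed.

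The main obstacle is getting the right power of $2^n$ out of the interpolation in the regime $\ell \approx na$ where the $\Psi_\ell$ cutoff is of critical size relative to the support of $K_n$ (diameter $\approx 2^n$). One has to be careful that \eqref{eqn:GnST} is only available for $r \ge 2$ and $p \ge \frac{2(d+1)}{d-1}$, so for $2 < p < \frac{2(d+1)}{d-1}$ one must first interpolate \eqref{eqn:GnST} (at some admissible larger exponent) with the trivial $L^2$ square-function bound, with the exponent $\frac d2 - \frac dr - 1$ varying linearly; tracking this carefully is what pins down the threshold $p > \frac{2d}{d-2\alpha}$, equivalently $\alpha > \frac d2 - \frac dp$, and the borderline $\alpha \ge \frac{d}{d+1}$ reflecting where \eqref{eqn:GnST} starts at $r = \frac{2(d+1)}{d-1}$. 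A secondary technical point is that the square function $G^\alpha$ is only a.e.-equivalent to the discretized dyadic model built from the $K_{n,s}$; I would handle this exactly as in the references \cite{ChristProc, Seegerthesis1986, LeeRogersSeeger-Stein-vol} and in \S\ref{Steinsqfctp<2}, noting the equivalence is harmless for the sparse bound since we only need an upper bound on $\inn{G^\alpha f}{\om}$. The rest is routine bookkeeping with the weights $2^{\ell d(\frac1p - \frac1q)}(1+\ell)$ in $\cB[m]$.
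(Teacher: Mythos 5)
Your overall route is indeed the paper's: decompose $G^\alpha f\lc\sum_{n\ge0}2^{-n(\alpha-1)}G_nf$, linearize with $\sH=L^2(\bbR^+,\tfrac{ds}{s})$ as in \S\ref{sec:mainthm-part-two}, and feed the resulting $\sL(\bbC,\sH)$-valued multiplier into Theorem \ref{thm:sparsemult}, with \eqref{eqn:GnST} as the single-scale input and rapid decay for $\ell\gg n$. But the quantitative core does not come out as you wrote it. First, the hypotheses force $p>\tfrac{2(d+1)}{d-1}$: since $\alpha\ge\tfrac d{d+1}$ one has $\tfrac{2d}{d-2\alpha}\ge\tfrac{2(d+1)}{d-1}$, and this is precisely the paper's opening observation; \eqref{eqn:GnST} is then used with $r=2$ directly at the exponent $p$, so your discussion of the regime $2<p<\tfrac{2(d+1)}{d-1}$ (interpolating \eqref{eqn:GnST} with a trivial $L^2$ bound) never arises, and your claims that this interpolation "pins down" $p>\tfrac{2d}{d-2\alpha}$, or that $\alpha\ge\tfrac d{d+1}$ marks where \eqref{eqn:GnST} "starts at $r=\tfrac{2(d+1)}{d-1}$", misstate the mechanism (in \eqref{eqn:GnST} the constraint $\ge\tfrac{2(d+1)}{d-1}$ is on the outer exponent $p$, not on $r$). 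Second, your summability assertion is reversed: for $\alpha<\tfrac d2$, $p>\tfrac{2d}{d-2\alpha}$ is equivalent to $\alpha<d(\tfrac12-\tfrac1p)$, i.e. $\alpha-1<\tfrac d2-\tfrac dp-1$, so with per-$n$ constants $2^{n(\frac d2-\frac dp-1+\eps)}$ the sum $\sum_n2^{-n(\alpha-1)}2^{n(\frac d2-\frac dp-1+\eps)}$ diverges in the range you quote. The argument really establishes the sparse bound at exponents $\tilde p$ with $\tfrac{2(d+1)}{d-1}\le\tilde p<\tfrac{2d}{d-2\alpha}$ (a window nonempty exactly when $\alpha>\tfrac d{d+1}$, where $\alpha>d(\tfrac12-\tfrac1{\tilde p})$), and the stated range is then reached via $\Lamaxga_{2,\tilde p}\le\Lamaxga_{2,p}$ for $p\ge\tilde p$; as written, your $n$-sum does not converge.

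The application of Theorem \ref{thm:sparsemult} is also set up with the wrong indices and a spurious loss. Since that theorem requires its exponents to satisfy $p\le q$, the only admissible choice producing a form dominated by $\Lamaxga_{2,p}$ is $(p,q)=(2,p)$ there: one must estimate $\sup_t\|[\beta(|\cdot|)u_n(t|\cdot|)]*\widehat\Psi_\ell\|_{M^{2,p}_{\bbC,\sH^*}}$ against the weight $2^{\ell(\frac d2-\frac dp+\ep_1)}$, which is exactly \eqref{eqn:pq-verificationmod}, and one lands on the (stronger) $\Lamaxga_{2,p'}$ form. The $M^{2,p'}$ norms and the weight $2^{\ell(d(\frac12-\frac1{p'})+\ep_1)}$ you propose are not what the theorem asks for; indeed $M^{2,p'}$ with $p'<2$ is not even finite for a nonzero convolution operator. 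Moreover, your item (b) imports an extra factor $2^{\ell d(\frac12-\frac1p)}$ by invoking \eqref{eqn:Mp-from-Mpq}; that lemma passes from $M^{p,q}$ to $M^{p,p}$ for compactly supported kernels and is neither needed nor usable to localize with $\widehat\Psi_\ell$. With that loss, in the critical regime $\ell\approx n$ the weighted bound becomes $2^{n(2d(\frac12-\frac1p)-1+\ep_1)}$ rather than the $2^{n(d(\frac12-\frac1p)-1+\eps)}$ your own reduction requires, so the claimed exponent does not follow. The correct point, implicit in the paper, is that $\|m*\widehat\Psi_\ell\|_{M^{2,p}}\lc\|m\|_{M^{2,p}}$ uniformly in $\ell$ (multiplying the kernel by $\Psi_\ell$ amounts to convolving $m$ with differences of $L^1$-normalized bumps), so the rescaled \eqref{eqn:GnST} with $r=2$ gives $\lc2^{-n}$ uniformly in $\ell$, the weight contributes at most $2^{n(1+\ep_2)(\frac d2-\frac dp+\ep_1)}$, and no interpolation with the $M^{2,2}$ bound is needed for the main regime; the Plancherel bound only enters through $\cB_\circ$ and the error range $\ell>n(1+\ep_2)$ is handled by \eqref{eqn:Psi-error}.
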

  \begin{proof}
  Note that in the given $p$-range, $p>\frac{2(d+1)}{d-1}$  when $\alpha\ge \frac{d}{d+1}$. We use the notation in the proof of the preceding proposition. By linearization (see the argument in \S\ref{sec:mainthm-part-two}) 
  it suffices to prove
  \Be\notag \label{eqn:Gnsparsevectmod} \Big|\iint K_{n,s} * f_1(x) f_{2,s}(x) \frac{ds}{s} dx \Big| \lc_{\eps} 2^{n (\frac d2-\frac dp-1+\eps)}  \,\La_{2,\bbC,p,\sH^*}^{\gamma, *}(f_1,f_2)
\Ee
and by  Theorem \ref{thm:sparsemult} this follows, given $\eps>0$ from
\Be \label{eqn:pq-verificationmod}
 \big\|[\beta(|\cdot|)u_n(t|\cdot|)]*\widehat \Psi_\ell\big\|_{M^{2,p}_{\bbC,\sH^*} } 2^{\ell (\frac d2-\frac d{p}+\ep_1)} \lc_\eps 2^{n (\frac d2-\frac dp-1+\eps)} 
\Ee
for some $\ep_1>0$, uniformly in $t>0$.  For $\ell<n(1+\ep_2)$ the left hand side is bounded by 
\Be \label{eqn:auxexpression}
2^{n(1+\ep_2) (\frac d2-\frac d{p}+\ep_1) } \big\|\beta(|\cdot|)u_n(t|\cdot|)\big\|_{M^{2,p}_{\bbC,\sH^*} }  .\Ee Using \eqref{eqn:GnST} for $r=2$ 
we get
\[
 \Big\| \Big(\int_{t/8}^{8t} |\cF^{-1} [ \beta(|\cdot|) u_n(\tfrac ts|\cdot|) \widehat f] |^2 \tfrac{ds}{s} \Big)^{1/2} \Big\|_p \lc 2^{-n} \|f\|_2
 \]
 and thus the expression in \eqref{eqn:auxexpression}  is $O(2^{n(1+\ep_2) (\frac d2-\frac d{p}+\ep_1)-n }).$  Finally, we choose  $\ep_1,\ep_2\ll\eps$ and combine this  with the error estimates \eqref{eqn:Psi-error}. This completes the verification of  \eqref{eqn:pq-verificationmod}.
  \end{proof}

\appendix
\section{Facts about sparse domination}\label{basicsect}

{For completeness, we collect a number of auxiliary results, some of them well-known, about sparse domination.}

\subsection{Replacing simple functions} 

It is often convenient  to replace   
the spaces $\simpleone$ and 
$\simpledual$ in  the definition of the $\Sp_{\ga}(p_1,B_1, p_2, B_2^*)$ norms  by  other
suitable  test function 
classes 
 such as the spaces  of compactly supported $C^\infty$ functions or Schwartz functions. This is justified by the following Lemma.

\begin{lem}\label{lem:density} Suppose $1\le p_1<p_2'$ and $p_1<p<p_2'$ and let $T\in \Sp_\ga(p_1,p_2)$. Let  $\cV_1$ be a dense subspace of $L^p_{B_1}$  and 
$\cV_2$ be a dense subspace of $L^{p'}_{B_2^*}$. Then
\begin{multline}\label{newdefsparse}
\|T\|_{\Sp_\ga(p_1, B_1,p_2,B_2^*)}= \sup\Big\{
\frac {|\inn{Tf_1}{f_2}|}{\Lamaxga_{p_1,B_1,p_2,B_2^*} (f_1,f_2)}: f_i\in \cV_i,\,  f_i\neq 0,\, i=1,2\Big\}.
\end{multline}
\end{lem}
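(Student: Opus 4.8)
\textbf{Proof plan for Lemma \ref{lem:density}.}

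The plan is to show two inequalities between the right-hand side of \eqref{newdefsparse}, call it $\mathcal N_{\cV}(T)$, and $\|T\|_{\Sp_\ga(p_1,B_1,p_2,B_2^*)}$. The inequality $\mathcal N_{\cV}(T)\le \|T\|_{\Sp_\ga(p_1,B_1,p_2,B_2^*)}$ is almost immediate: by definition of the sparse norm, $|\inn{Tf_1}{f_2}|\le \|T\|_{\Sp_\ga}\,\Lamaxga_{p_1,B_1,p_2,B_2^*}(f_1,f_2)$ holds for all $f_1\in\simpleone$, $f_2\in\simpledual$, hence in particular for $f_i$ ranging over the (possibly larger) test classes $\cV_i$ — but here one must be slightly careful, since $\cV_i$ need not consist of simple functions and $Tf_1$ is a priori only defined for $f_1\in\simpleone$. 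So the first real task is to make sense of $\inn{Tf_1}{f_2}$ for $f_1\in\cV_1\subset L^p_{B_1}$: this is exactly the content of the last sentence of the introduction, that operators in $\Sp(p_1,B_1,p_2,B_2^*)$ extend boundedly from $L^p_{B_1}$ to $L^p_{B_2}$ for $p_1<p<p_2'$, together with Lemma \ref{standardlemma}(ii)--(iii) / the bound \eqref{Gbypp'} which gives $\Lamaxga_{p_1,B_1,p_2,B_2^*}(f_1,f_2)\lesssim \gamma^{-1}\|f_1\|_{L^p_{B_1}}\|f_2\|_{L^{p'}_{B_2^*}}<\infty$ for all $f_1\in L^p_{B_1}$, $f_2\in L^{p'}_{B_2^*}$. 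So the extended operator still satisfies the sparse inequality against the extended (finite) maximal form, and $\mathcal N_{\cV}(T)\le \|T\|_{\Sp_\ga}$.

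For the reverse inequality I would argue by density and a limiting argument. Fix $f_1\in\simpleone$, $f_2\in\simpledual$ with $f_i\ne 0$; we must bound $|\inn{Tf_1}{f_2}|$ by $\mathcal N_{\cV}(T)\,\Lamaxga_{p_1,B_1,p_2,B_2^*}(f_1,f_2)$. Choose sequences $f_1^{(n)}\in\cV_1$ with $f_1^{(n)}\to f_1$ in $L^p_{B_1}$ and $f_2^{(n)}\in\cV_2$ with $f_2^{(n)}\to f_2$ in $L^{p'}_{B_2^*}$. By the $L^p_{B_1}\to L^p_{B_2}$ boundedness of the extension of $T$ and Hölder's inequality, $\inn{Tf_1^{(n)}}{f_2^{(n)}}\to\inn{Tf_1}{f_2}$. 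On the other hand $|\inn{Tf_1^{(n)}}{f_2^{(n)}}|\le \mathcal N_{\cV}(T)\,\Lamaxga_{p_1,B_1,p_2,B_2^*}(f_1^{(n)},f_2^{(n)})$, so it remains to pass to the limit in the maximal sparse form. Here I would use \eqref{maxineq} from Lemma \ref{standardlemma}(i): writing $\Lamaxga_{p_1,B_1,p_2,B_2^*}(g_1,g_2)\le \gamma^{-1}\int (\cM[|g_1|_{B_1}^{p_1}])^{1/p_1}(\cM[|g_2|_{B_2^*}^{p_2}])^{1/p_2}$, and more precisely writing $\Lamaxga(f_1^{(n)},f_2^{(n)})\le \Lamaxga(f_1,f_2) + \text{error}$, where the error is controlled using subadditivity-type bounds $\Lamaxga(f_1^{(n)},f_2^{(n)})\le\Lamaxga(f_1,f_2)+\Lamaxga(f_1^{(n)}-f_1,f_2^{(n)})+\Lamaxga(f_1,f_2^{(n)}-f_2)$ (which follows from the definition \eqref{eqn:maxform}, \eqref{sparseform} and the elementary inequality $\jp{a+b}_{Q,p}\le\jp a_{Q,p}+\jp b_{Q,p}$, using the same sparse family for all three terms), and then bounding the two error terms by \eqref{Gbypp'} applied with exponents $p_1<p<p_2'$:
\[
\Lamaxga_{p_1,B_1,p_2,B_2^*}(f_1^{(n)}-f_1,f_2^{(n)})\lesssim_{p,p_1,p_2}\gamma^{-1}\|f_1^{(n)}-f_1\|_{L^p_{B_1}}\|f_2^{(n)}\|_{L^{p'}_{B_2^*}},
\]
and similarly for the other term. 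Both tend to $0$ as $n\to\infty$, so $\limsup_n\Lamaxga(f_1^{(n)},f_2^{(n)})\le\Lamaxga(f_1,f_2)$, giving $|\inn{Tf_1}{f_2}|\le\mathcal N_{\cV}(T)\,\Lamaxga_{p_1,B_1,p_2,B_2^*}(f_1,f_2)$ for all simple $f_1,f_2$, i.e. $\|T\|_{\Sp_\ga}\le\mathcal N_{\cV}(T)$.

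The main obstacle I anticipate is not conceptual but bookkeeping: one needs $p$ strictly between $p_1$ and $p_2'$ so that the auxiliary bound \eqref{Gbypp'} — which trades a sparse form for a product of a Lorentz (here just Lebesgue, since $p_1<p$ so $L^{p,1}\supset L^p$ is not what we want; rather $1\le p_1<p$ gives $g\mapsto(\cM|g|^{p_1})^{1/p_1}$ bounded on $L^p$, and $p_2<p'$ gives the analogous statement for $f_2$) norm of $f_1$ and a Lebesgue norm of $f_2$ — is available with a \emph{genuine} $L^p\times L^{p'}$ pairing controlling the error; this is why the hypothesis $p_1<p<p_2'$ is exactly what is needed. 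A second small point to handle carefully is that $\cV_1$ is dense in $L^p_{B_1}$ and $\cV_2$ dense in $L^{p'}_{B_2^*}$, but the approximants need not be simple, so one cannot directly invoke the definition of $\Sp_\ga$ for them; this is precisely circumvented by first establishing $\mathcal N_{\cV}(T)\le\|T\|_{\Sp_\ga}$ on the extended operator, and then the limiting argument only uses $\mathcal N_{\cV}(T)$ as an upper bound for the pairings of the approximants, which by construction lie in $\cV_i$. Everything else is routine continuity of $T:L^p_{B_1}\to L^p_{B_2}$ and of the Hölder pairing.
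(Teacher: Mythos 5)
Your proposal is correct and is essentially the paper's own argument: the paper likewise makes sense of the pairing via the $L^p_{B_1}\to L^p_{B_2}$ extension of $T$, approximates in $L^p_{B_1}\times L^{p'}_{B_2^*}$, splits $\inn{Tf_1}{f_2}$ into three terms, and controls the sparse-form error via the quasi-subadditivity of $\Lamaxga_{p_1,B_1,p_2,B_2^*}$ together with \eqref{Gbypp'} (available precisely because $p_1<p<p_2'$), first treating the case $\cV_1=L^p_{B_1}$, $\cV_2=L^{p'}_{B_2^*}$ and then repeating the argument for general dense classes. Two small touch-ups: your first inequality (that the supremum over $\cV_1\times\cV_2$ is at most $\|T\|_{\Sp_\ga(p_1,B_1,p_2,B_2^*)}$) is exactly this full-space case and therefore needs the same limiting argument you spell out for the converse direction rather than being formal, and the parenthetical inclusion ``$L^{p,1}\supset L^p$'' is stated backwards (harmless, since the bound actually used is \eqref{Gbypp'}).
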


\begin{proof}

We first assume that $\cV_1=L^p_{B_1}$ and $\cV_2=L^{p'}_{B_2^*}$. In what follows we omit the reference to $B_1$, $B_2^*$.
 The right-hand side of \eqref{newdefsparse} dominates $\|T\|_{\Sp_\ga(p_1,p_2)}$, defined in \eqref{sparse norm}. In order to  verify the reverse inequality we have to show that given $\ep>0$ and given $f_1\in L^p$, $f_2\in L^{p'}$ we have the inequality
\Be \label{eqn:uptoeps}|\inn{Tf_1}{f_2}|\le (\|T\|_{\Sp_\ga(p_1,p_2)}+\ep) \, \Lamaxga_{p_1,p_2} (f_1,f_2).
\Ee
This is clear if one of the $f_i$ is zero almost everywhere. We may thus assume that
$\|f_1\|_p>0$, $\|f_2\|_{p'}>0$.
For any $\eps_1>0$ choose $g_1\in\simpleone$, $g_2\in \simpledual$ so that 
$\|f_1-g_1\|_p\le \ep_1$,  $\|f_2-g_2\|_{p'}\le \ep_1$, and also $\|g_1\|_p\le 2\|f_1\|_p$ ,
$\|g_2\|_{p'}\le 2\|f_2\|_{p'}$ and estimate, using the definition of $\|T \|_{\mathrm{Sp}_\gamma (p_1,p_2)}$,
\begin{align*}
&|\inn{Tf_1}{f_2} |\le 
|\inn{T[f_1-g_1]}{f_2} |+|\inn{Tf_1}{f_2-g_2}|+  |\inn{Tg_1}{g_2} |
\\
&\le \|T\|_{p-p} \big( \|f_1-g_1\|_p\|f_2\|_{p'} + \|f_1\|_p \|f_2-g_2\|_{p'} \big) + \|T\|_{\Sp_\ga(p_1,p_2)} \Lamaxga_{ p_1, p_2} (g_1,g_2) \\
& \le \|T\|_{p-p} \big( \ep_1 \|f_2\|_{p'} + \|f_1\|_p \ep_1 \big) + \|T\|_{\Sp_\ga(p_1,p_2)} \Lamaxga_{ p_1, p_2} (g_1,g_2).
\end{align*} 
Moreover, for $p_1<p<p_2'$, one has using (ii) in Lemma A.2. that
\begin{align*} 
&\Lamaxga_{p_1, p_2} (g_1,g_2) \le 
\Lamaxga_{ p_1, p_2} (g_1-f_1,g_2) +
\Lamaxga_{ p_1, p_2} (f_1,g_2-f_2)  +
\Lamaxga_{p_1, p_2} (f_1,f_2)
\\
 &\le C_1(p,p_1,p_2)(\|g_1-f_1\|_p \|g_2\|_{p'} +\|f_1\|_p \|g_2-f_2\|_{p'})+\Lamaxga_{p_1, p_2} (f_1,f_2)
 \\
 &\le C_1(p,p_1,p_2) (2\|f_2\|_{p'}\ep_1 +\|f_1\|_p\ep_1) + \Lamaxga_{ p_1, p_2} (f_1,f_2)
 \end{align*} and thus 
  \[|\inn{Tf_1}{f_2} |\le 
 \|T\|_{\Sp_\ga(p_1,p_2)} \, \Lamaxga_{ p_1, p_2} (f_1,f_2)  + \cE,\]
  with $\cE\le C(f_1,f_2, p,p_1,p_2,T) \ep_1$.
Choosing a suitable $\ep_1$ depending on $\ep$ we obtain the assertion \eqref{eqn:uptoeps}, for the case $\cV_1=L^p$, $\cV_2=L^p$.

In the general case we replace
 the couple  of pairs
 $(\simpleone, \simpledual)$  and $(L^p,L^{p'})$ by the couple of pairs 
  $(\cV_1,\cV_2)$ and $(L^p,L^{p'})$ and see that a repetition of the above arguments settles this case as well. 
\end{proof}

\subsection{The Hardy--Littlewood maximal function}\label{subsec:sparse HL}

 It is a well-known fact that the Hardy--Littlewood maximal operator, denoted by $\cM$, satisfies a sparse domination inequality. We have not been able to 
 identify the  original reference for this fact and  refer 
to Lerner's expository lecture \cite{Lerner-ElEscorial} instead.  This constitutes a 
first nontrivial example for sparse domination and we include
a  standard proof for completeness.
\begin{lem}
Let $f \in L^1_{loc}(\bbR^d)$. Then there exist
$\gamma$-sparse
families $\fS_i(f)$, $i=1,\dots, 3^d$,  such that
$$
\cM f(x) \leq 2^d(1-\gamma)^{-1}  \sum_{i=1}^{3^d} \sum_{Q \in \fS_i(f)} \jp{f}_{Q,1} \bbone_Q(x). 
$$
\end{lem}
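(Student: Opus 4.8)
The plan is to run the standard Calder\'on--Zygmund-type stopping time / Whitney decomposition that is used to prove pointwise sparse domination for the dyadic maximal function, then pass to the non-dyadic operator $\cM$ via the Three Lattice Theorem of Lerner--Nazarov \cite{lerner-nazarov}. Concretely, fix a dyadic lattice $\fQ$ and let $\cM^\fQ$ denote the associated dyadic maximal function, $\cM^\fQ f(x) = \sup_{Q\in\fQ, \, x\in Q} \jp{f}_{Q,1}$. First I would prove a sparse bound for $\cM^\fQ$: without loss of generality $f\ge 0$ and (by monotone convergence / exhaustion) $f$ is supported in a large cube of $\fQ$, so one may fix an initial cube $Q_0$. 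One then performs a recursive stopping-time construction. Set $\fS = \{Q_0\}$ at the first stage, and given a stopping cube $Q\in\fS$, select the maximal dyadic subcubes $Q'\subset Q$ for which $\jp{f}_{Q',1} > 2^{d+1}\jp{f}_{Q,1}$; add these $Q'$ to $\fS$ and recurse. By maximality the selected subcubes are pairwise disjoint, and by the $L^1$ Lebesgue differentiation/stopping inequality their total measure satisfies $\sum_{Q'} |Q'| \le 2^{-(d+1)}|Q| \le \tfrac12 |Q|$, so setting $E_Q := Q\setminus \bigcup_{Q'}Q'$ gives $|E_Q|\ge \tfrac12|Q|$; the sets $E_Q$ are pairwise disjoint by construction, so $\fS$ is $\tfrac12$-sparse.

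The key pointwise estimate is then: for a.e. $x$, if $Q\in\fS$ is the minimal stopping cube containing $x$ (which exists since $x$ lies in only finitely many $Q\in\fS$ above a certain scale, and $\cM^\fQ f(x) = \cM^{\fQ}_{\le L(Q_0)} f(x)$ for $x$ in the support region), then every dyadic cube $R$ with $x\in R\subseteq Q$ satisfies $\jp{f}_{R,1}\le 2^{d+1}\jp{f}_{Q,1}$ — otherwise $R$ would be contained in one of the selected children of $Q$, contradicting minimality. Hence $\cM^\fQ f(x) \le 2^{d+1}\jp{f}_{Q,1}\le 2^{d+1}\sum_{Q\in\fS}\jp{f}_{Q,1}\bbone_Q(x)$. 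Since the constructed $\fS$ is $\tfrac12$-sparse and $\tfrac12 < \gamma$ only when $\gamma>\tfrac12$, I would instead run the stopping condition with threshold $\lambda = 2^d/(1-\gamma)$ in place of $2^{d+1}$: then $\sum_{Q'}|Q'|\le \lambda^{-1}|Q| = 2^{-d}(1-\gamma)|Q|\le (1-\gamma)|Q|$, so $|E_Q|\ge\gamma|Q|$ and $\fS$ is $\gamma$-sparse, at the cost of the pointwise constant becoming $\lambda = 2^d(1-\gamma)^{-1}$. This yields $\cM^\fQ f(x)\le 2^d(1-\gamma)^{-1}\sum_{Q\in\fS(f)}\jp{f}_{Q,1}\bbone_Q(x)$ with a single $\gamma$-sparse family $\fS(f)\subset\fQ$.

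Finally I would remove the dyadic restriction. A standard fact (see e.g. \cite[\S2]{lerner-nazarov}) is that there exist $3^d$ dyadic lattices $\fQ^{(1)},\dots,\fQ^{(3^d)}$ such that every cube $Q\subset\bbR^d$ is contained in some cube $Q'\in\fQ^{(i)}$ with $\ell(Q')\le 3\ell(Q)$; consequently $\cM f(x)\le 2^d\sum_{i=1}^{3^d}\cM^{\fQ^{(i)}}f(x)$ (the factor $2^d\cdot 3^d$ absorbed appropriately — more precisely one gets $\jp{f}_{Q,1}\le 3^d\jp{f}_{Q',1}$, giving $\cM f\le 3^d\sum_i \cM^{\fQ^{(i)}}f$, and combining with the dyadic bound yields the stated constant $2^d(1-\gamma)^{-1}$ after bookkeeping). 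Applying the dyadic result in each $\fQ^{(i)}$ produces $\gamma$-sparse families $\fS_i(f)\subset\fQ^{(i)}$ with $\cM f(x)\le 2^d(1-\gamma)^{-1}\sum_{i=1}^{3^d}\sum_{Q\in\fS_i(f)}\jp{f}_{Q,1}\bbone_Q(x)$, as claimed. I do not anticipate a genuine obstacle here; the only point requiring mild care is the bookkeeping of constants between the $2^d$ from the three-lattice overlap and the $(1-\gamma)^{-1}$ from the sparseness threshold, and the standard reduction (via monotone convergence and exhausting $\bbR^d$ by cubes) that lets one fix an initial cube $Q_0$ while keeping all constants independent of $Q_0$.
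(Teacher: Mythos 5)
Your argument is correct in substance and follows the same two--step strategy as the paper (a sparse bound for a dyadic maximal function, then the Three Lattice reduction), but the dyadic step is organized differently. The paper works globally: with $a=2^d(1-\gamma)^{-1}$ it decomposes each level set $\Omega_k=\{\cM^{\fD}f>a^k\}$ into its maximal dyadic cubes $Q_k^j$ (so $a^k\le \jp{f}_{Q_k^j,1}\le 2^d a^k$), takes the collection of all these cubes as the sparse family, obtains sparseness from $|Q_k^j\cap\Omega_{k+1}|\le 2^d a^{-1}|Q_k^j|$, and reads off the pointwise bound on $\Omega_k\setminus\Omega_{k+1}$; no initial cube, no exhaustion. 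You instead run a local stopping-time recursion from a top cube $Q_0$, selecting maximal subcubes whose average exceeds $\lambda=2^d(1-\gamma)^{-1}$ times that of the parent, which gives the same sparseness and the same constant; this is a perfectly standard equivalent mechanism, but it forces you to handle two points that the paper's level-set formulation avoids. First, your key estimate is justified only for cubes $R$ with $x\in R\subseteq Q$ ($Q$ the minimal stopping cube containing $x$), whereas $\cM^{\fQ}f(x)$ also sees cubes with $Q\subsetneq R\subseteq Q_0$; the bound survives because such an $R$ satisfies $\jp{f}_{R,1}\le\lambda\,\jp{f}_{S,1}$ for its minimal stopping ancestor $S$, and averages increase along descending chains of stopping cubes, so $\jp{f}_{S,1}\le\jp{f}_{Q,1}$ — a one-line addition, but it should be said. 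Second, the reduction to $f$ supported in $Q_0$ plus ``monotone convergence'' is not automatic: the family you build consists of subcubes of $Q_0$ and cannot dominate $\cM f$ off $Q_0$ (try $f=\bbone_{Q_0}$), so one must adjoin a chain of ancestors of $Q_0$ or, as the paper does, run the construction at all scales of the lattice simultaneously; also the families produced along an exhausting sequence of cubes do not trivially pass to a limit. Finally, your closing ``bookkeeping'' does not actually remove the factor $3^d$ coming from $\jp{f}_{Q,1}\le 3^d\jp{f}_{R,1}$ in the lattice reduction, so strictly one obtains $6^d(1-\gamma)^{-1}$ rather than $2^d(1-\gamma)^{-1}$; the paper's own write-up elides the same factor (it states $\cM f\le\sum_i\cM^{\fD_i}f$ without it), and the precise dimensional constant is immaterial for the applications.
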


\begin{proof}
Let $\fD$ be a dyadic lattice and let $\cM^{\fD}$ denote the dyadic maximal function associated to $\fD$, that is, $\cM^{\fD}f(x):=\sup_{\substack{Q \ni x\\Q \in \fD}} \jp{f}_{Q}$. Fix $a \in \bbR$ a constant to be determined later, and for each $k \in \bbZ$, define the sets
$$
\Omega_k:=\{ x \in \bbR^d : \cM^\fD f(x) > a^k \} = \bigcup_{Q_{k}^j \in \fD } Q_k^j
$$
where $\{Q_k^j\}_j \subseteq \fD$ are the maximal disjoint dyadic cubes for $\Omega_k$, that is,
\begin{equation}\label{eq:maximal cubes}
a^k \leq \jp{f}_{Q_k^j} \leq a^k 2^d.
\end{equation}
Define the sets $E_k^j:= Q_k^j \backslash \Omega_{k+1}$, and note that the family of sets $\{E_k^j\}_{k,j}$ is pairwise disjoint and $|E_k^j| > (1-\tfrac{2^d}{a})|Q_k^j|$; for the last claim, note that
\begin{align*}
|Q_k^j \cap \Omega_{k+1}| &= \sum_{i} |Q_k^j \cap Q_{k+1, i}| = \sum_{i \, :\, Q_{k+1}^{i} \subset Q_k^j} |Q_{k+1}^i| \\
& < \sum_{i\, :\,  Q_{k+1}^{i} \subset Q_k^j} \frac{1}{a^{k+1}} \int_{Q_{k+1}^i} |f| \leq \frac{1}{a^{k+1}} \int_{Q^j_k} |f| 
\leq \frac{2^d}{a} |Q_{k}^j|, 
\end{align*}
using  the disjointness of the cubes $Q^i_{k+1}$ and \eqref{eq:maximal cubes} for $Q_{k+1}^i$ and $Q_k^j$. 
Thus, choosing $a=2^{d}(1-\gamma)^{-1}$, $\fS(f):=\{Q^j_k\}_{k,j}$ is a $\gamma$-sparse family, and moreover
\begin{align*}
    \cM^{\fD} f(x)&= \sum_{k \in \bbZ} \cM^{\fD} f(x) \bbone_{\Omega_k \backslash \Omega_{k+1}}(x)  \leq \sum_{k \in \bbZ} \sum_{Q_j^k} a^{k+1} \bbone_{E_k^j}(x) \\
    & \leq a \sum_{k, j} \jp{f}_{Q_k^j} \bbone_{Q_{k}^j}(x) = a \sum_{Q \in \fS(f)}  \jp{f}_{Q} \bbone_{Q}(x)
\end{align*}
by \eqref{eq:maximal cubes}. Finally, the result for the maximal function $\cM$ follows from the $3^d$-trick (see \cite[Theorem 3.1]{lerner-nazarov}), which ensures that there exist $3^d$ dyadic lattices $\fD_i$, $i=1, \dots, 3^d$ such that
\[
\cM f(x) \leq \sum_{i=1}^{3^d} \cM^{\fD_{i}} f(x). \qedhere
\]
\end{proof}

\begin{remarka} If $f$ has values in a  Banach space $B$, the same argument applies to $f\mapsto \cM(|f|_B)$. However, there are more interesting   vector-valued extensions such as  in the Fefferman--Stein theorem \cite{FeffermanStein}, 
and corresponding general sparse domination results with additional hypotheses on the Banach space are discussed in a paper by H\"anninen and Lorist \cite{HanninenLorist}.%
\end{remarka}

\subsection{Operators associated with dilates of Schwartz functions} 
 It is convenient in many applications to observe that maximal functions and variation operators generated by convolution operators with Schwarz functions satisfy sparse bounds. We choose to deduce the variational statements as a consequence of our Theorems in \S \ref{sec:squarefct-etc}, but it could also be based \textit{e.g.} on \cite{DiPlinioHytonenLi}. For the definition of the dyadic and short variation operators we refer to Remark \ref{rem:longshort}; here  $V^r_{\mathrm{sh}}$ is understood with $E=(0,\infty)$.

\begin{lem}\label{lem:nonlocalerror} Let $K\in C^2(\bbR^d)$ be a convolution kernel satisfying, for all multiindices $\alpha \in \bbN_0^d$ with $\sum_i |\alpha_i|\le 2$, 
\Be \notag \label{eqn:kappaest} 
|\partial^{\alpha} K(x)|\le (1+|x|)^{-d-2}.
\Ee Let  $K_t(x)=t^{-d}K(t^{-1}x)$ and let $\cK_tf=K_t* f(x)$. 
Then for $1<p\leq q<\infty$
\begin{align*} & |\inn{\sup_{t>0} |\cK_tf| }{\om} | \lc \Lamaxga_{p,q'} (f,\om)
\\
&|\inn{V^r_\dyad \cK f}{\om} | \lc \Lamaxga_{p,q'} (f,\om), \text{ $2< r < \infty,$}
\\
&| \inn{ V^r_{\mathrm{sh}} \cK f}{\om} | \lc \Lamaxga_{p,q'} (f,\om), \text{ $2 \leq r < \infty.$}
\end{align*}
\end{lem}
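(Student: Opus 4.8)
The strategy is to verify the hypotheses of Theorems \ref{thm:ellr} and \ref{thm:Vr} for a suitable family of operators $\{T_j\}$ obtained by a dyadic splitting of the kernel $K$, and then combine the three statements from a Littlewood--Paley-type decomposition in the spatial scale. Concretely, fix a partition $1=\Phi_0(x)+\sum_{\ell\ge 1}\Psi_\ell(x)$ where $\Phi_0$ is supported in $|x|\le 1$ and each $\Psi_\ell$ is supported in $|x|\approx 2^\ell$ (as in \S\ref{sec:more-precise}), and write $K=\sum_{\ell\ge 0}K^\ell$ with $K^0=K\Phi_0$ and $K^\ell=K\Psi_\ell$ for $\ell\ge 1$. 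Correspondingly set $K^\ell_{t}=t^{-d}K^\ell(t^{-1}\cdot)$, and for the dyadic family $t=2^j$ define $T^{\ell}_j f=K^\ell_{2^j}*f$. The kernel decay $|\partial^\alpha K(x)|\lesssim(1+|x|)^{-d-2}$ for $|\alpha|\le 2$ gives $\|K^\ell\|_1\lesssim 2^{-2\ell}$, $\|K^\ell\|_\infty\lesssim 2^{-(d+2)\ell}$, $\|\widehat{K^\ell}\|_\infty\lesssim 2^{-2\ell}$, and more importantly (after scaling out) the operators $\dil_{2^j}T^\ell_j$ have convolution kernel $2^{jd}K^\ell_{2^j}(2^j\cdot)=K^\ell$, which is supported in $|x|\lesssim 2^\ell$.

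For each fixed $\ell$, the operators $\{T^\ell_j\}_{j\in\bbZ}$ satisfy the support condition \eqref{support-assu} after the harmless rescaling $x\mapsto 2^{-\ell}x$ (equivalently, one replaces the generic dilation $2^j$ by $2^{j+\ell}$ and absorbs the shift); alternatively one notes the support is in a $2^{j+\ell}$-neighborhood and the whole theory is invariant under this reindexing, with all constants picking up only the $2^{\ell d(1/p-1/q)}$ factor recorded below. The single scale $(p,q)$ condition \eqref{p-q-rescaled} becomes $\|K^\ell*\cdot\|_{L^p\to L^q}\lesssim\|K^\ell\|_{M^{p,q}}$, which by Young's inequality is $\lesssim\|K^\ell\|_{L^s}$ with $1+1/q=1/p+1/s$, and using the decay this is $\lesssim 2^{-c\ell}$ for some $c=c(d,p,q)>0$ provided, say, $d(1/p-1/q)<2$ — in fact since $p\le q$ and $K^\ell$ has both $L^1$ and $L^\infty$ decay one gets a power gain $2^{-\eps\ell}$ for a small $\eps$ after interpolating, uniformly in $j$. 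The $\varepsilon$-regularity conditions \eqref{p-q-rescaled-reg-a}, \eqref{p-q-rescaled-reg-b} follow from Lemma \ref{lem:regbyFourier} together with the trivial smoothness of $\widehat{K^\ell}$: for $f\in\Eann(\lambda)$ one has $\|K^\ell*f\|_q\lesssim\lambda^{-N}\|K^\ell\|_1 2^{\ell N}\|f\|_p$ using that $\widehat{K^\ell}$ and all its derivatives decay, so the hypothesis \eqref{eq:p-q hyp ann regbyFourier} holds with any $\vartheta$ at the cost of a factor $2^{M\ell}$ for large $M$; since $T^\ell_j$ commutes with translations, \eqref{p-q-rescaled-reg-b} is automatic by Remark (ii) after Theorem \ref{mainthm}. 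Finally the weak-type and restricted-strong-type bounds \eqref{bdness-wt}, \eqref{bdness-rt}, in the forms \eqref{bdness-ellr} for $r=\infty$ and \eqref{bdness-Vr}, for the partial-sum operators $\sum_{j=N_1}^{N_2}T^\ell_j$ and uniformly in $N_1,N_2$: for the maximal and long-variation pieces built from $K^\ell$ these reduce to the classical $L^p$ bounds for $\sup_t|\cK^\ell_t f|$, $V^r_\dyad\cK^\ell f$, $r>2$, which hold with a constant $O(2^{-c\ell})$ because $K^\ell$ is a nice bump with integrable tails — these are standard (Lepingle's theorem \cite{Lepingle} for the long variation, a maximal-function comparison for the sup), and for the short variation one uses the Sobolev embedding argument as in \cite{JonesKaufmanRosenblattWierdl,JonesSeegerWright} together with the $C^2$ control on $K^\ell$, again gaining $2^{-c\ell}$.

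With all hypotheses verified for $\{T^\ell_j\}$, Remark (iii) after Theorem \ref{mainthm} applies: each $T^\ell=\sum_j T^\ell_j$ satisfies the assumptions with $A(p),A(q),A_\circ(p,q)=O(2^{-\ell\eps'})$ and $B=O(2^{\ell M})$, so Theorems \ref{thm:ellr}(i) (with $r=\infty$) and \ref{thm:Vr} give, for the corresponding sup / $V^r$ operators, sparse bounds with constant $O(\ell 2^{-\ell\eps'})$, which is summable in $\ell$. Summing the pieces — using subadditivity of $\sup_t$ and of the variation seminorms under the decomposition $\cK_t=\sum_\ell\cK^\ell_t$ — yields $\inn{\sup_{t>0}|\cK_t f|}{\omega}\lesssim\Lamaxga_{p,q'}(f,\omega)$, $\inn{V^r_\dyad\cK f}{\omega}\lesssim\Lamaxga_{p,q'}(f,\omega)$ for $2<r<\infty$, and $\inn{V^r_{\mathrm{sh}}\cK f}{\omega}\lesssim\Lamaxga_{p,q'}(f,\omega)$ for $2\le r<\infty$, as claimed. \textbf{Main obstacle.} The one nontrivial point is bookkeeping the support condition: $\dil_{2^j}T^\ell_j$ has kernel supported in $\{|x|\lesssim 2^\ell\}$ rather than $\{|x|\lesssim 1\}$, so one must either rescale spatially by $2^\ell$ (turning the family $\{T^\ell_j\}_j$ into $\{\tilde T^\ell_j\}_j$ with $\tilde T^\ell_j=T^\ell_{j-\ell}$, which satisfies \eqref{support-assu} exactly) and track how the $L^p\to L^q$ norms and the $2^{\ell d(1/p-1/q)}$-type losses transform, or invoke the remark that the whole framework tolerates a fixed dilation mismatch. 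Either way the losses are polynomial in $2^\ell$ and are dominated by the exponential gain $2^{-\eps\ell}$ coming from the kernel decay, so the sum over $\ell$ converges; making this precise is the only place requiring care, and it is entirely routine.
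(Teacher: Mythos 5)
Your outline is in the right spirit — the paper's own proof of the variational bounds uses exactly your decomposition $K=\sum_{n\ge 0}K\Psi_n$, verifies the hypotheses of Theorems \ref{thm:ellr} and \ref{thm:Vr} for each annular piece with an exponential gain in $n$ (from the bounds on $\nabla K$, $\nabla^2K$ and the results of \cite{JonesKaufmanRosenblattWierdl,JonesSeegerWright}), and sums. But there is a genuine gap in your application step: your family $T^\ell_jf=K^\ell_{2^j}*f$ is scalar-valued and indexed only by the dyadic times $t=2^j$. Theorem \ref{thm:ellr} with $r=\infty$ applied to this family controls only the dyadic maximal function $\sup_{j\in\bbZ}|\cK^\ell_{2^j}f|$, not $\sup_{t>0}|\cK^\ell_tf|$, and Theorem \ref{thm:Vr} controls only the long variation $V^r_\dyad\cK^\ell$. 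The short variation $V^r_{\mathrm{sh}}\cK^\ell$ is not a function of the values at $t=2^j$ at all, so the third estimate (and, as written, the first) does not follow from the theorems you invoke; ``subadditivity of $\sup_t$ and of the variation seminorms'' cannot repair this, since the within-block behaviour of $t\mapsto\cK^\ell_tf(x)$ never entered your setup.

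The fixes are what the paper does. For the maximal function no machinery is needed: the kernel bound $|K(x)|\le(1+|x|)^{-d-2}$ gives the pointwise domination $\sup_{t>0}|\cK_tf|\lc\cM f$, and the sparse bound for the Hardy--Littlewood maximal operator (\S\ref{subsec:sparse HL}) finishes that case directly. For the short variation one must run the argument in Banach-valued form: take $T_jf(x)=\{K^n_t*f(x)\}_{t\in[2^j,2^{j+1}]}$ with $B_2$ a large finite-dimensional subspace of $V^r_{[1,2]}$ (so that $|T_jf(x)|_{B_2}$ is the within-block variation), apply Theorem \ref{thm:ellr} for $\ell^r$-sums, and pass to the limit by monotone convergence using that the constants are independent of $B_2$. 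The single-scale inputs are then the rescaled estimates $\|V^r_{[1,2]}\{2^{nd}K^n_t(2^n\cdot)*f\}\|_q\lc 2^{-n}\|f\|_p$ and $\|V^r_{[1,2]}\{2^{nd}K^n_t(2^n\cdot)*\Delta_hf\}\|_q\lc 2^{n(1/r-1)}|h|^{1/r}\|f\|_p$, which follow from the $C^2$ bounds on $K$; this also disposes of your ``support bookkeeping'' concern, since rescaling by $2^n$ puts the kernels at unit scale and the $2^{-n}$ gain absorbs all polynomial losses. With these modifications (the long-variation piece of your argument is fine as stated) your proof becomes essentially the paper's.
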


\begin{proof} Since $\sup_{t>0} |\cK_t*f|$ is pointwise dominated by the Hardy-Littlewood maximal function  the sparse bound for 
$\inn{\sup_{t>0} |\cK_tf| }{\om}$
can be directly deduced from the sparse bound  for the Hardy--Littlewood maximal function $\cM$ in \S\ref{subsec:sparse HL}.

For the variation norm inequalities we decompose  $\cK=\sum_{n=0}^\infty \cK^n$ where $\cK^n$ denotes convolution with ${K^n}:=K\Psi_n $
(here $\Psi_n$ is supported where $|x|\approx 2^{n}$  when $n>0$, see \eqref{eqn:defofPsiell}). We can form the long and short variation operators with respect to the family of  operators $\{\cK^n_t\}_{t>0}$ where $\cK_t^n$ denotes convolution with $K^n_t:=t^{-d} K^n(t^{-1}\cdot)$.  
Using the pointwise bound  on $\nabla K$ and results in \cite{JonesKaufmanRosenblattWierdl} or \cite{JonesSeegerWright} we  have 
\begin{align*}
\| V^r_\dyad \cK^n  f \|_p & \lc_p  2^{-n} \|f\|_p, \quad 1<p\le \infty, \,\, r>2,
\\
\| V^r_{\mathrm{sh}} \cK^n  f \|_p & \lc_p  2^{-n} \|f\|_p, \quad 1<p\le \infty, \,\, r\ge 2.
\end{align*}
The kernel $K^n_t$ is supported in $\{x:|x|\le 2^{n-1}t\}$ and
from our assumptions it is easy to see that the  rescaled estimate
\[ \big \| V^r_{[1,2]}  \{2^{nd}K^n_t(2^n\cdot)*f\}  \big\|_q\lc 2^{-n} \|f\|_p, \quad 1\le p\le q\le \infty,\,  1\le r\le\infty,
\] holds. Moreover, using the bound for $\nabla K$  and $\nabla^2K$ we also get
\[ \big \| V^r_{[1,2]}  \{2^{nd}K^n_t(2^n\cdot)*\Delta_hf\}  \big\|_q\lc 2^{n(1/r-1)}|h|^{1/r}
\|f\|_p\] for $1\le p\le q\le \infty$, $1<r\le\infty$.
Applications of Theorem \ref{thm:Vr} and  Theorem \ref{thm:ellr} (for $\ell^r$-sums and with the choice of $B_2$ a subspace of $V^r_{[1,2]}$ of large finite dimension) together with the monotone convergence theorem  yield 
\begin{align*} 
| \inn{V^r_\dyad \cK^n f}{\om}| & \lc 2^{-n}\Lamaxga_{p,q'} (f,\om), \text{ $r>2,$}
\\
|\inn{V^r_{\mathrm{sh}} \cK^n f}{\om} | & \lc 2^{-n(1-1/r)}\Lamaxga_{p,q'} (f,\om), \text{ $r\ge2.$}
\end{align*}
    The proof is completed by summation in $n$.
\end{proof}

\section{Sparse domination: Cases where $p=1$ or $q=\infty$}
\label{appendixB}
Here we describe analogues of our main result Theorem \ref{mainthm}  which cover  cases where $p=1$ or $q=\infty$; we refer to Remark (iv) following the statement of Theorem \ref{mainthm} for an explanation of why these cases need to be treated separately.
We formulate three different results, one for $p=1$, $q<\infty$, one for $q=\infty$, $p>1$, and one for  $p=1$, $q=\infty$. This allows us to recover the classical case of Calder\'on--Zygmund operators, although we do not claim universality of sparse-domination results here: for example, we do not recover the sparse domination for Carleson-type operators from \cite{DPL14, Beltran2018, Beltran-Thesis}, neither the works for $p=1$ by Conde-Alonso, Culiuc, Di Plinio and Ou \cite{conde-alonso-etal} and by Lerner \cite{LernerRev2019} which also treat results on rough singular integrals, nor the works for $q=\infty$ which can often be upgraded to stronger pointwise sparse domination results of the type  \eqref{pointwisesparse} (see in particular 
\cite{LeNew}, \cite{LernerOmbrosi}, \cite{Lorist}). 

We will sketch the proofs of our results, indicating only what modifications need to be made compared to the proof of Theorem \ref{mainthm}.  Theorems \ref{thmB1}, \ref{thmB2} and \ref{thmB3} below have applications to  maximal operators, square functions and long variation operators (as formulated in \S\ref{sec:squarefct-etc}) similar to those of Theorem \ref{mainthm}. We leave the details to  the interested reader.

\subsection{The case $p=1, q< \infty$}

If $p=1$, one can drop the condition of weak type $(1,1)$. 
Our variant of Theorem \ref{mainthm} is then as  follows. 
\begin{thm}\label{thmB1}
 Let $1 \leq q<\infty$. Let $\{T_j\}_{j\in\bbZ}$ be a family of operators in $\mathrm{Op}_{B_1,B_2}$ such that
\begin{enumerate}
    \item[$\circ$] the support condition \eqref{support-assu} holds,
    \item[$\circ$] the restricted strong type $(q,q)$ condition \eqref{bdness-rt} holds,
    \item[$\circ$] the single scale $(1,q)$ condition \eqref{p-q-rescaled} holds,
    \item[$\circ$] the single scale $\varepsilon$-regularity conditions \eqref{p-q-rescaled-reg-a}, \eqref{p-q-rescaled-reg-b} hold with $p=1$.
\end{enumerate}
Define
\Be\notag\mathcal{C}= A(q)+A_\circ(1,q)  \log \big(2+
\tfrac{B}  {A_\circ(1,q)}\big).\Ee
Then,
for all integers $N_1, N_2$ with $N_1\le N_2$,
\Be\notag
\Big\|\sum_{j=N_1}^{N_2}T_j\Big \|_{\Sp_\ga(1,{B_1},{q'},{B_2^*})} \lesssim_{q,\eps,\gamma,d} \mathcal{C}.
\Ee
\end{thm}

\begin{proof}
We argue as in the proof of Theorem \ref{mainthm}, with the decomposition \eqref{first splitting} and the bound \eqref{good-fct}. The terms \eqref{termI}, \eqref{termII} are handled exactly as in the proof of Theorem \ref{mainthm}. Consider the splitting of $III$ as in \eqref{eq:IIIsplitting}. The terms
\eqref{eq:IIIthree-new} and \eqref{eq:IIIfour-new} are estimated as in the proof of Theorem \ref{mainthm}. We are thus only left with estimating the term
\begin{equation*}
III_1 + III_2
= \sum_{N_1\le j\le N_2} \sum_{\substack{W\in \cW \\ L(W)<j}} 
\inn{T_j b_{1,W} }{g_{2}}.
\end{equation*}
The argument in the proof of Theorem \ref{mainthm} does no longer work; recall that for $p>1$ these terms were bounded immediately via the weak type $(p,p)$ condition \eqref{bdness-wt} and the duality of $L^{p,\infty}_{B_2^{**}}$ and $L^{p',1}_{B_2^*}$.
Instead, here we will bound $III_1 + III_2$ using \eqref{p-q-rescaled} and the regularity condition \eqref{p-q-rescaled-reg-a}, close in spirit to the bounds of the terms $IV_1$ and $IV_2$ (defined in \eqref{eq:IVi def}) in the proof of Theorem \ref{mainthm}.

We let $0<\varepsilon'<\min \{1/q',\varepsilon\}$ and $\ell >0$ be as in  \eqref{elldef}, that is,
\[2^\ell <\frac{100}{\ep'}\big(2+ \frac{B}{A_\circ(1,q)}\big) \le 2^{\ell+1}.\] 
Let $\fR_j$ be the collection of dyadic subcubes of $Q_0$ of side length $2^j$. 
We tile $Q_0$ into such cubes  and write
 \begin{align*} 
III_1 + III_2&=\sum_s \widetilde {III}_s,
\end{align*}
where 
\begin{equation}\label{IIIs def APP}
\widetilde {III}_s=\sum_{N_1\le j\le N_2} \sum_{R\in \fR_j} 
\biginn{ \sum_{\substack{W\subset R\\
 L(W)=j-s}}
   T_j b_{1,W} } 
{
g_{2} \bbone_{\tr{R}} }.
\end{equation}
We first note 
\begin{subequations}
\begin{align} \label{TjAPP}
&\|T_j\|_{L^1_{B_1} \to L^q_{B_2}} \lc A_\circ(1,q)  2^{-jd(1-\frac 1q)}, 
\\ \label{TjregaAPP}
&\|T_j(\mathrm I -\bbE_{s-j})\|_{L^1_{B_1} \to L^q_{B_2}} \lc_{\varepsilon} B 2^{-jd(1-\frac 1q)} 2^{-\eps's}.
\end{align}
\end{subequations}
where
\eqref{TjAPP} follows from the single scale $(1,q)$ condition \eqref{p-q-rescaled} and 
\eqref{TjregaAPP} follows from the single scale $\varepsilon$-regularity condition \eqref{p-q-rescaled-reg-a} and Corollary \ref{cor:reg}.

For $L(W)=j-s$, we have $b_{1,W} =(I-\bbE_{s-j}) f_{1,W}$.
Let $R\in \fR_j$.
By \eqref{TjAPP} we get 
\begin{subequations}
\begin{align}
\notag &\Big|\biginn{ \sum_{\substack{W\subset R\\
 L(W)=j-s}}
   T_j b_{1,W} } 
{
g_2 \bbone_{\tr{R}} } \Big|
\\ \notag
&\le\Big\| T_j  \sum_{\substack{W\subset R\\
 L(W)=j-s}} b_{1,W} \Big\|_{L^q_{B_2}}
 \big\|
g_2 \bbone_{\tr{R}} \big \|_{L^{q'}_{B_2^*}}
\\ \notag
&\lc
A_\circ(1,q)
|R|^{-(1-\frac 1q)}
\sum_{\substack{W\subset R\\
 L(W)=j-s}}\| b_{1,W} \|_{L^1_{B_1}} 
 |R|^{1/q'}\jp{f_2}_{\tr{Q_0},q'} \\ \label{requires p=1}
 & \lesssim  A_\circ(1,q)  \sum_{\substack{W\subset R\\
 L(W)=j-s}}| W | \jp{f_1}_{Q_0,1}  \jp{f_2}_{\tr{Q_0},q'},
  \end{align}
  and by
  \eqref{TjregaAPP}, 
\begin{align}
\notag &\Big|\biginn{ \sum_{\substack{W\subset R\\
 L(W)=j-s}}
   T_j b_{1,W} } 
{
g_2 \bbone_{\tr{R}} } \Big|
\\ \notag
&\le\Big\| T_j (I-\bbE_{s-j}) \sum_{\substack{W\subset R\\
 L(W)=j-s}} f_{1,W} \Big\|_{L^q_{B_2}}
 \big\|
g_2 \bbone_{\tr{R}} \big \|_{L^{q'}_{B_2^*}}
\\ \notag
&\lc_{\varepsilon}
B 2^{-\eps' s}
|R|^{-(1-\frac 1q)}
\sum_{\substack{W\subset R\\
 L(W)=j-s}}\| f_{1,W} \|_{L^1_{B_1}} 
 |R|^{1/q'}\jp{f_2}_{\tr{Q_0},q'} \\ \label{requires p=1wcanc}
 & \lesssim  B 2^{-\varepsilon' s}  \sum_{\substack{W\subset R\\
 L(W)=j-s}}| W | \jp{f_1}_{Q_0,1}  \jp{f_2}_{\tr{Q_0},q'}   .
  \end{align}
  \end{subequations}
  Note that in obtaining the above bounds we have used \eqref{upperboundsWh} and \eqref{Linfty-g}.
  
  In the above definition \eqref{IIIs def APP} for $\widetilde {III}_s$  we  write $j=sn+i$ with $i=1,\dots, s$ so that  
  \begin{multline*} 
  |\widetilde {III}_s|\lc_{\varepsilon} 
  \min\{A_\circ(1,q),  B 2^{-\eps' s}\} 
  \\
  \sum_{i=1}^{s} 
  \sum_{\substack{n\in \bbZ\\ sn+i\in [N_1,N_2]}}
  \sum_{R\in \fR_{sn+i}} \sum_{\substack{W\subset R\\
 L(W)=s n + i-s}}| W |  \jp{f_1}_{Q_0,p}  \jp{f_2}_{\tr{Q_0},q'}  .
\end{multline*}
Now interchange the order of summation; 
here consider for  fixed $W\in \cW$  the set  of all triples $(R,n,i)$ such that $L(W)= s(n-1)+i$, $R\in \fR_{sn+i}$ and $W\subset \tr{R}$, and observe that the cardinality of this set is 1. Combining this with the above estimates and summing over the disjoint cubes $W\in \cW$ we obtain the bound
\begin{align*} 
|III_1+III_2|&\lc_{d,\ga,\varepsilon} 
\jp{f_1}_{Q_0,1} 
    \jp{f_2}_{\tr{Q_0},q'}  \sum_{W \in \cW} |W| 
    \sum_{s=1}^\infty \min\{A_\circ(1,q),  B 2^{-\eps' s}\}  \\
    & \lesssim A_\circ(1,q) \log\big(2+ \frac{B}{A_\circ(1,q)} \big) |Q_0| \jp{f_1}_{Q_0,1} 
    \jp{f_2}_{\tr{Q_0},q'},
  \end{align*}
  as desired.
 \end{proof}
 
 In the spirit of Section \ref{sec:nec}, it is possible to deduce that the sparse bound in Theorem \ref{thmB1} implies that the multi-scale sums $\sum_{j=N_1}^{N_2} T_j$ are of weak-type $(1,1)$. The proof of this fact is slightly different than the one given in Section \ref{sec:nec} for $p>1$, as it cannot rely on the duality between $L^{p,\infty}$ and $L^{p,1}$. We refer to the reader to \cite[Appendix B]{conde-alonso-etal} for details.

\subsection{The case $p>1$, $q=\infty$} If $q=\infty$ one can  drop the restricted strong type $(q,q)$ condition \eqref{bdness-rt}. Our variant of Theorem \ref{mainthm} is then as follows.

\begin{thm}\label{thmB2}
 Let $1<p\le \infty$. Let $\{T_j\}_{j\in\bbZ}$ be a family of operators in $\mathrm{Op}_{B_1,B_2}$ such that
\begin{enumerate}
    \item[$\circ$] the support condition \eqref{support-assu} holds,
    \item[$\circ$] the weak type $(p,p)$ condition \eqref{bdness-wt} holds,
    \item[$\circ$] the single scale $(p,\infty)$ condition \eqref{p-q-rescaled} holds,
    \item[$\circ$] the single scale $\varepsilon$-regularity conditions \eqref{p-q-rescaled-reg-a}, \eqref{p-q-rescaled-reg-b} hold with $q=\infty$.
\end{enumerate}
Define
\Be\notag\mathcal{C}= A(p)+A_\circ(p,\infty)  \log \big(2+
\tfrac{B}  {A_\circ(p,\infty)}\big).\Ee
Then,
for all integers $N_1, N_2$ with $N_1\le N_2$,
\Be\notag
\Big\|\sum_{j=N_1}^{N_2}T_j\Big \|_{\Sp_\ga(p,{B_1},{1},{B_2^*})} \lesssim_{p,\eps,\gamma,d} \mathcal{C}.
\Ee 
\end{thm}

\begin{proof} Again we argue as in the proof of Theorem \ref{mainthm} and describe the main induction step. Using the previous notations we now decompose
\Be
\inn{Sf_1}{f_2} = \mathrm I + \mathrm {II}+\mathrm{III}+\mathrm{IV}
\Ee
where
\begin{align*}
    \mathrm{I}&=   \sum_{W \in \cW} \inn{S_W f_1}{f_2}
    \\
    \mathrm{II}&= \inn{(S-\sum_{W \in \cW} S_W)f_1}{g_2}
    \\
    \mathrm{III}&= \inn{(S-\sum_{W \in \cW} S_W)g_1}{b_2}
    \\
    \mathrm{IV}&= \inn{(S-\sum_{W \in \cW} S_W)b_1}{b_2}.
\end{align*}
Note that the numbering here is slightly different from the one in the proof of Theorem \ref{mainthm}.
We deal with the term $\mathrm I $ using the induction hypothesis as in the proof of Theorem \ref{mainthm} and, using the argument therein, it suffices to show that the terms
$\mathrm{II}$, $\mathrm{III}$ and $\mathrm{IV}$ are bounded by $ c\, \cC |Q_0| \jp{f_1}_{Q_0,p} \jp{f_2}_{3Q_0,1}  $.

We first consider $\mathrm{II}=\mathrm{II}_1-\mathrm{II}_2$ where 
\[ \mathrm{II}_1=\inn{Sf_1}{g_2}, \qquad \mathrm{II}_2=\sum_W\inn{ S_Wf_1}{g_2}.\]
Here we use the weak type $(p,p)$ assumption \eqref{bdness-wt} for $p>1$ and \eqref{ri-g} for $r_2=p'<\infty$ to get
\begin{align*}
\inn{Sf_1}{g_2}&
\le \| Sf_1 \|_{L^{p,\infty}_{B_2^{**}}} \| g_2 \bbone_{3Q_0} \|_{L^{p',1}_{B_2^*}} 
\\ &\lc A(p) \|f_1\|_p \jp{f_2}_{3Q_0,1} |Q_0|^{1-1/p }
\\ & \lc A(p)  |Q_0| \jp{f_1}_{p,Q_0}  \jp{f_2}_{3Q_0,1} 
\end{align*}
and 
\begin{align*}
\sum_{W \in \cW}\inn{S_Wf_1}{g_2}&
\le \sum_W \|S_W[f_1 \bbone_W] \|_{L^{p,\infty}_{B_2^{**}}} \| g_2 \|_{L^\infty_{B_2^*}} \| \bbone_{3W} \|_{L^{p',1}_{B_2^*}} 
\\
&\lc A(p)\sum_{W \in \cW} \|f_1\bbone_W\|_p \jp{f_2}_{3Q_0,1} |W|^{1-1/p }
\\&\lc A(p) \sum_{W \in \cW} |W| \jp{f_1}_{Q_0,p}  \jp{f_2}_{3Q_0,1}
\end{align*}
and hence, by the disjointness of the cubes $W \in \cW$,
\Be\label{rmII} |\mathrm {II}|\le |\mathrm{II}_1|+|\mathrm{II}_2|\lc A(p) 
|Q_0| \jp{f_1}_{p,Q_0}  \jp{f_2}_{3Q_0,1} .
\Ee

The last term $\mathrm{IV}$ corresponds exactly to the sum $III_3+III_4$ in the proof of Theorem \ref{mainthm}, defined in \eqref{eq:IIIthree-new}, \eqref{eq:IIIfour-new}, and it is therefore treated in the same way; here the weak type and restricted strong type assumptions are not used. In particular, we obtain
\Be\label{rmIV} |\mathrm {IV}|\lc A_\circ(p,\infty) \log \big(2+\frac{B}{A_\circ(p,\infty)}\big) 
|Q_0| \jp{f_1}_{p,Q_0}  \jp{f_2}_{3Q_0,1}.
\Ee

It remains to bound the term $\mathrm{III}$.
By the definition of $g_1$ and $S_W$ we have
\begin{equation*}
    \sum_{W \in \cW} S_W g_1 = \sum_{W \in \cW} \sum_{j=N_1}^{L(W)} T_j [ \av_W[f_1] \bbone_W] = \sum_{j=N_1}^{N_2} \sum_{\substack{W \in \cW \\ j \leq L(W)}} T_j[\av_W[f_1] \bbone_W] 
\end{equation*}
and thus we may 
split  $\mathrm{III}=\mathrm{III}_1+\mathrm{III}_2$ where
\begin{align*}
    \mathrm{III}_1 & = \inn{S[g_1 \bbone_{\Omega^\complement}]}{b_2} \\
    \mathrm{III}_2 & =  \sum_{j=N_1}^{N_2} \sum_{\substack{W \in \cW \\ L(W) < j}} \inn{T_j[\av_W[f_1] \bbone_W]}{b_2}.
\end{align*}
Let $\fR_j$ be the collection of dyadic subcubes of $Q_0$ of side length $2^j$. 
We tile $Q_0$ into such cubes  and write 
\begin{align*}
    \mathrm{III}_1 & = \sum_{j=N_1}^{N_2} \sum_{R \in \fR_j}\inn{T_j[g_1 \bbone_{\Omega^\complement \cap R}]}{b_2}  = \sum_{j=N_1}^{N_2} \sum_{R \in \fR_j}\inn{T_j[g_1 \bbone_{\Omega^\complement \cap R}]}{\sum_{W' \in \cW}b_{2,W'} \bbone_{3R}}.
\end{align*}
Next, note that in order to  have  $\inn{T_j[g_1 \bbone_{\Omega^\complement \cap R}]}{\sum_{W' \in \cW}b_{2,W'} \bbone_{3R}} \neq 0$ we must have  that $\Omega^\complement  \cap R \neq \emptyset$ and $W'\cap 3R \neq \emptyset$. As $W' \in \cW$, the above implies
\begin{equation*}
    5\,\diam (W') \leq \dist (W', \Omega^\complement) \leq 3 \,\diam (R)
\end{equation*}
and therefore $L(W') < j$. Thus, 
\begin{equation}\label{rmIII-1}
    \mathrm{III}_1=\sum_{j=N_1}^{N_2} \sum_{R \in \fR_j}\inn{T_j[g_1 \bbone_{\Omega^\complement \cap R}]}{\sum_{\substack{W' \in \cW \\ L(W') < j}}b_{2,W'} \bbone_{3R}}.
\end{equation}
We next decompose  $\mathrm{III}_2=\mathrm{III}_{2,1} + \mathrm{III}_{2,2}$, where
\begin{subequations}
\begin{align} \label{rmIII-21}
\mathrm{III}_{2,1}&:=\sum_{j=N_1}^{N_2} \biginn{T_j\big[ \sum_{\substack{W \in \cW \\ L(W) < j}} \av_W[f_1] \bbone_W\big]}{\sum_{\substack{W' \in \cW \\ L(W') \geq j}}b_{2,W'}} ,\\ \label{rmIII-22}
\mathrm{III}_{2,2}&:=\sum_{j=N_1}^{N_2} \sum_{\substack{W \in \cW \\ L(W) < j}} \biginn{T_j[\av_W[f_1] \bbone_W]}{\sum_{\substack{W' \in \cW \\ L(W') < j}}b_{2,W'}}.
\end{align}
\end{subequations}

The term $\mathrm{III}_{2,1}$ can be treated as in the estimation of the term $III_3$ in the proof of Theorem \ref{mainthm}, defined in \eqref{eq:IIIthree-new}, as  cancellation does not play a role in this argument. The geometry expressed in \eqref{restriction on j} is crucial, i.e. we have 
likewise 
\begin{equation}\label{restriction on j B}
\def\arraystretch{1.4}
\left.
\begin{array}{l}
      \inn{T_j [\av_W f_1 \bbone_W] }{b_{2,W'}} \neq 0 \\ 
L(W)<j\le L(W')
\end{array}\right\} \quad  \implies  \quad
j\le L(W')\le L(W)+2\le j+2. 
\end{equation}
This implies
\begin{align*}
&|\mathrm{III}_{2,1}| \le \sum_{N_1 \leq j \leq N_2}\sum_{\substack{W'\in \cW:\\ j\le L(W')\le j+2}}\,\, \sum_{\substack{W\in \cW:\\L(W')-2\le L(W)\le j\\ W\subset 3W'}}|\inn{T_j [\av_W [f_1]\bbone_W] }{b_{2,W'}}|
\\ &\le A_\circ(p,\infty) 
\sum_{N_1 \leq j \leq N_2} 2^{-jd/p} 
\sum_{\substack{W,W'\in \cW:\,W\subset 3W'\\j\le L(W')\le j+2\\
L(W')-2\le L(W)\le j}} 
\|\av_W [f_1] \bbone_W  \|_{L^p_{B_1}}\|b_{2,W'}\|_{L^{1}_{B_2^*}}
\\
&\lc  A_\circ(p,\infty) \jp{f_1}_{Q_0,p} \jp {f_2}_{\tr{Q_0},1} 
\\&\qquad\qquad \times 
\sum_{N_1 \leq j \leq N_2}
\sum_{\substack{W,W':\,W\subset 3W'\\j\le L(W')\le j+2\\
L(W')-2\le L(W)\le j}} 
2^{-jd/p} |W|^{1/p} |W'|
\\
&
\lc A_\circ(p,\infty)
\jp{f_1}_{Q_0,p} \jp {f_2}_{\tr{Q_0},1} \sum_{\substack W'\in \cW} 
|W'| 
\lc 
A_\circ(p,\infty) |Q_0|
\jp{f_1}_{Q_0,p} \jp {f_2}_{\tr{Q_0},1}. 
\end{align*}

The terms  $\mathrm{III}_1$ and $\mathrm{III}_{2,2}$ can be treated in a similar way as in the estimation of the terms  $IV_1, IV_3$ (defined in \eqref{eq:IVi def}) in the proof of Theorem \ref{mainthm}. 
Let $0 < \varepsilon' < \min\{1/p,\varepsilon\}$ and $\ell>0$ be as in \eqref{elldef}. Then we split
\[ \mathrm{III}_1= \mathrm{III}_{1}^{\mathrm{lg}}  +\mathrm{III}_{1}^{\mathrm{sm}}, \qquad 
\mathrm{III}_{2,2}=\mathrm{III}_{2,2}^{\mathrm{lg}} +\mathrm{III}_{2,2}^{\mathrm{sm}} \]
where
\begin{align*}
    \mathrm{III}_1^{\mathrm{lg}} &=\sum_{j=N_1}^{N_2} \sum_{R \in \fR_j} \sum_{s=1}^\ell \biginn{T_j[g_1 \bbone_{\Omega^\complement \cap R}]}{\sum_{\substack{W' \in \cW \\ L(W') =j-s}}b_{2,W'} \bbone_{3R}}
    \\
    \mathrm{III}_1^{\mathrm{sm}} &=\sum_{j=N_1}^{N_2} \sum_{R \in \fR_j} \sum_{s=\ell+1}^\infty \biginn{g_1 \bbone_{\Omega^\complement \cap R}}{T_j^*\big[\sum_{\substack{W' \in \cW \\ L(W') =j-s}}b_{2,W'} \bbone_{3R}\big]}
\end{align*}
and 
\begin{align*}
    \mathrm{III}_{2,2}^{\mathrm{lg}}&=\sum_{j=N_1}^{N_2} \sum_{R \in \fR_j} \sum_{s=1}^\ell  \inn{\sum_{\substack{W \subset R \\ L(W) < j}} T_j[\av_W[f_1] \bbone_W]}{ \sum_{\substack{W' \in \cW \\ L(W') = j-s}}b_{2,W'} \bbone_{3R}},
    \\
     \mathrm{III}_{2,2}^{\mathrm{sm}}&=\sum_{j=N_1}^{N_2} \sum_{R \in \fR_j} \sum_{s=\ell+1}^\infty  \inn{\sum_{\substack{W \subset R \\ L(W) < j}} \av_W[f_1] \bbone_W}{ T_j^* \big[\sum_{\substack{W' \in \cW \\ L(W') = j-s}}b_{2,W'} \bbone_{3R} \big]}.
\end{align*}
Observe that the terms $\mathrm{III}_1^{\mathrm{sm}} $,
$\mathrm{III}_{2,2}^{\mathrm{sm}}$ involve very small cubes $W'$ for which the cancellation of $b_{2,W'}$ can be most effectively used.  The terms  $\mathrm{III}_1^{\mathrm{lg}} $,
$\mathrm{III}_{2,2}^{\mathrm{lg}}$ involve larger cubes;  for these terms it is more effective to  use  the single scale $(p,q)$ conditions \eqref{p-q-rescaled}.

We note  that the terms $ \mathrm{III}_1^{\mathrm{lg}}$ and  $\mathrm{III}_{2,2}^{\mathrm{lg}}$ behave very similarly, and also the terms 
$ \mathrm{III}_1^{\mathrm{sm}}$ and  $\mathrm{III}_{2,2}^{\mathrm{sm}}$.

Indeed, if $h_R$, $\widetilde h_R$  denote either of the first functions on the bilinear form,
\[ h_R(x)= g_1(x) \bbone_{\Omega^\complement\cap R}, \quad \widetilde h_R(x) =
\sum_{\substack{W\subset R\\L(W)<j}} \av_W[f_1] \bbone_W(x),\]
then it follows from the definition of $\Omega^\complement$ and the disjointness of the $W \in \cW$ that $h_R$, $\widetilde h_R$ share the relevant property 
\begin{equation*}
    \| h_R\|_{L^r_{B_1}} ,\,\|\widetilde h_R\|_{L^r_{B_1}}  \leq |R|^{1/r} \jp{f_1}_{Q_0,p}, \quad 1 \leq r \leq \infty,
\end{equation*}
which we will use with $r=p$.

By the above considerations, the hypothesis \eqref{p-q-rescaled} and \eqref{upperboundsWh} we have \begin{align*} \mathrm{III}_1^{\mathrm{lg}} &=
 \sum_{j=N_1}^{N_2} \sum_{R \in \fR_j}  \sum_{s=1}^\ell  \inn{T_j h_R}{ \sum_{\substack{W' \in \cW \\ L(W') = j-s_2}}b_{2,W'} \bbone_{3R} } \\
    & \lesssim\sum_{s=1}^\ell \sum_{j=N_1}^{N_2} \sum_{R \in \fR_j}
    A_\circ(p,\infty) 2^{-jd/p}\|h_R\|_{L^p_{B_1}} 
\Big\| \sum_{\substack{W' \in \cW \\ L(W') = j-s}}b_{2,W'} \bbone_{3R} \Big\|_{L^{1}_{B_2^*}}
\\
&\lc \sum_{s=1}^\ell  A_\circ(p,\infty) \jp{f_1}_{Q_0,p} \sum_{j=N_1}^{N_2}
 \sum_{R \in \fR_j}\sum_{\substack{W' \subset 3R \\ L(W') = j-s}}\|b_{2,W'}  \|_{L^{1}_{B_2^*}}
 \\
 &\lc  A_\circ(p,\infty)  \jp{f_1}_{Q_0,p} \jp{f_2}_{3Q_0,1} \sum_{s=1}^\ell \sum_{j=N_1}^{N_2}
 \sum_{\substack{W' \in \cW \\ L(W') = j-s}}|W'|
\\&\lc  \ell A_\circ(p,\infty) 
|Q_0| \jp{f_1}_{Q_0,p} \jp{f_2}_{3Q_0,1} 
\end{align*} 
and hence, by the definition of $\ell$,
\[ \mathrm{III}_{1}^{\mathrm{lg}}
\lc A_\circ(p,\infty) 
\log (2+\frac{B}{A_\circ(p,\infty)}) 
|Q_0| \jp{f_1}_{Q_0,p} \jp{f_2}_{3Q_0,1} .
\] Similarly we show (after replacing $h_R$ with $\widetilde h_R$ in the above calculation)
 \[
 \mathrm{III}_{2,2} ^{\mathrm{lg}} 
 \lc  A_\circ(p,\infty) \log (2+\frac{B}{A_\circ(p,\infty)}) |Q_0| \jp{f_1}_{Q_0,p} \jp{f_2}_{3Q_0,1} .\]
 
 For the estimation of $\mathrm{III}_1^{\mathrm{sm}}$, $\mathrm{III}_{2,2} ^{\mathrm{sm}}$ we use the $\eps$-regularity property \eqref{p-q-rescaled-reg-b} and Corollary \ref{cor:reg} to get
 \begin{align} \label{TjregbAPP}
\|T_j^*(\mathrm I -\bbE_{s-j})\|_{L^{1}_{B_1} \to L^{p'}_{B_2}} &\lc_{\varepsilon} B 2^{-jd/p} 2^{-\eps s}.
\end{align}
Moreover we use the formula  $b_{2,W'}=(I- \mathbb{E}_{s-j})f_{2,W'}$,  valid for 
$L(W')=j-s$.
Thus, via H\"older's inequality
\begin{align*}
    &\sum_{j=N_1}^{N_2} \sum_{R \in \fR_j}  \sum_{s=\ell+1}^\infty  \inn{h_R}{ T_j^* \big[\sum_{\substack{W' \in \cW \\ L(W') = j-s}}b_{2,W'} \bbone_{3R} \big]} \\
    &\lesssim \sum_{j=N_1}^{N_2} \sum_{s=\ell+1} 
    \|h_R\|_{L^p_{B_1}} \|T^*_j(I-\bbE_{s-j} )\|_{L^1_{B_2^*}\to L^{p'}_{B_1}}
     \Big\| \sum_{\substack{W' \in \cW \\ L(W') = j-s}}f_{2,W'} \bbone_{3R} \Big\|_{L^{1}_{B_2^*}}
    \\
    & \lesssim\sum_{j=N_1}^{N_2} \sum_{R \in \fR_j} \sum_{s=\ell+1}^\infty |R|^{1/p} \jp{f_1}_{Q_0,p} B 2^{-\varepsilon' s} 2^{-jd/p} \Big\| \sum_{\substack{W' \in \cW \\ L(W') = j-s_2}}f_{2,W'} \bbone_{3R} \Big\|_{L^{1}_{B_2^*}} 
    \\
    & \lesssim \sum_{j=N_1}^{N_2}  \sum_{s=\ell+1}^\infty B 2^{-\varepsilon' s} \jp{f_1}_{Q_0,p} \sum_{R \in \fR_j}\sum_{\substack{W' \subseteq 3R\\ L(W')=j-s}} \| f_{2,W'}  \|_{L^{1}_{B_2^*}}  \\
    & \lesssim \sum_{j=N_1}^{N_2}  \sum_{s=\ell+1}^\infty B 2^{-\varepsilon' s}  \jp{f_1}_{Q_0,p} \jp{f_2}_{3Q_0,1} 
    \sum_{R \in \fR_j}\sum_{\substack{W' \subseteq 3R\\ L(W')=j-s}} |W'| .
\end{align*}
We sum in $W'$ and then use $\sum_{s=\ell+1}^\infty B 2^{-\eps' s} \lc A_\circ(p,\infty) $ to obtain 
\[ |\mathrm{III}_1^{\mathrm{sm}} |\lc A_\circ(p,\infty)
|Q_0|\jp{f_1}_{Q_0,p} \jp{f_2}_{3Q_0,1} .\]
In exactly the same way (replacing $h_R$ by $\widetilde h_R$) we obtain
\[ |\mathrm{III}_{2,2}^{\mathrm{sm}} |\lc A_\circ(p,\infty)
|Q_0|\jp{f_1}_{Q_0,p} \jp{f_2}_{3Q_0,1} .\]
This concludes the proof.
\end{proof}

\subsection{The case $p=1$ and $q=\infty$}

In this case we can get rid of both the weak $(p,p)$ and restricted strong type $(q,q)$ hypotheses, but we shall still assume either a weak-type estimate $(r,r)$ or restricted strong type $(r,r)$ for some $1 < r <\infty$. 

\begin{thm}\label{thmB3}
Let $\{T_j\}_{j\in\bbZ}$ be a family of operators in $\mathrm{Op}_{B_1,B_2}$ such that
\begin{enumerate}
    \item[$\circ$] the support condition \eqref{support-assu} holds,
    \item[$\circ$] there exists $r\in (1,\infty)$ so that either the weak type $(r,r)$ condition \eqref{bdness-wt} holds or  the restricted strong type $(r,r)$ condition \eqref{bdness-rt} holds,
    \item[$\circ$] the single scale $(1,\infty)$ condition \eqref{p-q-rescaled} holds,
    \item[$\circ$] the single scale $\varepsilon$-regularity conditions \eqref{p-q-rescaled-reg-a}, \eqref{p-q-rescaled-reg-b} hold with $p=1$ and $q=\infty$.
\end{enumerate}
Define
\Be\notag\mathcal{C}= A(r)+A_\circ(1,\infty)  \log \big(2+
\tfrac{B}  {A_\circ(1,\infty)}\big).\Ee
Then,
for all integers $N_1, N_2$ with $N_1\le N_2$,
\Be\notag
\Big\|\sum_{j=N_1}^{N_2}T_j\Big \|_{\Sp_\ga(1,{B_1},{1},{B_2^*})} \lesssim_{r,\eps,\gamma,d} \mathcal{C}.
\Ee
\end{thm}

\begin{proof}[Sketch of proof] 
We use the terminology in the proofs of Theorems \ref{thmB1} and \ref{thmB2}. An examination of the proofs   reveals that it only remains to establish  the inequality
\begin{equation} \label{goodgood}
    |\inn{(S-\sum_{W \in \cW} S_W) g_1}{g_2}|\lc A(r)  |Q_0| \jp{f_1}_{Q_0,1} \jp{f_2}_{3Q_0,1},
\end{equation}
either under the restricted strong type $(r,r)$ assumption \eqref{bdness-rt}, 
or under 
the weak  type $(r,r)$ assumption \eqref{bdness-wt}. Here we will strongly use \eqref{Linfty-g} for both $g_1$ and $g_2$.

We first verify \eqref{goodgood} assuming  \eqref{bdness-rt}. 
By H\"older's inequality,
\begin{align*}
|\inn{S g_1}{g_2}|&\lesssim \| S g_1 \|_{L^r_{B_1}} \| g_2 \|_{L^{r'}_{B_2^*}} \lesssim A(r)\| g_1 \|_{L^{r,1}_{B_1}} |Q_0|^{1/r'} \|g_2\|_{L^\infty_{B_2^*}}\\
& \lesssim A(r)  |Q_0| \jp{f_1}_{Q_0,p} \jp{f_2}_{3Q_0,1}.
\end{align*}
Moreover for each $W\in \cW$
\begin{align*}
|\inn{S_W g_1}{g_2}|&= 
|\inn{S_W [g_1\bbone_W]}{g_2\bbone_{3W} }|\le 
\| S_W [g_1\bbone_W] \|_{L^r_{B_1}} \| g_2\bbone_{3W}  \|_{L^{r'}_{B_2^*}} 
\\&\lesssim A(r)\| g_1 \bbone_W\|_{L^{r,1}_{B_1}} |W|^{1/r'} \|g_2\bbone_{3W}\|_{L^\infty_{B_2^*}}
\lesssim A(r)  |W| \jp{f_1}_{Q_0,p} \jp{f_2}_{3Q_0,1},
\end{align*} 
and by summing over the disjoint cubes $W\in \cW$ we obtain
\[\sum_{W\in \cW} \big|\inn{ S_W g_1}{g_2}\big|\lc
A(r)  |Q_0| \jp{f_1}_{Q_0,p} \jp{f_2}_{3Q_0,1}.\]
Combining the two bounds yields \eqref{goodgood} (under the assumption \eqref{bdness-rt}). 

We now  verify \eqref{goodgood} assuming  \eqref{bdness-wt}. First, by H\"older's inequality for Lorentz spaces, 
\begin{align*}
|\inn{S g_1}{g_2}|\lesssim \| S g_1 \|_{L^{r,\infty}_{B_1}} \| g_2 \|_{L^{r',1}_{B_2^*}} &\lesssim A(r) \| g_1 \|_{L^{r}_{B_1}} |Q_0|^{1/r'}
\|g_2\|_{L^\infty_{B_2^*}}
\\&\lesssim |Q_0| \jp{f_1}_{Q_0,p} \jp{f_2}_{3Q_0,1}.
\end{align*}
Similarly,  for all $W\in \cW$, 
\[
|\inn{S_W g_1}{g_2}|\lesssim |W| \jp{f_1}_{Q_0,p} \jp{f_2}_{3Q_0,1}\]
and then after summation 
\[\sum_{W \in \cW}
\big|\inn{S_W g_1}{g_2}|\lesssim |Q_0| \jp{f_1}_{Q_0,p} \jp{f_2}_{3Q_0,1}.\]
This yields \eqref{goodgood} (under the assumption \eqref{bdness-wt}). 
\end{proof}

\section{Facts about Fourier multipliers}\label{app:multipliers}
For completeness, we provide proofs of the facts stated in the remark after the definition of the $\mathcal{B}[m]$. The proofs will be given for scalar multipliers but they carry over  to the setting with  $\sL(\sH_1,\sH_2)$-valued  multipliers. We start with the following simple observations.

\begin{lem}\label{lem:two-spatial-loc}
Let $\Psi\in C^\infty_c(\bbR^d)$ be supported in $\{x \in \bbR^d:1/2<|x|<2\}$.
Let $\Phi\in C^\infty_c(\bbR^d)$ be supported in $\{x \in \bbR^d:|x|<2\}$. 
Let $N>d$ and $\ka$ be such that \[\sup_{x \in \bbR^d}(1+|x|)^N|\ka(x)|\le 1.\] 
Then the following holds.

(i) Let $1\le \rho\le R/8$. Then
\[ \big \|\big[ \widehat \ka (m*R^d\widehat \Psi(R\cdot))\big] *\rho^d \widehat \Phi(\rho\cdot) \big\|_{M^{p,q} }
\lc R^{d-N} \big\|m*R^d\widehat \Psi(R\cdot)\big\|_{M^{p,q}} \]

(ii) Let $1\le \rho \le R/8$. Then
\[ \big \|\big[ \widehat\ka  (m*\rho^d\widehat \Phi(\rho\cdot))\big] *R^d \widehat \Psi(R\cdot) \big\|_{M^{p,q} }
\lc R^{d-N} \big\|m*\rho^d\widehat \Phi(\rho\cdot)\big\|_{M^{p,q}}  \]
\end{lem}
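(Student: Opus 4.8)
\textbf{Plan of proof for Lemma~\ref{lem:two-spatial-loc}.}
The strategy is to reduce both multiplier-norm estimates to a pointwise kernel bound followed by an application of Young's inequality, exactly as was already done for the estimate \eqref{eqn:localtoglobalpf-tail} in the proof of Theorem~\ref{thm:localtoglobal-sparse}. I will prove (i) in detail; (ii) is symmetric, with the roles of $\rho$ and $R$ interchanged in the obvious way, and I will indicate the one place where the frequency-support geometry is used. Throughout, write $n=m*R^d\widehat\Psi(R\cdot)$, so that $\widehat{\mathcal F^{-1}[n]}=n$ and $\|n\|_{M^{p,q}}$ is the quantity appearing on the right-hand side; note $n$ is supported in the annulus $\{|\xi|\approx R\}$ because $\widehat\Psi$ is supported in $\{1/2<|\xi|<2\}$ (after rescaling), though this will not actually be needed for (i).

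\textbf{Step 1: the kernel identity.} The multiplier $[\widehat\ka\,n]*\rho^d\widehat\Phi(\rho\cdot)$ is, by the convolution theorem, the Fourier transform of the function
\[
G(x)=\rho^{-d}\Phi(\rho^{-1}x)\cdot\bigl(\ka * \mathcal F^{-1}[n]\bigr)(x),
\]
where the product structure comes from the fact that convolution on the Fourier side is multiplication on the spatial side, and multiplication by $\widehat\ka$ on the Fourier side is convolution by $\ka$ on the spatial side. So $\|[\widehat\ka\,n]*\rho^d\widehat\Phi(\rho\cdot)\|_{M^{p,q}}=\|\,f\mapsto f*G\,\|_{L^p\to L^q}$, and by Young's inequality this is $\lesssim\|G\|_{L^1}$ once $p\le q$ (in fact one only needs $\|G\|_{L^1}$ here since Young's inequality with the exponent $1/p-1/q+1=1$ forces an $L^1$ kernel; more precisely $\|f*G\|_q\le\|G\|_{L^1}\|f\|_q\le\|G\|_{L^1}\|f\|_p$ is not quite right, so instead use $\|f*G\|_q\le\|G\|_{L^r}\|f\|_p$ with $1/r=1+1/q-1/p$ — and $r=1$ when $p=q$; for $p<q$ one interpolates, see Step~3).

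\textbf{Step 2: pointwise bound on $\ka*\mathcal F^{-1}[n]$.} Since $\Phi$ is supported in $\{|x|<2\}$, the factor $\rho^{-d}\Phi(\rho^{-1}x)$ restricts attention to $|x|\le 2/\rho\le 2$. On this region I claim $|(\ka*\mathcal F^{-1}[n])(x)|\lesssim R^{-N}\|n\|_{L^1(d\mu)}$ in an appropriate averaged sense; the cleanest route is instead to write
\[
\bigl(\ka*\mathcal F^{-1}[n]\bigr)(x)=\int \ka(x-y)\,\mathcal F^{-1}[n](y)\,dy
\]
and split the $y$-integral into $|y|\le R/4$ and $|y|>R/4$. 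This is exactly analogous to the integration-by-parts argument producing \eqref{eqn:localtoglobalpf-tail}: because $n=m*R^d\widehat\Psi(R\cdot)$ and $\widehat\Psi$ is smooth and compactly supported away from the origin with all moments controlled, $\mathcal F^{-1}[n](y)=\mathcal F^{-1}[m](y)\cdot R^{d}\Psi(Ry)$ decays rapidly — it is supported essentially in $|y|\approx 1/R$ up to rapidly decaying tails. Hence for $|x|\le 2$ and $|x-y|$ large the factor $\ka(x-y)$ contributes $(1+|x-y|)^{-N}\approx (1+|y|)^{-N}$, and combining with $\|\mathcal F^{-1}[n]\|_{L^1}\lesssim\|n\|_{M^{1,1}}$ one extracts a gain of $R^{d-N}$. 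The decisive input is the quantitative hypothesis $\rho\le R/8$, which guarantees that on $\supp(\rho^{-d}\Phi(\rho^{-1}\cdot))=\{|x|\le 2/\rho\}$ we are far (at scale $R$) from where $\mathcal F^{-1}[n]$ is concentrated, forcing the $N$ derivatives / rapid decay to act. I expect this pointwise/integration-by-parts bookkeeping to be the main obstacle: one must track the interplay of the three scales $1$ (the scale of $\ka$ and $\Phi$), $1/R$ (the concentration scale of $\mathcal F^{-1}[n]$), and $1/\rho$ (the cutoff scale), and verify that the net gain is precisely $R^{d-N}$ and not, say, $(R/\rho)^{d-N}$.

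\textbf{Step 3: assembling the $M^{p,q}$ bound.} Having shown $\|G\|_{L^1}\lesssim R^{d-N}\|n\|_{M^{p,q}}$ (the $\|n\|_{M^{p,q}}$ bounding $\|\mathcal F^{-1}[n]\|_{L^1}$ up to the already-established dimensional factors, via $M^{1,1}\supset$ the relevant class, or more carefully by noting $\mathcal F^{-1}[n]$ is an honest $L^1$ function with norm controlled — here one uses that $n$ is a localization of $m$ and the $\widehat\Psi$-convolution makes it Schwartz-class decaying), one concludes by Young: for $p=q$ directly, and for $p<q$ by writing $f*G$ as $(f*G_1)*G_2$ with $G=G_1*G_2$, $G_i\in L^1$, or simply by the generalized Young inequality $\|f*G\|_q\le\|G\|_r\|f\|_p$, $1/r=1-(1/p-1/q)$, together with the trivial bound $\|G\|_r\le\|G\|_\infty^{1-1/r}\|G\|_1^{1/r}\lesssim\|G\|_1$ since $G$ is supported in a set of measure $\lesssim 1$. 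This gives (i). For (ii) one repeats Steps 1–3 with $n$ replaced by $m*\rho^d\widehat\Phi(\rho\cdot)$ and the outer convolution by $R^d\widehat\Psi(R\cdot)$; now $\mathcal F^{-1}$ of the inner object is concentrated at scale $1/\rho$ while the $\widehat\Psi$-factor $R^d\Psi(R\cdot)$ is concentrated at scale $1/R\ll 1/\rho$, and it is the \emph{outer} localization that is narrow — one writes the product on the spatial side as $\bigl(R^d\Psi(Rx)\bigr)\cdot\bigl(\ka*\mathcal F^{-1}[m*\rho^d\widehat\Phi(\rho\cdot)]\bigr)(x)$ and the same splitting of the $\ka$-convolution, using $|x|\le 2/R$ on the support of $R^d\Psi(R\cdot)$ and again $\rho\le R/8$, yields the factor $R^{d-N}$. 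I note that in (ii) the support condition on $\widehat\Psi$ (annulus, away from the origin) is genuinely needed only to know $R^d\widehat\Psi(R\cdot)$ has an $L^1$ Fourier inverse with the stated localization, which is automatic since $\Psi\in C^\infty_c$.
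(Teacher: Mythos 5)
Your reduction to Young's inequality is the wrong tool here, and it leaves a gap that cannot be closed in the stated generality: once you pass to $\|G\|_{L^1}$ (or $\|G\|_{L^r}$) you are forced to control $\ka*\cF^{-1}[n]$ pointwise or in $L^1$, i.e.\ to control $\cF^{-1}[n]$ as a function, where $n=m*R^d\widehat\Psi(R\cdot)$. But the only quantity allowed on the right-hand side of the lemma is the multiplier norm $\|n\|_{M^{p,q}}$, and this does not control $\|\cF^{-1}[n]\|_{L^1}$ or any pointwise decay: $K=\cF^{-1}[m]$ is merely a tempered distribution, and convolving $m$ with the Schwartz function $R^d\widehat\Psi(R\cdot)$ does not make $\cF^{-1}[n]$ ``Schwartz-class decaying'' --- it multiplies $K$ by the bump $\Psi(R^{-1}\cdot)$, so $\cF^{-1}[n]=K\cdot\Psi(R^{-1}\cdot)$, a compactly supported distribution with no better size or regularity than $K$ itself. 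The places where you invoke ``$\|\cF^{-1}[n]\|_{L^1}\lc \|n\|_{M^{1,1}}$'' or that the $\widehat\Psi$-convolution makes the kernel integrable are exactly the missing (and false) ingredient; the analogy with \eqref{eqn:localtoglobalpf-tail} does not help, because there the conclusion was allowed to carry the factor $\|\phi m(t\cdot)\|_1$, whereas here only $\|n\|_{M^{p,q}}$ may appear --- indeed the whole point of this lemma, in its use for the invariance properties of $\cB[m]$, is that one has nothing but multiplier-norm information on the localized pieces. You also acknowledge yourself that the three-scale bookkeeping in your Step 2 is unverified.

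The scaling slips in your write-up hide the actual mechanism. With the paper's conventions, $\cF^{-1}[R^d\widehat\Psi(R\cdot)](x)=\Psi(R^{-1}x)$, so the kernel of $n$ is $K(x)\Psi(R^{-1}x)$, supported in the spatial annulus $\{R/2<|x|<2R\}$ --- not concentrated at scale $1/R$ as you assert; likewise $\cF^{-1}[\rho^d\widehat\Phi(\rho\cdot)](x)=\Phi(\rho^{-1}x)$ (no factor $\rho^{-d}$), supported in $\{|x|<2\rho\}$, not $\{|x|\le 2/\rho\}$. Hence the left-hand multiplier in (i) has kernel $\Phi(\rho^{-1}x)\int\ka(y)\,K(x-y)\Psi(R^{-1}(x-y))\,dy$, and since $|x|<2\rho\le R/4$ while $R/2\le|x-y|\le 2R$, the integrand vanishes unless $R/4\le|y|\le 4R$. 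The paper's proof then never leaves the level of operator norms: multiplying a convolution kernel by $\Phi(\rho^{-1}\cdot)$ changes its $M^{p,q}$ norm by at most $\|\widehat\Phi\|_1$ (on the multiplier side this is convolution with the $L^1$-normalized bump $\rho^d\widehat\Phi(\rho\cdot)$), and Minkowski's inequality in $y$ together with translation invariance of the multiplier norm (translating the kernel only modulates the multiplier) bounds everything by $\int_{R/4\le|y|\le 4R}|\ka(y)|\,dy\;\|m*R^d\widehat\Psi(R\cdot)\|_{M^{p,q}}\lc R^{d-N}\|m*R^d\widehat\Psi(R\cdot)\|_{M^{p,q}}$, using only $|\ka(y)|\le(1+|y|)^{-N}$ and $N>d$. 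Part (ii) is the same support game with the roles reversed: the inner kernel $K\Phi(\rho^{-1}\cdot)$ lives in $\{|x|<2\rho\}$ and the outer cutoff $\Psi(R^{-1}\cdot)$ lives in $\{|x|>R/2\}$, forcing $|y|\approx R$ again --- which also shows that the annular support of $\Psi$ is essential to the separation in both parts, not merely a technical integrability convenience as your closing remark suggests.
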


\begin{proof} 
Let $K=\cF^{-1}[m]$ and set $\|K\|_{cv(p,q)}:= \|\widehat K\|_{M^{p,q}}$.
The expression in (i) is  equal to
\[ \Big\| \Phi(\rho^{-1}\cdot) \int \ka(y) K(\cdot -y) \Psi(R^{-1}(\cdot-y)) \, dy\Big\|_{cv(p,q)}.
\] Observe that by the support properties of $\Phi$, $\Psi$  the integral in $y$ is extended  over
$R/2-2\rho\le |y|\le 2R+2\rho$, hence $|y|\in (R/4,4R)$. Thus the displayed quantity is bounded by
\begin{align*}&\int_{R/4\le|y|\le 4R}|\ka (y)| \|K(\cdot-y)\Psi(R^{-1}(\cdot-y))\|_{cv(p,q)} \, dy
\\ &\le \int_{R/4\le|y|\le 4R}|\ka (y)| \, dy\,\|m*R^d \widehat \Psi(R\cdot)\|_{M^{p,q}}  
\end{align*} and the desired bound follows from the hypothesis on $\ka$.  Part (ii) is proved in the same way.
\end{proof}

\begin{lem} \label{lem:multiplication-by-smooth} Let $\Psi_n$, $n\ge 0$, be as in \S\ref{sec:more-precise}. Let $N>d$ and let  $\chi$ be such that
$\|\partial^\alpha_\xi\chi\|_1\le A$ for all $\alpha \in \bbN_0$ such that $|\alpha|\le N$. Let $h\in L^1(\widehat{\bbR}^d)$ be supported in $\{\xi \in \widehat{\bbR}^d:1/2\le |\xi|\le 2\}$. Then
\[ \|(h\chi)*\widehat \Psi_\ell\|_{M^{p,q} } \lc A\sum_{n=0}^\infty C_{N-d}(n,\ell) \|h*\widehat\Psi_n\|_{M^{p,q}}\]
for any $\ell \geq 0$, where
\Be\label{eqn:Cnl} C_{N_1}(n,\ell) := \begin{cases} 
1&\text{ if } \ell-5\le n\le \ell+5,
\\
2^{-\ell N_1}  &\text{if } 0\le n<\ell-5,
\\
2^{-n N_1} &\text{ if }  \ell+5<n.
\end{cases}
\Ee 
\end{lem}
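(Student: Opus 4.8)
\textbf{Proof proposal for Lemma \ref{lem:multiplication-by-smooth}.}

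The plan is to expand $h\chi$ using the Littlewood--Paley-type partition of the excerpt. Writing $h = \sum_{n\ge 0} h * \widehat{\Psi}_n$ (in the sense that $\widehat{\Psi}_n$ is the Fourier transform of the inhomogeneous dyadic pieces $\Psi_n$ on the spatial side, so that on the frequency side the kernels $\cF^{-1}[h] \cdot \Psi_n$ reconstruct $\cF^{-1}[h]$), we get
\[
(h\chi)*\widehat{\Psi}_\ell = \sum_{n\ge 0} \big( (h*\widehat{\Psi}_n)\,\chi \big) * \widehat{\Psi}_\ell.
\]
Since $h$ is supported in $\{1/2 \le |\xi| \le 2\}$ and convolution with $\widehat{\Psi}_n$ on the frequency side corresponds to localizing the kernel to $|x| \approx 2^n$, each summand is a multiplier supported in a fixed compact frequency annulus, and the $M^{p,q}$ norm is unchanged by such truncations up to constants. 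The key point is to estimate $\|((h*\widehat{\Psi}_n)\chi)*\widehat{\Psi}_\ell\|_{M^{p,q}}$ with the decay factor $C_{N-d}(n,\ell)$. First I would record that $\|\chi * \widehat{\Psi}_j\|_{M^{p,q}} \lesssim \|\chi * \widehat{\Psi}_j\|_{M^1} \lesssim \|\cF^{-1}[\chi] \Psi_j\|_1 \lesssim A 2^{-j(N-d)}$ for $j \ge 1$ and $\lesssim A$ for $j = 0$, by integrating the hypothesis $\|\partial^\alpha \chi\|_1 \le A$ against $(1+|x|)^{-N}$ restricted to $|x| \approx 2^j$ (using that $\cF^{-1}[\chi]$ decays like $(1+|x|)^{-N}$).

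The main mechanism is then a paraproduct-type case analysis in the relation between $n$ and $\ell$. When $|n - \ell| \le 5$, I use that multiplication of a multiplier supported in $|x|\approx 2^n$ by $\chi$ and then convolving (on the frequency side) with $\widehat{\Psi}_\ell$ produces a multiplier whose $M^{p,q}$ norm is $\lesssim A \|h*\widehat{\Psi}_n\|_{M^{p,q}}$, using boundedness of the elementary operations and the fact that $\|\chi\|_{M^{p,q}}\lesssim A$. When $n < \ell - 5$: the kernel of $h*\widehat{\Psi}_n$ lives at scale $|x| \approx 2^n$, multiplying its frequency symbol by $\chi$ spreads it, but convolving the result (frequency side) with $\widehat{\Psi}_\ell$ forces a spatial localization at scale $2^\ell \gg 2^n$, and this mismatch costs $2^{-\ell(N-d)}$ — quantitatively via Lemma \ref{lem:two-spatial-loc}(ii) with $R \approx 2^\ell$, $\rho \approx 2^n$, $\kappa$ a suitable rescaled bump adapted to $\chi$. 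Symmetrically, when $n > \ell + 5$ the piece $h*\widehat{\Psi}_n$ is localized at $2^n$ while the output must live at $2^\ell$, giving $2^{-n(N-d)}$ via Lemma \ref{lem:two-spatial-loc}(i). Summing over $n$ yields the claimed bound.

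The main obstacle I anticipate is bookkeeping the rescalings so that Lemma \ref{lem:two-spatial-loc} applies cleanly: one must exhibit, for each relevant pair $(n,\ell)$, a kernel $\kappa$ with $\sup_x (1+|x|)^N |\kappa(x)| \le 1$ (after normalization by $A$) such that $\widehat{\kappa}$ reproduces the multiplication by $\chi$ on the frequency support in question, and verify the scale separation hypothesis $1 \le \rho \le R/8$ — which is exactly why the cutoff $|n - \ell| \le 5$ (rather than a tighter window) appears in the definition of $C_{N_1}(n,\ell)$. A secondary subtlety is that $\chi$ is not compactly supported, only has integrable derivatives up to order $N$; this is handled by noting $\cF^{-1}[\chi]$ has the required pointwise decay $(1+|x|)^{-N}$, which is all Lemma \ref{lem:two-spatial-loc} and the direct estimate for the diagonal term need. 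Everything else is a routine application of Young's inequality and the triangle inequality for the $M^{p,q}$ norm.
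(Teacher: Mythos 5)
Your proposal is correct and takes essentially the same route as the paper: expand $(h\chi)*\widehat\Psi_\ell=\sum_{n\ge 0}[(h*\widehat\Psi_n)\chi]*\widehat\Psi_\ell$, treat the diagonal $|n-\ell|\le 5$ by the elementary facts that multiplication by $\chi$ and convolution with $\widehat\Psi_\ell$ act boundedly on $M^{p,q}$ (via $\|\cF^{-1}\chi\|_1\lc A$ and $\|\widehat\Psi_\ell\|_1=O(1)$), and get the off-diagonal factors $2^{-\ell(N-d)}$, $2^{-n(N-d)}$ from Lemma \ref{lem:two-spatial-loc} (parts (ii) and (i), respectively) applied with $\kappa$ a normalization of $\cF^{-1}[\chi]$, whose decay $\lc A(1+|x|)^{-N}$ follows from the hypothesis on $\chi$ — exactly the paper's argument. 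The only blemish is your unused warm-up step $\|\chi*\widehat\Psi_j\|_{M^{p,q}}\lc\|\chi*\widehat\Psi_j\|_{M^{1}}$, which is not a valid inequality when $p<q$ (its conclusion is nonetheless true by Young's inequality), but nothing in your main argument depends on it.
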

\begin{proof}
We write $(h\chi)*\widehat{\Psi}_\ell=  \sum_{n=0}^\infty [(h*\widehat{\Psi}_n)\chi]* \widehat{ \Psi}_\ell$.
The result then follows by noting that $|\cF^{-1}[\chi](x)|\lc (1+|x|)^{-N}$
and an application of Lemma \ref{lem:two-spatial-loc}.
\end{proof}

\subsection{Multiplication by smooth symbols}\label{sec:bmsmooth-pf}

The above observations can be applied to show that the space defined by the finiteness of $\cB[m]$ in \eqref{eqn:Bm} is invariant under multiplication with multipliers satisfying a standard symbol  of order $0$ assumption. There is of course also a corresponding similar and immediate statement for $\cB_\circ[m]$. 

\begin{lem}\label{lem:mult-by-symbols0}
Let $a \in C^{\infty}(\widehat{\bbR}^d)$. Then
$$\cB[am]\lc \cB[m] \sum_{|\alpha|\le 2d+1} \sup_{\xi \in \widehat{\bbR}^d} |\xi|^{|\alpha|} |\partial^\alpha a(\xi)|,$$
where $\alpha \in \bbN_0$. Consequently, if $|\partial^\alpha a(\xi)| \lesssim_\alpha (1+|\xi|)^{-|\alpha|}$ for all $\xi \in \widehat{\bbR}^d$ and all $\alpha \in \bbN_0^d$, we have  $\cB[am]\lc \cB[m]$.
\end{lem}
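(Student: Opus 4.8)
The plan is to reduce the statement to the single-scale estimate in Lemma \ref{lem:multiplication-by-smooth} by exploiting the dyadic frequency structure encoded in the definition of $\cB[m]$, together with the scale invariance of the ingredients. First I would fix the localizing function $\phi$ (invariance of $\cB$ under the choice of $\phi,\Psi$ is Remark \ref{remark multiplier}(2)) and write, for each $t>0$,
\[
\phi(\xi) (am)(t\xi) = \phi(\xi)\, a(t\xi)\, m(t\xi).
\]
The function $\chi_t(\xi) := \phi(\xi) a(t\xi)$ should be treated as the ``smooth symbol'' factor to which Lemma \ref{lem:multiplication-by-smooth} is applied. The key point is that, because $\phi$ is supported where $|\xi|\approx 1$, the derivatives $\partial^\alpha_\xi[\phi(\xi)a(t\xi)]$ are controlled by $\sum_{|\beta|\le|\alpha|}\sup_{|\eta|\approx t}|t|^{|\beta|}|\partial^\beta a(\eta)| \lc \sum_{|\beta|\le|\alpha|}\sup_\eta |\eta|^{|\beta|}|\partial^\beta a(\eta)|$, \emph{uniformly in $t$} — this is exactly where the homogeneity-adapted hypothesis $\sup_\xi |\xi|^{|\alpha|}|\partial^\alpha a(\xi)|<\infty$ is used. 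In particular $\|\partial^\alpha_\xi \chi_t\|_1 \lc_N \sum_{|\alpha|\le N}\sup_\xi|\xi|^{|\alpha|}|\partial^\alpha a(\xi)|$ uniformly in $t$ for $N = 2d+1$.

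Next I would apply Lemma \ref{lem:multiplication-by-smooth} with $h = \phi m(t\cdot)$ (which is supported in $\{1/2\le|\xi|\le 2\}$ after shrinking the support of $\phi$, or by covering $\supp\phi$ by finitely many such annuli — a harmless reduction), $\chi = \chi_t$, and $N = 2d+1$, so that $N-d = d+1 > 0$ governs the off-diagonal decay. This gives, for every $\ell\ge 0$ and every $t>0$,
\[
\big\|[\phi (am)(t\cdot)]*\widehat\Psi_\ell\big\|_{M^{p,q}_{\sH_1,\sH_2}} \lc \Big(\sum_{|\alpha|\le 2d+1}\sup_\xi|\xi|^{|\alpha|}|\partial^\alpha a(\xi)|\Big) \sum_{n\ge 0} C_{d+1}(n,\ell)\, \big\|[\phi m(t\cdot)]*\widehat\Psi_n\big\|_{M^{p,q}_{\sH_1,\sH_2}},
\]
with $C_{d+1}(n,\ell)$ as in \eqref{eqn:Cnl}. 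Then I would take the supremum over $t>0$, multiply by the weight $2^{\ell d(1/p-1/q)}(1+\ell)$ dictated by \eqref{eqn:Bm}, and sum in $\ell\ge 0$. Interchanging the $\ell$- and $n$-sums, it remains to check the scalar bound
\[
\sum_{\ell\ge 0} 2^{\ell d(1/p-1/q)}(1+\ell)\, C_{d+1}(n,\ell) \lc 2^{n d(1/p-1/q)}(1+n)
\]
uniformly in $n$, which follows because the geometric decay rate $d+1$ in $C_{d+1}$ dominates the polynomial growth $2^{\ell d(1/p-1/q)}(1+\ell)$ (recall $d(1/p-1/q)\le d < d+1$): the near-diagonal terms $|\ell - n|\le 5$ contribute $O(2^{nd(1/p-1/q)}(1+n))$, the terms $\ell < n-5$ contribute a convergent sum bounded by $O(2^{nd(1/p-1/q)})$ after summing the $2^{-\ell(d+1)}$-type tail, and the terms $\ell > n+5$ contribute $\sum_{\ell>n+5} 2^{\ell(d(1/p-1/q)-(d+1))}(1+\ell) \lc 2^{-n}$. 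Summing the resulting bound against $2^{nd(1/p-1/q)}(1+n)\|[\phi m(t\cdot)]*\widehat\Psi_n\|_{M^{p,q}}$ and taking the supremum in $t$ reproduces $\cB[m]$, giving $\cB[am]\lc \cB[m]\sum_{|\alpha|\le 2d+1}\sup_\xi|\xi|^{|\alpha|}|\partial^\alpha a(\xi)|$.

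The main obstacle I anticipate is the bookkeeping around the uniformity in $t$ of the symbol estimates for $\chi_t = \phi(\cdot)a(t\cdot)$: one must be careful that the Leibniz expansion of $\partial^\alpha[\phi(\xi)a(t\xi)]$ produces factors $t^{|\beta|}\partial^\beta a(t\xi)$ that, on the support of $\phi$ where $|t\xi|\approx t$, are exactly bounded by $\sup_\eta|\eta|^{|\beta|}|\partial^\beta a(\eta)|$ with no hidden $t$-dependence — this is the precise reason the hypothesis is phrased with the homogeneous weights $|\xi|^{|\alpha|}$ rather than the inhomogeneous $(1+|\xi|)^{|\alpha|}$; the inhomogeneous version, stated as the "Consequently" clause, then follows trivially since $(1+|\xi|)^{-|\alpha|}\le |\xi|^{-|\alpha|}$ makes each $\sup_\xi|\xi|^{|\alpha|}|\partial^\alpha a(\xi)|$ finite. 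A secondary technical point is reducing from a general $\phi$ supported in $\{1/2<|\xi|<2\}$ to one whose support, together with that of $a(t\cdot)\phi$, fits the annular support hypothesis of Lemma \ref{lem:multiplication-by-smooth}; this costs only a fixed finite partition of unity and does not affect the final constants.
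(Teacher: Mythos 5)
Your argument is correct and is essentially the paper's own proof: write $\phi\,(am)(t\cdot)$ as the product of $h=\phi m(t\cdot)$ with a symbol built from $a(t\cdot)$ cut off near the unit annulus, check that the homogeneous weights $\sup_\xi|\xi|^{|\alpha|}|\partial^\alpha a(\xi)|$ give $t$-uniform bounds on the derivatives of that symbol, apply Lemma \ref{lem:multiplication-by-smooth} with $N=2d+1$, and then sum the kernel $C_{d+1}(n,\ell)$ against the weights $2^{\ell d(1/p-1/q)}(1+\ell)$, exactly as in the paper. The only small mismatch is that with your choice $\chi_t=\phi\,a(t\cdot)$ the lemma controls $[\phi^2(am)(t\cdot)]*\widehat\Psi_\ell$ rather than $[\phi(am)(t\cdot)]*\widehat\Psi_\ell$; the paper sidesteps this by taking instead $\chi=\widetilde\phi\,a(t\cdot)$ with $\widetilde\phi\equiv 1$ on $\{1/2\le|\xi|\le 2\}$, while in your version one would invoke the independence of $\cB$ from the choice of localizer (Remark \ref{remark multiplier}(2), proved without using this lemma), so the repair is immediate.
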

\begin{proof}
Let $\widetilde \phi\in C^\infty_c(\widehat{\bbR}^d)$ be supported in $\{\xi \in \widehat{\bbR}^d: 1/4\leq |\xi|\le 4\}$ and such that $\widetilde\phi(\xi)=1$ for $1/2\le|\xi|\le 2.$ Let $a^t(\xi)= \widetilde \phi(\xi) a(t\xi)$. %
\[  [\vphi a(t\cdot)m(t\cdot)]*\widehat{\Psi}_\ell= \sum_{n=0}^\infty 
\big[\big([\vphi m(t\cdot)]*\widehat {\Psi}_n\big) a^t\big]\,*\, \widehat {\Psi}_\ell .\]
We have
$\sum_{|\alpha|\le 2d+1}| \partial^\alpha a^t (\xi)| \lc 1$, uniformly in $t$. 
By Lemma \ref{lem:multiplication-by-smooth} with $N_1=2d+1$,
\begin{align*}
   & \sum_{\ell=0}^\infty (1+\ell)2^{\ell d(1/p-1/q)} \|[\phi a(t\cdot)m(t\cdot)]*\widehat{\Psi}_\ell\|_{M^{p,q}}
   \\& \lc
   \sum_{\ell=0}^\infty \sum_{n=0}^\infty C_{d+1}(n,\ell) (1+\ell)2^{\ell d(1/p-1/q)} \|[\phi m(t\cdot)]*\widehat{\Psi}_n\|_{M^{p,q}}
   \\&\lc \sum_{n=0}^\infty  (1+n)2^{n d(1/p-1/q)} \|[\vphi m(t\cdot)]*\widehat{\Psi}_n\|_{M^{p,q}}
\end{align*}
where  in the last line we used that   
\[ \sup_{n \geq 0} \sum_{\ell=0}^\infty \frac{1+\ell}{1+n} 2^{(\ell-n)d(\frac 1p-\frac 1q)} C_{d+1} (n,\ell) <\infty. \qedhere\]
\end{proof}

\subsection{\texorpdfstring{Independence of $\phi, \Psi$ in the finiteness of $\mathcal{B}[m]$}{Independence of bump functions}}
\label{sec:finiteness Bm}

The previous argument in Lemma \ref{lem:mult-by-symbols0} can also be used to show that the space defined by the finiteness of $\cB[m]$, $\cB_\circ[m]$  is independent of the specific choices of $\phi$, $\Psi$ in \S\ref{sec:more-precise}. We only give the argument for $\cB[m]$ and  a similar  reasoning applies to $\cB_\circ[m]$. 
\begin{lem} Denote the left hand side of \eqref{eqn:Bm} by $\cB[m,\phi,\Psi]$. 
Given two choices of $(\phi, \Psi)$ and 
$(\widetilde \phi, \widetilde\Psi)$ with the  specifications in the first paragraph of \S\ref{sec:more-precise}, there is a constant $C=C(\phi, \Psi)>1$ such that
\[ C^{-1} \cB[m,\phi, \Psi]\le \cB[m,\widetilde \phi, \widetilde \Psi]\le C\cB[m,\phi, \Psi].\]
\end{lem}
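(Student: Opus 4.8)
The statement asserts that the quantity $\cB[m,\phi,\Psi]$ is, up to multiplicative constants, independent of the choices of $\phi$ and $\Psi$ satisfying the required support and nondegeneracy conditions. The plan is to prove this in two independent steps: first show independence of the localizing function $\phi$ (with $\Psi$ fixed), then independence of $\Psi$ (with $\phi$ fixed). Both steps rely on the same mechanism that was already used in the proof of Lemma \ref{lem:mult-by-symbols0}, namely the bookkeeping lemma Lemma \ref{lem:multiplication-by-smooth} and its precursor Lemma \ref{lem:two-spatial-loc}, together with the summability of the kernel $C_{N_1}(n,\ell)$ against geometric weights $2^{(\ell-n)d(1/p-1/q)}$ and polynomial weights $(1+\ell)/(1+n)$.

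\textbf{Step 1: changing $\phi$.} Suppose $\phi$ and $\widetilde\phi$ are both radial $C^\infty$ functions supported in $\{1/2<|\xi|<2\}$ and not identically zero. Since $\widetilde\phi$ is supported where $\phi$ need not equal $1$, one cannot simply write $\widetilde\phi = (\widetilde\phi/\phi)\phi$. The standard fix is to use the dilation invariance built into the definition: for a fixed radial $\chi\in C^\infty_c$ supported in $\{1/4<|\xi|<4\}$ with $\sum_{k\in\bbZ}\chi(2^{-k}\xi)=1$ off the origin, one has $\widetilde\phi(\xi) = \sum_{|k|\le 2}\widetilde\phi(\xi)\chi(2^{-k}\xi)$, and each $\widetilde\phi\,\chi(2^{-k}\cdot)$ is a smooth symbol of order $0$ supported in a dyadic annulus comparable to $\{1/2<|\xi|<2\}$. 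Then for each fixed $t$,
\[
[\widetilde\phi\, m(t\cdot)]*\widehat\Psi_\ell = \sum_{|k|\le 2}\big[(\widetilde\phi\,\chi(2^{-k}\cdot))\,(\phi_k m(t\cdot))\big]*\widehat\Psi_\ell,
\]
where $\phi_k$ is a fixed smooth function equal to $1$ on the support of $\widetilde\phi\,\chi(2^{-k}\cdot)$ and supported in a comparable annulus; rescaling $t\mapsto 2^kt$ reduces $\phi_k m(t\cdot)$ to $\phi' m(2^kt\cdot)$ for a fixed $\phi'$ whose support lies in $\{1/2<|\xi|<2\}$. Applying Lemma \ref{lem:multiplication-by-smooth} with $a = \widetilde\phi\,\chi(2^{-k}\cdot)$ (whose symbol seminorms are $O(1)$) and summing the resulting $C_{d+1}(n,\ell)$-kernel against the weights exactly as at the end of the proof of Lemma \ref{lem:mult-by-symbols0} gives $\cB[m,\widetilde\phi,\Psi]\lc \cB[m,\phi',\Psi]$, and by symmetry the reverse bound, so all admissible $\phi$ yield comparable quantities. (The one subtlety is that one must first know $\cB[m,\phi,\Psi]$ does not depend on which fixed $\phi'$ one normalizes to; this is itself a special instance of the same argument, applied between two functions that both equal $1$ on each other's supports after a harmless enlargement, where the division trick does work directly.)

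\textbf{Step 2: changing $\Psi$.} With $\phi$ now fixed, suppose $\Psi_\ell$ and $\widetilde\Psi_\ell$ are two inhomogeneous dyadic spatial decompositions as in \S\ref{sec:more-precise}, so $\widehat\Psi_\ell$ and $\widehat{\widetilde\Psi}_\ell$ are both supported where $|x|\approx 2^\ell$ (resp. $|x|\lesssim 1$ for $\ell=0$) with the usual bump bounds. One writes $\widehat{\widetilde\Psi}_\ell = \sum_{n\ge 0}\widehat\Psi_n * \widehat{\widetilde\Psi}_\ell$ and observes that $\widehat\Psi_n*\widehat{\widetilde\Psi}_\ell$ is a Schwartz function whose rescaled version has $L^1$ norm $O(C_{N}(n,\ell))$ for every $N$, by the rapid decay of $\widehat\Psi_n$ and the spatial localization of $\widehat{\widetilde\Psi}_\ell$ (this is precisely the estimate underlying Lemma \ref{lem:two-spatial-loc}, now applied with $\kappa$ replaced by an honest Schwartz convolution). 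Hence
\[
\big\|[\phi m(t\cdot)]*\widehat{\widetilde\Psi}_\ell\big\|_{M^{p,q}} \lc \sum_{n\ge 0}C_N(n,\ell)\,\big\|[\phi m(t\cdot)]*\widehat\Psi_n\big\|_{M^{p,q}},
\]
and multiplying by $(1+\ell)2^{\ell d(1/p-1/q)}$, summing in $\ell$, and interchanging the order of summation (using $N>d(1/p-1/q)+1$ and the bound $\sup_n\sum_\ell \tfrac{1+\ell}{1+n}2^{(\ell-n)d(1/p-1/q)}C_N(n,\ell)<\infty$) yields $\cB[m,\phi,\widetilde\Psi]\lc\cB[m,\phi,\Psi]$; the reverse follows by symmetry. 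Combining Steps 1 and 2 gives the asserted two-sided comparison between $\cB[m,\phi,\Psi]$ and $\cB[m,\widetilde\phi,\widetilde\Psi]$.

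\textbf{Main obstacle.} The only genuinely delicate point is the change of $\phi$ in Step 1: because two admissible $\phi$'s need not be mutual reciprocals on overlapping supports, one cannot divide directly, and one must route through the dilation structure (the sum over $|k|\le 2$ of dyadic reassemblies) so that the smooth-symbol-multiplication lemma applies. Once that reduction is in place, everything is a mechanical repetition of the summation bookkeeping already carried out for Lemma \ref{lem:mult-by-symbols0}, so I would keep the exposition brief and simply point to that computation.
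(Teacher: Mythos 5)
Your Step 2 (changing $\Psi$ with $\phi$ fixed) is sound and is essentially the bookkeeping the paper also relies on, but Step 1 contains the genuine gap, and it is exactly where the real content of the lemma lies. The smooth-symbol mechanism of Lemmas \ref{lem:multiplication-by-smooth} and \ref{lem:mult-by-symbols0} only yields inequalities of the form $\cB[m,\widetilde\phi,\Psi]\lc\cB[m,\phi',\Psi]$ when the left-hand bump is a pointwise multiple $a\,\phi'$ of the right-hand one by a bounded symbol $a$; it can never produce the reverse direction $\cB[m,\phi',\Psi]\lc\cB[m,\phi,\Psi]$ for an \emph{arbitrary} admissible $\phi$, because $\phi$ is only assumed radial, not identically zero and supported in $\{1/2<|\xi|<2\}$ — it may vanish on most of that annulus (say a narrow bump on $\{0.9<|\xi|<1.1\}$), so no bounded $a$ satisfies $\phi'=a\phi$, and for fixed $t$ the pieces $[\phi m(t\cdot)]*\widehat\Psi_n$ carry no information whatsoever about $m$ on the part of the annulus where $\phi$ vanishes. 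Your parenthetical claim that ``the division trick does work directly'' between normalized choices does not address this: the problematic link in your chain is from the given arbitrary $\phi$ to any normalized $\phi'$, and your dyadic reassembly $\sum_{|k|\le 2}\chi(2^{-k}\cdot)$ with rescalings $t\mapsto 2^k t$ only makes available the five dyadic dilates of $\phi$, whose supports need not cover the support of $\widetilde\phi$ (or of $\phi'$). As written, the final inequality $\cB[m,\widetilde\phi,\widetilde\Psi]\le C\,\cB[m,\phi,\Psi]$ is therefore not established.

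The missing ingredient is a synthesis of $\widetilde\phi$ from a \emph{continuum} of dilates of $\phi$, which is how the paper argues: since $\phi$ is radial and nontrivial, $\int_0^\infty|\phi(s\xi)|^2\tfrac{ds}{s}\ge c>0$ for $\xi\neq 0$, so one may set $\widehat{\beta^s}(\xi)=\widetilde\phi(s^{-1}\xi)\phi(\xi)\big/\int_0^\infty|\phi(\sigma s^{-1}\xi)|^2\tfrac{d\sigma}{\sigma}$ and obtain the reproducing identity $\widetilde\phi(\xi)=\int_{1/4}^{4}\widehat{\beta^s}(s\xi)\phi(s\xi)\tfrac{ds}{s}$. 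Inserting this and changing variables $\xi\mapsto s^{-1}\xi$ expresses $[\widetilde\phi\,m(t\cdot)]*\widehat{\widetilde\Psi}_\ell$ through the quantities $[\phi\,m(ts^{-1}\cdot)]*\widehat\Psi_n$ (the supremum over $t$ in the definition of $\cB$ absorbs the shift $t\mapsto ts^{-1}$), multiplied by the uniformly controlled symbols $\widehat{\beta^s}$ and re-localized by the rescaled cutoffs $s^{-d}\widehat{\widetilde\Psi}_\ell(s^{-1}\cdot)$; at that point Lemma \ref{lem:two-spatial-loc} / Lemma \ref{lem:multiplication-by-smooth} and the $C_{N-d}(n,\ell)$ summation you describe apply and close the argument exactly as in your Step 2. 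Without some such use of the radial nondegeneracy of $\phi$ and continuous dilations, the comparison in the direction you need cannot be obtained from multiplication by bounded symbols alone.
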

\begin{proof}
We show the second inequality.
Note that $\int_0^\infty |\phi (s\xi)|^2 \frac{ds}{s} \ge c>0$ for $\xi\neq 0$. Let $\beta^s$ be defined by
\[\widehat {\beta^s}(\xi)  = \frac{\widetilde\phi(s^{-1}\xi) \phi(\xi)}{\int_0^\infty |\phi(\sigma s^{-1}\xi)|^2 \frac{d\sigma}{\sigma}}. \]
We then have, in view of the support conditions on $\phi$  and $\widetilde{\phi}$,
\[\tilde \phi(\xi)=\int_{1/4}^4 \widehat{\beta^s}(s\xi) \phi(s\xi) \frac{ds}{s}\]
and hence 
\begin{align*} 
&\big\|\widetilde \phi m(t\cdot)*\widehat{\widetilde\Psi}_\ell\big\|_{M^{p,q}}
\le \int_{1/4}^4\big\|
[\widehat {\beta^s}(s\cdot)\phi(s\cdot)m(t\cdot)] *\widehat{\widetilde\Psi}_\ell\big\|_{M^{p,q}}\frac{ds}{s}
\\
&= \int_{1/4}^4 s^{d(\frac 1p-\frac 1q)}\big\| 
[\widehat{\beta^s}\phi(\cdot)m(ts^{-1} \cdot)] *s^{-d}\widehat{\widetilde\Psi}_\ell(s^{-1}\cdot)\big\|_{M^{p,q}}\frac{ds}{s}
\\&\lc \sum_{n=0}^\infty \int_{1/4}^4
\big\| \big(\widehat {\beta^s} \big(
[\phi(\cdot)m(ts^{-1} \cdot)]*\widehat{\Psi}_n\big)\big) *s^{-d}\widehat{\widetilde\Psi}_\ell(s^{-1}\cdot)\big\|_{M^{p,q}}\frac{ds}{s}.
\end{align*}
By Lemma \ref{lem:multiplication-by-smooth} this is dominated by
\[C_{N-d}(n,\ell) \int_{1/4}^4\big\|[\phi(\cdot)m(ts^{-1} \cdot)]*\widehat{\Psi}_n\big\|_{M^{p,q}}\,\frac{ds}{s},\]
where $C_{N-d} (n,\ell)$ is as in \eqref{eqn:Cnl}.
It is now easy to see that for  $N\ge 2d+1$
\begin{align*}
 &\sum_{\ell \geq 0} 2^{\ell d(\frac 1p-\frac 1q)}(1+\ell)   \big\|\widetilde \phi m(t\cdot)*\widehat{\widetilde\Psi}_\ell\big\|_{M^{p,q}}
\\&\lc \int_{1/4}^4
    \sum_{\ell\ge 0} \sum_{n\ge 0} C_{N-d}(n,\ell)2^{\ell d(\frac 1p-\frac 1q)}(1+\ell) \big\|[\phi(\cdot)m(ts^{-1} \cdot)]*\widehat{\Psi}_n\big\|_{M^{p,q}} \frac{ds}{s}
    \\
    &\lc \sup_\tau \sum_{n=0}^\infty 2^{nd(\frac 1p-\frac 1q)} \big\|[\phi(\cdot)m(\tau \cdot)]*\widehat{\Psi}_n\big\|_{M^{p,q}}.
\end{align*}
This establishes the inequality $\cB[m,\widetilde \phi, \widetilde \Psi]\le C\cB[m,\varphi, \Psi]$
and the converse follows by interchanging the roles of $(\phi, \Psi)$ and 
$(\widetilde \phi, \widetilde \Psi)$.
\end{proof}


\providecommand{\bysame}{\leavevmode\hbox to3em{\hrulefill}\thinspace}
\providecommand{\MR}{\relax\ifhmode\unskip\space\fi MR }
\providecommand{\MRhref}[2]{%
	\href{http://www.ams.org/mathscinet-getitem?mr=#1}{#2}
}
\providecommand{\href}[2]{#2}


\end{document}